\documentclass[a4paper,11pt]{article}
\usepackage{amsmath,amsthm,amssymb,enumitem}
\usepackage[mathscr]{eucal}
\usepackage{tocloft}

\usepackage{tikz}
\usetikzlibrary{decorations.pathreplacing,shapes.geometric,petri}
\usetikzlibrary{calc,positioning}

\usepackage[nosort,nocompress,noadjust]{cite}

\usepackage[linktocpage=true,bookmarks=false,hyperfootnotes=false,colorlinks,
    linkcolor={red!60!black},
    citecolor={blue!50!black},
    urlcolor={blue!80!black}]{hyperref}

\renewcommand{\eqref}[1]{\hyperref[#1]{(\ref{#1})}}

\pagestyle{plain}

\setlength{\evensidemargin}{0pt}
\setlength{\oddsidemargin}{0pt}
\setlength{\topmargin}{-20pt}
\setlength{\footskip}{55pt}
\setlength{\textheight}{670pt}
\setlength{\textwidth}{450pt}
\setlength{\headsep}{10pt}
\setlength{\parindent}{0pt}
\setlength{\parskip}{1ex plus 0.5ex minus 0.2ex}

\newlist{enumlist}{enumerate}{1}
\setlist[enumlist]{labelindent=0cm,label=\arabic*.,labelwidth=2.5ex,labelsep=0.5ex,leftmargin=3ex,align=left,topsep=0.5ex,itemsep=1ex,parsep=1ex}

\newlist{itemlist}{itemize}{1}
\setlist[itemlist]{labelindent=0cm,label=$\bullet$,labelwidth=2.5ex,labelsep=0.5ex,leftmargin=3ex,align=left,topsep=0.5ex,itemsep=1ex,parsep=1ex}

\numberwithin{equation}{section}

{\theoremstyle{definition}\newtheorem{definition}{Definition}[section]

\newtheorem{remark}[definition]{Remark}
}

\newtheorem{proposition}[definition]{Proposition}
\newtheorem{lemma}[definition]{Lemma}
\newtheorem{theorem}[definition]{Theorem}
\newtheorem{corollary}[definition]{Corollary}

\setlength{\cftbeforesecskip}{0.4ex}
\setlength{\cftbeforesubsecskip}{0ex}

\newcommand{\bim}[3]{\mathord{\raisebox{-0.4ex}[0ex][0ex]{\scriptsize $#1$}{#2}\hspace{-0.25ex}\raisebox{-0.4ex}[0ex][0ex]{\scriptsize $#3$}}}

\newcommand{\coms}[4]{\begin{array}{ccc}
#1 & \subset & #2 \\ \cup & & \cup \\ #3 & \subset & #4
\end{array}}

\newcommand{\C}{\mathbb{C}}
\newcommand{\cC}{\mathcal{C}}
\newcommand{\eps}{\varepsilon}
\newcommand{\al}{\alpha}
\newcommand{\be}{\beta}
\newcommand{\albar}{\overline{\alpha}}
\newcommand{\End}{\operatorname{End}}
\newcommand{\Irr}{\operatorname{Irr}}
\newcommand{\Mor}{\operatorname{Mor}}

\newcommand{\ot}{\otimes}
\newcommand{\recht}{\rightarrow}

\newcommand{\Z}{\mathbb{Z}}
\newcommand{\vphi}{\varphi}

\newcommand{\op}{^\text{\rm op}}
\newcommand{\bG}{\mathbb{G}}
\newcommand{\cO}{\mathcal{O}}
\newcommand{\id}{\mathord{\text{\rm id}}}
\newcommand{\om}{\omega}

\newcommand{\N}{\mathbb{N}}
\newcommand{\cL}{\mathcal{L}}
\newcommand{\ovt}{\mathbin{\overline{\otimes}}}
\newcommand{\mult}{\operatorname{mult}}
\newcommand{\Tr}{\operatorname{Tr}}
\newcommand{\real}{\operatorname{Re}}

\newcommand{\cD}{\mathcal{D}}

\newcommand{\counit}{\epsilon}

\newcommand{\cH}{\mathcal{H}}

\newcommand{\cZ}{\mathcal{Z}}

\newcommand{\cG}{\mathcal{G}}

\newcommand{\cK}{\mathcal{K}}

\newcommand{\cJ}{\mathcal{J}}

\newcommand{\cF}{\mathcal{F}}
\newcommand{\T}{\mathbb{T}}
\newcommand{\actson}{\curvearrowright}

\newcommand{\cS}{\mathscr{S}}
\newcommand{\cA}{\mathcal{A}}
\newcommand{\TLJ}{\operatorname{TLJ}}
\newcommand{\cB}{\mathcal{B}}
\newcommand{\zdiml}{\mathop{\text{\rm zd}_\ell}}
\newcommand{\zdimr}{\mathop{\text{\rm zd}_r}}
\newcommand{\cHbar}{\overline{\cH}}
\newcommand{\cW}{\mathcal{W}}
\newcommand{\cU}{\mathcal{U}}
\newcommand{\QN}{\operatorname{QN}}
\newcommand{\drank}{d_{\text{\rm rank}}}
\newcommand{\Ker}{\operatorname{Ker}}
\newcommand{\cM}{\mathcal{M}}
\newcommand{\lspan}{\operatorname{span}}
\newcommand{\cN}{\mathcal{N}}
\newcommand{\cR}{\mathcal{R}}
\newcommand{\cKreg}{\mathcal{K}^{\text{\rm reg}}}
\newcommand{\Creg}{C_{\text{\rm reg}}}
\newcommand{\orbit}{\operatorname{orbit}}
\newcommand{\indC}{\mathord{\text{\rm ind-}\mathcal{C}}}
\newcommand{\onb}{\operatorname{onb}}
\newcommand{\dpr}{^{\prime\prime}}

\newcommand{\cI}{\mathcal{I}}
\newcommand{\cV}{\mathcal{V}}

\newcommand{\cE}{\mathcal{E}}
\newcommand{\Tor}{\operatorname{Tor}}
\newcommand{\Ext}{\operatorname{Ext}}
\newcommand{\Hom}{\operatorname{Hom}}
\newcommand{\Ltil}{\widetilde{L}}
\newcommand{\Rtil}{\widetilde{R}}
\newcommand{\ftil}{\widetilde{f}}
\newcommand{\gtil}{\widetilde{g}}
\newcommand{\sub}{\text{\rm sub}}
\newcommand{\cHreg}{\mathcal{H}_{\text{\rm reg}}}
\newcommand{\HH}{\operatorname{HH}}
\newcommand{\Xbar}{\overline{X}}
\newcommand{\bns}{\beta_n^{\text{\rm\tiny (2)}}}
\newcommand{\bnl}{\beta_n^{\text{\rm\tiny (2)}}}
\newcommand{\bes}{\beta^{\text{\rm\tiny (2)}}}
\newcommand{\bel}{\beta^{\text{\rm\tiny (2)}}}
\newcommand{\rd}{\operatorname{d}}
\newcommand{\rdl}{\mathop{\text{\rm d}_\ell}}
\newcommand{\rdr}{\mathop{\text{\rm d}_r}}
\newcommand{\Aut}{\operatorname{Aut}}
\newcommand{\cCtil}{\widetilde{\mathcal{C}}}
\newcommand{\cAtil}{\widetilde{\mathcal{A}}}
\newcommand{\vphitil}{\widetilde{\varphi}}
\newcommand{\psitil}{\widetilde{\psi}}
\newcommand{\deltatil}{\widetilde{\delta}}
\newcommand{\mtil}{\widetilde{m}}
\newcommand{\cP}{\mathcal{P}}

\renewcommand{\Im}{\operatorname{Im}}

\tikzset{Box/.style={very thick, rounded corners}}
\tikzset{marked/.style={star, star point height = .75mm, star points =5, fill=black,minimum size=2mm, inner sep=0mm} }
\tikzset{verythickline/.style = {line width=7pt}}
\tikzset{thickline/.style = {line width=5pt}}
\tikzset{medthick/.style = {line width=3pt}}
\tikzset{med/.style = {line width=2pt}}
\tikzset{count/.style = {fill=white,circle,draw,thin, inner sep=2pt}}
\tikzset{rcount/.style = {fill=white,rectangle,draw,thin,inner sep=2pt, rounded corners}}
\tikzset{cpr/.style = {draw,fill=white,rectangle,thin, rounded corners}}



\begin{document}

\begin{center}
{\boldmath\LARGE\bf Cohomology and $L^2$-Betti numbers for subfactors \vspace{0.5ex}\\ and quasi-regular inclusions}

\bigskip

{\sc by Sorin Popa\footnote{Mathematics Department, UCLA, Los Angeles, CA 90095-1555 (United States), popa@math.ucla.edu\\
Supported in part by NSF Grant DMS-1400208}, Dimitri Shlyakhtenko\footnote{Mathematics Department, UCLA, Los Angeles, CA 90095-1555 (United States), shlyakht@math.ucla.edu\\ Supported in part by NSF Grant DMS-1500035} and Stefaan Vaes\footnote{KU~Leuven, Department of Mathematics, Leuven (Belgium), stefaan.vaes@kuleuven.be \\
    Supported in part by European Research Council Consolidator Grant 614195, and by long term structural funding~-- Methusalem grant of the Flemish Government.}}
\end{center}

\begin{abstract}\noindent
We introduce $L^2$-Betti numbers, as well as a general homology and cohomology theory for the standard invariants of subfactors, through the associated quasi-regular symmetric enveloping inclusion of II$_1$ factors. We actually develop a (co)homology theory for arbitrary quasi-regular inclusions of von Neumann algebras. For crossed products by countable groups $\Gamma$, we recover the ordinary (co)homology of $\Gamma$. For Cartan subalgebras, we recover Gaboriau's $L^2$-Betti numbers for the associated equivalence relation. In this common framework, we prove that the $L^2$-Betti numbers vanish for amenable inclusions and we give cohomological characterizations of property~(T), the Haagerup property and amenability. We compute the $L^2$-Betti numbers for the standard invariants of the Temperley-Lieb-Jones subfactors and of the Fuss-Catalan subfactors, as well as for free products and tensor products.
\end{abstract}

\mbox{}\vspace{-6ex}

{\footnotesize\tableofcontents}

\vfill

\mbox{}\clearpage

\section{Introduction}

It has been a longstanding problem to define a suitable (co)homology theory, including the theory of $L^2$-cohomology and of $L^2$-Betti numbers, for objects encoding  ``quantum symmetries'' that arise in Jones's theory of subfactors, \cite{Jo82}.  Such objects include the standard invariant in Jones subfactor theory ($\lambda$-lattice or Jones planar algebra), rigid $C^*$-tensor categories as well as representation categories of compact quantum groups.  The main goal of the present paper is to give a definition of such a (co)homology theory.  In fact, our approach gives a unified way of defining (co)homology for discrete groups, measure preserving discrete groupoids and equivalence relations as well as such quantum symmetries.  In this way, we present a common approach to $L^2$-Betti numbers, which includes Atiyah-Cheeger-Gromov $L^2$-Betti numbers of groups, Gaboriau's $L^2$-Betti numbers for equivalence relations, as well as (new) $L^2$-invariants such as $L^2$-Betti numbers associated to a Jones subfactor.

The importance of a suitable definition of $L^2$-Betti numbers in the context of quantum symmetries is apparent already from the case of discrete groups.  Indeed, the theory $L^2$-invariants has had a wide range of applications in  geometry, topology, geometric group theory, ergodic theory and von Neumann algebras, see \cite{Lu02,Po01,Ga01}. They were originally defined by Atiyah \cite{At74} for $\Gamma$-coverings $p : \Xbar \recht X$ of compact Riemannian manifolds, in the context of equivariant index theory, and they were generalized to measurable foliations in \cite{Co78}. When $\Xbar$ is contractible, these are invariants of the group $\Gamma$. For general countable groups $\Gamma$, not necessarily having a nice classifying space, the $L^2$-Betti numbers $\bns(\Gamma)$ were introduced in \cite{CG85}, as the $L(\Gamma)$-dimension of the usual group homology of $\Gamma$ with coefficients in $\ell^2(\Gamma)$. A remarkable result of Gaboriau \cite{Ga01} shows that these numbers are orbit equivalence invariants, and his introduction of these invariants in ergodic theory has led to a number of striking advances in that field.

Key to our approach is the definition of a Hochschild type (co)homology for general {\em quasi-regular} inclusions of von Neumann algebras, which in the irreducible case we show to be equivalent to a Hochschild type (co)homology of an algebra that we canonically associate to such an inclusion. We call it the {\em tube algebra}, the terminology being motivated by the particular case of the symmetric enveloping inclusion of a finite depth subfactor \cite{Oc93}.  When we compute our cohomology theory with $L^2$-coefficients, the resulting cohomology groups are naturally modules over a semifinite von Neumann algebra and in this way, we obtain the notion of $L^2$-Betti numbers of a quasi-regular inclusion.

Quasi-regular inclusions of von Neumann algebras $T\subset S$ are generalizations of crossed product inclusions in which $S=T\rtimes \Gamma$ is a crossed product by a discrete group $\Gamma$ acting by automorphisms of $T$, so that the normalizer $\cN_S(T) = \{u \in \cU(S) \mid uTu^* = T\}$ generates the entire von Neumann algebra $S$. For quasi-regular inclusions, $S$ is generated by finite index $T$-bimodules.

Let us explain how our construction can be used to yield (co)homology theories and $L^2$-Betti numbers for subfactors, groups and equivalence relations.

{\bf Subfactors.}  A subfactor $N \subset M$ gives rise to the group like standard invariant $\cG_{N,M}$ that ``acts'' on $M$. The corresponding crossed product type inclusion, which will be a crucial tool for us, is the \emph{symmetric enveloping (SE) inclusion} $T \subset S$ defined in \cite{Po94a,Po99}. Here, $T = M \ovt M\op$ and $S$ should be thought of as a crossed product of $T$ by an action of $\cG_{N,M}$. Indeed, in the particular case of diagonal subfactors defined by finitely many automorphisms, the standard invariant encodes the discrete group $\Gamma \subset \Aut(M)$ generated by these automorphisms as well as the generating set. The corresponding SE-inclusion is then precisely the inclusion of  $T= M \ovt M\op$ into the crossed product $S=T\rtimes \Gamma$. With this example in mind, the SE-inclusion has been successfully used to define and study several group like properties for standard invariants of subfactors, including amenability, the Haagerup property, property~(T), etc., see \cite{Po94a,Po94b,Po01,PV14}.

Since the inclusion $T\subset S$ is quasi-regular, our definition yields a (co)homology theory and the notion of $L^2$-Betti numbers for the subfactor.  Our tube algebra is then up to Morita equivalence the same as  Ocneanu's tube algebra \cite{Oc93}; in fact, this case was our motivating example for the general definition of the tube algebra.  Since this algebra only depends on the standard invariant, our (co)homology theory and $L^2$-Betti numbers also depend only on the standard invariant $\cG_{N,M}$.  Actually, the definition makes sense in other related contexts, including planar algebras and  rigid $C^*$-tensor categories, such as representation categories of compact quantum groups. In that case, our (co)homology corresponds to quantum group (co)homology for the quantum double of $\bG$. In particular, the $L^2$-Betti numbers should be viewed as the $L^2$-Betti numbers of this quantum double.

{\bf Discrete groups.}
If in the previous case, $N\subset M$ is the diagonal subfactor defined by a finite family of automorphisms of $N$, or more generally for crossed product inclusions $T\subset T\rtimes \Gamma=S$, with $\Gamma$ a discrete group, our (co)homology of the inclusion $T\subset S$ is equivalent to ordinary group (co)homology with coefficients in a unitary representation. The $L^2$-Betti numbers are exactly the Cheeger-Gromov $L^2$-Betti numbers of $\Gamma$. Our tube algebra in this case becomes (essentially) the group algebra of $\Gamma$.

{\bf Measured equivalence relations.}
Given a probability measure preserving equivalence relation $\cR$ on a probability measure space $(X,\mu)$, the associated Cartan subalgebra inclusion  $T=L^\infty(X)\subset S = L(\cR)$ is quasi-regular.  Applying our definition, we recover Gaboriau's $L^2$-Betti numbers of $\cR$, see \cite{Ga01}, as well as groupoid cohomology with coefficients in a unitary representation.  In fact, this example was one of the original motivations for our definition of (co)homology for quasi-regular inclusions.

In the last two sections, we compute $L^2$-Betti numbers in several interesting cases  and show that
the resulting theory goes well with various approximation properties of the quasi-regular inclusion.
We prove that they vanish for amenable irreducible quasi-regular inclusions, as well as for the Temperley-Lieb-Jones subfactors/planar algebras. We prove a formula for the $L^2$-Betti numbers of the free product of quasi-regular inclusions and deduce that the first $L^2$-Betti number of the Fuss-Catalan subfactors is nonzero, while the others vanish. We also briefly discuss homology with trivial coefficients. Finally, we prove that for an irreducible quasi-regular inclusion $T \subset S$, the Haagerup property is equivalent with the existence of a proper $1$-cocycle, while property~(T) is equivalent with all $1$-cocycles being inner. In particular, for property~(T) inclusions, the first $L^2$-Betti number vanishes.

{\bf Acknowledgment.} We are grateful to the Mittag-Leffler Institute for their hospitality during the program {\it Classification of operator algebras: complexity, rigidity, and dynamics}, where the latest version of this paper has been finalized.

\section{Preliminaries}

\subsection{Bimodules over tracial von Neumann algebras}

We fix a von Neumann algebra $T$ with a normal faithful tracial state $\tau$. When $\cH$ is a right Hilbert $T$-module, we denote by $\zdimr(\cH)$ its center valued dimension. In principle, $\zdimr(\cH)$ belongs to the extended positive cone of $\cZ(T)$, but we will only use this notation when $\cH$ is finitely generated as a Hilbert $T$-module. This precisely corresponds to $\zdimr(\cH)$ being a bounded operator. We similarly use the notation $\zdiml(\cH)$ when $\cH$ is a left Hilbert $T$-module.

We call a Hilbert $T$-bimodule $\cH$ \emph{bifinite} if both $\zdiml(\cH)$ and $\zdimr(\cH)$ are bounded. We call their support projections the left, resp.\ right support of $\cH$. When $T$ is a II$_1$ factor, we have $\zdiml(\cH) = \rdl(\cH) 1$ and $\zdimr(\cH) = \rdr(\cH) 1$, where $\rdl(\cH)$ and $\rdr(\cH)$ denote the usual left, resp.\ right, Murray-von Neumann dimension of $\cH$. When $T$ is a II$_1$ factor, it is more common to say that a $T$-bimodule has \emph{finite index} if both $\rdl(\cH)$ and $\rdr(\cH)$ are finite.

Let $z_1,z_2 \in \cZ(T)$ be central projections and $\al : \cZ(T) z_1 \recht \cZ(T) z_2$ a bijective $*$-isomorphism. We say that $\cH$ is an \emph{$\al$-$T$-bimodule} if the right support of $\cH$ equals $z_1$, the left support equals $z_2$ and
$$\xi a = \al(a) \xi \quad\text{for all}\;\; a \in \cZ(T) z_1 \; .$$

\begin{definition}\label{def.irred}
Let $(T,\tau)$ be a von Neumann algebra with a normal faithful tracial state.
We say that a bifinite Hilbert $T$-bimodule $\cH$ with right support $z_1$ and left support $z_2$ is \emph{irreducible} if the space $\End_{T-T}(\cH)$ of $T$-bimodular bounded operators  equals $\cZ(T)z_1$ represented by its right action on $\cH$, and also equals $\cZ(T)z_2$ represented by its left action on $\cH$.
\end{definition}

Note that in the situation of Definition \ref{def.irred}, there is a unique bijective $*$-isomorphism $\al : \cZ(T) z_1 \recht \cZ(T) z_2$ satisfying $\xi a = \al(a) \xi$ for all $\xi \in \cH$, $a \in \cZ(T) z_1$, so that $\cH$ is in particular an $\al$-$T$-bimodule.

Whenever $p \in M_n(\C) \ot T$ is a projection and $\psi : T \recht p(M_n(\C) \ot T)p$ is a normal unital $*$-homomorphism, we define the $T$-bimodule $\cH(\psi)$ given by
$$\cH(\psi) = p(\C^n \ot L^2(T)) \quad\text{and}\quad a \cdot \xi \cdot b = \psi(a) \xi b \quad\text{for all}\;\; a,b \in T, \xi \in \cH(\psi) \; .$$
Denote by $\Tr$ the non-normalized trace on $M_n(\C)$ and by $E_\cZ$ the unique trace preserving conditional expectation of $T$ onto $\cZ(T)$. Then, $\zdimr(\cH) = (\Tr \ot E_\cZ)(p)$. Also, the left support of $\cH(\psi)$ equals the support of the homomorphism $\psi$, i.e.\ the smallest projection $z \in T$ with the property that $\psi(1-z) = 0$.

When $T$ is a II$_1$ factor and $\cH$ a Hilbert $T$-bimodule, then the product $\rdl(\cH) \cdot \rdr(\cH)$ of the left and right dimension of $\cH$ is at least $1$. This follows for instance by using the categorical dimension function on bifinite Hilbert $T$-bimodules. The non-factorial version of this observation is provided by the following lemma.

\begin{lemma}\label{lem.dim-estimate}
Let $\al : \cZ(T) z_1 \recht \cZ(T) z_2$ be a bijective $*$-isomorphism and let $\cH$ be a bifinite $\al$-$T$-bimodule with right support $z_1$ and left support $z_2$. Then,
$$z_2 \leq \zdiml(\cH) \; \al(\zdimr(\cH)) \; .$$
\end{lemma}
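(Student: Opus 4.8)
The plan is to reduce the statement to the factorial inequality $\rdl\cdot\rdr\ge1$ recorded just before the lemma, by passing to the central decomposition of $T$ and using the $\al$-structure to match up fibers. First I would put $\cH$ in the standard form $\cH=\cH(\psi)$ with $\psi:T\recht p(M_n(\C)\ot T)p$ a normal unital $*$-homomorphism and $p$ a projection, as in the discussion preceding the lemma; this is possible because $\cH$ is finitely generated as a right Hilbert $T$-module. Then $\zdimr(\cH)=(\Tr\ot E_\cZ)(p)$ and the left support $z_2$ is the support of $\psi$, so $\psi(z_2)=p$. The key point is to encode the $\al$-condition $\xi a=\al(a)\xi$ ($a\in\cZ(T)z_1$) at the level of $\psi$: since the right action of a central $a$ is implemented by $1\ot a$ and the left action of $\al(a)$ by $\psi(\al(a))$, and both must agree on $\cH=p(\C^n\ot L^2(T))$, one obtains
\begin{equation*}
\psi(\al(a))=(1\ot a)\,p\qquad\text{for all } a\in\cZ(T)z_1 . \tag{$\star$}
\end{equation*}
In particular $(\star)$ shows that $\psi\circ\al$ transports $\cZ(T)z_1$ onto the central part of $T$ acting through $p$, which is exactly the compatibility needed to align the two central decompositions.

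Next I would disintegrate over $\cZ(T)\cong L^\infty(Z)$, writing $T=\int_Z^\oplus T_x\,d\nu(x)$ with the $T_x$ finite factors, and view $\al$ as the pullback along a measurable isomorphism $\theta:Z_2\recht Z_1$ between the supports of $z_2$ and $z_1$. Read through $(\star)$, the relation $\xi a=\al(a)\xi$ says precisely that $\cH$ is supported on the graph of $\theta$, so it disintegrates as $\cH=\int_{Z_2}^\oplus\cH_y\,d\nu(y)$, where each $\cH_y$ is a nonzero bifinite $T_y$--$T_{\theta(y)}$ bimodule. Under this decomposition the center-valued dimensions localize to the factorial ones, namely $\zdiml(\cH)(y)=\rdl(\cH_y)$ and, because $\al$ is pullback along $\theta$, $\al(\zdimr(\cH))(y)=\zdimr(\cH)(\theta(y))=\rdr(\cH_y)$. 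Hence the asserted inequality $z_2\le\zdiml(\cH)\,\al(\zdimr(\cH))$ becomes, for $\nu$-almost every $y\in Z_2$, the scalar inequality $1\le\rdl(\cH_y)\,\rdr(\cH_y)$, which is exactly the factorial statement quoted before the lemma, applied to a bimodule between the two finite factors $T_y$ and $T_{\theta(y)}$ and using that $\cH_y\ne0$ a.e.\ since the left support is all of $z_2$.

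To make the argument robust, in particular to avoid invoking a disintegration under separability hypotheses, I would alternatively run the same idea by cutting with central projections: if the inequality failed, there would be a nonzero central projection $f\le z_2$ and $\eps>0$ with $\zdiml(\cH)\,\al(\zdimr(\cH))\le(1-\eps)f$, and replacing $\cH$ by $f\cH=\cH\,\al^{-1}(f)$ (again an $\al$-bimodule, by the $\al$-relation) one reduces to contradicting the uniform factorial bound. The main obstacle is precisely the matching step: making rigorous, via $(\star)$, that the left fiber at $y$ and the right fiber at $\theta(y)$ are one and the same factor bimodule $\cH_y$, so that the two center-valued dimensions are evaluated on the paired factors. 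Once this identification is in place, the factorial inequality $\rdl\cdot\rdr\ge1$ closes the argument; the categorical dimension function mentioned before the lemma supplies that input and, in its center-valued form, could even be used to bypass the explicit disintegration altogether.
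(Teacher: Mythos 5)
Your route is genuinely different from the paper's. You reduce the center-valued inequality to the scalar bound $\rdl\cdot\rdr\geq 1$ fiberwise: your relation $(\star)$ is correct, it does say that $\cH$ is supported on the graph of the point map implementing $\al$, and (given a disintegration) the left fiber at $y$ and the right fiber at $\theta(y)$ are indeed the same $T_y$-$T_{\theta(y)}$-bimodule, so the fiberwise inequality sums up to the claim. The paper instead runs the Connes tensor product argument once, globally, at the center-valued level: writing $\cH\cong\cH(\vphi)$ and $\cHbar\cong\cH(\psi)$, it notes that $(\Tr\ot E_\cZ)\circ\psi$ factors through $E_\cZ$ (because $(\Tr\ot\tau_1)\circ\psi$ is tracial for every tracial state $\tau_1$), combines this with the $\al$-relation $\psi(a)=(1\ot\al(az_1))q$ to compute $\zdimr(\cH\ot_T\cHbar)=\zdiml(\cH)\,\al(\zdimr(\cH))$, and then uses that $L^2(Tz_2)$ embeds into $\cH\ot_T\cHbar$ to get $z_2\leq\zdimr(\cH\ot_T\cHbar)$. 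What that buys is exactly what your approach has to pay for: no measure theory, no separability hypothesis, and no appeal to a factor-to-factor dimension inequality.

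Two concrete gaps remain in your version. First, the central disintegration into factors requires separable predual, which the lemma does not assume; and your fallback of cutting with central projections does not repair this, because compressing by a central projection $f\leq z_2$ leaves $\cZ(T)f$ as diffuse as before --- it never produces factors, so there is no ``uniform factorial bound'' available to contradict without either doing the disintegration or switching to a direct center-valued argument after all. Second, the fiberwise input you invoke is not literally the statement quoted before the lemma (which concerns bimodules over a single II$_1$ factor $T$): you need $\rdl(\cK)\,\rdr(\cK)\geq 1$ for a nonzero bimodule $\cK$ between two possibly distinct, possibly type~I finite factors. That statement is true, but its standard proof is precisely the embedding $L^2(N)\subset\cK\ot_M\overline{\cK}$ together with multiplicativity of dimensions under $\ot_M$ --- i.e., the very computation the paper performs once with $E_\cZ$ in place of the trace. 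So your route can be completed under separability, but as written it assumes more than the lemma does and still owes, in each fiber, the key argument that the paper's proof carries out globally.
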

\begin{proof}
Take unital $*$-homomorphisms $\vphi : T \recht p (M_n(\C) \ot T)p$ and $\psi : T \recht q (M_n(\C) \ot T)q$ such that $\cH \cong \cH(\vphi)$ and $\cHbar \cong \cH(\psi)$ as Hilbert $T$-bimodules. For any tracial state $\tau_1$ on $T$, we have that $(\Tr \ot \tau_1) \circ \psi$ is a trace on $T$. Therefore,
$$(\Tr \ot E_\cZ) \circ \psi = (\Tr \ot E_\cZ) \circ \psi \circ E_\cZ \; .$$
Since $\cH$ is an $\al$-$T$-bimodule, we have $\psi(a) = (1 \ot \al(az_1))q$ for all $a \in \cZ(T)$. We conclude that for all $x \in T$, we have that
\begin{equation}\label{eq.center-trace}
\begin{split}
(\Tr \ot E_\cZ)(\psi(x)) &= (\Tr \ot E_\cZ)(\psi(E_\cZ(x))) = (\Tr \ot E_\cZ)(q (1 \ot \al(E_\cZ(x)))) \\ & = \zdimr(\cH(\psi)) \, \al(E_\cZ(x)) = \zdiml(\cH) \, \al(E_\cZ(x)) \; .
\end{split}
\end{equation}
We now use the Connes tensor product $\cH \ot_T \cHbar$. Note that $\cH \ot_T \cHbar \cong \cH((\id \ot \psi)\vphi)$. Therefore,
$$\zdimr(\cH \ot_T \cHbar) = (\Tr \ot \Tr \ot E_\cZ)((\id \ot \psi)(p)) \; .$$
Using \eqref{eq.center-trace}, it follows that
\begin{align*}
\zdimr(\cH \ot_T \cHbar) &= (\Tr \ot E_\cZ)(\psi((\Tr \ot \id)(p))) = \zdiml(\cH) \, \al((\Tr \ot E_\cZ)(p)) \\ &= \zdiml(\cH) \, \al(\zdimr(\cH)) \; .
\end{align*}
Since $z_2$ is the left support of $\cH$, the $T$-bimodule $L^2(T z_2)$ is a sub-$T$-bimodule of $\cH \ot_T \cHbar$. Therefore, $z_2 \leq \zdimr(\cH \ot_T \cHbar)$ and the lemma is proved.
\end{proof}

When $T$ is a II$_1$ factor, the bifinite Hilbert $T$-bimodules form a rigid C$^*$-tensor category. In particular, every bifinite Hilbert $T$-bimodule decomposes as a direct sum of finitely many irreducible $T$-bimodules. We need the following non-factorial version of this fact.

\begin{proposition}\label{prop.decompose-irred}
Let $(T,\tau)$ be a von Neumann algebra with a normal faithful tracial state. Every bifinite Hilbert $T$-bimodule is a direct sum of \emph{finitely many} Hilbert $T$-bimodules that are irreducible in the sense of Definition \ref{def.irred}.
\end{proposition}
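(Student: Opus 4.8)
The plan is to analyze the von Neumann algebra $\cA := \End_{T-T}(\cH)$ and to show that bifiniteness forces it to be of type~I with uniformly bounded degree; a finite decomposition of $1_\cA$ into suitable minimal projections then yields the irreducible summands. To set up, realize $\cH \cong \cH(\vphi)$ for a unital $*$-homomorphism $\vphi : T \recht p(M_n(\C)\ot T)p$ as in the proof of Lemma~\ref{lem.dim-estimate}, so that $\cA = \vphi(T)' \cap p(M_n(\C)\ot T)p$ is a \emph{finite} von Neumann algebra. Left and right multiplication by elements of $\cZ(T)$ are $T$-bimodular and hence commute with every element of $\cA$; they thus give two commuting copies $Z_\ell \cong \cZ(T)z_2$ and $Z_r \cong \cZ(T)z_1$ of $\cZ(T)$ inside the center $\cZ(\cA)$.

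The crux is to prove that $\cA$ is finitely generated as a module over $\cZ(T)$, say via $Z_r$. This is exactly where bifiniteness enters. By Frobenius reciprocity, $\cA = \End_{T-T}(\cH) \cong \Hom_{T-T}(L^2(T), \cHbar \ot_T \cH)$ as a $\cZ(T)$-module; that is, $\cA$ is the space of $T$-central vectors of the Connes tensor product $\cHbar \ot_T \cH$. As in the dimension computation of Lemma~\ref{lem.dim-estimate}, this tensor product is again bifinite, hence finitely generated as a right Hilbert $T$-module, and its $T$-central vectors therefore form a finitely generated Hilbert $\cZ(T)$-module. A finite von Neumann algebra that is finitely generated as a module over an abelian subalgebra of its center is automatically of type~I with bounded degree: disintegrating over $\cZ(\cA)$ exhibits fibers of uniformly bounded linear dimension, which rules out a type~II$_1$ summand and caps the multiplicity by some integer $N$. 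I expect this to be the main obstacle, since it is the non-factorial substitute for the finite-dimensionality of $\End_{T-T}(\cH)$ that one has over a II$_1$ factor.

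Granting this, both inclusions $Z_r, Z_\ell \subseteq \cZ(\cA)$ are finite, so the induced maps $\pi_r, \pi_\ell : \operatorname{Spec}\cZ(\cA) \recht \operatorname{Spec}\cZ(T)$ are finite-to-one of bounded multiplicity. By a standard measurable selection argument I would partition $\operatorname{Spec}\cZ(\cA)$ into \emph{finitely many} Borel sets on each of which both $\pi_r$ and $\pi_\ell$ are injective and the local degree of $\cA$ is constant; let $q_E \in \cZ(\cA)$ be the associated central projections. On such a piece, injectivity of $\pi_r$ forces $Z_r q_E = \cZ(\cA)q_E = Z_\ell q_E$, so that $q_E\cH$ is an $\al_E$-bimodule with $\al_E = \pi_\ell \circ \pi_r^{-1}$, while constancy of the degree gives $\cA q_E \cong M_{m_E}(\cZ(\cA)q_E)$ with $m_E \le N$. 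Choosing for each $E$ a diagonal system of matrix units $e^{(E)}_{11},\dots,e^{(E)}_{m_E m_E}$ and setting $p = e^{(E)}_{jj}$, one obtains $p\cA p = \cZ(\cA)p = Z_r p = Z_\ell p$, so that each $p\cH$ is irreducible in the sense of Definition~\ref{def.irred}. These projections are mutually orthogonal, sum to $1$, and are finite in number, whence $\cH = \bigoplus_{E,j} e^{(E)}_{jj}\cH$ is the required finite decomposition. The only remaining labor~-- arranging the matrix units measurably across the partition and checking Definition~\ref{def.irred} on each fiber~-- is routine.
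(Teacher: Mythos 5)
Your overall architecture is viable and, once its crux is granted, runs parallel to the paper's own argument: the paper likewise reduces everything to showing that $\End_{T-T}(\cH)$ is ``finite over $\cZ(T)$'' (there via the finite-index result \cite[Lemma A.3]{Va07} applied to $\psi(\cZ(T)) \subset \psi(T)' \cap p(M_n(\C) \ot T)p$, where $\cH \cong \cH(\psi)$), and its regrouping of the resulting irreducible pieces along the bounded-multiplicity maps $\pi_1,\pi_2 : \cW \recht X$ is the same device as your partition of $\operatorname{Spec}\cZ(\cA)$ along $\pi_\ell,\pi_r$. Your second half is fine modulo routine points (for instance, finite generation of $\cA$ over $Z_r$ passes to $\cZ(\cA)$ by applying the center-valued trace of the finite algebra $\cA$, which you should say).

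The genuine gap sits exactly at the step you yourself call the main obstacle, and the justification offered there is a non sequitur. You write that $\cHbar \ot_T \cH$ is bifinite, ``hence finitely generated as a right Hilbert $T$-module, and its $T$-central vectors \emph{therefore} form a finitely generated Hilbert $\cZ(T)$-module.'' Finite generation as a right $T$-module implies nothing of the sort by itself: $L^2(T)$, for $T$ a II$_1$ factor, is singly generated as a right $T$-module yet infinite dimensional over $\cZ(T) = \C 1$; the smallness of the central part is entirely a consequence of the bimodule structure, and extracting it is precisely the content that must be supplied. The statement you need is true --- if $\zdimr(\cK) \leq \kappa \, 1$, then the central vectors $\cK_T$ form a Hilbert $\cZ(T)$-module of dimension at most $\kappa$ --- but it is a lemma of the same weight as the finite-index input the paper takes from \cite{Va07}, not a formal consequence. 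One way to prove it: take a maximal family of central vectors $\xi_i$ whose generated sub-bimodules $\overline{T\xi_i T}$ are pairwise orthogonal; check that for a central vector $\xi$ the functional $a \mapsto \langle \xi a, \xi \rangle$ is a normal trace, so that $\overline{T\xi T}$ is a centrally weighted trivial bimodule $L^2(Ts)$ whose central vectors are exactly $\overline{\xi \cZ(T)}$ and whose right dimension is a projection $s \in \cZ(T)$; then use that the projection onto the closed span of the $\overline{T\xi_i T}$ is bimodular (hence preserves centrality) to conclude by maximality that $\cK_T = \overline{\bigoplus_i \xi_i \cZ(T)}$ with $\dim_{\cZ(T)} \cK_T = \sum_i s_i \leq \zdimr(\cK)$. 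Alternatively, skip Frobenius reciprocity altogether and obtain your crux the way the paper does: $\zdiml(\cH) \leq \kappa \, 1$ makes $\vphi(T) \subset p(M_n(\C)\ot T)p$ a finite-index inclusion, \cite[Lemma A.3]{Va07} gives that $Z_\ell \subset \cA$ has finite index, and a Pimsner--Popa basis gives finite generation. As written, however, the decisive step of your proof is asserted rather than proved.
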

\begin{proof}
Take a bifinite Hilbert $T$-bimodule $\cH$. Take a positive number $\kappa > 0$ such that $\zdiml(\cH) \leq \kappa \, 1$ and $\zdimr(\cH) \leq \kappa \, 1$. We first prove that $\cH$ is a, possibly infinite, direct sum of irreducible Hilbert $T$-bimodules. Write $\cH$ as $\cH(\psi)$ for some normal unital $*$-homomorphism $\psi : T \recht p(M_n(\C) \ot T)p$. Since $\zdiml(\cH) \leq \kappa \, 1$, we get that $\psi(T)$ has finite index as a von Neumann subalgebra of $p(M_n(\C) \ot T)p$ equipped with the trace $\Tr \ot \tau$. Using e.g.\ \cite[Lemma A.3]{Va07}, also
$$\psi(\cZ(T)) = \psi(T)' \cap \psi(T) \subset \psi(T)' \cap p (M_n(\C) \ot T)p$$
has finite index. We identify $\psi(T)' \cap p (M_n(\C) \ot T)p = \End_{T-T}(\cH)$. Since $\cZ(T)$ is abelian, it follows that $\End_{T-T}(\cH)$ is of type~I and that we can find projections $p_k \in \End_{T-T}(\cH)$ with $\sum_k p_k = 1$ such that for every $k$, $\End_{T-T}(p_k \cH)$ equals the image of $\cZ(T)$ by its left action. By symmetry, we can further decompose and find that $\cH$ is the orthogonal direct sum of irreducible Hilbert $\al_k$-$T$-bimodules $\cH_k \subset \cH$, where $\al_k : \cZ(T) z_{1,k} \recht \cZ(T) z_{2,k}$ are bijective $*$-isomorphisms.

Write $(\cZ(T),\tau) = L^\infty(X,\mu)$ for some standard probability space $(X,\mu)$. We then view each $\al_k$ as a nonsingular partial automorphism of $X$, with domain $D_k \subset X$ and range $R_k \subset X$. Define the set $\cW = \sqcup_k D_k$ as the disjoint union of the sets $D_k$. Define the maps $\pi_1,\pi_2 : \cW \recht X$ given by $\pi_1(x) = x$ and $\pi_2(x) = \al_k(x)$ when $x \in D_k$. The positive measurable function $x \mapsto |\pi_1^{-1}(x)|$ is equal to $\sum_k z_{1,k}$. Similarly, the function $x \mapsto |\pi_2^{-1}(x)|$ equals $\sum_k z_{2,k}$.

Recall that we have chosen $\kappa > 0$ such that $\zdiml(\cH) \leq \kappa \, 1$ and $\zdimr(\cH) \leq \kappa \, 1$. We claim that $\sum_k z_{1,k} \leq \kappa^2 \, 1$. By Lemma \ref{lem.dim-estimate}, we get for all $k$ that
$$z_{1,k} \leq \zdimr(\cH_{\al_k}) \, \al_k^{-1}(\zdiml(\cH_{\al_k})) \leq \kappa \, \zdimr(\cH_{\al_k}) \; .$$
Summing over $k$, it follows that
$$\sum_k z_{1,k} \leq \kappa \, \zdimr\bigr(\bigoplus_k \cH_{\al_k}\bigr) = \kappa \, \zdimr(\cH) \leq \kappa^2 \, 1 \; .$$
So, the claim is proved. Similarly, we get that $\sum_k z_{2,k} \leq \kappa^2 \, 1$.

So after removing from $X$ a set of measure zero, both functions $x \mapsto |\pi_1^{-1}(x)|$ and $x \mapsto |\pi_2^{-1}(x)|$ are bounded on $X$. We can thus write $\cW$ as the disjoint union of finitely many Borel sets $\cW_1,\ldots,\cW_\eta$ such that for each $j$, both the restriction of $\pi_1$ to $\cW_j$ and the restriction of $\pi_2$ to $\cW_j$ are $1$-to-$1$. Denote by $r_{1,j,k} \leq z_{1,k}$ the projection that corresponds to $D_k \cap \cW_j$. Define $r_{2,j,k} = \al_k(r_{1,j,k})$. For every fixed $j$, the sets $(\cW_j \cap D_k)_k$ form a partition of $\cW_j$. Since both the restriction of $\pi_1$ to $\cW_j$ and the restriction of $\pi_2$ to $\cW_j$ are $1$-to-$1$, the projections $(r_{1,j,k})_k$ are orthogonal, as well as the projections $(r_{2,j,k})_k$.

Define the Hilbert $T$-bimodules $\cH_1,\ldots,\cH_\eta \subset \cH$ given by
$$\cH_j = \bigoplus_k \cH_k r_{1,j,k} = \bigoplus_k r_{2,j,k} \cH_k \; .$$
By the orthogonality of the projections $(r_{1,j,k})_k$, we get that $\cH_j r_{1,j,k} = \cH_k r_{1,j,k}$. Similarly, $r_{2,j,k} \cH_j = \cH_k r_{1,j,k}$. The irreducibility of the $\cH_k$ implies that all $\cH_j$ are irreducible.

Since the sets $\cW_1,\ldots,\cW_\eta$ form a partition of $\cW$, we get that
$$\cH = \bigoplus_{j=1}^\eta \cH_j \; .$$
\end{proof}

If $(T,\tau)$ is a von Neumann algebra with a normal faithful tracial state and $\cH$ is a bifinite Hilbert $T$-bimodule, then $\tau$ induces a canonical trace $\Tr^r_\cH$ on $\End_{-T}(\cH)$, i.e.\ the commutant of the right $T$-action on $\cH$, as well as a canonical trace $\Tr^\ell_\cH$ on $\End_{T-}(\cH)$. Both restrict to faithful traces on $\End_{T-T}(\cH)$ and these might be different. We denote by $\Delta_\cH$ the, possibly unbounded, positive, self-adjoint operator affiliated with $\cZ(\End_{T-T}(\cH))$ such that $\Tr^r_\cH = \Tr^\ell_\cH(\, \cdot \, \Delta_\cH)$. The \emph{canonical trace} on $\End_{T-T}(\cH)$, denoted by $\Tr_\cH$ is then defined as
\begin{equation}\label{eq.canonical-trace}
\Tr_\cH = \Tr^\ell_\cH(\, \cdot \, \Delta_\cH^{1/2}) = \Tr^r_\cH(\, \cdot \, \Delta_\cH^{-1/2}) \; .
\end{equation}

\subsection{Quasi-regular inclusions of von Neumann algebras}

We recall here the definition of quasi-regular inclusions (see \cite{Po99,Po01}) and their basic properties, with special emphasis on the case
of irreducible subfactors.

\begin{definition}\label{def.quasiregular}
Let $(S,\tau)$ be a tracial von Neumann algebra and $T \subset S$ a von Neumann subalgebra. The \emph{quasi-normalizer} of $T$ inside $S$ is defined as
\begin{align*}
\QN_S(T) = \Bigl\{x \in S \Bigm| \; & \exists x_1,\ldots,x_n, y_1,\ldots,y_m \in S \;\;\text{such that}\;\; x T \subset \sum_{i=1}^n T x_i \;\;\text{and}\\ & T x \subset \sum_{j=1}^m y_j T \Bigr\} \; .
\end{align*}
We say that $T \subset S$ is \emph{quasi-regular} if $\QN_S(T)\dpr = S$.
\end{definition}

For irreducible subfactors $T \subset S$, the quasi-normalizer is particularly well behaved, as can be seen from the following lemma. All the results in the lemma can be deduced from \cite[Section 1]{PP84}. For the convenience of the reader, we give a self contained proof.

\begin{lemma}\label{lem.well-behaved}
Let $(S,\tau)$ be a II$_1$ factor and $T \subset S$ an irreducible subfactor. Denote by $E : S \recht T$ the unique trace preserving conditional expectation. Let $\cK \subset L^2(S)$ be a finite index $T$-subbimodule.
\begin{enumerate}
\item There exists a basis (in the sense of \cite{PP84}) of $\cK \cap S$ as a right $T$-module: elements $x_1,\ldots,x_n \in \cK \cap S$ satisfying
$$p_\cK(x) = \sum_{i=1}^n x_i E(x_i^* x) \;\;\text{for all}\;\; x \in S \;\; ,$$
where $p_\cK$ is the orthogonal projection of $L^2(S)$ onto $\cK$.

\item Similarly, there exists a basis of $\cK \cap S$ as a left $T$-module: elements $y_1,\ldots,y_m \in \cK \cap S$ such that
$$p_\cK(x) = \sum_{j=1}^m E(x y_j^*) y_j \;\;\text{for all}\;\; y \in S \;\; .$$

\item The space of $T$-bounded vectors in $\cK$ equals $\cK \cap S$.

\item The densely defined linear maps $\cK \ot_T L^2(S) \recht L^2(S)$ and $L^2(S) \ot_T \cK \recht L^2(S)$ given by multiplication are well defined bounded operators.

\item If $\cK$ is irreducible, the multiplicity of $\cK$ in $L^2(S)$ is bounded above by both $\rdl(\cK)$ and $\rdr(\cK)$.
\end{enumerate}

Let $(\cK_i)_{i \in I}$ be a maximal family of inequivalent irreducible finite index $T$-bimodules that appear in $L^2(S)$. The map
\begin{equation}\label{eq.map}
\bigoplus_{i \in I} \Bigl( (L^2(S),\cK_i) \ot \cK_i^0 \Bigr) \recht \QN_S(T) : V \ot \xi \mapsto V(\xi)
\end{equation}
is an isomorphism of vector spaces. Here, $(L^2(S),\cK_i)$ denotes the space of $T$-bimodular bounded operators from $\cK_i$ to $L^2(S)$ and $\cK_i^0 \subset \cK_i$ is the subspace of $T$-bounded vectors. All tensor products and direct sums are algebraic. Also, by 5, the vector spaces $(L^2(S),\cK_i)$ are finite dimensional.
\end{lemma}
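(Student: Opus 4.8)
The plan is to establish the statements in order, first proving the five enumerated properties via Pimsner-Popa basis theory, then using them to verify that the map \eqref{eq.map} is a vector space isomorphism.

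**For the enumerated properties.** The key object is a finite index $T$-subbimodule $\cK \subset L^2(S)$. First I would invoke the Pimsner-Popa machinery from \cite[Section~1]{PP84}: since $\cK$ has finite index as a $T$-bimodule and $T \subset S$ is irreducible (so $T' \cap S = \C$ and $E$ is the unique trace-preserving conditional expectation), one obtains a right $T$-basis by the standard argument. Concretely, I would pick a maximal family of elements $x_i \in \cK \cap S$ whose right $T$-supports are orthogonal and exhaust $p_\cK$; finiteness of the family follows from finiteness of $\rdr(\cK)$, and the reconstruction formula $p_\cK(x) = \sum_i x_i E(x_i^* x)$ is then the defining property of a Pimsner-Popa basis. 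Statement~2 is the mirror image, using the left $T$-module structure and $\rdl(\cK)$. For statement~3, the inclusion $\cK \cap S \subset \cK^0$ is clear (elements of $S$ are $T$-bounded); conversely, given a $T$-bounded vector $\xi \in \cK$, the formula $\xi = \sum_i x_i E(x_i^* \xi)$ expresses $\xi$ as a finite sum of products of elements of $S$, hence $\xi \in S$, so $\cK^0 = \cK \cap S$. Statement~4 follows because the right-multiplication map $\cK \ot_T L^2(S) \recht L^2(S)$ can be written explicitly using the basis $(x_i)$ as $\eta \ot \zeta \mapsto \sum_i x_i \langle x_i, \eta\rangle_T \, \zeta$ (and similarly on the left), which is manifestly bounded. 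For statement~5, when $\cK$ is irreducible I would bound its multiplicity in $L^2(S)$ by comparing with $\dim (L^2(S),\cK)$: each bimodular isometry $\cK \recht L^2(S)$ is determined by the image of a generating $T$-bounded vector, and the number of mutually orthogonal such copies is controlled by the basis — the count of basis elements needed is exactly $\rdr(\cK)$ (resp.\ $\rdl(\cK)$) via the index, giving the two upper bounds.

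**For the isomorphism \eqref{eq.map}.** Fixing a maximal inequivalent family $(\cK_i)_{i\in I}$ of irreducible finite index $T$-bimodules in $L^2(S)$, the map sends $V \ot \xi$ to $V(\xi) \in S$ (well-defined because $\xi \in \cK_i^0 = \cK_i \cap S$ is a $T$-bounded vector and $V$ is bounded bimodular, so $V(\xi)$ is again $T$-bounded, hence lands in $\QN_S(T)$ by statement~3 applied to the image bimodule). I would prove surjectivity by showing any $x \in \QN_S(T)$ generates a finite index $T$-subbimodule $\cK_x = \overline{TxT}$ of $L^2(S)$ (this is exactly the content of the quasi-normalizer definition: the one-sided containments $xT \subset \sum T x_i$ and $Tx \subset \sum y_j T$ force finite left and right dimension), decompose $\cK_x$ into irreducibles using Proposition~\ref{prop.decompose-irred}, match each summand to some $\cK_i$ via an intertwiner $V$, and write $x$ as a finite sum $\sum V(\xi)$. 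Injectivity I would obtain from Schur's lemma in this non-factorial setting: the $(\cK_i)$ are mutually inequivalent irreducibles, so $(L^2(S),\cK_i)$ and $(L^2(S),\cK_j)$ have orthogonal ranges for $i \neq j$, and within each block the pairing $(L^2(S),\cK_i) \ot \cK_i^0 \recht L^2(S)$ is nondegenerate because $\cK_i^0$ is dense and the intertwiner space acts faithfully.

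**The main obstacle** will be statement~5 together with injectivity, i.e.\ the finiteness and linear-independence bookkeeping. The delicate point is that $T$ is not assumed to be a factor in general, so I must be careful that "multiplicity" and the dimension counts are the correct center-valued or scalar quantities; here irreducibility of $S$ as a II$_1$ factor helps, since $\cZ(T)$-valued subtleties collapse. Establishing that the algebraic direct sum is exactly $\QN_S(T)$ — not its closure — requires showing every quasi-normalizer element already lies in the span of finitely many $V(\xi)$, which rests on the finite-dimensionality from statement~5 and on the fact that $\QN_S(T)$ consists precisely of the $T$-bounded vectors lying in finite index subbimodules; I would verify this last equivalence carefully, as it is the crux linking the algebraic and Hilbert-space pictures.
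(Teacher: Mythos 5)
Your overall architecture (build a basis of $\cK \cap S$, deduce 3--5 from it, then obtain surjectivity of \eqref{eq.map} from the finite index bimodule $\overline{TxT}$ generated by a quasi-normalizer element) matches the paper's, but there is a genuine gap at the very first step, and that step is the crux of the whole lemma. You claim that a maximal family of elements $x_i \in \cK \cap S$ with orthogonal right supports will ``exhaust $p_\cK$''. This is not a consequence of standard Pimsner--Popa machinery: \cite{PP84} treats finite index inclusions $T \subset S$, whereas here $[S:T]$ is typically infinite and only the subbimodule $\cK$ has finite index. The usual maximality/pull-down argument applied to a nonzero projection $q \leq p_\cK$ in $\langle S,e_T\rangle$ only produces vectors of $\cK$ that are \emph{right $T$-bounded}; it does not produce elements of $S$, and a priori $\cK \cap S$ could even be $\{0\}$, in which case your maximal family is empty and exhausts nothing. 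Indeed, without irreducibility the statement is simply false: take $T = L(\Gamma) \ot 1 \subset S = L(\Gamma) \ovt L(\Lambda)$ and $\cK = L^2(T) \ot \C\xi$ with $\xi \in L^2(L(\Lambda)) \setminus L(\Lambda)$; this is a finite index (indeed trivial) $T$-bimodule whose $T$-bounded vectors form the dense subspace $\{\eta \ot \xi \mid \eta \in T\}$, yet $\cK \cap S = \{0\}$. So the richness of $\cK \cap S$ is exactly where $T' \cap S = \C 1$ must enter, and your proposal invokes irreducibility only for the uniqueness of $E$, never at this point. (There is also a latent circularity: your proof of 3 uses the basis from 1, while the exhaustion claim in 1 is essentially the density assertion of 3.)

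The paper closes this gap as follows: write $\cK \cong p(\C^n \ot L^2(T))$ via a $T$-bimodular unitary $V$, with $\psi : T \to p(M_n(\C) \ot T)p$ implementing the left action, set $x_i = V(p(e_i \ot 1))$ and form the row vector $\cV = \sum_i \overline{e_i} \ot x_i$; bimodularity gives $a\cV = \cV\psi(a)$, so $\cV\cV^*$ is an element of $L^1(S)$ commuting with $T$, hence a scalar by irreducibility, which forces $\cV \in (\overline{\C^n} \ot S)p$, i.e.\ $x_i \in S$. The reconstruction formula of statement 1 then drops out of the formula for $V^*$. Some such step is needed before any maximality argument inside $\cK \cap S$ can get off the ground. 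The same issue resurfaces in your sketch of 5: to run the counting argument one needs the bimodular isometries $V_i : \cK \recht L^2(S)$ to be implemented by row vectors $\cV_i$ with entries in $S$ (not merely in $L^2(S)$), and one needs a second essential use of irreducibility, namely $\cV_i\cV_j^* \in T' \cap S = \C 1$, to get the orthogonality relations that bound the multiplicity by $(\Tr \ot \tau)(p) = \rdr(\cK)$; your appeal to ``the count of basis elements'' does not supply either ingredient.
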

\begin{proof}
To prove 1-5, we may assume that $\cK$ is irreducible. Since $\cK$ is a finite index $T$-bimodule, we can choose a $T$-bimodular unitary operator
$$V : p(\C^n \ot L^2(T)) \recht \cK \; ,$$
where the left $T$-module structure on $p(\C^n \ot L^2(T))$ is given by left multiplication with $\psi(a)$, $a \in T$ and $\psi : T \recht p (M_n(\C) \ot T)p$ is a finite index inclusion. Define the elements $x_i \in \cK$ given by $x_i := V(p(e_i \ot 1))$. Define $\cV \in (\overline{\C^n} \ot L^2(S))p$ given by
$$\cV = \sum_{i=1}^n \overline{e_i} \ot x_i \; .$$
The $T$-bimodularity of $V$ means that $a \cV = \cV \psi(a)$ for all $a \in T$. In particular, $\cV = \cV p$. Then, $\cV \cV^*$ is an element of $L^1(S)$ that commutes with $T$. So, $\cV \cV^*$ is a multiple of $1$. Therefore, $\cV \in (\overline{\C^n} \ot S)p$ and $x_i \in S$ for all $i$.

1.\ View $V$ as a partial isometry from $\C^n \ot L^2(T)$ to $L^2(S)$ with initial projection $p$ and final projection $p_\cK$. A direct computation gives that
$$V^*(x) = \sum_{i=1}^n e_i \ot E(x_i^* x) \quad\text{for all}\;\; x \in S \; .$$
From this, 1 follows immediately.

2.\ This is analogous to 1.

3.\ Denote by $\cK^0 \subset \cK$ the space of $T$-bounded vectors. The inclusion $\cK \cap S \subset \cK^0$ is obvious. On the other hand, $\cK^0 = V(p(\C^n \ot T))$. Since all $x_i \in S$, it follows that $\cK^0 \subset S$.

4.\ Identifying
$$p(\C^n \ot L^2(T)) \ot_T L^2(S) = p(\C^n \ot L^2(S)) \; ,$$
the multiplication operator $\cK \ot_T L^2(S) \recht L^2(S)$ is the composition of the unitary operator
$$V^* \ot 1 : \cK \ot_T L^2(S) \recht p(\C^n \ot L^2(S))$$
and the bounded operator $p(\C^n \ot L^2(S)) \recht L^2(S)$ given by left multiplication with $\cV$.

5.\ Assume that the $T$-bimodule $p(\C^n \ot L^2(T))$ appears at least $k$ times in $L^2(S)$. We then find, for $i=1,\ldots,k$, $T$-bimodular isometries
$$V_i : p(\C^n \ot L^2(T)) \recht L^2(S)$$
with orthogonal ranges. The corresponding elements $\cV_i \in (\overline{\C^n} \ot S)p$ then satisfy
$$(\id \ot E)(\cV_j^* \cV_i) = \delta_{i,j} p \; .$$
Since $\cV_i \cV_j^*$ belongs to $T' \cap S = \C 1$, it follows that
$$\cV_i \cV_j^* = \delta_{i,j} (\Tr \ot \tau)(p) \, 1 = \delta_{i,j} \rdr(\cK) \, 1 \; .$$
The elements $\rdr(\cK)^{-1/2} \cV_i$ are thus partial isometries in $\overline{\C^n} \ot S$ with left support equal to $1$ and orthogonal right supports below $p$. It follows that $k \leq (\Tr \ot \tau)(p) = \rdr(\cK)$. By symmetry, also $k \leq \rdl(\cK)$.

To prove the remaining statement in the lemma, observe that the map in \eqref{eq.map} is injective and has its range in $\QN_S(T)$. When $x \in \QN_S(T)$, define $\cK$ as the closed linear span of $TxT$. Then, $\cK$ is a finite index $T$-bimodule and $x$ is a $T$-bounded vector in $\cK$. So, $x$ lies in the range of the map in \eqref{eq.map}.
\end{proof}

\subsection{Rank completions}

Let $(A,\tau)$ be a von Neumann algebra with a normal faithful tracial state and let $\cH$ be a (purely algebraic) $A$-bimodule. In \cite[Section 2]{T06}, the following quasi-metric is defined on $\cH$. For all $\xi,\eta \in \cH$, we put
$$
[\xi] := \inf\{ \tau(p) + \tau(q) \mid (1-p) \xi (1-q) = 0 \} \quad\text{and}\quad
\drank(\xi,\eta) := [\xi - \eta] \; .
$$
As explained in \cite[Section 2]{T06}, the separation/completion of $\cH$ w.r.t.\ the \emph{rank metric} $\drank$ is again an $A$-bimodule. It is called the \emph{rank completion} of $\cH$.

In the framework of quasi-regular inclusions $T \subset S$, we will use the rank completion w.r.t.\ $A = \cZ(T)$. The following lemma is then of crucial technical importance.

\begin{lemma}\label{lem.rank-cont}
Let $(S,\tau)$ be a tracial von Neumann algebra, $T \subset S$ a von Neumann subalgebra and $\cS$ a $*$-algebra with $T \subset \cS \subset \QN_S(T)$. Let $\cH$ be a (purely algebraic) $\cS$-bimodule. Consider the rank metric on $\cH$ viewed as a $\cZ(T)$-bimodule. Both the left and the right module actions of $\cS$ on $\cH$ are rank continuous. Hence, the rank completion of $\cH$ canonically is an $\cS$-bimodule.
\end{lemma}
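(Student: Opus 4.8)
The plan is to fix $s\in\cS$ and show that the left multiplication $L_s:\xi\mapsto s\xi$ and the right multiplication $R_s:\xi\mapsto\xi s$ are Lipschitz for the rank metric. Since $\drank$ is translation invariant and $L_s,R_s$ are additive, it then suffices to bound $[s\xi]$ and $[\xi s]$ by a multiple of $[\xi]$. By a symmetric argument (replacing $s$ by $s^*$ and passing to the opposite algebra) I will only treat $L_s$. The whole problem will be reduced to a single quantitative statement about how far left multiplication by $s$ can spread the $\cZ(T)$-central support of a projection, and \emph{that} spreading estimate is the main obstacle.

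\textbf{Reduction to a support estimate.} Take central projections $p,q\in\cZ(T)$ with $(1-p)\xi(1-q)=0$ and $\tau(p)+\tau(q)$ within $\eps$ of $[\xi]$. From $(1-p)\xi(1-q)=0$ we get $s\xi(1-q)=sp\,\xi(1-q)$, so $s\xi=s\xi q+sp\,\xi(1-q)$ and hence $[s\xi]\le[s\xi q]+[sp\,\xi(1-q)]$. The first summand has right support below $q$, whence $[s\xi q]\le\tau(q)$. For the second, let $z\in\cZ(T)$ be the central support of the left support of the element $sp\in\cS$; then $(1-z)sp=0$ in $S$, so the element $(1-z)sp\in\cS$ acts as $0$ on $\cH$ and $(1-z)\cdot\bigl(sp\,\xi(1-q)\bigr)=0$, giving $[sp\,\xi(1-q)]\le\tau(z)$. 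Thus everything reduces to an estimate $\tau(z)\le C_s\,\tau(p)$ with $C_s$ independent of $p$: feeding it back gives $[s\xi]\le\tau(q)+C_s\tau(p)\le C_s([\xi]+\eps)$, so $L_s$ is rank Lipschitz with constant $C_s$.

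\textbf{The spreading estimate (main obstacle).} Since $s\in\QN_S(T)$, the inclusions $sT\subseteq\sum_iTx_i$ and $Ts\subseteq\sum_jy_jT$ show that $\cK:=\overline{TsT}\subseteq L^2(S)$ is a bifinite $T$-bimodule, so by Proposition \ref{prop.decompose-irred} it is a finite orthogonal direct sum of irreducible $\al_k$-$T$-bimodules $\cH_k$, with $\al_k:\cZ(T)z_{1,k}\recht\cZ(T)z_{2,k}$. Writing $s=\sum_k s_k$ with $s_k\in\cH_k$, we have $sp=\sum_k s_kp$ and $\overline{TspT}\subseteq\bigoplus_k\overline{Ts_kpT}$, so $z\le\bigvee_k z^{(k)}$ where $z^{(k)}$ is the left support of $s_kp$ inside $\cH_k$. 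The defining relation $\xi a=\al_k(a)\xi$ gives $s_kp=\al_k(pz_{1,k})\,s_k$, hence $z^{(k)}\le\al_k(pz_{1,k})$ is central and $\tau(z)\le\sum_k\tau\bigl(\al_k(pz_{1,k})\bigr)$. The crux is therefore to bound the trace distortion of each $\al_k$, i.e.\ to prove $\tau\circ\al_k\le C_k\,\tau$ on $\cZ(T)z_{1,k}$; this is exactly where bifiniteness must be used, and it is the step I expect to require the most care.

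\textbf{Resolving the distortion.} Here Lemma \ref{lem.dim-estimate} applied to $\cH_k$ yields $z_{2,k}\le\zdiml(\cH_k)\,\al_k(\zdimr(\cH_k))$; combined with $\al_k(\zdimr(\cH_k))\le\|\zdimr(\cH_k)\|\,z_{2,k}$ this forces the lower bound $\zdiml(\cH_k)\ge\|\zdimr(\cH_k)\|^{-1}z_{2,k}$. On the other hand, the matching of the left and right center-valued traces of $\cH_k$ under $\al_k$ gives, for every projection $e\le z_{1,k}$, the identity $\tau\bigl(\al_k(e)\,\zdiml(\cH_k)\bigr)=\tau\bigl(e\,\zdimr(\cH_k)\bigr)$ (this is the content of the canonical-trace comparison behind \eqref{eq.canonical-trace}). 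Putting the two together,
$$\|\zdimr(\cH_k)\|^{-1}\,\tau(\al_k(e))\le\tau\bigl(\al_k(e)\,\zdiml(\cH_k)\bigr)=\tau\bigl(e\,\zdimr(\cH_k)\bigr)\le\|\zdimr(\cH_k)\|\,\tau(e),$$
so $\tau(\al_k(e))\le\|\zdimr(\cH_k)\|^2\,\tau(e)$. Summing over the finitely many $k$ gives $\tau(z)\le C_s\,\tau(p)$ with $C_s=\sum_k\|\zdimr(\cH_k)\|^2$, which completes the estimate and hence the rank continuity of $L_s$. The identical argument applied to $s^*$ controls $R_s$, and once both one-sided actions are shown to be rank continuous they extend to the rank completion of $\cH$, which is thereby an $\cS$-bimodule.
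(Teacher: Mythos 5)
Your setup is sound and, up to the last step, follows the same route as the paper: decompose $\cK=\overline{TsT}$ into finitely many irreducible $\al_k$-$T$-bimodules via Proposition \ref{prop.decompose-irred}, observe that left multiplication by $s$ spreads central supports only through the $\al_k$'s (your $z\leq\bigvee_k\al_k(pz_{1,k})$ is exactly the paper's relation $xp=p_0xp$ with $p_0=\al_1(p)\vee\cdots\vee\al_\eta(p)$), and reduce everything to controlling $\tau(\al_k(p))$ in terms of $\tau(p)$. The gap is in the step you yourself flag as the crux. The identity $\tau\bigl(\al_k(e)\,\zdiml(\cH_k)\bigr)=\tau\bigl(e\,\zdimr(\cH_k)\bigr)$ is \emph{not} the content of \eqref{eq.canonical-trace}; that equation introduces the operator $\Delta_{\cH_k}$ precisely because the left and right traces on $\End_{T-T}(\cH_k)$ are in general \emph{different}, and your identity is equivalent to $\Delta_{\cH_k}=1$, i.e.\ to unimodularity of $\cH_k$, which is nowhere assumed. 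A minimal counterexample: let $T$ be the hyperfinite II$_1$ factor, $\psi:T\recht M_2(\C)\ot T$ a $*$-isomorphism, and $\cH=\C^2\ot L^2(T)$ with left action via $\psi$ and right action by right multiplication. This is an irreducible bifinite $T$-bimodule with $\zdimr(\cH)=2$, $\zdiml(\cH)=\tfrac12$, $z_1=z_2=1$ and $\al=\id_\C$; your identity with $e=1$ would read $\tfrac12=2$. So the chain of inequalities in your last paragraph collapses, and the Lipschitz constant $C_s=\sum_k\|\zdimr(\cH_k)\|^2$ is unproven.

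Moreover, the target you aim at is stronger than what is true or needed: the module actions are rank \emph{continuous} but there is no reason for them to be rank \emph{Lipschitz}, because the trace distortion of the partial isomorphisms $\al_k$ of $\cZ(T)$ need not be boundedly dominated, i.e.\ one cannot expect $\tau\circ\al_k\leq C_k\,\tau$ in general. The correct repair — and this is exactly what the paper does — is to use only that each $\al_k$ is a \emph{normal} partial isomorphism of $\cZ(T)$: then $\tau\circ\al_k$ is a normal positive functional on $\cZ(T)z_{1,k}$, hence absolutely continuous with respect to $\tau$, so for every $\eps>0$ there is $\delta_1>0$ such that $\tau(p)<\delta_1$ implies $\tau(\al_k(p))<\eps/(2\eta)$ for all $k$. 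Feeding this $\eps$-$\delta$ statement into your (correct) reduction $[s\xi]\leq\tau(q)+\sum_k\tau(\al_k(pz_{1,k}))$ gives continuity at $0$, which by additivity of $L_s$ and translation invariance of $\drank$ is uniform continuity, and that suffices to extend both actions to the rank completion. With that replacement your argument becomes the paper's proof.
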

\begin{proof}
Fix $x \in \cS$. By symmetry, it suffices to prove that the left action of $x$ on $\cH$ is a rank continuous map. Denote by $\cK \subset L^2(S)$ the Hilbert $T$-bimodule defined as the closed linear span of $TxT$. Since $x \in \QN_S(T)$, we see that $\cK$ is a bifinite Hilbert $T$-bimodule. By Proposition \ref{prop.decompose-irred}, we can decompose $\cK$ as the direct sum of finitely many irreducible $T$-bimodules $\cK_1,\ldots,\cK_\eta$. Each of these $\cK_j$ is an $\al_j$-$T$-bimodule, where $\al_j$ is a partial automorphism of $\cZ(T)$. It follows that for every projection $p \in \cZ(T)$, we have
\begin{equation}\label{eq.useful}
x p = p_0 x p \quad\text{where}\quad p_0 = \al_1(p) \vee \cdots \vee \al_\eta(p) \; .
\end{equation}

Fix $\eps > 0$. We construct $\delta > 0$ such that for every $\xi \in \cH$ with $[\xi] < \delta$, we have $[x \xi] < \eps$. Since the $\al_j$ are normal partial automorphisms, we can take $\delta_1 > 0$ such that for any projection $p \in \cZ(T)$ with $\tau(p) < \delta_1$, we have $\tau(\al_j(p)) < \eps / (2 \eta)$. Put $\delta = \min\{\delta_1,\eps/2\}$. Take $\xi \in \cH$ with $[\xi] < \delta$. We prove that $[x \xi] < \eps$.

Take projections $p,q \in \cZ(T)$ such that $\tau(p) + \tau(q) < \delta$ and $(1-p)\xi(1-q) = 0$. Define $p_0 = \al_1(p) \vee \cdots \vee \al_\eta(p)$. Since $\tau(p) < \delta_1$, we get that $\tau(p_0) < \eps/2$. From \eqref{eq.useful}, we get that $x p = p_0 x p$ and thus, $(1-p_0) x = (1-p_0)x (1-p)$. Therefore,
$$(1-p_0) x \xi (1-q) = (1-p_0)x (1-p)\xi (1-q) = 0 \; .$$
Since $\tau(p_0) < \eps/2$ and $\tau(q) < \eps/2$, we conclude that $[x \xi] < \eps$.
\end{proof}

\subsection[Rigid C$^*$-tensor categories]{\boldmath Rigid C$^*$-tensor categories}\label{subsec.categories}

Recall that a \emph{rigid C$^*$-tensor category} is a C$^*$-tensor category that is semisimple, with irreducible tensor unit $\eps \in \cC$ and with every object $\al \in \cC$ having an adjoint $\albar \in \cC$ that is both a left and a right dual of $\al$. For basic definitions and results on rigid C$^*$-tensor categories, we refer to \cite[Sections 2.1 and 2.2]{NT13}.

For $\al,\be \in \cC$, the finite dimensional Banach space of morphisms from $\al$ to $\be$ is denoted by $(\be,\al)$. Recall that $\End(\al) = (\al,\al)$ is a finite dimensional C$^*$-algebra. We denote the tensor product of $\al,\be \in \cC$ by juxtaposition $\al \be$. For every $\al \in \cC$, we choose a standard solution of the conjugate equations (in the sense of \cite{LR95}, see also \cite[Definition 2.2.12]{NT13}): $s_\al \in (\al \albar,\eps)$ and $t_\al \in (\albar \al,\eps)$ such that
\begin{equation}\label{eq.standard-sol}
(t_\al^* \ot 1) (1 \ot s_\al) = 1 \;\; , \;\; (s_\al^* \ot 1)(1 \ot t_\al)=1 \quad\text{and}\quad t_\al^*(1 \ot X) t_\al = s_\al^* (X \ot 1) s_\al
\end{equation}
for all $X \in \End(\al)$. These $s_\al,t_\al$ are unique up to unitary equivalence and the functional $\Tr_\al(X) = t_\al^*(1 \ot X) t_\al = s_\al^* (X \ot 1) s_\al$ on $\End(\al)$ is uniquely determined and tracial. The trace $\Tr_\al$ is non-normalized: $\Tr_\al(1) = \rd(\al)$, the categorical dimension of $\al$.

We also consider the partial traces
\begin{align*}
\Tr_\al \ot \id & : (\al \be, \al \gamma) \recht (\be,\gamma) : (\Tr_\al \ot \id)(S) = (t_\al^* \ot 1) (1 \ot S)(t_\al \ot 1) \;\; , \\
\id \ot \Tr_\al & : (\be \al , \gamma \al) \recht (\be,\gamma) : (\id \ot \Tr_\al)(S) = (1 \ot s_\al^*) (S \ot 1) (1 \ot s_\al) \;\; .
\end{align*}
Note that $\Tr_\be \circ (\Tr_\al \ot \id) = \Tr_{\al \be} = \Tr_\al \circ (\id \ot \Tr_\be)$ on $(\al \be,\al \be)$.

We denote by $\Irr(\cC)$ a set of representatives of all irreducible objects in $\cC$. The fusion $*$-algebra $\C[\cC]$ of $\cC$ has $\Irr(\cC)$ as a vector space basis, with
\begin{equation}\label{eq.fusion-alg}
\al \cdot \be = \sum_{\gamma \in \Irr(\cC)} \mult(\al \be,\gamma) \, \gamma \quad\text{and}\quad \al^\# = \albar \; ,
\end{equation}
where $\mult(\al \be,\gamma)$ denotes the multiplicity of $\gamma \in \Irr(\cC)$ in $\al \be$, i.e.\ the dimension of the vector space $(\al \be , \gamma)$.

Using the categorical trace, all spaces of morphisms $(\be,\al)$, for $\al,\be \in \cC$, are finite dimensional Hilbert spaces with scalar product
$$\langle V,W\rangle = \Tr_\be(V W^*) = \Tr_\al(W^* V) \quad\text{for all}\;\; V,W \in (\be,\al) \; .$$
We denote by $\onb(\be,\al)$ any choice of orthonormal basis of the Hilbert space $(\be,\al)$. When $\al \in \Irr(\cC)$ and $V,W \in (\be,\al)$, we have that $W^* V \in (\al,\al) = \C 1$. Therefore, for all $\be \in \cC$, we have
\begin{equation}\label{eq.onb}
\sum_{\al \in \Irr(\cC)} \sum_{V \in \onb(\be,\al)} \rd(\al) V V^* = 1 \; .
\end{equation}

\subsection{Subfactors, their standard invariant and symmetric enveloping algebra}\label{subsec.SEinclusion}

Let $N \subset M$ be an inclusion of II$_1$ factors with finite Jones index, $[M:N] < \infty$. Let $N \subset M \subset M_1 \subset \cdots$ be the associated Jones tower and use the convention that $M_0 = M$, $M_{-1} = N$. Recall that for all $n \geq 0$, $M_{n+1}$ is generated by $M_n$ and the Jones projection $e_n : L^2(M_n) \recht L^2(M_{n-1})$. The relative commutants $A_{ij} = M_i' \cap M_j$, $i \leq j$, form a lattice of multimatrix algebras called the \emph{standard invariant}. Together with the projections $e_n \in A_{ij}$, $i < n < j$, they form a \emph{$\lambda$-lattice} in the sense of \cite{Po94b}, with $\lambda = [M:N]^{-1}$.

Another axiomatization for the standard invariant of a subfactor is given by Jones  \cite{jones:planar}. Indeed, he showed that the axioms of a $\lambda$-lattice are equivalent to the existence of a \emph{planar algebra} structure on the linear spaces $A_{ij}$.  A key ingredient is the assignment to the isotopy invariance class of a {\em planar tangle} $T$ of a certain multi-linear map $Z_T$ between certain tensor products of these linear spaces.  We refer to  \cite{jones:planar} for details.

We also consider the C$^*$-tensor category $\cC$ of all $M$-bimodules that are isomorphic to a finite direct sum of $M$-subbimodules of $\bim{M}{L^2(M_n)}{M}$ for some $n$. Note that $\cC$ is a rigid C$^*$-tensor category.

For every extremal\footnote{This means that the natural anti-isomorphism between $M' \cap M_1$ and $N' \cap M$ is trace preserving. This is equivalent with all $M$-subbimodules of $L^2(M_n)$ having equal left and right $M$-dimension.} finite index subfactor $N \subset M$, we consider the symmetric enveloping (SE) algebra $S = M \boxtimes_{e_N} M\op$ introduced in \cite{Po94a,Po99}. By \cite{Po94a,Po99}, $S$ is the unique tracial von Neumann algebra generated by commuting copies of $M$ and $M\op$ together with an orthogonal projection $e_N$ that serves as the Jones projection for both $N \subset M$ and $N\op \subset M\op$. Writing $T = M \ovt M\op$, we refer to $T \subset S$ as the \emph{SE-inclusion} for the subfactor $N \subset M$. By \cite{Po99}, $T \subset S$ is irreducible and quasi-regular.

Denote by $\cC$ the C$^*$-tensor category of $M$-bimodules generated by $N \subset M$ as above. By \cite{Po99}, we have
\begin{equation}\label{eq.decomp}
L^2(S) = \bigoplus_{\al \in \Irr(\cC)} \bigl(\cH_\al \ot \overline{\cH_\al}\bigr)
\end{equation}
as $T$-bimodules. Given any C$^*$-tensor category $\cC$ of finite index $M$-bimodules having equal left and right dimension, one can define the SE-inclusion $T \subset S$ with $T = M \ovt M\op$ and such that \eqref{eq.decomp} holds; see \cite{LR94,Ma99} and see also \cite[Remark 2.7]{PV14}.

\section{\boldmath The tube $*$-algebra of an irreducible quasi-regular inclusion} \label{sec.tube-algebras}

Let $N \subset M$ be an extremal finite index subfactor with associated SE-inclusion $T \subset S$ (see Section \ref{subsec.SEinclusion}). In \cite{PV14}, the representation theory of the standard invariant of $N \subset M$ was defined as the class of \emph{SE-correspondences}, i.e.\ $S$-bimodules $\cH$ that are generated by $T$-central vectors. It was shown that this representation theory only depends on the standard invariant. Denoting by $\cC$ the tensor category of $M$-bimodules generated by $N \subset M$, the notions of an \emph{admissible state} on the fusion $*$-algebra $\C[\cC]$ (see \eqref{eq.fusion-alg}) and an \emph{admissible representation} of $\C[\cC]$ were defined in \cite{PV14} and characterized purely in terms of $\cC$ as a rigid C$^*$-tensor category. It was proved that there is a canonical bijection between SE-correspondences and admissible representations of $\C[\cC]$.

In \cite{NY15a}, a more categorical point of view on this representation theory was given. For any rigid C$^*$-tensor category $\cC$, the notion of a unitary half braiding on an ind-object of $\cC$ was defined (see Section \ref{sec.rep-tube-vs-half-braiding} for details). In the case where $\cC$ is the category of finite index $M$-bimodules generated by an extremal subfactor $N \subset M$ with associated SE-inclusion $T \subset S$, it was proved in \cite{NY15a} that there is a canonical bijection between the class of these unitary half-braidings and the \emph{generalized SE-correspondences}, i.e.\ the $S$-bimodules $\cH$ that, as a $T$-bimodule, are a direct sum of $T$-bimodules of the form $\cH_\al \ot \overline{\cH_\be}$, $\al,\be \in \cC$ (recall that $T = M \ovt M\op$). In this picture, one should think of the SE-correspondences as the \emph{spherical} part of the representation theory given by all generalized SE-correspondences.

In \cite{GJ15}, the representation theory of a rigid C$^*$-tensor category has been developed further and linked to the \emph{tube $*$-algebra} $\cA$ of Ocneanu \cite{Oc93}. This $*$-algebra $\cA$, whose construction is recalled in Section \ref{sec.tube-tensor} below, comes with a family of projections $(p_i)_{i \in \Irr(\cC)}$ and a canonical isomorphism $p_\eps \cdot \cA \cdot p_\eps \cong \C[\cC]$. It was proved in \cite{GJ15} that a state $\om$ on $\C[\cC]$ is admissible if and only if $\om$ remains positive on $\cA$.

The main parts of this section are \ref{sec.tube-quasireg} and \ref{sec.rep-tube-quasireg-vs-bimodules}. Inspired by Ocneanu's tube algebra of a tensor category (see \cite{Oc93}) and the above connection between representations of the tube algebra and SE-correspondences, we define a tube $*$-algebra $\cA$ for an \emph{arbitrary} irreducible quasi-regular inclusion $T \subset S$ of II$_1$ factors, see Section \ref{sec.tube-quasireg}. In the special case where $T \subset S$ is an SE-inclusion, our tube algebra is Morita equivalent with Ocneanu's, see Proposition \ref{prop.morita}.

We actually define the tube $*$-algebra $\cA$ for an irreducible quasi-regular inclusion $T \subset S$ together with a choice of tensor category $\cC$ of finite index $T$-bimodules containing all finite index $T$-subbimodules of $L^2(S)$. A canonical choice for $\cC$ is of course the tensor category generated by all finite index $T$-subbimodules of $L^2(S)$, but it is convenient to also allow larger choices of $\cC$. In Section \ref{sec.rep-tube-quasireg-vs-bimodules}, we construct a canonical bijection between Hilbert space representations of the tube $*$-algebra $\cA$ and Hilbert $S$-bimodules $\cH$ that, as a $T$-bimodule, are a direct sum of $T$-bimodules in $\cC$. In this way, the $*$-algebra $\cA$ exactly encodes the $S$-bimodules that are ``discrete'' as a $T$-bimodule (i.e.\ a direct sum of finite index $T$-subbimodules).

In the second part of this section, we unify and complete the different pictures of the representation theory mentioned above. For general rigid C$^*$-tensor categories $\cC$, we construct in Section \ref{sec.rep-tube-vs-half-braiding} a canonical bijection between Hilbert space representations of the tube $*$-algebra $\cA$ of $\cC$ and unitary half braidings for $\cC$ in the sense of \cite{NY15a}. When $\cC$ is a category of $M$-bimodules with corresponding SE-inclusion $T \subset S$, we prove in Section \ref{sec.tube-tensor} that there is a canonical bijection between generalized SE-correspondences and Hilbert space representations of the tube $*$-algebra (see Corollary \ref{cor.corr-vs-tube-category}). Finally, in Section \ref{sec.affine-cat}, we explain the relation with the approach of \cite{jones:annular}, where representations of a planar algebra (i.e.\ standard invariant of a subfactor) are viewed as Hilbert space representations of the associated affine category.

\subsection[Construction of the tube $*$-algebra]{\boldmath Construction of the tube $*$-algebra}\label{sec.tube-quasireg}

Let $S$ be a II$_1$ factor and $T \subset S$ an irreducible quasi-regular subfactor.
Given Hilbert $T$-bimodules $\cH_1, \cH_2$, we say that a $T$-bimodular bounded operator $V : \cH_2 \recht \cH_1$ has \emph{finite rank} if the closure of $V(\cH_2)$ is a finite index $T$-bimodule. We denote the vector space of these finite rank $T$-bimodular operators as $(\cH_1,\cH_2)$.

Fix a tensor category $\cC$ of finite index $T$-bimodules containing all finite index $T$-subbimodules of $L^2(S)$. Realize every $i \in \Irr(\cC)$ as an irreducible $T$-bimodule $\cH_i$. Write $\cS = \QN_S(T)$. With some abuse of notation, we denote for all $i,j \in \Irr(\cC)$,
$$(i \cS, \cS j) := (\cH_i \ovt_T L^2(S) , L^2(S) \ovt_T \cH_j) \; .$$
For every finite subset $\cF \subset \Irr(\cC)$, denote by $e_\cF$ the orthogonal projection of $L^2(S)$ onto the sum of the $T$-subbimodules of $L^2(S)$ that are equivalent with one of the $\cH_i$, $i \in \cF$. By Lemma \ref{lem.well-behaved}, every $e_\cF$ has finite rank. Also, a bounded $T$-bimodular operator $V : L^2(S) \ovt_T \cH_j \recht \cH_i \ovt_T L^2(S)$ has finite rank if and only if there exists a finite subset $\cF \subset \Irr(\cC)$ satisfying $V = V(e_\cF \ot 1) = (1 \ot e_\cF) V$.

We can then define the tube $*$-algebra $\cA$ associated with $T \subset S$ and $\cC$. As a vector space, $\cA$ is defined as the algebraic direct sum
$$\cA = \bigoplus_{i,j \in \Irr(\cC)} (i \cS, \cS j) \; .$$
The product of $V \in (i \cS, \cS k)$ and $W \in (k' \cS, \cS j)$ is denoted by $V \cdot W$, belongs to $(i \cS, \cS j)$ and is defined as
\begin{equation}\label{eq.prod-A}
\delta_{k,k'} \; (1 \ot m) (V \ot 1)(1 \ot W) (m^* \ot 1) \; .
\end{equation}
Here, $m : S \ot_T S \recht S$ denotes the multiplication map and $m^*$ is its adjoint w.r.t.\ the Hilbert space structures $L^2 (S) \ovt_T L^2(S)$ and $L^2(S)$. Since $m$ need not extend to a bounded operator from $L^2(S) \ovt_T L^2(S)$ to $L^2(S)$, one has to be careful in the interpretation of \eqref{eq.prod-A}. But since $V$ and $W$ are finite rank intertwiners, we can take a finite subset $\cF \subset \Irr(\cC)$ such that $W = W (e_\cF \ot 1)$ and $V = (1 \ot e_\cF) V$. By Lemma \ref{lem.well-behaved}, we have that $m(e_\cF \ot 1)$ and $m(1 \ot e_\cF)$ are bounded $T$-bimodular operators from $L^2(S) \ovt_T L^2(S)$ to $L^2(S)$. So, the expression in \eqref{eq.prod-A} is a well defined finite rank intertwiner.

The associativity of the product map gives us the associativity of the product on $\cA$.

We now define the adjoint operation on $\cA$. Denote by $\delta : L^2(T) \recht L^2(S)$ the inclusion map. Then denote $a = m^* \delta$. Again, $a$ need not be a well defined intertwiner from $L^2(T)$ to $L^2(S) \ovt_T L^2(S)$. But, whenever $\cF \subset \Irr(\cC)$ is a finite subset, we have that $(e_\cF \ot 1) a = (1 \ot e_{\overline{\cF}}) a$ is well defined and given by
$$(e_\cF \ot 1) a = \sum_{i=1}^n x_i \ot x_i^* \;\; ,$$
where $x_1,\ldots,x_n$ is a basis of $e_\cF (L^2(S))$ as a right $T$-module (see Lemma \ref{lem.well-behaved}.1). One checks that
$$(a^* \ot 1)(1 \ot a) = 1 \;\; ,$$
which rigorously speaking only makes sense after multiplying with $e_\cF$ for an arbitrary finite subset $\cF \subset \Irr(\cC)$.

The adjoint of $V \in (i \cS,\cS j)$ is denoted by $V^\#$, belongs to $(j \cS, \cS i)$ and is defined as
$$V^\# = (a^* \ot 1 \ot 1)(1 \ot V^* \ot 1)(1 \ot 1 \ot a) \; .$$

The fundamental properties of $m$, $a$ and $\delta$ can be summarized as:
\begin{equation}\label{eq.m-a}
\begin{alignedat}{2}
& m(1 \ot m) = m(m \ot 1) \quad , \quad & & (a^* \ot 1)(1 \ot m^*) = m = (1 \ot a^*) (m^* \ot 1) \quad , \\
& m(1 \ot \delta) = 1 = m(\delta \ot 1) \quad , \quad & & (a^* \ot 1) (1 \ot a) = 1 = (1 \ot a^*)(a \ot 1) \; .
\end{alignedat}
\end{equation}
As above, these formulas only make sense after multiplication with enough projections $e_\cF$, $\cF \subset \Irr(\cC)$ finite.

Using \eqref{eq.m-a}, one easily checks that $\cA$ is a $*$-algebra. When $\Irr(\cC)$ is infinite, the $*$-algebra $\cA$ is non-unital. But, for every $i \in \Irr(\cC)$, the element $(1 \ot \delta)(\delta^* \ot 1) \in (i \cS, \cS i)$ is a self-adjoint projection in $\cA$ that we denote as $p_i$. Note that $(i \cS, \cS j) = p_i \cdot \cA \cdot p_j$. So, $\cA$ always has enough self-adjoint idempotents.

When $\cK$ is a Hilbert $T$-bimodule that is a direct sum of finite index $T$-bimodules, then the algebra $(\cK,\cK)$ of finite rank intertwiners has two natural faithful traces:
$$\Tr^\ell_\cK(W) = \sum_i \langle W(\xi_i) , \xi_i \rangle \quad\text{and}\quad \Tr^r_\cK(W) = \sum_j \langle W(\eta_j),\eta_j \rangle \;\;,$$
where the $\xi_i$, resp.\ $\eta_j$, form an orthonormal basis of $\cK$ as a left, resp.\ right, $T$-module. We have $\Tr^\ell_\cK(1) = \rdl(\cK)$ and $\Tr^r_\cK(1) = \rdr(\cK)$. We have $\Tr^\ell = \Tr^r$ if and only if all subbimodules of $\cK$ have equal left and right dimension. We denote by $\Delta_\cK$ the positive, self-adjoint, but generally unbounded, operator on $\cK$ such that $$\Tr^r(\, \cdot \,) = \Tr^\ell( \Delta_\cK \, \cdot \, ) \; .$$
For every finite index intertwiner $V \in (\cK , \cK')$, we have that $\Delta_\cK V$ and $V \Delta_{\cK'}$ are equal and bounded. When $\cK$ is an irreducible finite index $T$-bimodule, $(\cK,\cK)$ is one-dimensional and $\Delta_\cK$ equals the ratio $\rdr(\cK)/\rdl(\cK)$ between the right and left $T$-dimension of $\cK$.

In particular, we consider the positive self-adjoint, but generally unbounded, operator $\Delta_\cS$ on $L^2(S)$. For every finite subset $\cF \subset \Irr(\cC)$, we have that $\Delta_\cS e_\cF$ is bounded and given by
$$\Delta_\cS = \sum_{\al \in \cF} \Delta_\al e_{\{\al\}} \; .$$

Since intertwiner spaces have a left and a right trace, we also have a left and a right scalar product on all our intertwiner spaces, defined as
$$\langle V, W \rangle_\ell = \Tr^\ell_\cK(V W^*) = \Tr^\ell_\cH(W^* V) \quad , \quad \langle V, W \rangle_r = \Tr^r_\cK(V W^*) = \Tr^r_\cH(W^* V)$$
for all $V, W \in (\cK,\cH)$.

Finally, note that
\begin{equation}\label{eq.trace-left-right}
\Tr^\ell(V) = a^* (1 \ot V) a \quad\text{and}\quad \Tr^r(V) = a^* (V \ot 1) a \quad\text{for all}\;\; V \in (\cS,\cS) \; .
\end{equation}

The following lemma implies that every $*$-representation of $\cA$ on a pre-Hilbert space is automatically by bounded operators, and that $\cA$ has a universal enveloping C$^*$-algebra.

\begin{lemma}\label{lem.other-sum}
For every $i \in \Irr(\cC)$ and every finite subset $\cF \subset \Irr(\cC)$, we have
\begin{equation}\label{eq.other-sum}
\sum_{j \in \Irr(\cC)} \;\;\; \sum_{W \in \onb_\ell((1 \ot e_\cF)(i \cS , \cS j) (e_\cF \ot 1))} \;\; \rdl(j) \; W \cdot W^\# = \rdr(e_\cF(L^2(S)))^2 \; p_i \; .
\end{equation}
\end{lemma}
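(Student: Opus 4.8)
The plan is to reduce the whole identity to the completeness relation \eqref{eq.onb} for orthonormal bases of morphism spaces, after rewriting the tube product $W\cdot W^\#$ purely in terms of the multiplication $m$ and the duality map $a$. First I would fix once and for all a finite set $\cF$ large enough that every relevant $W$ satisfies $W=(1\ot e_\cF)W(e_\cF\ot1)$, and abbreviate $\cK:=e_\cF(L^2(S))$, which is a finite index $T$-bimodule by Lemma \ref{lem.well-behaved}, with $R:=\rdr(\cK)$. Cutting by $e_\cF$ everywhere makes $m(e_\cF\ot1)$, $m(1\ot e_\cF)$ and $(e_\cF\ot1)a$ bounded (Lemma \ref{lem.well-behaved}), so the formal identities \eqref{eq.m-a} become genuine operator identities and the manipulations below are legitimate. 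Under these identifications, $H_j:=(1\ot e_\cF)(i\cS,\cS j)(e_\cF\ot1)$ is the finite dimensional space of $T$-bimodular operators $\cK\ovt_T\cH_j\recht\cH_i\ovt_T\cK$, equipped with the left inner product $\langle V,W\rangle_\ell=\Tr^\ell(VW^*)$.

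Next I would expand $W\cdot W^\#$ by substituting the definition of the product \eqref{eq.prod-A} and of the adjoint $W^\#$, obtaining a composite of $W$, $W^*$, two factors $m,m^*$ and the caps $a,a^*$. Using the associativity and zig-zag relations \eqref{eq.m-a} I would straighten this into a single diagram. The key observation is that the caps $a,a^*$ in the definition of $\#$ turn $W^*$ into the Frobenius mate of $W$, so that $W\cdot W^\#$ is, up to the $m,m^*$ contraction over the two $L^2(S)$-legs, the expression $W V^*$ with $V$ the image of $W$ under Frobenius reciprocity into the irreducible bimodule $\cH_j$. Consequently the full double sum $\sum_{j}\sum_{W}\rdl(j)\,W\cdot W^\#$ is, after straightening, exactly a left-handed completeness sum $\sum_j\rdl(j)\sum_{V}VV^*$ over an orthonormal basis of morphisms with irreducible target $\cH_j$. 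By the left-handed analogue of \eqref{eq.onb}, namely $\sum_{j\in\Irr(\cC)}\rdl(j)\sum_{V\in\onb_\ell}VV^*=1$ (proved verbatim as \eqref{eq.onb}, using that $\Tr^\ell$ is cyclic and $\Tr^\ell_{\cH_j}(1)=\rdl(j)$), this removes all dependence on $j$ and leaves a diagram built only from $\cH_i$, $\cK$ and the maps $m,a,\delta$.

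I would then contract the remaining $a$-caps and $m$-bubbles using \eqref{eq.m-a} and the explicit formula $(e_\cF\ot1)a=\sum_p x_p\ot x_p^*$ for a right $T$-basis $(x_p)$ of $\cK$. Graphically, the surviving diagram is an annulus (``tube'') whose inner and outer boundary circles each carry a copy of $L^2(S)$ cut down to $\cK$; closing each boundary circle produces a $\cK$-loop evaluating to $\Tr^r_\cK(1)=R$, while the units $\delta,\delta^*$ assemble into the projection $p_i=(1\ot\delta)(\delta^*\ot1)$. The two boundary circles thus contribute the factor $R^2$, and the outcome is $R^2\,p_i$, as claimed.

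The step I expect to be the main obstacle is the bookkeeping of the left–right asymmetry of the trace. The orthonormal bases and the weights $\rdl(j)$ are left-handed, whereas the right-hand side is normalized by the \emph{right} dimension $\rdr(\cK)^2$; reconciling these requires inserting and cancelling the modular operators $\Delta_\cK$ and $\Delta_{\cH_j}$ (defined by $\Tr^r=\Tr^\ell(\Delta\,\cdot\,)$) at the right places, together with the $\cK$-cap coming from Frobenius reciprocity, so that each closed $\cK$-loop genuinely evaluates to $\rdr$ rather than $\rdl$ and every $\Delta_{\cH_j}$-factor cancels against the $\rdl(j)$ weight. A secondary, purely technical, point is to ensure at each stage that $\cF$ is large enough for every occurrence of the unbounded map $m$ to be applied only after cutting with a projection $e_{\cF'}$, so that none of the intermediate operators acts unprotected.
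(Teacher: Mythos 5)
Your proposal is correct and is essentially the paper's own proof: the paper implements exactly your plan by observing that $W \mapsto (a^* \ot 1 \ot 1)(1 \ot W)$ is a unitary from $(e_{\overline{\cF}} \ot 1 \ot e_\cF)(\cS i \cS , j)$ onto $(1 \ot e_\cF)(i \cS , \cS j)(e_\cF \ot 1)$ for the left scalar products, so that $W \cdot W^\#$ becomes $(a^* \ot 1 \ot m)(1 \ot VV^* \ot 1)(m^* \ot 1 \ot a)$ with $V$ the Frobenius mate, the left-handed completeness relation with weights $\rdl(j)$ collapses the double sum to the projection $e_{\overline{\cF}} \ot 1 \ot e_\cF$, and the remaining caps contract via $m(e_\cF \ot 1)a = m(1 \ot e_{\overline{\cF}})a = \rdr(e_\cF(L^2(S)))\,\delta$ to give $\rdr(e_\cF(L^2(S)))^2\, p_i$. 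The "main obstacle" you anticipate never materializes: no modular operators $\Delta$ need to be inserted, since the Frobenius map is already unitary for the left scalar products, and the right dimension $\rdr$ enters automatically because $(e_\cF \ot 1)a = \sum_p x_p \ot x_p^*$ is an expansion over a \emph{right} $T$-basis, so each cap contraction directly evaluates to $\rdr(e_\cF(L^2(S)))$ rather than requiring any left-to-right correction.
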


Here, we denote by $\onb_\ell$ any choice of orthonormal basis w.r.t.\ the left scalar product. Also note that the sum in \eqref{eq.other-sum} only has finitely many terms~: since $\cF$ is finite and $i$ is fixed, there are only finitely many $j \in \Irr(\cC)$ for which $(1 \ot e_\cF)(i \cS , \cS j) (e_\cF \ot 1)$ is non-zero, and each of these is a finite dimensional Hilbert space.

\begin{proof}
Note that the map
$$(e_{\overline{\cF}} \ot 1 \ot e_\cF)(\cS i \cS , j) \recht (1 \ot e_\cF)(i \cS , \cS j) (e_\cF \ot 1) : W \mapsto (a^* \ot 1 \ot 1)(1 \ot W)$$
is a unitary w.r.t.\ the left scalar products and that $\bigl((a^* \ot 1 \ot 1)(1 \ot W)\bigr)^\# = (W^* \ot 1)(1 \ot 1 \ot a)$. Therefore, the left hand side of \eqref{eq.other-sum} equals
\begin{align*}
\sum_{j \in \Irr(\cC)} \;\;\;  & \sum_{W \in \onb_\ell ((e_{\overline{\cF}} \ot 1 \ot e_\cF)(\cS i \cS , j))} \;\; \rdl(j) \; (a^* \ot 1 \ot m)(1 \ot WW^* \ot 1) (m^* \ot 1 \ot a) \\
& = (a^* \ot 1 \ot m)(1 \ot e_{\overline{\cF}} \ot 1 \ot e_\cF \ot 1) (m^* \ot 1 \ot a) \\
& = \rdr(e_\cF(L^2(S)))^2 \;  (1 \ot \delta) (\delta^* \ot 1) = \rdr(e_\cF(L^2(S)))^2 \; p_i \;\; ,
\end{align*}
because $m(e_\cF \ot 1)a = m(1 \ot e_{\overline{\cF}}) a = \rdr(e_\cF(L^2(S))) \delta$.
\end{proof}

The $*$-algebra $\cA$ has the following natural weight $\tau : \cA \recht \C$ with corresponding von Neumann algebra completion $\cA\dpr$ acting on $L^2(\cA)$. In the unimodular case, i.e.\ when all $T$-subbimodules of $L^2(S)$ have equal left and right dimension, $\tau$ is a trace and $\cA\dpr$ is a semifinite von Neumann algebra.

\begin{proposition}\label{prop.trace-tube-quasi-reg}
Let $S$ be a II$_1$ factor, $T \subset S$ an irreducible quasi-regular subfactor and $\cC$ a tensor category of finite index $T$-bimodules containing all finite index $T$-subbimodules of $L^2(S)$. Define the $*$-algebra $\cA$ as above. The linear map
$$\tau : \cA \recht \C : \tau(V) = \sum_{i \in \Irr(\cC)} \Tr^\ell_i((1 \ot \delta^*) V_{ii} (\delta \ot 1))$$
is a faithful positive functional on $\cA$. Denote by $L^2(\cA)$ the completion of $\cA$ w.r.t.\ the norm $\|V\|_{2,\tau} = \sqrt{\tau(V^\# \cdot V)}$. Left multiplication extends to a $*$-representation of $\cA$ by bounded operators on $L^2(\cA)$ and $\tau$ extends uniquely to a normal semifinite faithful weight on $\cA\dpr$ with modular automorphism group
$$\sigma^\tau_t(V) =  (1 \ot \Delta_\cS^{it}) V \quad\text{for all}\;\; V \in (i \cS, \cS j) \; .$$
\end{proposition}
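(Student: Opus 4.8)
The plan is to establish the three assertions in order: positivity and faithfulness of $\tau$, boundedness of the left regular representation, and the identification of $\cA\dpr$ with its modular data. Throughout I work with the fundamental relations \eqref{eq.m-a} and \eqref{eq.trace-left-right} and the decomposition of $L^2(S)$ from Lemma \ref{lem.well-behaved}. For positivity I first reduce $\tau(V^\# \cdot V)$ to diagonal blocks. Writing $V = \sum_{i,j} V_{ij}$ with $V_{ij} \in (i\cS, \cS j)$, the product rule \eqref{eq.prod-A} makes $(V_{ij})^\# \cdot V_{i'j'}$ vanish unless $i = i'$, and $\tau$ only reads off the components lying in some $(j\cS, \cS j)$, forcing also $j = j'$; hence $\tau(V^\# \cdot V) = \sum_{i,j} \tau((V_{ij})^\# \cdot V_{ij})$ and it suffices to treat a single $W = V_{ij} \in (i\cS, \cS j)$. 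Using $a = m^*\delta$, so that $(1 \ot \delta^*)(1 \ot m) = (1 \ot a^*)$ and $(m^* \ot 1)(\delta \ot 1) = (a \ot 1)$, one rewrites
\[ \tau(W^\# \cdot W) = \Tr^\ell_j\bigl((1 \ot a^*)(W^\# \ot 1)(1 \ot W)(a \ot 1)\bigr). \]
Substituting the definition of $W^\#$ and collapsing the resulting caps and cups by the zig-zag relations $(a^* \ot 1)(1 \ot a) = 1 = (1 \ot a^*)(a \ot 1)$ from \eqref{eq.m-a}, this expression simplifies to a manifestly nonnegative scalar product $\langle W, W\rangle$ built from $W^*W$ and a canonical trace on $(i\cS, \cS j)$; positivity follows, and faithfulness follows since this scalar product is nondegenerate, so $\tau(W^\# \cdot W) = 0$ forces $W = 0$.

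For the boundedness of left multiplication, I first record that it is a $*$-representation of $\cA$ on the pre-Hilbert space $\cA$ with inner product $\langle V, W\rangle = \tau(W^\# \cdot V)$: since $\#$ is an involutive anti-automorphism, $\langle V \cdot \xi, \eta\rangle = \tau(\eta^\# \cdot V \cdot \xi) = \langle \xi, V^\# \cdot \eta\rangle$, so $L_V^* = L_{V^\#}$. By the remark preceding Lemma \ref{lem.other-sum}, such a $*$-representation is automatically by bounded operators; concretely, \eqref{eq.other-sum} writes a positive combination $\sum_{j,W}\rdl(j)\, L(W)L(W)^*$ as $\rdr(e_\cF(L^2(S)))^2\, L(p_i)$, whence each $L(W)L(W)^* \leq \rdr(e_\cF(L^2(S)))^2\rdl(j)^{-1}\, L(p_i)$ is bounded, and since every element of $\cA$ lies in finitely many finite-dimensional blocks $(1 \ot e_\cF)(i\cS, \cS j)(e_\cF \ot 1)$, all $L_V$ are bounded. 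Thus left multiplication extends to $L^2(\cA)$ and $\cA\dpr := L(\cA)\dpr$ is a von Neumann algebra.

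For the weight and its modular group, I would verify that $\cA$, with the inner product $\langle V, W\rangle = \tau(W^\# \cdot V)$, the product of $\cA$ and the involution $\#$, is a left Hilbert algebra: $\cA^2 = \cA$ is dense (as $p_i \cdot V = V$ for $V \in p_i\cA$), left multiplications are bounded by the previous step, and $\#$ is a preclosable involution. Tomita--Takesaki theory then produces a normal semifinite faithful weight on $\cA\dpr$ extending $\tau$, with modular automorphism group implemented by the modular operator of the Hilbert algebra. To identify it as $\sigma^\tau_t(V) = (1 \ot \Delta_\cS^{it})V$, I would check that $U_t(V) := (1 \ot \Delta_\cS^{it})V$ is a one-parameter group of automorphisms of $\cA$ leaving $\tau$ invariant — using that $\Delta_\cS$ acts as the positive scalar $\rdr(\al)/\rdl(\al)$ on the $\al$-isotypic part of $L^2(S)$ and that these modular factors are multiplicative across the fusion of $T$-subbimodules, so that $\Delta_\cS^{it}$ is compatible with $m$, $m^*$, $a$ and $\delta$ — and then verify the KMS identity
\[ \tau(V \cdot W) = \tau\bigl(W \cdot (1 \ot \Delta_\cS) V\bigr) \]
for $V \in (i\cS, \cS j)$, $W \in (j\cS, \cS i)$. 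By the uniqueness of the modular automorphism group of a faithful normal semifinite weight, this gives $\sigma^\tau_t = U_t$.

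I expect the main obstacle to be this last twisted-trace identity. Reducing both sides as in the positivity step, it becomes a ``rotation by one strand'' statement: cyclically moving $V$ past $W$ through the caps and cups $a, a^*, \delta, \delta^*$ must reproduce $V$ twisted by $\Delta_\cS$ on the correct tensor leg. This is exactly where the discrepancy between $\Tr^\ell$ and $\Tr^r$, governed by $\Tr^r = \Tr^\ell(\,\cdot\,\Delta_\cS)$ in \eqref{eq.trace-left-right}, must be made to appear, and ensuring that the resulting factor is the block-scalar $\Delta_\cS$ (so that it commutes past the fusion morphisms) is the crux; the unimodular case $\Delta_\cS = 1$, where $\tau$ is an honest trace, is the degenerate special case.
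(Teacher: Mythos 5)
Your proposal is correct and follows essentially the same route as the paper: the paper's proof consists precisely of the two trace computations $\tau(V^\# \cdot W) = \Tr^\ell_{\cS j}(V^*W)$ and $\tau(W \cdot V^\#) = \Tr^\ell_{\cS j}((\Delta_\cS \ot 1)V^*W)$ (your positivity/faithfulness step and your KMS identity are equivalent reformulations of these), followed by an appeal to Lemma \ref{lem.other-sum} for boundedness and to ``standard methods of modular theory'' for the weight extension and modular group. Your left Hilbert algebra scaffolding and the identification of the twisted-trace identity as the crux simply spell out what the paper compresses into that last phrase.
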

\begin{proof}
Take $V,W \in (i\cS, \cS j)$. A direct computation yields
$$\tau(V^\# \cdot W) = \Tr^\ell_{\cS j}(V^*W) \quad\text{and}\quad \tau(W \cdot V^\#) = \Tr^\ell_{\cS j}((\Delta_\cS \ot 1) V^* W) \; .$$
By Lemma \ref{lem.other-sum}, the representation of $\cA$ on $L^2(\cA)$ is indeed by bounded operators. The remaining statements follow by standard methods of modular theory.
\end{proof}

The following definition is now a natural one and corresponds exactly to the case where $\tau$ is a trace.

\begin{definition}\label{def.inclusion-unimodular}
Let $S$ be a II$_1$ factor and $T \subset S$ an irreducible quasi-regular subfactor. We say that the inclusion $T \subset S$ is \emph{unimodular} when all $T$-subbimodules of $L^2(S)$ have equal left and right dimension.
\end{definition}

\subsection[Representations of the tube $*$-algebra and Hilbert bimodules]{\boldmath Representations of the tube $*$-algebra and Hilbert bimodules}\label{sec.rep-tube-quasireg-vs-bimodules}

We say that a Hilbert space $\cK$ is a \emph{right Hilbert $\cA$-module} when we are given a $*$-anti-homomorphism from $\cA$ to $B(\cK)$. We denote the right action of $V \in \cA$ on $\xi \in \cK$ as $\xi \cdot V$. We say that $\cK$ is \emph{nondegenerate} when $\cK \cdot \cA$ has dense linear span in $\cK$. Note that $\cK$ is nondegenerate if and only if the linear span of the subspaces $\cK \cdot p_i$, $i \in \Irr(\cC)$, is dense in $\cK$.

\begin{theorem}\label{thm.tube-vs-corr}
Let $S$ be a II$_1$ factor, $T \subset S$ an irreducible quasi-regular subfactor and $\cC$ a tensor category of finite index $T$-bimodules containing all finite index $T$-subbimodules of $L^2(S)$. Let $\cA$ be the associated tube $*$-algebra. The formulas below provide a natural bijection between
\begin{itemize}
\item Hilbert $S$-bimodules $\cH$ that, as a $T$-bimodule, are a direct sum of $T$-bimodules contained in $\cC$~;
\item nondegenerate right Hilbert $\cA$-modules.
\end{itemize}
\end{theorem}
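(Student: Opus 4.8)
The plan is to exhibit two explicit, mutually inverse constructions and to reduce all verifications to the Frobenius-type relations \eqref{eq.m-a}, the product rule \eqref{eq.prod-A} and the basis/quasi-normalizer description in Lemma \ref{lem.well-behaved}. Realize each $i\in\Irr(\cC)$ as an irreducible $T$-bimodule $\cH_i$.

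\textbf{From bimodules to modules.} Let $\cH$ be a Hilbert $S$-bimodule that, as a $T$-bimodule, is a direct sum of members of $\cC$. Since $\cH_i$ is irreducible and $T$ is a factor, $\End_{T-T}(\cH_i)=\C 1$, so on $\cK_i:=\Mor_{T-T}(\cH_i,\cH)$ the formula $\eta^*\xi=\langle\xi,\eta\rangle\,1$ defines a scalar product whose norm is the operator norm; hence each $\cK_i$ is a Hilbert space and I set $\cK=\bigovplus_{i\in\Irr(\cC)}\cK_i$. The canonical map $\bigovplus_i \cH_i\ot\cK_i\recht\cH:\zeta\ot\xi\mapsto\xi(\zeta)$ is a $T$-bimodular unitary, so $\cK_i$ is exactly the multiplicity space of $\cH_i$ in $\cH$; this identification is what matches the two constructions. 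The left and right module maps $L_\cH:L^2(S)\ovt_T\cH\recht\cH$ and $R_\cH:\cH\ovt_T L^2(S)\recht\cH$ are bounded on each finite-index piece (via a finite $T$-basis as in Lemma \ref{lem.well-behaved}.1), and I let $\cA$ act on the right by the two-sided creation-multiplication operator
$$\xi\cdot V \;=\; L_\cH\,(1\ot R_\cH)\,(1\ot\xi\ot1)\,(1\ot V)\,(a\ot1)\qquad(V\in(i\cS,\cS j),\ \xi\in\cK_i),$$
which lands in $\cK_j$. As $V$ is finite rank this is a well-defined bounded operation, and a short computation gives $\xi\cdot p_i=\xi$. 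That $V\mapsto(\xi\mapsto\xi\cdot V)$ is a $*$-anti-homomorphism, i.e.\ $(\xi\cdot V)\cdot W=\xi\cdot(V\cdot W)$ and $\langle\xi\cdot V,\eta\rangle=\langle\xi,\eta\cdot V^\#\rangle$, reduces after expanding \eqref{eq.prod-A} and the definition of $V^\#$ to the relations \eqref{eq.m-a} (the $a$ in the formula is precisely what absorbs the $m^*$ occurring in the product). Nondegeneracy is automatic since $\cK\cdot p_i=\cK_i$ and these span a dense subspace.

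\textbf{From modules to bimodules.} Conversely, given a nondegenerate right Hilbert $\cA$-module $\cK$, set $\cK_i:=\cK\cdot p_i$ and define the $T$-bimodule $\cH:=\bigovplus_{i\in\Irr(\cC)}\cH_i\ot\cK_i$. I must endow $\cH$ with commuting normal left and right $S$-actions inducing this $T$-bimodule structure. Using the isomorphism \eqref{eq.map}, every $x\in\cS=\QN_S(T)$ is a finite sum of terms $W(\zeta)$ with $W$ a finite-rank $T$-bimodular operator into $L^2(S)$ and $\zeta$ a $T$-bounded vector; I define the left and right actions of such elementary pieces by the formulas dual to the one above, contracting the given $\cA$-action on the spaces $\cK_i$ against $a$, $m$ and $\delta$ so as to invert the forward construction. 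One then checks that these expressions are independent of the chosen presentation of $x$, are $*$-preserving, and that the resulting left and right actions commute, all of which unwinds, once more, to \eqref{eq.m-a} and the multiplication rule \eqref{eq.prod-A}.

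\textbf{Main obstacle and conclusion.} The crux is this second direction: showing that the reconstructed actions are genuinely bounded and extend from $\cS$ to $S$. Here the point is that every $*$-representation of $\cA$ is automatically by bounded operators, the consequence of Lemma \ref{lem.other-sum} recorded just before the theorem, so that the norms of the reconstructed multiplication operators are controlled by the norm of the $\cA$-action; being $\|\cdot\|_2$-bounded and normal on the dense $*$-algebra $\cS$, they extend to commuting normal $*$-representations of $S$ and make $\cH$ a Hilbert $S$-bimodule whose underlying $T$-bimodule is $\bigovplus_i\cH_i\ot\cK_i$. Finally I would verify that the two constructions are mutually inverse: starting from an $S$-bimodule $\cH$, the canonical unitary $\bigovplus_i\cH_i\ot\Mor_{T-T}(\cH_i,\cH)\recht\cH$ of the first step is $S$-bimodular for the reconstructed actions; and starting from an $\cA$-module $\cK$, the spaces $\Mor_{T-T}(\cH_i,\bigovplus_j\cH_j\ot\cK_j)$ recover $\cK_i=\cK\cdot p_i$ together with its $\cA$-action. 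I expect the boundedness and the choice-independence of the reconstructed $S$-actions to be the only delicate points; everything else is bookkeeping with \eqref{eq.m-a}.
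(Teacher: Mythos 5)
Your two constructions follow the same route as the paper (multiplicity spaces $\cK_i = (\cH,i)$, explicit formulas built from $m$, $a$, $\delta$, boundedness from Lemma \ref{lem.other-sum}), but the forward direction contains a concrete error: your action formula omits the modular correction $\Delta_\cS^{1/2}$, and this makes the $*$-compatibility check fail outside the unimodular case. The theorem is stated for arbitrary irreducible quasi-regular inclusions, where $T$-subbimodules of $L^2(S)$ may have different left and right dimensions; the paper's formula \eqref{eq.Amodule} is $\xi \cdot V = m_{lr}(1 \ot \xi \ot 1)(1 \ot V(\Delta_\cS^{1/2} \ot 1))(a \ot 1)$, whereas yours drops $\Delta_\cS^{1/2} \ot 1$. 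To see this is not a harmless renormalization, test the required relation $\langle \xi \cdot V, \eta \rangle = \langle \xi , \eta \cdot V^\# \rangle$ on $V \in (i\cS,\cS j)$ supported on a single $\al \in \Irr(\cC)$, i.e.\ $V = V(e_{\{\al\}} \ot 1)$; then $V^\#$ is supported on $\albar$, and since $\Delta_{\albar} = \Delta_\al^{-1}$, comparing with the (valid) relation for the paper's action shows that with your action and your scalar products ($\eta^*\xi = \langle \xi,\eta\rangle 1$) the two sides differ by the factor $\rdr(i)/\bigl(\rdr(j)\,\Delta_\al\bigr)$. Taking $i = j = \eps$ forces $\Delta_\al = 1$ for every $\al$ contained in $L^2(S)$, i.e.\ unimodularity in the sense of Definition \ref{def.inclusion-unimodular}. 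No per-$i$ rescaling of the scalar products on the $\cK_i$ can repair this, because $\Delta_\cS^{1/2}$ weights the different $\al$-components of one and the same $V$ by different constants. So your claim that the $*$-anti-homomorphism property ``reduces to \eqref{eq.m-a}'' is false as stated; the relations \eqref{eq.m-a} must be supplemented by the modular structure of Proposition \ref{prop.trace-tube-quasi-reg}.

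A second, lesser weakness is in the converse direction, which you rightly identify as the crux but then resolve with the wrong tool. The automatic boundedness of $*$-representations of $\cA$ (Lemma \ref{lem.other-sum}) controls the $\cA$-action on $\cK$; it does not by itself show that the algebraic $\cS$-bimodule structure on $\cH^0$ has matrix coefficients $x \mapsto \langle x \cdot \mu , \mu' \rangle$ that are normal on $S$, which is what is needed to pass to the Hilbert space completion and extend from $\cS$ to $S$. The mechanism that actually works (and is the only genuinely nontrivial step in the paper's proof) is that $\mu \in \cH^0_i$ is a $T$-bounded vector, so $L_\mu : L^2(S) \recht \cH_i \ovt_T L^2(S) : x \mapsto \mu \ot x$ is bounded, whence each matrix coefficient has the form $x \mapsto \langle x , L_\mu^*(U(\mu')) \rangle$ with $L_\mu^*(U(\mu')) \in L^2(S)$ and is therefore normal. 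Your sketch names the right conclusion (``$\|\cdot\|_2$-bounded, hence normal'') but credits Lemma \ref{lem.other-sum} rather than this bounded-vector argument, and never writes the dual formulas (compare \eqref{eq.Sbimodule}, which again carry $\Delta_\cS^{1/2}$ factors), so the delicate point remains unestablished; in the unimodular case your outline can be completed along the paper's lines, but in general both directions need the modular corrections inserted first.
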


Given a Hilbert $S$-bimodule $\cH$ that, as a $T$-bimodule, is a direct sum of $T$-bimodules contained in $\cC$, define for all $i \in \Irr(\cC)$, the space $\cK_i := (\cH,i)$ and turn $\cK_i$ into a Hilbert space using the right scalar product $\langle \xi,\eta \rangle = \Tr^r_i(\eta^* \xi)$. Denote $m_{lr} : \cS \ot_T \cH \ot_T \cS \recht \cH : m_{lr}(x \ot \xi \ot y) = x \cdot \xi \cdot y$. Then,
\begin{equation}\label{eq.Amodule}
\xi \cdot V = m_{lr} (1 \ot \xi \ot 1) (1 \ot V (\Delta_\cS^{1/2} \ot 1)) (a \ot 1)
\end{equation}
for all $V \in (i \cS, \cS j)$ and $\xi \in \cK_i = (\cH,i)$, turns the direct sum $\cK = \oplus_{i \in \Irr(\cC)} \cK_i$ into a nondegenerate right Hilbert $\cA$-module.

Given a nondegenerate right Hilbert $\cA$-module $\cK$, denote $\cK_i := \cK \cdot p_i$ for all $i \in \Irr(\cC)$ and define $\cH^0$ as the algebraic direct sum of all $\cK_i \ot \cH^0_i$, $i \in \Irr(\cC)$, where $\cH^0_i$ is the set of $T$-bounded vectors in the irreducible $T$-bimodule $\cH_i$. The formulas
\begin{equation}\label{eq.Sbimodule}
\begin{split}
& (\xi \ot \mu) \cdot x = \sum_{j \in \Irr(\cC)} \; \sum_{U \in \onb_r(i \cS,j)} \; \rdr(j) \;\; \xi \cdot (U(\delta^* \ot 1)) \ot U^*(\mu \ot x) \;\; , \\
& x \cdot (\xi \ot \mu) = \sum_{j \in \Irr(\cC)} \; \sum_{U \in \onb_\ell(j , \cS i)} \; \rdl(j) \;\; \xi \cdot ((1 \ot \delta)U(\Delta_\cS^{1/2} \ot 1))^\# \ot U(x \ot \mu)
\end{split}
\end{equation}
for all $\xi \in \cK_i , \mu \in \cH^0_i$ and $x \in \cS$, together with the scalar product
$$\langle \xi_1 \ot \mu_1 , \xi_2 \ot \mu_2 \rangle = \frac{1}{\rdr(i)} \; \delta_{i,j} \; \langle \xi_1,\xi_2 \rangle \; \langle \mu_1,\mu_2 \rangle$$
turn the Hilbert space completion $\cH$ of $\cH^0$ into a well defined Hilbert $S$-bimodule that, as a $T$-bimodule, is a direct sum of copies of $\cH_i$, $i \in \Irr(\cC)$, with $(\cH,i) = \cK_i$.

\begin{proof}
Given a Hilbert $S$-bimodule $\cH$ and defining $\cK^0$ as the algebraic direct sum of the Hilbert spaces $\cK_i  := (\cH,i)$, a slightly tedious, but straightforward computation shows that \eqref{eq.Amodule} defines a $*$-anti-representation of $\cA$ on $\cK^0$. By Lemma \ref{lem.other-sum}, this anti-representation is by bounded operators on the Hilbert space completion $\cK$ of $\cK^0$ and we have found a nondegenerate right Hilbert $\cA$-module $\cK$.

Conversely, assume that $\cK$ is a nondegenerate right Hilbert $\cA$-module and define the pre-Hilbert space $\cH^0$ as above. It is again straightforward but slightly tedious to check that the formulas \eqref{eq.Sbimodule} turn $\cH^0$ into an $\cS$-bimodule satisfying
$$\langle x \cdot \mu \cdot y , \mu' \rangle = \langle \mu, x^* \cdot \mu' \cdot y^* \rangle$$
for all $x,y \in \cS$ and $\mu,\mu' \in \cH^0$. In order to prove that we can uniquely extend this to a Hilbert $S$-bimodule structure on the Hilbert space completion $\cH$ of $\cH^0$, it suffices to prove that for all $i , j \in \Irr(\cC)$, $\xi \in \cK_i$, $\xi' \in \cK_j$, $\mu \in \cH^0_i$ and $\mu' \in \cH^0_j$, the linear functionals
$$\cS \recht \C : x \mapsto \langle (\xi \ot \mu) \cdot x , \xi' \ot \mu' \rangle  \quad\text{and}\quad x \mapsto \langle x \cdot (\xi \ot \mu), \xi' \ot \mu' \rangle$$
extend to normal functionals on $S$. By symmetry, we only consider the first functional. It follows from \eqref{eq.Sbimodule} that it is a finite linear combination of functionals of the form
\begin{equation}\label{eq.my-functional}
x \mapsto \langle U^*(\mu \ot x) , \mu' \rangle
\end{equation}
with $\mu \in \cH^0_i$, $\mu' \in \cH^0_j$ and $U \in (i \cS,j)$. Since $\mu$ is a bounded vector, we can define the bounded operator $L_\mu : L^2(S) \recht \cH_i \ovt_T L^2(S)$ given by $L_\mu(x) = \mu \ot x$ for all $x \in S$. It follows that
$$\langle U^*(\mu \ot x) , \mu' \rangle = \langle x, L_\mu^*(U(\mu')) \rangle \; .$$
Since $L_\mu^*(U(\mu')) \in L^2(S)$, the functional in \eqref{eq.my-functional} is indeed normal.

By construction, the above correspondence between Hilbert $S$-bimodules and Hilbert $\cA$-mod\-ules is indeed bijective, in the sense of the theorem.
\end{proof}

Given an irreducible quasi-regular inclusion of II$_1$ factors $T \subset S$, we have two natural $S$-bimodules: the trivial $S$-bimodule $L^2(S)$ and the family of coarse $S$-bimodules
$L^2(S) \ovt_T L^2(S)$, as well as $L^2(S) \ovt_T \cH \ovt_T L^2(S)$ for an arbitrary $T$-bimodule $\cH$ that is a direct sum of finite index $T$-bimodules. Through Theorem \ref{thm.tube-vs-corr}, they correspond to the following representations of the tube algebra. The proof of this lemma is given by a direct computation.

\begin{lemma} \label{lem.reg-triv-A}
Let $T \subset S$ be an irreducible quasi-regular inclusion of II$_1$ factors and $\cC$ a tensor category of finite index $T$-bimodules containing all finite index $T$-subbimodules of $L^2(S)$. Denote by $\cA$ the associated tube $*$-algebra.

Under the bijection of Theorem \ref{thm.tube-vs-corr},
\begin{enumerate}
\item the $S$-bimodule $L^2(S) \ovt_T L^2(S)$ corresponds to the right Hilbert $\cA$-module $L^2(p_\eps \cdot \cA)$, where $L^2(\cA)$ is given by Proposition \ref{prop.trace-tube-quasi-reg} and the right action of $W \in \cA$ on $L^2(p_\eps \cdot \cA)$ is given by right multiplication with $\sigma^\tau_{-i/2}(W)$~;
\item given a $T$-bimodule $\cH$ that is a direct sum of $T$-bimodules in $\cC$, the $S$-bimodule $L^2(S) \ovt_T \cH \ovt_T L^2(S)$ corresponds to the right Hilbert $\cA$-module $\bigoplus_{i \in \Irr(\cC)} (\cH,i) \ot L^2(p_i \cdot \cA)$~; in particular, with $\cH = \bigoplus_{i \in \Irr(\cC)} \cH_i$, we find the right Hilbert $\cA$-module $L^2(\cA)$~;
\item the $S$-bimodule $L^2(S)$ corresponds to the right Hilbert $\cA$-module defined by completing
$$\cE^r := \bigoplus_{i \in \Irr(\cC)} (\cS,i)$$
w.r.t.\ the left scalar product on $(\cS,i)$ and right $\cA$-module structure given by
$$\xi \cdot V = m(1 \ot m) (1 \ot (\xi \ot \Delta_\cS^{1/2}) V) (a \ot 1)$$
for all $\xi \in (\cS,i)$ and $V \in (i\cS, \cS j)$.
\end{enumerate}
\end{lemma}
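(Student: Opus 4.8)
The plan is to verify each of the three correspondences by feeding the relevant $S$-bimodule into the explicit formula \eqref{eq.Amodule} of Theorem \ref{thm.tube-vs-corr} and computing the resulting right Hilbert $\cA$-module directly. For a given $S$-bimodule $\cH$, the recipe is: form the Hilbert spaces $\cK_i = (\cH,i)$ of finite-rank intertwiners from $\cH_i$ into $\cH$ (equipped with the right scalar product $\Tr^r_i(\eta^*\xi)$), and compute the right action via $\xi\cdot V = m_{lr}(1\ot\xi\ot1)(1\ot V(\Delta_\cS^{1/2}\ot1))(a\ot1)$. So for each of the three bimodules I would first identify $\cK_i$ with a concrete intertwiner space, then substitute into \eqref{eq.Amodule} and match against the claimed $\cA$-module structure.

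For statement (2), the coarse-type bimodule $L^2(S)\ovt_T\cH\ovt_T L^2(S)$ is the cleanest case and I would do it first, since (1) is the special case $\cH = \cH_\eps$ and the final sentence of (2) (taking $\cH=\bigoplus_i\cH_i$ giving $L^2(\cA)$) is an immediate consequence. Here a $T$-bimodular finite-rank intertwiner from $\cH_i$ into $L^2(S)\ovt_T\cH\ovt_T L^2(S)$ factors, using Lemma \ref{lem.well-behaved}.4 and the finite-rank condition, as a tensor $(\cH,i)\ot L^2(p_i\cdot\cA)$-type object: concretely, $\cK_i \cong (\cH,i)\ot(i\cS,\cS\eps)$, and one recognizes $(i\cS,\cS\eps)=p_i\cdot\cA\cdot p_\eps$ together with the completion $L^2(p_i\cdot\cA)$ from Proposition \ref{prop.trace-tube-quasi-reg}. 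The action formula \eqref{eq.Amodule} then unwinds, using the defining product \eqref{eq.prod-A} of $\cA$ and the relations \eqref{eq.m-a} for $m,a,\delta$, into right multiplication on the $\cA$-factor. For statement (1), specializing $\cH=\cH_\eps=L^2(T)$ collapses the first tensor factor and the appearance of $\sigma^\tau_{-i/2}$ is forced by the $\Delta_\cS^{1/2}$ in \eqref{eq.Amodule} together with the modular automorphism group $\sigma^\tau_t(V)=(1\ot\Delta_\cS^{it})V$ computed in Proposition \ref{prop.trace-tube-quasi-reg}; the analytic continuation to $t=-i/2$ reproduces exactly the $\Delta_\cS^{1/2}$ twist.

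For statement (3), the trivial bimodule $\cH=L^2(S)$ requires identifying $\cK_i=(L^2(S),\cH_i)$, which by Lemma \ref{lem.well-behaved} is precisely the finite-dimensional space $(\cS,i)$ of finite-rank intertwiners (equivalently $T$-bounded vectors pattern), so that $\cE^r=\bigoplus_i(\cS,i)$. The stated right action $\xi\cdot V = m(1\ot m)(1\ot(\xi\ot\Delta_\cS^{1/2})V)(a\ot1)$ is then obtained by substituting $\cH=L^2(S)$ and $m_{lr}=m(1\ot m)$ (the iterated multiplication on $\cS\ot_T L^2(S)\ot_T\cS$) into \eqref{eq.Amodule}, and the claim that this extends to bounded operators follows from Lemma \ref{lem.other-sum} exactly as in the proof of Theorem \ref{thm.tube-vs-corr}. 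The switch from the right scalar product (default in Theorem \ref{thm.tube-vs-corr}) to the \emph{left} scalar product on $(\cS,i)$ asserted here is a genuine point to check: it reflects the asymmetry between $L^2(S)$ as the distinguished vector versus a generic bimodule, and I expect verifying this compatibility of scalar products to be the main obstacle.

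The hard part throughout will be keeping the $\Delta_\cS^{1/2}$ modular twists, the left-versus-right scalar products, and the $\rd_\ell$ versus $\rd_r$ normalization weights bookkept correctly across the several adjunctions encoded by $a$, $a^*$, $m$, $m^*$; each relation must be applied only after multiplying by a finite projection $e_\cF$, as emphasized after \eqref{eq.m-a}. Since the paper itself labels these as computations ``given by a direct computation,'' I would structure the proof as three parallel unwindings of \eqref{eq.Amodule}, presenting (2) in full and noting that (1) is its specialization, and then handle (3) separately with explicit attention to the scalar-product identification via \eqref{eq.trace-left-right}.
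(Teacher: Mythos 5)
Your overall strategy is the right one and is exactly what the paper intends: the paper offers nothing beyond ``a direct computation,'' and unwinding \eqref{eq.Amodule} of Theorem \ref{thm.tube-vs-corr} for each of the three bimodules, with (1) obtained as the case $\cH = \cH_\eps$ of (2), is the intended route. However, your central identification in case (2) is wrong as stated. You claim $\cK_i \cong (\cH,i)\ot(i\cS,\cS\eps)$, but the space of finite rank $T$-bimodular maps from $\cH_j$ into $L^2(S)\ovt_T\cH\ovt_T L^2(S)$ is
$$\cK_j \;\cong\; \bigoplus_{i \in \Irr(\cC)} (\cH,i)\ot(i\cS,\cS j) \;:$$
one must sum over the irreducible constituents $\cH_i$ of the \emph{middle} factor $\cH$, and Frobenius reciprocity (conjugating one copy of $L^2(S)$ and using $\overline{L^2(S)}\cong L^2(S)$) places the grading index $j$ of the $\cA$-module in the \emph{right} leg of the corner $p_i\cdot\cA\cdot p_j$. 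The projection $p_\eps$ has no business in the general case; it is special to (1), where the middle factor is $\cH_\eps = L^2(T)$. A quick sanity check exposes the error: for $\cH = \cH_k$ with $k\neq\eps$, your formula forces $\cK_i = 0$ whenever $i\neq k$, yet $L^2(S)\ovt_T\cH_k\ovt_T L^2(S)$ contains many irreducibles besides $\cH_k$. Moreover, your spaces would complete to $\bigoplus_i(\cH,i)\ot L^2(p_i\cdot\cA\cdot p_\eps)$, not to the asserted $\bigoplus_i(\cH,i)\ot L^2(p_i\cdot\cA)$, so the lemma's conclusion does not follow from your identification; with the corrected one it does, and \eqref{eq.Amodule} becomes twisted right multiplication on the $\cA$-leg.

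For case (3) you correctly isolate the left-versus-right scalar product discrepancy as the main point, but you leave it unresolved, and it should be resolved rather than flagged: on each $(\cS,i)$ the two scalar products differ by the scalar $\Delta_i = \rdr(i)/\rdl(i)$, so $\xi \mapsto \Delta_i^{1/2}\xi$ is a unitary from $(\cS,i)$ with the right scalar product (the default of Theorem \ref{thm.tube-vs-corr}) onto $(\cS,i)$ with the left scalar product. Transporting the raw specialization of \eqref{eq.Amodule} through this unitary, and using the paper's intertwiner identity $\Delta_\cK V = V \Delta_{\cK'}$, which for $V \in (i\cS,\cS j)$ gives $(1\ot\Delta_\cS^{1/2})V = \Delta_i^{-1/2}\Delta_j^{1/2}\, V(\Delta_\cS^{1/2}\ot 1)$, converts $(\xi\ot 1)V(\Delta_\cS^{1/2}\ot 1)$ into the stated $(\xi\ot\Delta_\cS^{1/2})V$, the scalar $\Delta_i^{-1/2}\Delta_j^{1/2}$ being exactly absorbed by the rescaling. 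This simultaneously explains why the action in (3) is written differently from \eqref{eq.Amodule} and completes the proof of (3). With these two repairs, your plan is correct and coincides with the paper's (unwritten) direct computation.
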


\begin{remark} \label{rem.left-triv}
\begin{enumerate}
\item The right $\cA$-module $\cE^r$ should be considered as the trivial representation of $\cA$. Its adjoint is the left $\cA$-module
$$\cE^\ell := \bigoplus_{i \in \Irr(\cC)} (i, \cS)$$
with left $\cA$-module structure given by
$$V \cdot \xi = (1 \ot a^*) ((1 \ot \Delta_\cS^{-1/2})V \ot 1) (1 \ot \xi \ot 1) (m^* \ot 1) m^*$$
for all $V \in (i \cS, \cS j)$ and $\xi \in (j,\cS)$.

\item Also this left $\cA$-module $\cE^\ell$ can be completed into a Hilbert $\cA$-module by using the left scalar product on each $(i,\cS)$.

\item In Remark \ref{rem.reg-triv-as-cp-maps-and-states}, we will see that $\cE^\ell$ and $\cE^r$ can also be viewed as the GNS-spaces of $\cA$ w.r.t.\ a canonical state on $\cA$.
\end{enumerate}
\end{remark}

\begin{corollary}\label{cor.cp-maps-vs-states-tube-algebra}
Let $S$ be a II$_1$ factor, $T \subset S$ an irreducible quasi-regular subfactor and $\cC$ a tensor category of finite index $T$-bimodules containing all finite index $T$-subbimodules of $L^2(S)$. Let $\cA$ be the associated tube $*$-algebra. Then \eqref{eq.cp-maps-vs-states-tube-algebra} below gives a bijection between
\begin{itemize}
\item unital, completely positive, trace preserving $T$-bimodular maps $\vphi : S \recht S$~;
\item states $\omega_\vphi$ on $\cA$ with the property that $\om_\vphi(p_\eps) = 1$.
\end{itemize}
This bijection is given by
\begin{equation}\label{eq.cp-maps-vs-states-tube-algebra}
\begin{split}
\om_\vphi(V) &= \Tr(\vphi \circ V) \quad\text{for all}\;\; V \in (\cS,\cS) \quad\text{and}\\
\om_\vphi(V) &= 0 \quad\text{when}\;\; V \in (i \cS, \cS j) \;\;\text{with $i \neq \eps$ or $j \neq \eps$.}
\end{split}
\end{equation}
Note that for all $V \in (\cS,\cS)$ and for every $T$-bimodular linear map $\vphi : S \recht S$, we can view $\vphi \circ V$ as a finite rank $T$-bimodular map, i.e.\ as an element of $(\cS,\cS)$. We denote by $\Tr$ the categorical trace on $(\cS,\cS)$ given by $\Tr(\,\cdot\,) = \Tr^r(\Delta_{\cS}^{-1/2} \, \cdot \,) = \Tr^\ell(\Delta_{\cS}^{1/2} \, \cdot \,)$.
\end{corollary}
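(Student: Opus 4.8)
The plan is to route the bijection through the intermediate class of pointed Hilbert $S$-bimodules $(\cH,\xi)$ with $\xi$ a $T$-central unit vector, using the Stinespring/GNS correspondence on the completely positive side and Theorem~\ref{thm.tube-vs-corr} on the tube-algebra side. Starting from a unital, completely positive, trace preserving $T$-bimodular map $\vphi : S \recht S$, I form its correspondence $\cH_\vphi$, the separation-completion of $S \otalg S$ for the inner product $\langle a_1 \ot b_1, a_2 \ot b_2\rangle = \tau(b_2^* \vphi(a_2^* a_1) b_1)$, with bimodule action $x \cdot (a \ot b) \cdot y = (xa) \ot (by)$ and distinguished vector $\xi_\vphi = 1 \ot 1$. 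Complete positivity makes the inner product positive, $T$-bimodularity of $\vphi$ makes the tensor product balanced over $T$ and $\xi_\vphi$ a $T$-central vector, and unitality together with trace preservation give $\langle x \cdot \xi_\vphi \cdot y, \xi_\vphi\rangle = \tau(\vphi(x) y)$ and $\|\xi_\vphi\| = 1$. Because $\xi_\vphi$ is $T$-central and $T \subset S$ is quasi-regular, the $S$-bimodule $\cH_\vphi$ is, as a $T$-bimodule, a direct sum of finite index $T$-bimodules — each appearing in a tensor product of finite index $T$-subbimodules of $L^2(S)$ and hence lying in $\cC$ — so Theorem~\ref{thm.tube-vs-corr} turns it into a nondegenerate right Hilbert $\cA$-module $\cK$.

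The crucial observation is that the $T$-central vectors of $\cH_\vphi$ are exactly the elements of $(\cH_\vphi,\eps) = \cK \cdot p_\eps = \cK_\eps$, so $\xi_\vphi$ becomes a vector in $\cK_\eps$, and I set $\om_\vphi(V) := \langle \xi_\vphi \cdot V, \xi_\vphi\rangle$. Writing the right action as $\xi \cdot V = \rho(V)\xi$ for the $*$-anti-homomorphism $\rho$, positivity is immediate from $\om_\vphi(V^\# \cdot V) = \langle \rho(V)\rho(V)^* \xi_\vphi, \xi_\vphi\rangle = \|\rho(V)^* \xi_\vphi\|^2 \geq 0$, and $\xi_\vphi \cdot p_\eps = \xi_\vphi$ forces $\om_\vphi(V) = \langle \xi_\vphi \cdot (p_\eps \cdot V \cdot p_\eps), \xi_\vphi\rangle$. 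Hence $\om_\vphi$ vanishes on every $(i\cS, \cS j)$ with $i \neq \eps$ or $j \neq \eps$ and $\om_\vphi(p_\eps) = \|\xi_\vphi\|^2 = 1$, exactly as required by \eqref{eq.cp-maps-vs-states-tube-algebra}.

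For the converse, I start from a state $\om$ on $\cA$ (regarded as a state on the universal C$^*$-algebra of $\cA$, which exists by Lemma~\ref{lem.other-sum}) with $\om(p_\eps) = 1$. This normalization already forces $\om$ into the corner $p_\eps \cdot \cA \cdot p_\eps = (\cS,\cS)$: for $i \neq \eps$ the projection $p_\eps + p_i$ has $\om(p_\eps + p_i) \leq 1$, so $\om(p_i) = 0$, and Cauchy--Schwarz then gives $\om(V) = 0$ for $V \in (i\cS, \cS j)$ unless $i = j = \eps$. The GNS construction realizes $\om$ as the vector state of a cyclic right Hilbert $\cA$-module $(\cK,\Omega)$ with $\Omega = \widehat{p_\eps} \in \cK_\eps$; feeding $\cK$ through Theorem~\ref{thm.tube-vs-corr} yields a pointed $S$-bimodule $(\cH,\xi)$ with $\xi$ a $T$-central unit vector, and I recover $\vphi$ by $\tau(\vphi(x) y) = \langle x \cdot \xi \cdot y, \xi\rangle$. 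Here complete positivity, $T$-bimodularity and unitality of $\vphi$ read off from the genuine $S$-bimodule structure and the $T$-centrality of $\xi$, while $\tau \circ \vphi = \tau$ follows from $\|\xi\| = 1$; the cyclicity built into both constructions makes these assignments mutually inverse.

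The one genuinely computational point — and the main obstacle — is to check that on the corner $(\cS,\cS)$ the vector state $\om_\vphi$ equals the prescribed $\Tr(\vphi \circ V)$. For this I would substitute $i = j = \eps$ in the action formula \eqref{eq.Amodule}, so that $\xi_\vphi \cdot V = m_{lr}(1 \ot \xi_\vphi \ot 1)(1 \ot V(\Delta_\cS^{1/2} \ot 1))(a \ot 1)$, pair with $\xi_\vphi$, and expand $(e_\cF \ot 1) a = \sum_i x_i \ot x_i^*$ using a right $T$-basis from Lemma~\ref{lem.well-behaved}.1 together with the structural identities \eqref{eq.m-a}. The pairing then collapses to the trace of $\vphi \circ V$, with the factor $\Delta_\cS^{1/2}$ in \eqref{eq.Amodule} being precisely what converts the right trace into the symmetric categorical trace $\Tr = \Tr^r(\Delta_\cS^{-1/2} \, \cdot\,) = \Tr^\ell(\Delta_\cS^{1/2} \, \cdot\,)$ of the statement; keeping track of the modular operator $\Delta_\cS$ across the tensor legs is where the care is needed. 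Combining this identity on $(\cS,\cS)$ with the vanishing off the corner yields \eqref{eq.cp-maps-vs-states-tube-algebra} and the asserted bijection.
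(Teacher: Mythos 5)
Your proposal is correct and takes essentially the same route as the paper's proof: construct the GNS correspondence $\cH_\vphi$ from the completely positive map, convert it via Theorem~\ref{thm.tube-vs-corr} into a right Hilbert $\cA$-module and take the vector state at the $T$-central vector $1 \ot 1$, and conversely combine the GNS construction of a state with $\om(p_\eps)=1$ with Theorem~\ref{thm.tube-vs-corr} to recover $\vphi$ from the resulting pointed $S$-bimodule. The paper compresses into ``a direct computation'' the two points you spell out -- the corner localization (orthogonality of $p_\eps$ and $p_i$ plus Cauchy--Schwarz) and the identity $\om_\vphi(V)=\Tr(\vphi\circ V)$ on $(\cS,\cS)$ obtained by expanding $a$ through a right $T$-basis, with the factor $\Delta_\cS^{1/2}$ in the action formula turning $\Tr^\ell$ into the categorical trace -- and both of your sketches are consistent with it.
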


Note that whenever $\vphi : S \recht S$ is a normal, completely positive, $T$-bimodular map, the irreducibility of $T \subset S$ implies that $\vphi(1) = \lambda 1$ and $\tau \circ \vphi = \lambda \, \tau$ for some $\lambda \geq 0$. It is therefore not restrictive to only consider unital, trace preserving maps.

\begin{proof}
Given a unital, completely positive, trace preserving $T$-bimodular map $\vphi : S \recht S$, define the $S$-bimodule $\cH$ as the separation/completion of $S \ot S$ w.r.t.\ the scalar product $\langle x \ot y, a \ot b \rangle = \tau(x \vphi(yb^*)a^*)$. Note that by construction, as a $T$-bimodule, $\cH$ is isomorphic with a direct sum of irreducible $T$-subbimodules of $L^2(S) \ovt_T L^2(S)$, which thus belong to $\cC$. By Theorem \ref{thm.tube-vs-corr}, we find a $*$-representation of $\cA$ on a Hilbert space $\cK$ and a unit vector $\xi_0 \in \cK \cdot p_\eps$ corresponding to the $T$-central vector $1 \ot 1 \in \cH$. Define $\om_\vphi$ as the vector state on $\cA$ given by $\xi_0$. A direct computation shows that \eqref{eq.cp-maps-vs-states-tube-algebra} holds.

Conversely, given a state $\om : \cA \recht \C$ with $\om(p_\eps) = 1$, combining the GNS-construction and Theorem \ref{thm.tube-vs-corr}, we find an $S$-bimodule $\cH$ and a $T$-central unit vector $\xi_1 \in \cH$. Denote by $\vphi$ the unique unital, completely positive, trace preserving $T$-bimodular map $\vphi : S \recht S$ satisfying $\langle x \cdot \xi_1 \cdot y , \xi_1 \rangle = \tau(x \vphi(y))$ for all $x,y \in S$. A direct computation shows that $\om_\vphi = \om$.
\end{proof}

\begin{remark}\label{rem.reg-triv-as-cp-maps-and-states}
The trivial and the regular representation of Lemma \ref{lem.reg-triv-A} can also be understood in the context of Corollary \ref{cor.cp-maps-vs-states-tube-algebra}. The identity map $S \recht S : x \mapsto x$, corresponds to the state $\counit : \cA \recht \C$ given by $\counit(V) = \Tr(V)$ for all $V \in (\cS,\cS)$ and $\counit(V) = 0$ if $V \in (i \cS, \cS j)$ with $i \neq \eps$ or $j \neq \eps$. Performing the GNS-construction with this state $\counit$, we obtain the right Hilbert $\cA$-module $\cE^r$ of Lemma \ref{lem.reg-triv-A}. Also note that $\counit$ is a character when restricted to $p_\eps \cdot \cA \cdot p_\eps$, but it is not a character on the entire $*$-algebra $\cA$.

The map $S \recht S : x \mapsto \tau(x) 1$ corresponds to the state $\cA \recht \C : V \mapsto \tau(p_\eps \cdot V \cdot p_\eps)$. Performing the GNS-construction with this state, we obtain the right Hilbert $\cA$-module $L^2(p_\eps \cdot \cA)$.
\end{remark}

\subsection[Ocneanu's tube $*$-algebra of a rigid C$^*$-tensor category]{\boldmath Ocneanu's tube $*$-algebra of a rigid C$^*$-tensor category}\label{sec.tube-tensor}

Let $\cC$ be a rigid C$^*$-tensor category. We recall  the construction of \emph{Ocneanu's tube $*$-algebra}, introduced in \cite{Oc93} when $\cC$ has only finitely many irreducible objects. As a vector space, $\cA$ is given as the algebraic direct sum
$$\cA = \bigoplus_{i,j,\al \in \Irr(\cC)} (i\al,\al j) \; .$$
So, an element $V \in \cA$ is given by elements $V^\al_{ij} \in (i \al, \al j)$, with only finitely many of these elements being nonzero. Whenever $V \in (i \al,\al j)$, we also view $V$ as an element of $\cA$ living in the corresponding direct summand. When $i,j \in \Irr(\cC)$ and $\be \in \cC$ is a not necessarily irreducible object, every $V \in (i \be,\be j)$ defines an element in $\cA$ concretely given by
\begin{equation}\label{eq.general-elements-cA}
V^\al_{kl} = \delta_{i,k} \; \delta_{j,l} \; \sum_{U \in \onb (\be,\al)} \rd(\al) \; (1 \ot U^*) V (U \ot 1) \; .
\end{equation}
Here, we use the same conventions for the orthonormal basis $\onb(\be,\al)$ as in \eqref{eq.onb}.

We then turn $\cA$ into a $*$-algebra:
\begin{align*}
V \cdot W = \delta_{k,k'} \; (V \ot 1)(1 \ot W) &\quad\text{if $V \in (i \al, \al k)$ and $W \in (k' \be, \be, j)$~;}\\
V^\# = (t_\al^* \ot 1 \ot 1) (1 \ot V^* \ot 1) (1 \ot 1 \ot s_\al) &\quad\text{if $V \in (i \al, \al j)$.}
\end{align*}
Note that $V^\# \in (j \albar,\albar i)$ when $V \in (i \al, \al j)$. To avoid confusion with composition and adjoints of morphisms, we systematically write the dot and use the symbol $^\#$ for the operations in $\cA$.

The identity morphism $1 \in (i \eps, \eps i)$, when viewed as an element of $\cA$, is denoted as $p_i$. Note that the $p_i$ are self-adjoint idempotents and that
$$p_i \cdot \cA \cdot p_j = \bigoplus_{\al \in \Irr(\cC)} (i \al , \al j)$$
as vector spaces.

Identifying $\al \in \Irr(\cC)$ with the identity map $1 \in (\eps \al , \al \eps)$, we get an isomorphism $p_\eps \cdot \cA \cdot p_\eps \cong \C[\cC]$, where $\C[\cC]$ is the fusion $*$-algebra of $\cC$ (see \eqref{eq.fusion-alg}).

The \emph{co-unit} $\counit : \cA \recht \C$ is the unital $*$-homomorphism given by $\counit(p_i) = 0$ for all $i \neq \eps$ and $\counit(\al) = \rd(\al)$ for all $\al \in \Irr(\cC)$ viewed as the identity map $1 \in (\eps \al , \al \eps)$.

The following lemma ensures purely algebraically that there is a universal C$^*$-norm on $\cA$, a fact that was proved already in \cite{GJ15}. The proof is identical to the proof of Lemma \ref{lem.other-sum}.

\begin{lemma}\label{lem.sum}
For all $i \in \Irr(\cC)$ and $\al \in \cC$, we have that
$$\sum_{j \in \Irr(\cC)} \sum_{W \in \onb(i \al, \al j)} \rd(j) \;  W \cdot W^\# = \rd(\al) p_i \; .$$
For every $*$-representation $\pi$ of $\cA$ as linear operators on a pre-Hilbert space $\cH$, we have that $\|\pi(V)\| \leq \rd(\al) \|V\|$ for all $i,j \in \Irr(\cC)$, $\al \in \cC$, $V \in (i\al,\al j)$. Here, $\|V\|$ denotes the operator norm of $V \in (i \al, \al j)$.
\end{lemma}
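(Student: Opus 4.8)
The statement has two parts—an algebraic identity and a norm estimate—and the identity is the categorical counterpart of Lemma~\ref{lem.other-sum}, from which the estimate then follows formally. Both sides of the identity lie in $p_i\cdot\cA\cdot p_i$: the left hand side in the summand $(i\al\albar,\al\albar i)$ (before $\al\albar$ is decomposed into irreducibles), and the right hand side is $\rd(\al)$ times $p_i=1_i\in(i\eps,\eps i)$. The plan is to run the proof of Lemma~\ref{lem.other-sum} in the categorical dictionary, where the multiplication map $m$ is replaced by the (strict) tensor product, the intertwiner $a$ by the standard solutions $s_\al,t_\al$ of the conjugate equations, and the inclusion $\delta$ by identity morphisms. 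The enabling step is the Frobenius reciprocity map
$$(i\al,\al j)\recht(\albar i\al,j):\quad W\longmapsto W'=(1_\albar\ot W)(t_\al\ot 1_j)\,,$$
which I would first check is a \emph{unitary} for the categorical inner products: using the partial trace $\Tr_\al\ot\id$ one computes $(W_2')^{*}W_1'=(\Tr_\al\ot\id)(W_2^*W_1)$, and the tower property $\Tr_j\circ(\Tr_\al\ot\id)=\Tr_{\al j}$ gives $\langle W_1',W_2'\rangle=\langle W_1,W_2\rangle$ with no stray scalar. Hence an orthonormal basis of $(i\al,\al j)$ is carried to an orthonormal basis of $(\albar i\al,j)$.

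Expanding $W^\#$ by its definition and substituting the change of variables, $W\cdot W^\#$ takes the form $P\,(1\ot W'(W')^{*}\ot 1)\,Q$, where $P$ and $Q$ are fixed morphisms assembled only from $s_\al,t_\al$ and identities. Summing $\rd(j)\,W'(W')^{*}$ over $j\in\Irr(\cC)$ and over an orthonormal basis of $(\albar i\al,j)$, the completeness relation~\eqref{eq.onb} applied to $\be=\albar i\al$ collapses the inner sum to the identity $1_{\albar i\al}$; what remains is $P\,(1\ot 1_{\albar i\al}\ot 1)\,Q$, a fixed morphism in $(i\al\albar,\al\albar i)$. Finally I would contract this using the conjugate equations~\eqref{eq.standard-sol}, the scalar $\rd(\al)=\Tr_\al(1)$ appearing from a closed $\al$-loop, to obtain $\rd(\al)\,1_i=\rd(\al)\,p_i$. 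The main obstacle is exactly this bookkeeping: pinning down the fixed morphisms $P,Q$ after the substitution, and verifying that their contraction produces precisely the scalar $\rd(\al)$ and lands in the $\eps$-component $(i\eps,\eps i)$—this is where the computation must be carried out rather than merely transported by analogy.

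For the norm bound, let $\pi$ be a $*$-representation of $\cA$ on a pre-Hilbert space and fix $V\in(i\al,\al j)$. Applying $\pi$ to the identity just proved and using $\pi(W^\#)=\pi(W)^{*}$ together with the fact that $\pi(p_i)$ is a projection, I get the inequality
$$\sum_{k\in\Irr(\cC)}\;\sum_{W\in\onb(i\al,\al k)}\rd(k)\,\pi(W)\pi(W)^{*}=\rd(\al)\,\pi(p_i)\le\rd(\al)\,1\,.$$
Every summand is positive, so dropping all but the term of a single unit vector $V$ (normalised by $\Tr_{i\al}(VV^{*})=1$ and completed to an orthonormal basis of $(i\al,\al j)$) yields $\rd(j)\,\pi(V)\pi(V)^{*}\le\rd(\al)\,1$, whence $\|\pi(V)\|^{2}=\|\pi(V)\pi(V)^{*}\|\le\rd(\al)/\rd(j)$ in that normalisation, i.e.\ $\|\pi(V)\|^{2}\le(\rd(\al)/\rd(j))\,\Tr_{i\al}(VV^{*})$ in general. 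Since $\Tr_{i\al}(VV^{*})=\Tr_{\al j}(V^{*}V)\le\|V\|^{2}\,\Tr_{\al j}(1)=\|V\|^{2}\,\rd(\al)\rd(j)$, combining the two estimates gives $\|\pi(V)\|^{2}\le\rd(\al)^{2}\|V\|^{2}$, that is $\|\pi(V)\|\le\rd(\al)\,\|V\|$.
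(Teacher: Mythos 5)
Your proof is correct and takes essentially the same route as the paper: the paper proves this lemma by declaring it ``identical to the proof of Lemma \ref{lem.other-sum}'', and that argument is precisely your combination of the Frobenius-reciprocity unitary $W \mapsto W' = (1 \ot W)(t_\al \ot 1)$, the completeness relation \eqref{eq.onb} applied to $\albar\, i \,\al$, and the final contraction by the conjugate equations, while the norm estimate (left implicit in the paper) is exactly the standard positivity argument you give. The bookkeeping you flagged as the remaining obstacle does work out as you predict: one finds $W \cdot W^\# = (s_\al^* \ot 1)(1_\al \ot W'(W')^* \ot 1_\albar)(1 \ot s_\al)$, and after summing, only the $\eps$-component of the resulting morphism in $(i\,\al\albar, \al\albar\, i)$ survives under \eqref{eq.general-elements-cA}, the closed loop contributing $s_\al^* s_\al = \rd(\al)$.
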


As in Proposition \ref{prop.trace-tube-quasi-reg}, we have a natural trace on the tube $*$-algebra $\cA$ with corresponding von Neumann algebra completion $\cA\dpr$.

\begin{proposition}\label{prop.tube-von-neumann-tensor}
The map
$$\tau : \cA \recht \C : \tau(V) = \sum_{i \in \Irr(\cC)} \Tr_i(V^\eps_{ii})$$
is a positive faithful trace on $\cA$~: $\tau(V \cdot W) = \tau(W \cdot V)$ and $\tau(V^\# \cdot V) \geq 0$ for all $V,W \in \cA$ with $\tau(V^\# \cdot V) = 0$ if and only if $V = 0$.

Denote by $L^2(\cA)$ the completion of $\cA$ w.r.t.\ the norm $\|V\|_{2,\tau} = \sqrt{\tau(V^\# \cdot V)}$. For every $V \in \cA$, left multiplication as well as right multiplication with $V$ extend to bounded operators on $L^2(\cA)$.

Denote by $\cA\dpr$ the von Neumann algebra generated by the left action of $\cA$ on $L^2(\cA)$. Then $\tau$ uniquely extends to a normal semifinite faithful trace on $\cA\dpr$.
\end{proposition}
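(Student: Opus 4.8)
The plan is to follow exactly the route of the proof of Proposition~\ref{prop.trace-tube-quasi-reg}, the crucial difference being that here the standard solutions of the conjugate equations build in a \emph{spherical} structure, and this is precisely what upgrades $\tau$ from a weight to an honest trace. By linearity it suffices to work with homogeneous elements $V \in (i\al,\al j)$, $\al \in \Irr(\cC)$. First I would record the basic computation underlying positivity: for $V,W \in (i\al,\al j)$, writing out $V^\# \cdot W = (V^\# \ot 1)(1 \ot W) \in (j\albar\al,\albar\al j)$, extracting its diagonal $\eps$-graded component via \eqref{eq.general-elements-cA} (the space $(\albar\al,\eps)$ being one-dimensional, spanned by $t_\al$, since $\al$ is irreducible), and applying $\Tr_j$ and the conjugate equations \eqref{eq.standard-sol}, gives
$$\tau(V^\# \cdot W) = \Tr(V^* W) \; ,$$
the categorical trace of the morphism $V^* W$. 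Taking $W = V$ exhibits $\tau(V^\# \cdot V)$ as $\Tr(V^* V) \geq 0$, vanishing exactly when $V = 0$; done componentwise, this yields positivity and faithfulness of $\tau$.

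For the trace property $\tau(V \cdot W) = \tau(W \cdot V)$ I would first use the grading to isolate the only case that can contribute. Since $V \cdot W \in (i\al\be,\al\be j)$ has a nonzero diagonal $\eps$-component only when $\eps \subset \al\be$, i.e.\ $\be \cong \albar$, and symmetrically for $W \cdot V$, both sides vanish unless $V \in (i\al,\al k)$ and $W \in (k\albar,\albar i)$. Writing the unique $U \in (i\al,\al k)$ with $W = U^\#$, the identity to prove becomes $\tau(V \cdot U^\#) = \tau(U^\# \cdot V)$. By the computation above the right-hand side equals $\Tr(U^* V)$, obtained by capping the $\al$-strand against $\albar$ on one side; the analogous computation of $\tau(V \cdot U^\#)$ produces the same contraction of $U^* V$ but with the $\al$-strand capped on the \emph{other} side. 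The equality of these two closures is exactly the content of the third relation in \eqref{eq.standard-sol}, $t_\al^*(1 \ot X) t_\al = s_\al^*(X \ot 1) s_\al$; in the language of Proposition~\ref{prop.trace-tube-quasi-reg} this says that the modular operator $\Delta$ is trivial, where in the non-categorical setting it was the nontrivial $\Delta_\cS$. I expect this diagrammatic verification --- tracking the cups and caps through the definition of $\#$ and applying sphericality --- to be the one genuine computation, and hence the main obstacle, of the proof.

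The remaining assertions are then formal. Boundedness of left multiplication on $L^2(\cA)$ is immediate from Lemma~\ref{lem.sum}: the identity $\langle V \cdot X, Y \rangle_\tau = \langle X, V^\# \cdot Y \rangle_\tau$ shows that left multiplication is a $*$-representation of $\cA$ on the pre-Hilbert space $\cA \subset L^2(\cA)$, so each $V \in (i\al,\al j)$ acts with norm at most $\rd(\al)\,\|V\|$. Once $\tau$ is known to be a trace, the conjugate-linear map $J \colon \widehat{V} \mapsto \widehat{V^\#}$ is isometric on $\cA$, since $\tau(V \cdot V^\#) = \tau(V^\# \cdot V)$, and therefore extends to an antiunitary involution of $L^2(\cA)$ satisfying $J L_V J = R_{V^\#}$; this gives boundedness of right multiplication as well.

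Finally, to extend $\tau$ to $\cA\dpr$, I would observe that the $p_i$ are mutually orthogonal projections in $\cA$ with $\tau(p_i) = \rd(i) < \infty$, and that the finite sums $\sum_{i \in \cF} p_i$ form an increasing net of finite-trace projections converging strongly to $1$. Hence $\cA\dpr$ is semifinite, and by the standard GNS and Tomita--Takesaki construction for a faithful positive trace on a $*$-algebra with enough projections, $\tau$ extends uniquely to a normal semifinite faithful trace on $\cA\dpr$.
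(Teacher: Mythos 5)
Your proposal is correct and follows essentially the same route as the paper's (very brief) proof: compute the pairing $\tau$ on homogeneous components, deduce traciality from the grading reduction together with sphericality of the standard solutions, invoke Lemma~\ref{lem.sum} for boundedness of the left regular representation (with the $J$-operator argument handling right multiplication), and use standard Hilbert-algebra theory for the normal semifinite faithful extension to $\cA\dpr$. The only slip is a normalization: since the $\eps$-component of $V^\# \cdot W$ is extracted with the \emph{unit} vector $\rd(\al)^{-1/2} t_\al \in (\albar\al,\eps)$, the correct formula is $\tau(V^\# \cdot W) = \rd(\al)^{-1}\,\Tr_{\al j}(V^* W)$ (as in the paper), not $\Tr_{\al j}(V^* W)$; this is harmless because the same positive factor appears on both sides of the trace identity (using $\rd(\albar) = \rd(\al)$), so positivity, faithfulness and traciality are all unaffected.
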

\begin{proof}
A direct computation gives for all $i,j,\al,\be \in \Irr(\cC)$ and $V \in (i\al,\al j)$, $W \in (i \be, \be j)$ that
$$\tau(V \cdot W^\#) = \delta_{\be,\al} \; \frac{1}{\rd(\al)} \; \Tr_{i \al}(V W^*) \quad\text{and}\quad \tau(W^\# \cdot V) =  \delta_{\be,\al} \; \frac{1}{\rd(\al)} \; \Tr_{\al j}(W^* V) \; .$$
It follows that $\tau$ is a trace. The remaining statements follow from Lemma \ref{lem.sum}.
\end{proof}

\begin{remark} \label{rem.confusion}
Given $i,j,\al \in \Irr(\cC)$, we have considered $(i \al, \al j)$ as a Hilbert space using the scalar product $\langle V, W \rangle = \Tr_{i \al}(V W^*)$. Now, we can also view $(i \al, \al j)$ as a subspace of $\cA$ and thus, of $L^2(\cA)$. Then, the scalar product is scaled with the factor $\rd(\al)$.

When using the notation $\onb(i\al, \al j)$, we always refer to an orthonormal basis for the first mentioned scalar product. This is the most convenient, since we also use such orthonormal bases for arbitrary spaces of morphisms $(\be,\gamma)$ with $\be, \gamma \in \cC$.
\end{remark}

Assume now that $M$ is a II$_1$ factor and that $\cC$ is a tensor category of finite index $M$-bimodules having equal left and right dimension. Consider the SE-inclusion $T \subset S$ defined in Section \ref{subsec.SEinclusion}. We then have two tube $*$-algebras: Ocneanu's tube algebra of the tensor category $\cC$ that we recalled above and the tube algebra of the quasi-regular inclusion $T \subset S$ defined in Section \ref{sec.tube-quasireg}. We prove that both tube algebras are naturally strongly Morita equivalent.

\begin{proposition}\label{prop.morita}
Let $M$ be a II$_1$ factor and $\cC$ a tensor category of finite index $M$-bimodules having equal left and right dimension. Put $T = M \ovt M\op$ and let $\cC_1$ be the tensor category of $T$-bimodules generated by $\al \ot \overline{\be}$, $\al,\be \in \cC$. The formula \eqref{eq.concrete-Morita} below defines a Morita equivalence between Ocneanu's tube $*$-algebra associated with $\cC$ (as defined in this section) and the tube $*$-algebra associated with the quasi-regular SE-inclusion $T \subset S$ and $\cC_1$ (as defined in Section \ref{sec.tube-quasireg}).
\end{proposition}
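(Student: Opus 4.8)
The plan is to realize Ocneanu's tube $*$-algebra $\cA$ as a full corner of the SE tube $*$-algebra $\cA_1$, and then to invoke the standard Morita theory for $*$-algebras with local units. Throughout I use the decomposition \eqref{eq.decomp}, $L^2(S) = \bigoplus_{\al\in\Irr(\cC)}\cH_\al\ot\overline{\cH_\al}$, together with the fact that over $T = M\ovt M\op$ the Connes tensor product factorizes as a tensor product of Connes products over $M$ (left leg) and over $M\op$ (right leg), so that in particular $\overline{\cH_\eps}\ovt_{M\op}\overline{\cH_\gamma}=\overline{\cH_\gamma}$. Since $\cC_1$ is generated by the $\cH_\al\ot\overline{\cH_\be}$, its irreducibles are exactly these bimodules, and I identify $\Irr(\cC_1)$ with $\Irr(\cC)\times\Irr(\cC)$.

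First I single out the self-adjoint idempotent $e = \sum_{\al\in\Irr(\cC)} p_{\al\ot\overline\eps}$ in the multiplier algebra of $\cA_1$ (the projections $p_I$, $I\in\Irr(\cC_1)$, form a system of local units, so $e$ is a legitimate multiplier even when $\Irr(\cC)$ is infinite). Using the factorization of the Connes product I compute
$$\cH_{\al\ot\overline\eps}\ovt_T L^2(S) = \bigoplus_{\gamma}(\cH_\al\ovt_M\cH_\gamma)\ot\overline{\cH_\gamma} \quad\text{and}\quad L^2(S)\ovt_T\cH_{\al'\ot\overline\eps}=\bigoplus_\gamma(\cH_\gamma\ovt_M\cH_{\al'})\ot\overline{\cH_\gamma}.$$
Matching irreducible $T$-subbimodules and using Schur orthogonality, the corner space $p_{\al\ot\overline\eps}\cdot\cA_1\cdot p_{\al'\ot\overline\eps}$ has dimension $\sum_\gamma\dim(\al\gamma,\gamma\al')$, which is precisely the dimension of Ocneanu's corner $p_\al\cdot\cA\cdot p_{\al'}=\bigoplus_\gamma(\al\gamma,\gamma\al')$. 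This bookkeeping dictates the map: the formula \eqref{eq.concrete-Morita} should send the Ocneanu summand $(\al\gamma,\gamma\al')$ isomorphically onto the block of $p_{\al\ot\overline\eps}\cdot\cA_1\cdot p_{\al'\ot\overline\eps}$ supported on the $\cH_\gamma\ot\overline{\cH_\gamma}$ summand of the middle $L^2(S)$, acting by $V\in(\al\gamma,\gamma\al')$ on the $M$-leg and leaving the $\overline{\cH_\gamma}$-leg essentially untouched, up to the categorical $\rd$-normalizations. Thus the middle tube object $\gamma$ in Ocneanu's picture corresponds to the $T$-bimodule $\cH_\gamma\ot\overline{\cH_\gamma}$ that genuinely occurs in $L^2(S)$.

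Next I would verify that this map $\Phi$ is a $*$-algebra isomorphism onto $e\cdot\cA_1\cdot e$. The substance is to check that $\Phi$ intertwines the two products and the two $\#$-operations, and this is the main obstacle. It amounts to translating between the categorical composition in $\cA$ — which glues the middle legs by $(V\ot1)(1\ot W)$ and reorganizes over $\onb(\be,\al)$ using the standard solutions $s_\al,t_\al$ — and the analytic composition in $\cA_1$, which glues by the multiplication maps $m$, $m^*$ and the pairing $a=m^*\delta$ of Section \ref{sec.tube-quasireg}. The bridge is that, restricted to the summand $\cH_\gamma\ot\overline{\cH_\gamma}\subset L^2(S)$, the multiplication of $S$ and the creation map $a$ implement exactly the fusion isometries and the standard solution $t_\gamma,s_\gamma$ of the conjugate equations, so that the relations \eqref{eq.m-a} transcribe into \eqref{eq.standard-sol}; the $\rd(\gamma)$-factors and the operator $\Delta_\cS$ account for the normalization discrepancies recorded in Remark \ref{rem.confusion} and Proposition \ref{prop.trace-tube-quasi-reg}. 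I expect a direct but bookkeeping-heavy computation, best organized by reducing everything to the right $T$-basis vectors supplied by Lemma \ref{lem.well-behaved}.

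Finally I would check that $e$ is full, i.e.\ $\cA_1\cdot e\cdot\cA_1=\cA_1$, which by the local-unit structure reduces to producing, for every $\al,\be$, some $\delta$ with $p_{\al\ot\overline\be}\cdot\cA_1\cdot p_{\delta\ot\overline\eps}\neq0$. Running the same Connes-product computation, $\cH_{\al\ot\overline\be}\ovt_T L^2(S)$ contains the irreducible $\cH_\delta\ot\overline{\cH_\eps}$ for every $\delta\prec\al\overline\be$ (take the $\gamma=\overline\be$ summand), while $L^2(S)\ovt_T\cH_{\delta\ot\overline\eps}$ contains the same $\cH_\delta\ot\overline{\cH_\eps}$ (take $\gamma=\eps$); since $\al\overline\be$ is a nonzero object it has at least one such constituent $\delta$, so this corner is nonzero. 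Hence $e$ is full, and the full-corner isomorphism $e\cdot\cA_1\cdot e\cong\cA$ yields the desired strong Morita equivalence. As a consistency check, this matches the representation-theoretic bijections of Section \ref{sec.rep-tube-quasireg-vs-bimodules}, under which nondegenerate $\cA$- and $\cA_1$-modules both correspond to the same generalized SE-correspondences.
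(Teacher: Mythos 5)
Your route is genuinely different from the paper's. The paper writes down the explicit formula \eqref{eq.concrete-Morita} identifying \emph{all} of the SE tube algebra $\cA_1$ with an amplified tube algebra $\cA_\cO$ built on the set of (in general reducible) objects $\cO=\{\overline{\be}\,\al \mid \al,\be\in\Irr(\cC)\}$, and for $\cA_\cO$ the strong Morita equivalence with $\cA$ is manifest from the decomposition $\cA_\cO \cong \bigoplus_{i,j\in\Irr(\cC)} K_i \ot p_i\cdot\cA\cdot p_j \ot \overline{K_j}$ with $K_i=\bigoplus_{k\in\cO}(k,i)$. You instead identify Ocneanu's $\cA$ with the corner $e\cdot\cA_1\cdot e$, $e=\sum_\al p_{\al\ot\overline{\eps}}$, and try to prove $e$ is full. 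Your corner identification (the factorization of the Connes product over $T$, the fact that intertwiners cannot mix the $\overline{\cH_\gamma}$-legs, and the dimension count $\sum_\gamma\dim(\al\gamma,\gamma\al')$) is correct, and it is exactly the restriction of the paper's isomorphism $\Psi$ to this corner; the deferred bookkeeping there is comparable to the paper's own ``straightforward to check''.

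The genuine gap is the fullness step. You claim that $\cA_1\cdot e\cdot\cA_1=\cA_1$ ``by the local-unit structure reduces to producing, for every $\al,\be$, some $\delta$ with $p_{\al\ot\overline{\be}}\cdot\cA_1\cdot p_{\delta\ot\overline{\eps}}\neq 0$.'' That reduction is false. Writing $I=\al\ot\overline{\be}$, the nonvanishing of $p_I\cdot\cA_1\cdot e$ only says that the two-sided ideal generated by $e$ meets the unital corner $p_I\cdot\cA_1\cdot p_I$ in a \emph{nonzero} ideal; it does not put $p_I$ itself into that ideal, which is what fullness requires. These corners are far from simple, so no general-nonsense argument can close this: for instance, for the Temperley-Lieb-Jones category the corners $p_i\cdot\cA\cdot p_i$ are essentially polynomial algebras (see \cite[Section 5.2]{GJ15}, as used in the proof of Theorem \ref{thm.L2-Betti-TLJ-Fuss-Catalan}), and polynomial algebras have an abundance of proper nonzero ideals. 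What you actually need is a resolution of the identity through the corner: finitely many $X_k\in p_I\cdot\cA_1\cdot e$ and constants $c_k>0$ with $\sum_k c_k\, X_k\cdot X_k^{\#}=p_I$. Categorically this is the identity $\sum_{\delta\in\Irr(\cC)}\sum_{U\in\onb(\overline{\be}\al,\delta)} \rd(\delta)\, UU^*=1$ from \eqref{eq.onb} applied to the object $\overline{\be}\,\al$, but transporting it into $\cA_1$ requires knowing how a morphism $U\in(\overline{\be}\al,\delta)$ sits inside $p_{\al\ot\overline{\be}}\cdot\cA_1\cdot p_{\delta\ot\overline{\eps}}$ --- which is precisely a special case of the paper's formula \eqref{eq.concrete-Morita}. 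So the fix forces you back to essentially the computation the paper performs. Note also that your closing ``consistency check'' via the representation theory of Section \ref{sec.rep-tube-quasireg-vs-bimodules} would at best yield fullness at the von Neumann algebra level (central support of $e$), which is strictly weaker than the algebraic strong Morita equivalence asserted in the proposition and needed later, e.g.\ in Proposition \ref{prop.L2-Betti-Morita}, where projective resolutions are transported through the equivalence bimodules.
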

\begin{proof}
Note that for every rigid C$^*$-tensor category and every set of objects $\cO \subset \cC$, we can define the $*$-algebra
$$\cA_\cO = \bigoplus_{i,j \in \cO} \;\; \bigoplus_{\al \in \Irr(\cC)} \;\; (i \al , \al j)$$
in exactly the same way as we defined the tube $*$-algebra $\cA$ in the beginning of this section. By construction, we have
$$\cA_\cO = \bigoplus_{i,j \in \Irr(\cC)} \bigl( K_i \ot p_i \cdot \cA \cdot p_j \ot \overline{K_j} \bigr)$$
where $K_i$ is the vector space given as the algebraic direct sum $K_i = \bigoplus_{k \in \cO} (k,i)$. So, when $\cO$ is large enough in the sense that for every $i \in \Irr(\cC)$, there exists a $k \in \cO$ with $(k,i) \neq \{0\}$, we get that $\cA_\cO$ is strongly Morita equivalent with $\cA$.

Returning to the context of Proposition \ref{prop.morita}, we put $\cO = \{\, \overline{\beta} \, \al \, \mid \, \al,\be \in \Irr(\cC) \, \}$. We denote by $\cA_1$ the tube $*$-algebra associated with the quasi-regular inclusion SE-inclusion $T \subset S$ and $\cC_1$. Note that $\Irr(\cC_1) = \{ \al \ot \overline{\be} \mid \al,\be \in \Irr(\cC)\}$. The construction of the SE-inclusion $T \subset S$ comes with canonical intertwiners $\delta_\eta \in (\cS, \eta \ot \overline{\eta})$ for every $\eta \in \Irr(\cC)$, see \cite[Remark 2.7]{PV14}.

Let $\al,\al',\be,\be',\eta,\eta' \in \Irr(\cC)$. Whenever $V \in (\al \eta, \eta' \al')$ and $W \in (\be \eta, \eta' \beta')$, the tensor product of $V$ and $\overline{W}$ defines a morphism $\theta(V,W)$, in the category $\cC_1$, from $(\eta' \ot \overline{\eta'})(\al' \ot \overline{\beta'})$ to $(\al \ot \overline{\beta})(\eta \ot \overline{\eta})$. There is a unique $*$-isomorphism $\Psi : \cA_1 \recht \cA_{\cO}$ given by
\begin{equation}\label{eq.concrete-Morita}
\Psi\bigl( \, (1 \ot \delta_\eta) \theta(V,W) (\delta_{\eta'}^* \ot 1) \, \bigr)
= (1 \ot V) (1^2 \ot s_{\beta'}^* \ot 1) (1 \ot W^* \ot 1^2) (t_\beta \ot 1^3)
\end{equation}
for all $\al,\al',\be,\be',\eta,\eta' \in \Irr(\cC)$, $V \in (\al \eta, \eta' \al')$ and $W \in (\be \eta, \eta' \beta')$. Note that the right hand side belongs to $(\overline{\beta} \al \eta , \eta \overline{\beta'} \al')$ and thus defines an element in $\cA_\cO$.

It is straightforward to check that $\Psi$ is indeed a $*$-isomorphism.
\end{proof}

Still assume that $M$ is a II$_1$ factor and that $\cC$ is a tensor category of finite index $M$-bimodules having equal left and right dimension, with associated SE-inclusion $T \subset S$.
Recall from the first two paragraphs of Section \ref{sec.tube-algebras} the notion of a generalized SE-correspondence. Combining Proposition \ref{prop.morita} and Theorem \ref{thm.tube-vs-corr}, we thus obtain the following result.

\begin{corollary}\label{cor.corr-vs-tube-category}
Let $M$ be a II$_1$ factor and $\cC$ a tensor category of finite index $M$-bimodules having equal left and right dimension. There is a natural bijection between generalized SE-correspondences of the SE-inclusion $T \subset S$ and nondegenerate $*$-representations of the tube $*$-algebra $\cA$ of $\cC$.
\end{corollary}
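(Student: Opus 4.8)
The plan is to deduce the bijection by composing two correspondences already at our disposal: Theorem~\ref{thm.tube-vs-corr}, applied to the quasi-regular SE-inclusion $T \subset S$ together with the category $\cC_1$, and the Morita equivalence between $\cA$ and $\cA_1$ furnished by Proposition~\ref{prop.morita}. The first task is purely a matter of matching definitions. By definition a generalized SE-correspondence is a Hilbert $S$-bimodule $\cH$ that, as a $T$-bimodule, decomposes as a direct sum of the bimodules $\cH_\al \ot \overline{\cH_\be}$ with $\al,\be \in \cC$. Since $\Irr(\cC_1) = \{\al \ot \overline{\be} \mid \al,\be \in \Irr(\cC)\}$, these are exactly the Hilbert $S$-bimodules that, as $T$-bimodules, are direct sums of objects of $\cC_1$. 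As $\cC_1$ contains every finite index $T$-subbimodule of $L^2(S)$ (by \eqref{eq.decomp} these sit inside the $\cH_\al \ot \overline{\cH_\al}$), the inclusion $T \subset S$ and the category $\cC_1$ satisfy the hypotheses of Theorem~\ref{thm.tube-vs-corr}, which therefore yields a natural bijection between generalized SE-correspondences and nondegenerate right Hilbert $\cA_1$-modules.

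It then remains to transport nondegenerate right Hilbert $\cA_1$-modules to nondegenerate $*$-representations of $\cA$. Here I would invoke Proposition~\ref{prop.morita}, whose proof supplies an explicit $*$-isomorphism $\Psi : \cA_1 \recht \cA_\cO$ together with a strong Morita equivalence between $\cA_\cO$ and $\cA$. Through $\Psi$, a nondegenerate right Hilbert $\cA_1$-module becomes verbatim a nondegenerate right Hilbert $\cA_\cO$-module, and the strong Morita equivalence of these $*$-algebras with enough self-adjoint idempotents induces an equivalence between the categories of nondegenerate $*$-representations of $\cA_\cO$ and of $\cA$, hence a bijection on unitary equivalence classes. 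Finally, since $\cA$ carries the involution $\#$, the assignment $V \mapsto \rho(V^\#)$ turns a $*$-anti-homomorphism $\rho$ into a $*$-homomorphism, so right Hilbert $\cA$-modules and $*$-representations of $\cA$ carry the same data; naturality of the composite bijection follows from the functoriality of each of the three identifications.

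The one step that demands genuine care, rather than bookkeeping, is checking that the Morita equivalence of Proposition~\ref{prop.morita} is compatible with the Hilbert space structure. Concretely, I would verify that the bimodule implementing the strong Morita equivalence $\cA_\cO \sim \cA$ sends a nondegenerate \emph{Hilbert} $\cA_\cO$-module to a nondegenerate \emph{Hilbert} $\cA$-module, respecting positivity and completeness, so that the resulting correspondence is a bijection on unitary equivalence classes and not merely on algebraic modules. This reduces to tracking how the inner products rescale under the amplification by the multiplicity spaces $K_i$ and to confirming that nondegeneracy is preserved in both directions; with this in hand, the chain of identifications above delivers the stated bijection.
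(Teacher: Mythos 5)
Your proposal is correct and is exactly the paper's argument: the paper's entire proof consists of the sentence ``Combining Proposition~\ref{prop.morita} and Theorem~\ref{thm.tube-vs-corr}, we thus obtain the following result,'' i.e.\ precisely your composition of the bijection for $(T \subset S, \cC_1)$ with the Morita equivalence $\cA_1 \cong \cA_\cO \sim \cA$. The compatibility checks you flag (positivity, nondegeneracy, and the passage between right Hilbert modules and $*$-representations via $\#$) are left implicit in the paper, and your filling them in is a faithful elaboration rather than a different route.
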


\subsection[Representations of the tube $*$-algebra and unitary half braidings]{\boldmath Representations of the tube $*$-algebra and unitary half braidings}\label{sec.rep-tube-vs-half-braiding}

Given a II$_1$ factor $M$ and a tensor category $\cC$ of finite index $M$-bimodules having equal left and right dimension, we have seen in Section \ref{sec.tube-tensor} two equivalent ways to express the associated representation theory: as generalized SE-correspondences for the SE-inclusion $T \subset S$ and as representations of the tube $*$-algebra $\cA$ of $\cC$.

In \cite{NY15a}, it was proved that there is a natural bijection between generalized SE-correspon\-dences and \emph{unitary half braidings on ind-objects} for $\cC$. Formally, an ind-object $X \in \indC$ is a possibly infinite direct sum of objects in $\cC$. Then, $\indC$ is again a C$^*$-tensor category and we refer to
\cite[Section 1.2]{NY15a} for a rigorous definition. Following \cite{NY15a}, a unitary half braiding $\sigma$ on an ind-object $X \in \indC$ is a family of unitary morphisms $\sigma_\al \in (X \al,\al X)$ satisfying
\begin{itemize}
\item naturality, meaning that $(1 \ot V) \sigma_\al = \sigma_\be (V \ot 1)$ for all $V \in (\be,\al)$~;
\item $\sigma_\eps = \id$~;
\item multiplicativity, meaning that $\sigma_{\al\be} = (\sigma_\al \ot 1) (1 \ot \sigma_\be)$.
\end{itemize}
Let $\sigma$ be a unitary half braiding on the ind-object $X \in \indC$. Since $\cC$ is a category of finite index $M$-bimodules, we can realize $\indC$ as the category of Hilbert $M$-bimodules $\cH$ that can be written as a direct sum of $M$-bimodules belonging to $\cC$. For every $\al \in \cC$, we have the $M$-bimodular unitary operator
$$\sigma_\al : \cH_\al \ot_M X \recht X \ot_M \cH_\al \; .$$
Since $L^2(S)$ is the direct sum of the $T$-bimodules $\cH_\al \ot \overline{\cH_\al}$, $\al \in \Irr(\cC)$, we find a unitary operator
$$\Sigma : L^2(S) \ot_M X \recht X \ot_M L^2(S)$$
by composing
$$L^2(S) \ot_M X = \bigoplus_{\al \in \Irr(\cC)} (H_\al \ot_M X) \ot \overline{\cH_\al} \;\;\overset{\oplus \sigma_\al}{\longrightarrow}\;\;
\bigoplus_{\al \in \Irr(\cC)} (X \ot_M H_\al) \ot \overline{\cH_\al} = X \ot_M L^2(S) \; .$$
Define $\cH = L^2(S) \ot_M X$ and note that $\cH$ is a left Hilbert $S$-module. Defining
\begin{equation}\label{eq.right-module}
\xi \cdot x = \Sigma^* ( (\Sigma \xi) \cdot x) \;\; ,
\end{equation}
we also have a right Hilbert $S$-module structure on $\cH$. In \cite{NY15a}, it is proved that these left and right actions commute and that $\cH$ is a generalized SE-correspondence. Moreover, it is proved in \cite{NY15a} that all generalized SE-correspondences arise canonically in this way from a unitary half braiding on an ind-object.

In combination with Corollary \ref{cor.corr-vs-tube-category}, there is thus also a natural bijection between nondegenerate Hilbert space representations of the tube $*$-algebra $\cA$ and unitary half braidings on $\indC$. Both the tube $*$-algebra $\cA$ and the notion of a unitary half braiding are defined without referring to the realization of $\cC$ as a category of finite index $M$-bimodules. It is therefore not surprising that we can as follows construct this bijection in an abstract context of rigid C$^*$-tensor categories.

\begin{proposition} \label{prop.braiding-vs-rep}
Let $\cC$ be a rigid C$^*$-tensor category and denote by $\cA$ the associated tube $*$-algebra. The following defines a natural bijection between unitary half braidings on ind-objects for $\cC$ and nondegenerate right Hilbert $\cA$-modules $\cK$.
\begin{itemize}
\item Given a unitary half braiding $\sigma$ on $X \in \indC$, define the Hilbert spaces $\cK_i = (X,i)$, $i \in \Irr(\cC)$ and define $\cK$ as the orthogonal direct sum of all $\cK_i$, $i \in \Irr(\cC)$. The formula
    \begin{equation}\label{eq.from-braiding-to-rep}
    \xi \cdot V = (\Tr_\al \ot \id)(\sigma_\al^* (\xi \ot 1) V) \quad\text{for all}\;\; \xi \in \cK_i , V \in (i\al,\al j), i,j,\al \in \Irr(\cC)
    \end{equation}
    turns $\cK$ into a nondegenerate right Hilbert $\cA$-module satisfying $\cK_i = \cK \cdot p_i$.
\item Given a nondegenerate right Hilbert $\cA$-module $\cK$, write $\cK_i = \cK \cdot p_i$ for all $i \in \Irr(\cC)$ and define the ind-object $X \in \indC$ such that $(X,i) = \cK_i$ for all $i \in \Irr(\cC)$. There is a unique unitary half braiding $\sigma$ on $X$ satisfying
    \begin{equation}\label{eq.from-rep-to-braiding}
    \Tr_{\al j}((1 \ot \eta^*) \sigma_\al^* (\xi \ot 1) V) = \langle \xi \cdot V , \eta \rangle
    \end{equation}
    for all $i,j,\al \in \Irr(\cC)$, $\xi \in \cK_i$, $\eta \in \cK_j$, $S \in (i \al , \al j)$.
\end{itemize}
\end{proposition}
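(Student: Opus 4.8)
The plan is to verify directly that the two assignments \eqref{eq.from-braiding-to-rep} and \eqref{eq.from-rep-to-braiding} are well defined and mutually inverse, handling the forward map (half braiding $\recht$ module) and the backward map (module $\recht$ half braiding) separately, and then observing that the two defining formulas are simply adjoint to one another. Throughout I equip each $\cK_i = (X,i)$ with the Hilbert space inner product $\langle \xi,\eta\rangle = \Tr_i(\eta^* \xi)$.

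\textbf{Forward direction.} Given a unitary half braiding $\sigma$ on $X \in \indC$, I would first record the \emph{naturality extension}: formula \eqref{eq.from-braiding-to-rep} continues to hold verbatim for an arbitrary object $\al \in \cC$ and $V \in (i\al, \al j)$, not merely for $\al \in \Irr(\cC)$. Concretely, one decomposes $V$ into its irreducible components via \eqref{eq.general-elements-cA}, resolves the identity with \eqref{eq.onb}, and uses naturality of $\sigma$ (in the form $(1 \ot U)\sigma_\al = \sigma_\be(U \ot 1)$ for $U \in (\be,\al)$ and its adjoint) to check that the action of $V$ as an element of $\cA$ agrees with $(\Tr_\al \ot \id)(\sigma_\al^*(\xi \ot 1)V)$. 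This consistency is precisely what makes the assignment well defined on $\cA$. With it in place, the anti-homomorphism law $\xi \cdot (V \cdot W) = (\xi \cdot V)\cdot W$ for $V \in (i\al, \al k)$, $W \in (k\be, \be j)$ reduces, using $\Tr_{\al\be} = \Tr_\al \circ (\id \ot \Tr_\be)$ together with multiplicativity $\sigma_{\al\be} = (\sigma_\al \ot 1)(1 \ot \sigma_\be)$, to peeling off the inner $\be$-trace first (which produces $\xi \cdot V \mapsto (\,\cdot\,) \cdot W$) and the outer $\al$-trace second.

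The $*$-compatibility $\langle \xi \cdot V, \eta\rangle = \langle \xi, \eta \cdot V^\#\rangle$ for $\xi \in \cK_i$, $\eta \in \cK_j$, $V \in (i\al, \al j)$ is then a direct computation starting from $V^\# = (t_\al^* \ot 1 \ot 1)(1 \ot V^* \ot 1)(1 \ot 1 \ot s_\al)$, the conjugate equations \eqref{eq.standard-sol}, and unitarity of $\sigma_\al$; it expresses that tracing out $\al$ on the left and on the right match up through the standard solution $(s_\al,t_\al)$. Boundedness of the resulting anti-representation on the Hilbert space completion $\cK$ follows from Lemma \ref{lem.sum} (whose estimate $\|\pi(V)\| \leq \rd(\al)\|V\|$ is purely algebraic and applies equally to anti-representations), while $\sigma_\eps = \id$ and $\Tr_\eps = \id$ give $\xi \cdot p_i = \xi$ for $\xi \in \cK_i$, hence $\cK_i = \cK \cdot p_i$ and nondegeneracy.

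\textbf{Backward direction.} Given a nondegenerate right Hilbert $\cA$-module $\cK$, I would set $\cK_i = \cK \cdot p_i$, let $X$ be the ind-object with $(X,i) = \cK_i$, and \emph{define} $\sigma_\al$ through \eqref{eq.from-rep-to-braiding}. Since $V$ ranges over all of $(i\al, \al j)$ and $\xi,\eta$ over the $\cK_i$, this pins down every matrix coefficient of $\sigma_\al$, giving uniqueness immediately; using \eqref{eq.onb} one can moreover write $\sigma_\al$ explicitly in terms of the module maps. The substantive points are that each $\sigma_\al$ is unitary and that $(\sigma_\al)_{\al}$ is a half braiding. Unitarity $\sigma_\al^* \sigma_\al = 1 = \sigma_\al \sigma_\al^*$ is extracted from the module $*$-relation together with the resolution of identity in Lemma \ref{lem.sum}, applied in exactly the right slot; multiplicativity of $\sigma$ is the mirror image of the anti-homomorphism law of the module, and naturality is the mirror image of the compatibility \eqref{eq.general-elements-cA}, with $\sigma_\eps = \id$ coming from $p_i$ acting as the identity on $\cK_i$. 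Finally, the two constructions are inverse because \eqref{eq.from-rep-to-braiding} is nothing but the weak (inner-product) form of \eqref{eq.from-braiding-to-rep}: pairing $\xi \cdot V = (\Tr_\al \ot \id)(\sigma_\al^*(\xi \ot 1)V)$ with $\eta \in \cK_j$ and using $\Tr_j \circ (\Tr_\al \ot \id) = \Tr_{\al j}$ yields exactly $\Tr_{\al j}((1 \ot \eta^*)\sigma_\al^*(\xi \ot 1)V)$, so applying one construction after the other returns the original data.

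I expect the main obstacle to be the diagrammatic bookkeeping in the forward direction, namely the naturality extension of \eqref{eq.from-braiding-to-rep} to reducible objects and the passage from multiplicativity to the anti-homomorphism law, together with establishing unitarity of the reconstructed $\sigma_\al$ in the backward direction, where Lemma \ref{lem.sum} must be deployed precisely so that the module $*$-relation collapses into $\sigma_\al^*\sigma_\al = 1$.
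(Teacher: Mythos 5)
Your proposal is correct in outline and shares the paper's skeleton: the same two formulas, verification that \eqref{eq.from-braiding-to-rep} is a $*$-anti-representation, Lemma \ref{lem.sum} for boundedness, and reconstruction of $\sigma_\al$ from matrix coefficients (with your observation that \eqref{eq.from-rep-to-braiding} is just the weak form of \eqref{eq.from-braiding-to-rep}, via $\Tr_j \circ (\Tr_\al \ot \id) = \Tr_{\al j}$, neatly justifying that the constructions are mutually inverse — a point the paper leaves implicit). Where you diverge from the paper is in how the two delicate verifications are handled, and here your attributions are slightly off. Both the $\#$-compatibility in the forward direction and unitarity in the backward direction hinge on a single identity relating $\sigma_\al$ to $\sigma_{\albar}$, which the paper extracts from \emph{multiplicativity} and naturality applied to $t_\al$: from $\sigma_{\albar\al} = (\sigma_{\albar} \ot 1)(1 \ot \sigma_\al)$ one gets $1 \ot t_\al = (\sigma_{\albar} \ot 1)(1 \ot \sigma_\al)(t_\al \ot 1)$, hence $\sigma_\al = (s_\al^* \ot 1 \ot 1)(1 \ot \sigma_{\albar}^* \ot 1)(1 \ot 1 \ot t_\al)$; in the backward direction this, combined with the translated $*$-relation $\sigma_{\albar}^* = (1 \ot 1 \ot s_\al^*)(1 \ot \sigma_\al \ot 1)(t_\al \ot 1 \ot 1)$, yields $\sigma_{\albar}\sigma_{\albar}^* = 1$ and then $\sigma_\al^* \sigma_\al = 1$ purely algebraically. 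Your forward-direction attribution to ``conjugate equations and unitarity'' omits the essential multiplicativity input, and your backward-direction plan (module $*$-relation plus Lemma \ref{lem.sum}) is a genuinely different, Plancherel-type route: it can be completed — the right-handed sum gives isometry of $\sigma_\al^*$, and the left-handed analogue (obtained by applying Lemma \ref{lem.sum} to $\albar$ and using that $W \mapsto W^\#$ is anti-unitary between $(i\al,\al j)$ and $(j\albar,\albar i)$) gives isometry of $\sigma_\al$ — but it requires tracking the dimension normalizations of Remark \ref{rem.confusion}, whereas the paper's identity-chasing is shorter and uses only ingredients you establish anyway. Finally, the paper takes care that $\sigma_\al = (\sigma_{\al,ij})_{ij}$ is a row- and column-finite infinite matrix (for fixed $\al,j$ only finitely many $i$ with $(i\al,\al j) \neq \{0\}$, and symmetrically), which is what makes $\sigma_\al$ a composable morphism in $\indC$; you should not skip this, since $X$ is an honest ind-object. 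On the other hand, your explicit naturality extension of \eqref{eq.from-braiding-to-rep} to reducible $\al$ via \eqref{eq.general-elements-cA} is a step the paper glosses over, and is needed to make the anti-homomorphism verification clean.
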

\begin{proof}
Let $\sigma$ be a unitary half braiding on $X \in \indC$. Define the pre-Hilbert space $\cK^0$ as the algebraic direct sum of the Hilbert spaces $\cK_i$, $i \in \Irr(\cC)$. Consider the bilinear map $\cK^0 \times \cA \recht \cK^0$ given by \eqref{eq.from-braiding-to-rep}. The multiplicativity of $\sigma$, i.e.\ $\sigma_{\al \be} = (\sigma_\al \ot 1)(1 \ot \sigma_\be)$, implies that $(\xi \cdot V) \cdot W = \xi \cdot (V \cdot W)$ for all $\xi \in \cK^0$, $V,W \in \cA$.

Since $\sigma_{\albar \al} = (\sigma_{\albar} \ot 1)(1 \ot \sigma_\al)$, we get $1 \ot t_\al = (\sigma_{\albar} \ot 1)(1 \ot \sigma_\al)(t_\al \ot 1)$ and thus,
$$\sigma_\al = (s_\al^* \ot 1 \ot 1)(1 \ot \sigma_{\albar}^* \ot 1)(1 \ot 1 \ot t_\al) \; .$$
It follows that $\langle \xi \cdot V ,\eta \rangle = \langle \xi, \eta \cdot V^\# \rangle$ for all $\xi,\eta \in \cK$, $V \in \cA$.

By Lemma \ref{lem.sum}, this $*$-anti-representation of $\cA$ on $\cK^0$ is necessarily by bounded operators. So, we can pass to the completion and have found the nondegenerate right Hilbert $\cA$-module $\cK$.

Conversely, assume that we are given a nondegenerate right Hilbert $\cA$-module $\cK$. Define the ind-object $X \in \indC$ such that $(X,i) = \cK_i$ for all $i \in \Irr(\cC)$. Define $X_i$ as the sub-object of $X$ given as the direct sum of all sub-objects equivalent with $i$. For all fixed $i,j,\al \in \Irr(\cC)$ and every fixed $V \in (i \al,\al j)$, we have that $(\xi,\eta) \mapsto \langle \xi \cdot V, \eta\rangle$ is a bounded sesquilinear form on $\cK_i \times \cK_j$. So we have uniquely defined bounded morphisms $\sigma_{\al,ij} \in (X_i \al, \al X_j)$ satisfying
$$\Tr_{\al j}((1 \ot \eta^*) \sigma_{\al,ij}^* (\xi \ot 1) S) = \langle \xi \cdot S , \eta \rangle$$
for all $\xi \in \cK_i$, $\eta \in \cK_j$, $S \in (i \al , \al j)$.

For fixed $\al,j \in \Irr(\cC)$, there are only finitely many $i \in \Irr(\cC)$ for which $(i\al,\al j) \neq \{0\}$. So, for fixed $\al, j \in \Irr(\cC)$, there are only finitely many $i \in \Irr(\cC)$ for which $\sigma_{\al,ij} \neq 0$. Similarly, for fixed $\al,i \in \Irr(\cC)$, there are only finitely many $j \in \Irr(\cC)$ for which $\sigma_{\al,ij} \neq 0$. Define $\sigma_\al = (\sigma_{\al,ij})_{ij}$ as an infinite matrix indexed by $\Irr(\cC)$. We uniquely define $\sigma_\al$ for arbitrary objects $\al \in \cC$ such that naturality holds. By the finiteness properties, all these infinite matrices can be multiplied.

The multiplicativity of the right $\cA$-action on $\cK$ translates to $\sigma_{\al \be} = (\sigma_\al \ot 1)(1 \ot \sigma_\be)$. We then also get that $1 \ot t_\al = (\sigma_{\albar} \ot 1)(1 \ot \sigma_\al)(t_\al \ot 1)$ and thus,
$$1 = \sigma_{\albar} \; (1 \ot 1 \ot s_\al^*) (1 \ot \sigma_\al \ot 1) (t_\al \ot 1 \ot 1) \; .$$
The property that $\langle \xi \cdot V,\eta \rangle = \langle \xi, \eta \cdot V^\# \rangle$ translates to
$$\sigma_{\albar}^* = (1 \ot 1 \ot s_\al^*) (1 \ot \sigma_\al \ot 1) (t_\al \ot 1 \ot 1)$$
and we find that $\sigma_{\albar} \sigma_{\albar}^* = 1$.

From the formula $\sigma_{\albar \al} = (\sigma_{\albar} \ot 1)(1 \ot \sigma_\al)$, we also get that $t_\al^* \ot 1 = (1 \ot t_\al^*)(\sigma_{\albar} \ot 1)(1 \ot \sigma_\al)$ and thus,
$$1 = (1 \ot 1 \ot t_\al^*)(1 \ot \sigma_{\albar} \ot 1)(s_\al \ot 1 \ot 1) \; \sigma_\al \; ,$$
meaning that $1 = \sigma_\al^* \sigma_\al$. Altogether, it follows that for every $\al \in \cC$, the infinite matrix $\sigma_\al$ actually defines a unitary morphism $\sigma_\al \in (X \al,\al X)$. So we have found the required unitary half braiding $\sigma$ on $X$.
\end{proof}

\subsection{The tube algebra and the affine category of a planar algebra}\label{sec.affine-cat}

Jones introduced the \emph{affine category} associated to a subfactor inclusion (this notion is related to his annular category \cite{jones:annular}).  Let us briefly recall its construction.
Let $P=(P^{\pm}_k)$ be a planar algebra which can be viewed as the quotient of a universal planar algebra \cite{jones:planar} by a set of relations $R$.  Given a tangle $T$ in the universal planar algebra, one can separate its strings into three groups and draw it on the sphere with two disks labeled ``left'' and ``right'' removed (in the drawing the sphere is identified with the plane to which we add a point at infinity):

\begin{equation*}
 \begin{tikzpicture}[scale=.60]
 \draw[Box](0,0) rectangle (2,1);
  \draw[verythickline](0,.5) --node[rcount,scale=.75]{$i$}++ (-1,0);
  \draw[verythickline](2,.5) --node[rcount,scale=.75]{$j$}++ (1,0);
  \draw (3,.5) arc(0:360:-1); \node at (3.0,1.3){$\dagger$};
   \draw (-1,.5) arc(0:360:1);  \node at (-1.0,1.3){$\dagger$};
  \draw[verythickline](1,1)  --node[rcount,scale=.75]{$k$}++ (0,.75) to[out=90,in=180] ++(4,0.5) arc(90:-90:1.75);
   \draw[verythickline](1,0)  --node[rcount,scale=.75]{$k$}++ (0,-.75) to[out=-90,in=180] ++(4,-0.5);
 \node at (1,.5) {$T$}; \node[marked] at (-.1,1.05) {};
 \end{tikzpicture}\end{equation*}

Here thick lines stand for the indicated number of parallel strings.
The symbols $\dagger$ mark a preferred interval on each of the two disks, corresponding to region bounding both the preferred portion of the leftmost disk and $\star$ as well as the region where the topmost string of $T$ connects to the rightmost disk.  Such drawings make sense both for shaded planar algebras (the kind coming from subfactor theory) as well as the unshaded planar algebras.  We will mainly concentrate on the shaded case in this paper, although it is worth pointing out that our constructions work unaltered in the unshaded case as well.  In the shaded case, the additional data on the picture is the shading (not shown) so that each string lies at the boundary of a shaded and an unshaded region.  The shading of the picture is completely determined once we specify the shading of one of the regions (e.g., the region marked by $\star$).  In this case the shading of the region containing the left-most $\dagger$ is the same as the shading of the region containing $\star$, while the shading of the rightmost region containing $\dagger$ is either the same or opposite, depending on whether $k$ is even or not.   Alternatively, we can fix the shading of each the two regions containing the symbols $\dagger$ (note that this also fixes the parity of $k$).

Because the drawing is on the sphere, we can equally well draw it as
\begin{equation}\label{eq:tangle}
 \begin{array}{c}
 \begin{tikzpicture}[scale=.60]
 \draw[Box](0,0) rectangle (2,1);
  \draw[verythickline](0,.5) --node[rcount,scale=.75]{$i$}++ (-1,0);
  \draw[verythickline](2,.5) --node[rcount,scale=.75]{$j$}++ (1,0);
  \draw (3,.5) arc(0:360:-1);  \node at (3.0,1.3){$\dagger$};
   \draw (-1,.5) arc(0:360:-4.25); \node at (-0.5,1.8){$\dagger$};
  \draw[verythickline](1,1)  --node[rcount,scale=.75]{$k$}++ (0,.75) to[out=90,in=180] ++(4,0.5) arc(90:-90:1.75);
   \draw[verythickline](1,0)  --node[rcount,scale=.75]{$k$}++ (0,-.75) to[out=-90,in=180] ++(4,-0.5);
 \node at (1,.5) {$T$}; \node[marked] at (-.1,1.05) {};
\end{tikzpicture}\end{array}\end{equation} which is more customary (in the latter picture the inner disk is often called the ``input disk'' and the outer disk, the ``output disk'').

One considers the linear span of such diagrams (taken up to isotopy that fixes the boundaries of the annulus) and then takes a quotient by an appropriate subspace which ensures that any relation $R$ still holds when drawn in any open simply connected region inside the annulus.  The resulting quotient is denoted by $\mathscr{A}(P)$ and is called the affine category (or affine algebroid) associated to $P$.  We will sometimes write $\mathscr{A}$ when $P$ is understood.

Note that $\mathscr{A}(P)$ is bi-graded by the numbers of strings going to the left and right disks
as well as the choices of shading of the two regions marked by $\dagger$.

This linear space has a natural  multiplication $x\cdot y$ given by drawing the tangles for $x$ and $y$ as in \eqref{eq:tangle} and then gluing $y$ into the input disk of $x$
in a way that matches the regions marked by $\dagger$ (the multiplication is defined to be zero unless the the output disk of $y$ has the same number of string boundary points as the input disk of $x$ and compatible shading).  There is also an involution $\#$ given by an orientation-reversing diffeomorphism of the sphere that switches the two removed disks.

By \cite[Proposition 5.6]{Cu12} and \cite[Proposition 3.5]{GJ15}, the algebra $\mathscr{A}(P)$ is naturally Morita equivalent to the tube algebra $\mathcal{A}$ of Section \ref{sec.tube-tensor}.

For future purposes we point out that the algebra $\mathscr{A}$ has a natural subalgebra $\mathscr{B}$ consisting of sums of elements of the form

\begin{equation*}
 \begin{tikzpicture}[scale=.60]
 \draw[Box](0,0) rectangle (2,1);
  \draw[verythickline](0,.5) --node[rcount,scale=.75]{$i$}++ (-1,0);
  \draw[verythickline](2,.5) --node[rcount,scale=.75]{$j$}++ (1,0);
  \draw (3,.5) arc(0:360:-1); \node at (3.0,1.3){$\dagger$};
   \draw (-1,.5) arc(0:360:1);  \node at (-1.0,1.3){$\dagger$};
 \node at (1,.5) {$T$}; \node[marked] at (-.1,1.05) {};
 \end{tikzpicture}\end{equation*}

i.e., ones that have no strings looping around the right disk.
This subalgebra is also bi-graded according to the shading of input and output disks and the number of input/output strings.

Also for future purposes we would like to present a graphical picture for the tensor product $\mathscr{A}\otimes_\mathscr{B} \mathscr{A}$ as well as the higher tensor powers $\mathscr{A}\otimes_{\mathscr{B}} \otimes \cdots \otimes_{\mathscr{B}} \mathscr{A}$.  To do so, we consider the space $X_k$ which is the two-sphere $S^2$ with $k$ points $r_1,\ldots,r_k$ as well as two disks removed; these disks are labeled ``left'' and ``right''.  We then consider once again a planar algebra $P$ as a quotient of the universal planar algebra by a set of relations $R$.  This time, we consider the space $\mathscr{A}_k$ given by the linear span of isotopy classes of elements of the universal planar algebra drawn on $X_k$ modulo the linear span of relations from $R$ which are taken to hold true in any open simply connected region in $X_k$.  In the example below,  $T\in P_{\frac{1}{2}(i+j+2r+2s)}$ gives rise to an element in $\mathscr{A}_2$:

\begin{equation}\label{eq:tangleAlmostStandard}
\begin{array}{c}
 \begin{tikzpicture}[scale=.75]
 \draw[Box](0,0) rectangle (2,1);
  \draw[verythickline](0,.5) --node[rcount,scale=.75]{$i$}++ (-1,0);
  \draw[verythickline](2,.5) --node[rcount,scale=.75]{$j$}++ (1,0);
  \draw (3,.5) arc(0:360:-1);\node at (3.0,1.3){$\dagger$};
   \draw (-1,.5) arc(0:360:1);\node at (-1.0,1.3){$\dagger$};
 \node at (1,.5) {$T$}; \node[marked] at (-.1,1.05) {};
 \node at (6,.5){$\bullet$};
  \node at (8,.5){$\bullet$};
  \draw[verythickline](1.25,1) --node[rcount,scale=.75]{$r$}++ (0,0.5) arc(180:90:0.5) -- ++(4,0) arc(90:-90:1.5) -- ++(-4,0) arc(-90:-180:0.5);
  \draw[verythickline](1.25,0) --node[rcount,scale=.75]{$r$}++ (0,-0.5);
  \draw[verythickline](.75,1) --node[rcount,scale=.75]{$s$}++ (0,1) arc(180:90:0.5) -- ++(6,0) arc(90:-90:2) -- ++(-6,0) arc(-90:-180:0.5);
  \draw[verythickline](.75,0) --node[rcount,scale=.75]{$s$}++ (0,-1);
 \end{tikzpicture}\end{array}\end{equation}

We endow the space $\mathscr{A}_k$ with an $\mathscr{A}$-bimodule structure as follows.  The left multiplication action is given by gluing an element of $x\in \mathscr{A}$ drawn as in \eqref{eq:tangle} into the left disk of an element $\xi\in \mathscr{A}_k$ (with $x\xi=0$ if the number of string boundary points on the outer disk of $x$ is different from the number of boundary points on the left disk of $A$).  The right action (of the opposite algebra $\mathscr{A}^{op}$) is given by gluing an element of $\mathscr{A}$ into the right disk, with a similar requirement of equality of numbers of boundary points.

Note that by isotoping strings as illustrated below
\begin{equation}\label{eq:addcup}
\begin{array}{c}
 \begin{tikzpicture}[scale=.75]
 \draw[Box](0,0) rectangle (2,1);
  \draw[verythickline](0,.5) --++ (-1,0);
  \draw[verythickline](2,.5) --++ (1,0);
  \draw (3,.5) arc(0:360:-0.5);
   \draw (-1,.5) arc(0:360:0.5);
 \node at (1,.5) {$T$}; \node[marked] at (-.1,1.05) {};
 \node at (4.5,.5){$\bullet$};
 \draw[verythickline](0.75,1) --++(0,0.5) ++(0,0.5) node{$\vdots$};
  \draw[verythickline](0.75,0) --++(0,-0.5) ++(0,-.150) node{$\vdots$};
  \draw(1.25,1) --++ (0,0.5) arc(180:90:0.5) -- ++(3,0) arc(90:0:.5) --++(0,-2) arc(-180:0:0.25) ++(0,0.5) node{$\vdots$};
 \end{tikzpicture}
 \end{array} \rightsquigarrow \begin{array}{c}
 \begin{tikzpicture}[scale=.60]
 \draw[Box](0,0) rectangle (2,1);
  \draw[verythickline](0,.5) --++ (-1,0);
  \draw[verythickline](2,.5) --++ (1,0);
  \draw (3,.5) arc(0:360:-0.5);
   \draw (-1,.5) arc(0:360:0.5);
 \node at (1,.5) {$T$}; \node[marked] at (-.1,1.05) {};
 \node at (4.5,.5){$\bullet$};
 \draw[verythickline](0.75,1) --++(0,0.5) ++(0,0.5) node{$\vdots$};
  \draw[verythickline](0.75,0) --++(0,-0.5) ++(0,-.150) node{$\vdots$};
  \draw(1.25,1) --++ (0,0.5) arc(180:90:0.5) -- ++(3,0) arc(90:0:.5) --++(0,-1.825) arc (0:-90:0.25) --++(-2.75,0) arc(-90:-180:0.25) arc(0:180:0.25) --++(0,-0.25) arc(-180:-90:0.25)--++(3.5,0)  arc(-90:0:0.25) ++(0,0.5) node{$\vdots$};
  \draw[thick,rounded corners,dashed] (-0.25,1.25) rectangle (2.25,-0.25);
 \end{tikzpicture}
 \end{array}\end{equation}
and viewing the inside of the dashed region as another planar algebra element, $T'$, obtained by adding $\cap$ to the bottom of $T$, one can always draw an element of $\mathscr{A}_k$ in the form of \eqref{eq:tangleAlmostStandard}.

  Alternatively, using the fact that the picture is drawn on the sphere, any element can be viewed as linear combination of elements of the form
\begin{equation}\label{eq:tangleStandard}
\begin{array}{c}
 \begin{tikzpicture}[scale=.75]
 \draw[Box](0,0) rectangle (2,1);
  \draw[verythickline](0,.5) --node[rcount,scale=.75]{$i$}++ (-1,0);
  \draw[verythickline](2,.5) --node[rcount,scale=.75]{$j$}++ (1,0);
  \draw (3,.5) arc(0:360:-1); \node at (3.0,1.3){$\dagger$};
   \draw (-1,.5) arc(0:360:-6 and -3.25); \node at (-0.4,1.4){$\dagger$};
 \node at (1,.5) {$T$}; \node[marked] at (-.1,1.05) {};
 \node at (6,.5){$\bullet$};
  \node at (8,.5){$\bullet$};
  \draw[verythickline](1.25,1) --node[rcount,scale=.75]{$r$}++ (0,0.5) arc(180:90:0.5) -- ++(4,0) arc(90:-90:1.5) -- ++(-4,0) arc(-90:-180:0.5);
  \draw[verythickline](1.25,0) --node[rcount,scale=.75]{$r$}++ (0,-0.5);
  \draw[verythickline](.75,1) --node[rcount,scale=.75]{$s$}++ (0,1) arc(180:90:0.5) -- ++(6,0) arc(90:-90:2) -- ++(-6,0) arc(-90:-180:0.5);
  \draw[verythickline](.75,0) --node[rcount,scale=.75]{$s$}++ (0,-1);
 \end{tikzpicture}  \end{array}\end{equation}

\begin{lemma}\label{lem.tensor-product-planar}
$\mathscr{A}_k$ is isomorphic to the $k$-fold tensor product  $\mathscr{A}\otimes_{\mathscr{B}} \cdots \otimes_{\mathscr{B}} \mathscr{A}$.
\end{lemma}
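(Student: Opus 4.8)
The plan is to prove the isomorphism by induction on $k$, realizing the passage from $\mathscr{A}_{k-1}$ to $\mathscr{A}_k$ as tensoring with one further copy of $\mathscr{A}$ over $\mathscr{B}$. The geometric content is carried by two mutually inverse maps: a \emph{gluing} map that stacks diagrams along a seam, and a \emph{cutting} map that slices a diagram of $\mathscr{A}_k$ along a circle separating one marked point from the rest. Everything is organized as an isomorphism of $\mathscr{A}$-bimodules, the left and right actions being gluing into the left and right disks respectively.

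First I would dispose of the base case $k=1$, namely $\mathscr{A}_1\cong\mathscr{A}$. An element of $\mathscr{A}$ is drawn with strings looping around the right disk, whereas an element of $\mathscr{A}_1$ has one marked point around which strings may loop; the isotopy displayed in \eqref{eq:addcup} trades a family of strings looping around the right disk for strings looping around the marked point (the returning strands being absorbed into the tangle by a cup), and it is manifestly reversible. Since $\mathscr{B}$ consists precisely of the diagrams with \emph{no} strings looping around the right disk, the same reasoning identifies the no-marked-point case with $\mathscr{B}$; this explains why $\mathscr{B}$ is the correct ring to tensor over, as it represents an empty seam region that may be inserted or removed freely.

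For the inductive step I would produce an $\mathscr{A}$-bimodule isomorphism $\mathscr{A}_k\cong\mathscr{A}_{k-1}\otimes_{\mathscr{B}}\mathscr{A}$. Given $\xi\in\mathscr{A}_k$ in the standard form \eqref{eq:tangleStandard}, draw a circle $C$ on the sphere separating the marked points $r_1,\dots,r_{k-1}$ and the left disk from $r_k$ and the right disk. The part inside $C$ is an $\mathscr{A}_{k-1}$-diagram whose output disk is $C$, and the part outside $C$ is an $\mathscr{A}$-diagram ($=\mathscr{A}_1$-diagram) whose input disk is $C$; cutting along $C$ therefore produces an element $\eta\otimes x$ of $\mathscr{A}_{k-1}\otimes_{\mathscr{B}}\mathscr{A}$, and gluing back along $C$ is the inverse. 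The map descends to the $\mathscr{B}$-balanced tensor product precisely because a diagram $b\in\mathscr{B}$ sitting on $C$ has no string looping around $C$ and can hence be isotoped to either side of the cut, so that $\eta\cdot b\otimes x$ and $\eta\otimes b\cdot x$ have equal image; this is exactly where the defining property of $\mathscr{B}$ is used. Bimodularity is immediate, since the left and right disks lie on opposite sides of $C$. Composing with the induction hypothesis $\mathscr{A}_{k-1}\cong\mathscr{A}\otimes_{\mathscr{B}}\cdots\otimes_{\mathscr{B}}\mathscr{A}$ then gives the result.

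The hard part will be the well-definedness of the cutting map, i.e.\ showing that the only relations it imposes are the $\otimes_{\mathscr{B}}$-balancing relations and no extra identifications — this is what injectivity of the gluing map amounts to. Concretely, one must verify that the value is independent of the placement of $C$ (any two separating circles are isotopic across the annular region between $r_{k-1}$ and $r_k$) and that the planar-algebra relations $R$, which by construction hold in every simply connected region of $X_k$, are compatible with the cut; phrased algebraically, this says that the standard form \eqref{eq:tangleStandard} admits a decomposition along $C$ that is unique modulo $\mathscr{B}$, which one checks by resolving the strands crossing $C$ against a basis of the matching $\mathscr{B}$-morphism space and its trace-dual. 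Surjectivity, by contrast, is already in hand: it is the reduction of an arbitrary diagram on $X_k$ to the standard form \eqref{eq:tangleStandard} via the isotopies illustrated in \eqref{eq:addcup}.
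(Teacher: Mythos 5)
Your overall architecture coincides with the paper's: induction on $k$, a gluing/cutting correspondence along a circle that splits off one tensor factor of $\mathscr{A}$ (the paper splits it off on the left, you on the right, which is immaterial), surjectivity via reduction to the standard form \eqref{eq:tangleStandard}, and the observation that a diagram lying in a collar of the seam with no strings looping around it belongs to $\mathscr{B}$ and can be pushed to either side. The gap is in the one step you yourself flag as hard. Showing that cutting imposes no identifications beyond the $\otimes_{\mathscr{B}}$-balancing ones (equivalently, that the gluing map is injective --- these are the same crux, stated for the two directions of the map) is the entire content of the lemma, and your proposed mechanism, ``resolving the strands crossing $C$ against a basis of the matching $\mathscr{B}$-morphism space and its trace-dual,'' is not an argument in this setting. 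The lemma is stated for an arbitrary planar algebra presented as a quotient of the universal planar algebra by a set of relations $R$: no positivity, semisimplicity or finite dimensionality is assumed, so there is no nondegenerate trace pairing and no dual bases to resolve against. And even where such bases do exist (subfactor planar algebras), the device does not address the actual difficulty: an element of $\mathscr{A}_k$ is a class of diagrams modulo isotopy and modulo relations from $R$ applied in arbitrary open simply connected regions of $X_k$, and such a region may meet every circle isotopic to $C$; it is exactly these moves that must be controlled.

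The paper's proof supplies precisely this missing idea, and it is purely diagrammatic: any isotopy or relation from $R$ is applied inside some open simply connected region; if that region meets the seam, it can be slid along the seam circle --- deforming the strands entering and leaving it, in the manner of \eqref{eq:addcup} --- until it sits in the collar between the two halves $T$ and $S$, at which point the identification it induces is literally $Ta\otimes S = T\otimes aS$ with $a\in\mathscr{B}$. Nothing in your sketch substitutes for this sliding argument, so the well-definedness of your cutting map (equivalently, injectivity of your gluing map) remains unproven. A secondary error: your justification that $\mathscr{B}$ is ``the correct ring to tensor over'' by identifying the no-marked-point case with $\mathscr{B}$ is wrong --- diagrams on the sphere with two disks removed and no marked points form $\mathscr{A}$ itself, since strings may still loop around the right disk; $\mathscr{B}$ is the proper subalgebra of such diagrams with no looping strings, and its role in the lemma is only the collar-absorption property that you correctly invoke elsewhere.
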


\begin{proof}
The proof is by induction on $k$; the case $k=1$ is clear.  Assuming the isomorphism to hold for $k$, we note that there is map from  $\mathscr{A}\otimes_{\mathscr{B}} \mathscr{A}_k$ to $\mathscr{A}_{k+1}$ given by:

\begin{equation}\label{eq:tangleMap}
\begin{split}
 \begin{array}{c}
 \begin{tikzpicture}[scale=.75]
 \draw[Box](0,0) rectangle (2,1);
  \draw[verythickline](0,.5) --node[rcount,scale=.75]{$i$}++ (-1,0);
  \draw[verythickline](2,.5) --node[rcount,scale=.75]{$j$}++ (1,0);
  \draw (3,.5) arc(0:360:-0.5); \node at (3.0,1){\scriptsize $\dagger$};  \node at (-0.3,1.6){\scriptsize $\dagger$};
   \draw (-1,.5) arc(0:360:-3.0 and -2.5);
  \draw[verythickline](1,1)  --++ (0,0) to[out=90,in=180] ++(2.5,0.5) arc(90:-90:1);
   \draw[verythickline](1,0)  --++ (0,0) to[out=-90,in=180] ++(2.5,-0.5);
 \node at (1,.5) {$T$}; \node[marked] at (-.1,1.05) {};
 \end{tikzpicture}\end{array}
\otimes
\begin{array}{c}
 \begin{tikzpicture}[scale=.75]
 \draw[Box](0,0) rectangle (2,1);
  \draw[verythickline](0,.5) --node[rcount,scale=.75]{$k$}++ (-1,0);
  \draw[verythickline](2,.5) --node[rcount,scale=.75]{$l$}++ (1,0);
  \draw (3,.5) arc(0:360:-0.5);
   \draw (-1,.5) arc(0:360:-4 and -3);  \node at (3.0,1){\scriptsize $\dagger$};  \node at (-0.3,1.6){\scriptsize $\dagger$};
 \node at (1,.5) {$S$}; \node[marked] at (-.1,1.05) {};
 \node at (4.5,.5){$\bullet$};
  \node at (5.5,.5){$\bullet$};
  \draw[verythickline](1.25,1) --++ (0,0.5) arc(180:90:0.5) -- ++(1.75,0) arc(90:-90:1.5) -- ++(-1.75,0) arc(-90:-180:0.5);
  \draw[verythickline](1.25,0) --++ (0,-0.5);
  \draw[verythickline](.75,1) --++ (0,1) arc(180:90:0.5) -- ++(3,0) arc(90:-90:2) -- ++(-3,0) arc(-90:-180:0.5);
  \draw[verythickline](.75,0) --++ (0,-1);
 \end{tikzpicture}  \end{array} \\
 \mapsto \delta_{jk}
\begin{array}{c}
 \begin{tikzpicture}[scale=.75]
 \draw[Box](-.5,0) rectangle (0.25,1);
 \draw[Box](0.5,0) rectangle (2,1);
  \draw[verythickline](-0.5,.5) --node[rcount,scale=.75]{$i$}++ (-1.5,0);
  \draw[verythickline](2,.5) --node[rcount,scale=.75]{$l$}++ (1,0);
  \draw (3,.5) arc(0:360:-0.5);
   \draw (-2,.5) arc(0:360:-5.25 and -3.5);  \node at (3.0,1){\scriptsize $\dagger$};  \node at (-1.3,1.6){\scriptsize $\dagger$};
 \node at (-0.125,.5) {$T$};
 \node at (1.25,.5) {$S$};
 \node at (4.5,.5){$\bullet$};
  \node at (5.5,.5){$\bullet$};
    \node at (7,.5){$\bullet$};
  \draw[verythickline](1.5,1) --++ (0,0.5) arc(180:90:0.5) -- ++(1.5,0) arc(90:-90:1.5) -- ++(-1.5,0) arc(-90:-180:0.5);
  \draw[verythickline](1.5,0) --++ (0,-0.5);
  \draw[verythickline](1,1) --++ (0,1) arc(180:90:0.5) -- ++(2.75,0) arc(90:-90:2) -- ++(-2.75,0) arc(-90:-180:0.5);
    \draw[verythickline](1,0) --++ (0,-1.0);
   \draw[verythickline](-0.125,1) --++ (0,1.25) arc(180:90:0.75) -- ++(4.75,0) arc(90:-90:2.5) -- ++(-4.75,0) arc(-90:-180:0.75);
  \draw[verythickline](-0.125,0) --++ (0,-1.25);
  \draw[verythickline](0.25,0.5) --++ (0.25,0);
 \end{tikzpicture}  \end{array}\end{split}\end{equation}

It is clear that this map is surjective, since every element of the form \eqref{eq:tangleStandard} can be clearly obtained in its image.

We claim that the map is injective.  To see this, consider the construction of the tangle \eqref{eq:tangleStandard}:

$$
\begin{array}{c}
 \begin{tikzpicture}[scale=.75]
    \filldraw[fill=green!20!white] (2.5,2.5) arc(0:360:0.825);
 \draw[Box](-.5,0) rectangle (0.25,1);
 \draw[Box](0.5,0) rectangle (2,1);
  \draw[verythickline](-0.5,.5) --node[rcount,scale=.75]{$i$}++ (-1.5,0);
  \draw[verythickline](2,.5) --node[rcount,scale=.75]{$l$}++ (1,0);
  \draw (3,.5) arc(0:360:-0.5);
   \draw (-2,.5) arc(0:360:-5.25 and -3.5);\node at (3.0,1){\scriptsize $\dagger$};  \node at (-1.3,1.6){\scriptsize $\dagger$};
 \node at (-0.125,.5) {$T$};
 \node at (1.25,.5) {$S$};
 \node at (4.5,.5){$\bullet$};
  \node at (5.5,.5){$\bullet$};
    \node at (7,.5){$\bullet$};
  \draw[verythickline](1.5,1) --++ (0,0.5) arc(180:90:0.5) -- ++(1.5,0) arc(90:-90:1.5) -- ++(-1.5,0) arc(-90:-180:0.5);
  \draw[verythickline](1.5,0) --++ (0,-0.5);
  \draw[verythickline](1,1) --++ (0,1) arc(180:90:0.5) -- ++(2.75,0) arc(90:-90:2) -- ++(-2.75,0) arc(-90:-180:0.5);
    \draw[verythickline](1,0) --++ (0,-1.0);
   \draw[verythickline](-0.125,1) --++ (0,1.25) arc(180:90:0.75) -- ++(4.75,0) arc(90:-90:2.5) -- ++(-4.75,0) arc(-90:-180:0.75);
  \draw[verythickline](-0.125,0) --++ (0,-1.25);
  \draw[verythickline](0.25,0.5) --++ (0.25,0);
  \draw[dashed](0.325,1) --++ (0,1.0) arc(180:90:0.75) -- ++(3.25,0) arc(90:-90:2.25) -- ++(-3.25,0) arc(-90:-180:0.75) --++(0,1.0);
 \end{tikzpicture}  \end{array}
 $$
and let us analyze the effect of removing the dashed line in the figure above. The removal of this line permits us to apply any isotopy or a relation from $R$ in some open simply connected region that intersects the dashed line (drawn in green on the picture).  We may now move this region along the dashed circle until it is in between $T$ and $S$, deforming the strings going in and out of the green region in the process (in a way similar to \eqref{eq:addcup}).  But this means that by possibly modifying $T$ and $S$ we may assume that the relation precisely amounts to identifying $T a\otimes S$ with $T\otimes aS$ for $a\in \mathscr{B}$.
\end{proof}

\section[Cohomology of quasi-regular inclusions of von Neumann algebras]{Cohomology of quasi-regular inclusions of von Neumann \linebreak algebras}

Throughout this section, we fix a tracial von Neumann algebra $(S,\tau)$ with von Neumann subalgebra $T \subset S$.

\begin{definition}\label{def.cohom-quasireg}
Whenever $\cH$ is a Hilbert $S$-bimodule and $T \subset \cS \subset \QN_S(T)$ is an intermediate $*$-algebra, we define the cohomology $H^n(T \subset \cS,\cH)$ as the $n$-th cohomology of the complex
$$C^0 \overset{\delta}{\longrightarrow} C^1 \overset{\delta}{\longrightarrow} \cdots$$
where $C^0 = \cH^c_T =$ the space of $T$-central vectors in the rank completion $\cH^c$ of $\cH$ as a $\cZ(T)$-bimodule,
\begin{align*}
& C^n = \text{the space of $T$-bimodular maps from}\; \underbrace{\cS \ot_T \cdots \ot_T \cS}_{n \;\text{factors}} \;\text{to}\; \cH^c \quad\text{and}\\
& \delta : C^n \recht C^{n+1} : \delta = \sum_{i=0}^{n+1} (-1)^i \delta_i \quad\text{is given by}\\
& (\delta_0 c)(x_0 \ot \cdots \ot x_n) = x_0 \cdot c(x_1 \ot \cdots \ot x_n) \; ,\\
& (\delta_i c)(x_0 \ot \cdots \ot x_n) = c(x_0 \ot \cdots \ot x_{i-1} x_i \ot \cdots \ot x_n) \quad\text{for}\;\; i=1,\ldots,n \quad\text{and}\\
& (\delta_{n+1} c)(x_0 \ot \cdots \ot x_n) = c(x_0 \ot \cdots \ot x_{n-1}) \cdot x_n \; .
\end{align*}
\end{definition}

\begin{remark}\label{rem.cohom-quasireg}
\begin{enumlist}
\item Note that the maps $\delta_0$ and $\delta_{n+1}$ are well defined because by Lemma \ref{lem.rank-cont}, the rank completion $\cH^c$ is an $\cS$-bimodule. In the definition of $C^n$, we denote by $\ot_T$ the algebraic relative tensor product.

\item When $T$ is a factor, and in particular when $T' \cap S = \C 1$, the rank completion over $\cZ(T)$ in Definition \ref{def.cohom-quasireg} disappears. However, as we will see below, when $T \subset S$ is a Cartan subalgebra, it is crucial to take the rank completion in order to recover the usual cohomology theory of the underlying equivalence relation.

\item We denote $Z^n(T \subset \cS,\cH) = \Ker(\delta : C^n \recht C^{n+1})$ and $B^n(T \subset \cS,\cH) = \Im(\delta : C^{n-1} \recht C^n)$. Note that $Z^1(T \subset \cS,\cH)$ precisely is the space of $T$-bimodular \emph{derivations} from $\cS$ to $\cH^c$, i.e.\ the space of $T$-bimodular maps $c : \cS \recht \cH^c$ satisfying $c(xy) = x c(y) + c(x) y$ for all $x,y \in \cS$.

\item The cohomology $H^n(T \subset \cS,\cH)$ only `sees' the part of $\cH$ that, as a Hilbert $T$-bimodule, is a direct sum of Hilbert $T$-bimodules that appear as subbimodules of some tensor power $L^2(S) \ovt_T \cdots \ovt_T L^2(S)$. Indeed, replacing $\cH$ by this $T$-subbimodule, the cochain spaces $C^n$ do not change.
    
\item In the general context of an algebra $\cS$ with a subalgebra $T$ and a given $\cS$-bimodule, the complex in Definition \ref{def.cohom-quasireg} already appeared in \cite[Section 3]{H56}.
\end{enumlist}
\end{remark}

We define the $L^2$-cohomology of $T \subset \cS$ as the cohomology with values in the following ``universal'' coarse $S$-bimodule (relative to $T$)~:
\begin{equation}\label{eq.coarse}
\begin{split}
\cHreg & = (L^2(S) \ovt_T L^2(S)) \; \oplus \; (L^2(S) \ovt_T L^2(S) \ovt_T L^2(S)) \oplus \cdots \\
& = L^2(S) \ovt_T \cH \ovt_T L^2(S) \quad\text{with}\quad \cH = L^2(T) \oplus L^2(S) \oplus (L^2(S) \ovt_T L^2(S)) \oplus \cdots \; .
\end{split}
\end{equation}
At first sight, it may sound more natural to consider $L^2(S) \ovt_T L^2(S)$ as the coarse $S$-bimodule, but then we do not have a Fell absorption principle and, as seen in Lemma \ref{lem.reg-triv-A}, we miss part of the regular representation from the point of view of the tube algebra.

We define $\cM(T \subset S)$ as the von Neumann algebra $\End_{S-S}(\cHreg)$. We have the following natural normal semifinite faithful weight $\mu$ on $\cM(T \subset S)$~: whenever $\cH_1$ is a bifinite Hilbert $T$-bimodule and $W : \cH_1 \recht \cH$ is a $T$-bimodular isometry with $p = WW^*$, we define the $T$-bimodular isometry $V : \cH_1 \recht \cHreg$ given by $V(\xi) = 1 \ot \xi \ot 1$ and put
\begin{equation}\label{eq.weight-on-End}
\mu((1 \ot p \ot 1) x (1 \ot p \ot 1)) = \Tr_{\cH_1}(V^* x V) \quad\text{for all}\;\; x \in \cM(T \subset S) \; ,
\end{equation}
where $\Tr_{\cH_1}$ is the canonical trace on $\End_{T-T}(\cH_1)$ (see \eqref{eq.canonical-trace}).

Note that $\End_{S-S}(L^2(S) \ovt_T L^2(S))$ is a corner of $\cM(T \subset S)$ and that the restriction of $\mu$ to this corner is the vector state given by the vector $1 \ot 1$.

\begin{definition} \label{def.L2-cohom-quasireg}
Let $T \subset \cS \subset \QN_S(T)$ be an intermediate $*$-algebra. We define the \emph{$L^2$-cohomology} of $T \subset \cS$ as $H^n(T \subset \cS,\cHreg)$.

Note that $H^n(T \subset \cS,\cHreg)$ canonically is an $\cM(T \subset S)$-module. In the unimodular case, i.e.\ when $\mu$ is a trace on $\cM(T \subset S)$, we define
$$\bnl(T \subset \cS) := \dim_{\cM(T \subset S)} H^n(T \subset \cS,\cHreg) \; .$$
Here, we use L\"{u}ck's dimension function for arbitrary modules over a von Neumann algebra with a semifinite trace, see \cite[Section 6.1]{Lu02} and \cite[Section A.4]{KPV13}.
\end{definition}

We are mainly interested in the following two types of quasi-regular von Neumann algebra inclusions $T \subset S$~: Cartan subalgebras and quasi-regular irreducible subfactors. We prove below that for a Cartan subalgebra $A \subset M$ of a tracial von Neumann algebra and $\cS = \lspan \cN_M(A)$, the cohomology theory in Definition \ref{def.cohom-quasireg} amounts to the usual cohomology theory for the underlying equivalence relation $\cR$ (and, in particular, forgets to scalar $2$-cocycle on $\cR$ that is given by $A \subset M$). In that case, unimodularity is automatic, $\cM(A \subset M)$ is an infinite amplification of $L(\cR)$, and $\bns(A \subset \cS)$ equals the $n$-th $L^2$-Betti number of $\cR$ in the sense of Gaboriau, \cite{Ga01}.

When $T \subset S$ is an irreducible quasi-regular subfactor, we interpret the cohomology theory in Definition \ref{def.cohom-quasireg} as a natural Hochschild type cohomology for the associated tube $*$-algebra. We prove that the unimodularity assumption is equivalent to the requirement that every $T$-subbimodule of $L^2(S)$ has equal left and right dimension, i.e.\ that $T \subset S$ is unimodular in the sense of Definition \ref{def.inclusion-unimodular}.

\section{Cohomology of Cartan subalgebras}\label{sec.cohom-cartan}

Fix a tracial von Neumann algebra $(M,\tau)$ with separable predual and Cartan subalgebra $L^\infty(X) \subset M$. Denote by $\cR$ the associated countable probability measure preserving (pmp) equivalence relation on $(X,\mu)$.

By \cite{FM75}, we know that $M$ is canonically isomorphic with $L_\Omega(\cR)$, where $\Omega : \cR^{(2)} \recht \T$ is a scalar $2$-cocycle on $\cR$. Here, $\cR^{(2)} = \{(x,y,z) \in X \times X \times X \mid (x,y) \in \cR \;\text{and}\; (y,z) \in \cR \}$. We similarly define $\cR^{(n)}$, and by convention, $\cR^{(0)} = X$. Recall that all $\cR^{(n)}$ are equipped with a natural $\sigma$-finite measure given by integration over $(X,\mu)$ of the counting measure through the projection $\pi : \cR^{(n)} \recht X : (x_0,\ldots,x_n) \mapsto x_0$.

A \emph{unitary representation} of $\cR$ consists of a measurable field of Hilbert spaces $(\cK_x)_{x \in X}$ and a measurable family of unitary operators $U(x,y) : K_y \recht K_x$ for all $(x,y) \in \cR$ such that $U(x,y) U(y,z) = U(x,z)$ for a.e.\ $(x,y,z) \in \cR^{(2)}$. Put $A = L^\infty(X)$. The integration of the field $(\cK_x)_{x \in X}$ yields the Hilbert $A$-module $\cK$ given by the $L^2$-sections of the field. Denote by $[\cR]$ the full group of $\cR$. For every $\al \in [\cR]$, define the unitary operator $U(\al)$ on $\cK$ given by
$$\bigl( U(\al)\xi \bigr)(x) = U(x,\al^{-1}(x)) \, \xi(\al^{-1}(x)) \quad\text{for all}\;\; \xi \in \cK , x \in X \; .$$
Define the Hilbert space $\cH = L^2(M) \ovt_A \cK$. The formulae
\begin{equation}\label{eq.M-bimod}
x \cdot (\xi \ot \eta) \cdot a v = x \xi a v \ot U(\al_v)^* \eta \quad\text{for all}\;\; x \in M, \xi \in L^2(M), \eta \in \cK, a \in A, v \in \cN_M(A)
\end{equation}
turn $\cH$ into a Hilbert $M$-bimodule. Here, $\al_v$ is the element of $[\cR]$ defined by $v \in \cN_M(A)$.

Note that the vectors $1 \ot \xi \in \cH$ are all $A$-central. Therefore, $\cH$ is generated, as a Hilbert $M$-bimodule, by $A$-central vectors. Conversely, it is easy to check that every Hilbert $M$-bimodule generated by $A$-central vectors arises in this way.

Given a unitary representation $U$ of $\cR$ on $\cK$, the cohomology $H^n(\cR,\cK)$ is defined as follows. Define the field $(\cK^{(n)}_x)_{x \in \cR^{(n)}}$ given by $\cK^{(n)}_x = \cK_{\pi(x)}$. Denote by $C^n$ the set of measurable sections of this field, identifying two sections when they coincide a.e. Then, $H^n(\cR,\cK)$ is defined as the cohomology of the complex
$$C^0 \overset{\delta}{\longrightarrow} C^1 \overset{\delta}{\longrightarrow} \cdots$$
where
\begin{align*}
& \delta : C^n \recht C^{n+1} : \delta = \sum_{i=0}^{n+1} (-1)^i \delta_i \quad\text{and}\\
& (\delta_0 \xi)(x_0,\ldots,x_{n+1}) = U(x_0,x_1) \xi(x_1,\ldots,x_n) \quad , \\
& (\delta_i \xi)(x_0,\ldots,x_{n+1}) = \xi(x_0,\ldots,\widehat{x_i},\ldots,x_{n+1}) \quad\text{for}\;\; i=1,\ldots,n+1 \; .
\end{align*}

The \emph{regular representation} is given by $\cKreg_x = \ell^2(\orbit(x))$ with $U(x,y)$ being the identity identification between $\orbit(x)$ and $\orbit(y)$ when $(x,y) \in \cR$. The $L^2$-cohomology of $\cR$ is thus given by the cohomology of the complex
$$\Creg^0 \overset{\delta}{\longrightarrow} \Creg^1 \overset{\delta}{\longrightarrow} \cdots$$
where $\Creg^n$ consists of the measurable functions $\xi : \cR^{(n+1)} \recht \C$ with the property that for a.e.\ $y \in \cR^{(n)}$, we have
$$\sum_{x \in \orbit(\pi(y))} |\xi(x,y)|^2 < \infty$$
and
$$(\delta \xi)(x,y_0,\ldots,y_{n+1}) = \sum_{j=0}^{n+1} (-1)^j \xi(x,y_0,\ldots,\widehat{y_j},\ldots,y_{n+1}) \; .$$
We can then turn $\Creg^n$ into a left $L(\cR)$-module by defining
$$(a u_\vphi \cdot \xi)(x,y) = a(x) \, \xi(\vphi^{-1}(x),y) \quad\text{for all}\;\; a \in A, \vphi \in [\cR], \xi \in \Creg^n , (x,y) \in \cR^{(n+1)} \; .$$
To define $b \cdot \xi$ for an arbitrary element $b \in L(\cR)$, note that $L^2(\cR^{(n+1)})$ is a Hilbert $L(\cR)$-$L^\infty(\cR^{(n)})$-bimodule. We can view $\Creg^n$ as the rank completion of $L^2(\cR^{(n+1)})$ viewed as a right $L^\infty(\cR^{(n)})$-module. This rank completion canonically stays a left $L(\cR)$-module.

We can then define the $L^2$-Betti numbers of the equivalence relation $\cR$ as
\begin{equation}\label{eq.L2-Betti-cohom-equiv-rel}
\bnl(\cR) := \dim_{L(\cR)} H^n(\cR,\cKreg) \; .
\end{equation}
It follows from \cite[Proposition 3.1]{KPV13} that this definition coincides with Gaboriau's definition of $L^2$-Betti numbers, \cite{Ga01}.

\begin{proposition}
Let $(M,\tau)$ be a tracial von Neumann algebra with separable predual and Cartan subalgebra $A \subset M$. Denote by $\cR$ the associated countable pmp equivalence relation. Denote $\cS = \lspan \cN_M(A)$.

For every unitary representation of $\cR$ on $\cK$, consider the associated $M$-bimodule $\cH$ given by \eqref{eq.M-bimod}. We then have a natural isomorphism
$$H^n(A \subset \cS,\cH) \cong H^n(\cR,\cK) \; .$$
We also have a natural isomorphism between $\cM(A \subset M)$ and an infinite amplification of $L(\cR)$, as well as the equality
$$\bnl(A \subset \cS) = \bnl(\cR) \quad\text{for all}\;\; n \in \N \; .$$
\end{proposition}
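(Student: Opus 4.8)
The plan is to prove the three assertions in turn; the first is the heart of the matter and the other two follow by a dimension count. I would begin by setting up Feldman--Moore coordinates. Writing $M=L_\Omega(\cR)$, every element of $\cS=\lspan\cN_M(A)$ is a finite sum $\sum a_i u_{\vphi_i}$ with $a_i\in A$ and $\vphi_i$ in the full pseudogroup $[[\cR]]$, where $u_\vphi$ is supported on the graph of $\vphi$ inside $\cR$. Consequently the algebraic relative tensor product $\cS^{\ot_A n}$ is spanned, as an $A$-bimodule, by the elements $u_{\vphi_1}\ot\cdots\ot u_{\vphi_n}$, and such an element is ``supported'' on the multigraph $\{(x_0,\dots,x_n)\in\cR^{(n)}:x_{i-1}=\vphi_i(x_i)\}$, the left and right $A$-actions being multiplication by functions of $x_0$ and of $x_n$. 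On the coefficient side, the bimodule $\cH=L^2(M)\ovt_A\cK$ of \eqref{eq.M-bimod} is, as an $A$-bimodule, the space of $L^2$-sections over $\cR$ of the field $(x,y)\mapsto\cK_y$, again with left and right $A$-actions given by functions of $x$ and of $y$; passing to the rank completion $\cH^c$ over $\cZ(A)=A$ replaces $L^2$-sections by arbitrary measurable sections. This last step is exactly the one flagged in Remark \ref{rem.cohom-quasireg}, and it is what produces \emph{all} measurable cochains rather than only $L^2$-ones.

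Given an $A$-bimodular $c\colon\cS^{\ot_A n}\to\cH^c$, the relative-tensor relations $u_{\vphi_i}a\ot u_{\vphi_{i+1}}=u_{\vphi_i}\ot au_{\vphi_{i+1}}$ together with $A$-bimodularity (multiplying by characteristic functions $1_E\in A$) force $c(u_{\vphi_1}\ot\cdots\ot u_{\vphi_n})$ to be local: its value at $(x,y)\in\cR$ depends only on the germ of $(\vphi_1,\dots,\vphi_n)$ at the string with $x_0=x$, $x_n=y$. I would therefore define $\widehat c(x_0,\dots,x_n):=c(u_{\vphi_1}\ot\cdots\ot u_{\vphi_n})(x_0,x_n)\in\cK_{x_n}$, check, using that $[[\cR]]$ tiles $\cR$ measurably, that this is a well-defined measurable section over $\cR^{(n)}$ independent of the $\vphi_i$, and that every section so arises; this identifies $C^n(A\subset\cS,\cH)$ with the sections over $\cR^{(n)}$ of the field $\cK_{x_n}$. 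The transport $\widehat c\mapsto\bigl((x_0,\dots,x_n)\mapsto U(x_0,x_n)\widehat c(x_0,\dots,x_n)\bigr)$ then lands in the sections of $\cK_{x_0}=\cK_\pi$ defining $C^n(\cR,\cK)$. The genuinely careful point is that on the quasi-regular side the representation $U$ is carried by the \emph{right} action in \eqref{eq.M-bimod}, so $U$ enters through $\delta_{n+1}$, whereas on the $\cR$-side it enters through $\delta_0$; verifying that the left--right flip built into the $U$-transport intertwines the two coboundary conventions term by term is the bookkeeping needed to conclude $H^n(A\subset\cS,\cH)\cong H^n(\cR,\cK)$.

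Next I would compute $\cM(A\subset M)=\End_{M-M}(\cHreg)$ by decomposing, as $M$-bimodules, $\cHreg=\bigoplus_{m\ge2}L^2(M)^{\ovt_A m}=\bigoplus_{m\ge2}L^2(\cR^{(m)})$. Each $L^2(\cR^{(m)})$ disintegrates over its endpoints $(x_0,x_m)\in\cR$, with fibre the $\ell^2$-space of the intermediate strings $(x_1,\dots,x_{m-1})$; an $M$-bimodular operator must commute with the left and right actions of $A$, which forces it to be decomposable over the endpoints, and with the normalizers, which forces $\cR$-equivariance. A direct Feldman--Moore computation then identifies the algebra of such equivariant kernels, together with the off-diagonal $\Hom$-spaces between the summands, with an infinite amplification $L(\cR)\ovt B(\ell^2(\Lambda))$ of $L(\cR)$. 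Under this identification the weight $\mu$ of \eqref{eq.weight-on-End}, built from the canonical traces $\Tr_{\cH_1}$, matches $\tau_{L(\cR)}\ovt\Tr$; in particular $\mu$ is a trace, so the Cartan case is automatically unimodular.

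Finally, applying the coordinate identification of the first two paragraphs with the coefficient bimodule $\cHreg$ in place of $\cH$, I would identify the complex $C^\bullet(A\subset\cS,\cHreg)$, as a complex of left $\cM(A\subset M)$-modules, with an amplification of the complex $\Creg^\bullet$ computing $H^\bullet(\cR,\cKreg)$, the intermediate strings of the $\cR^{(m)}$ producing exactly the multiplicity $\Lambda$ of the previous paragraph. Passing to cohomology gives $H^n(A\subset\cS,\cHreg)\cong H^n(\cR,\cKreg)\ovt\ell^2(\Lambda)$ as modules over $\cM(A\subset M)\cong L(\cR)\ovt B(\ell^2(\Lambda))$, and since L\"uck's dimension is unchanged under such an amplification, $\dim_{\cM(A\subset M)}H^n(A\subset\cS,\cHreg)=\dim_{L(\cR)}H^n(\cR,\cKreg)$, that is $\bnl(A\subset\cS)=\bnl(\cR)$. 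I expect the main obstacle to be precisely this matching of module structures and of the weight $\mu$ across Paragraphs~3--4, so that the two dimension functions agree: the homological content is already contained in the chain-level isomorphism, but ensuring the amplification from $L(\cR)$ to $\cM(A\subset M)$ is consistent between $\End_{M-M}(\cHreg)$ and the cochain complex is where the care is required.
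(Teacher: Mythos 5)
Your proposal is correct and follows essentially the same route as the paper: a Feldman--Moore/normalizer identification of the bar complex $C^\bullet(A \subset \cS,\cH)$ with the complex of measurable cochains of $\cR$ (the paper constructs the inverse of your map, going from sections to cochains via the bijections $\theta_v$, $v \in \cN_M(A)$, rather than extracting a section from a cochain by a germ argument, but the content is identical), followed by the trace-preserving identification of $\cM(A \subset M) = \End_{M-M}(\cHreg)$ with an infinite amplification of $L(\cR)$ and a L\"{u}ck-dimension count; your fourth paragraph in fact spells out more than the paper, which disposes of the Betti-number equality with an ``in particular''.

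One inaccuracy should be repaired, though it does not damage the argument. The rank completion $\cH^c$ is \emph{not} the space of all measurable sections over $\cR$: when orbits are a.e.\ infinite, the constant section $\mathbf{1}$ is not rank-approximable by $L^2$-sections, since for any $E,F \subset X$ with $\mu(E)+\mu(F)<1$ the set $\cR \cap ((X\setminus E) \times (X \setminus F))$ has infinite measure for the natural $\sigma$-finite measure on $\cR$, so no $L^2$-section can agree with $\mathbf{1}$ there. What is true, and what your proof actually uses, is that the \emph{twisted-central} subspaces of $\cH^c$ --- the vectors $\xi$ satisfying $a\,\xi = \xi\,\al(a)$ for a single partial automorphism $\al$ of $A$, i.e.\ the sections supported on the graph of one element of $[[\cR]]$ --- complete to \emph{all} measurable sections over that graph. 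Since an $A$-bimodular cochain automatically takes its values in these subspaces, your identification of $C^n(A \subset \cS,\cH)$ with the measurable sections over $\cR^{(n)}$, and everything downstream of it, is unaffected; only the stated description of $\cH^c$ itself must be weakened.
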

\begin{proof}
Denote by $C^n$ the space of measurable sections of the field $(\cK^{(n)}_x)_{x \in \cR^{(n)}}$ as above. The map $\eta \mapsto 1 \ot \eta$ is a unitary operator between $\cK$ and the space of $A$-central vectors in $\cH$. This map extends to a linear bijection between $C^0$ and the space of $A$-central vectors in $\cH^c$. Next, for every $v \in \cN_M(A)$, the map $\theta_v : \eta \mapsto v \ot \eta$ is a unitary operator between $\cK$ and the space of vectors $\xi \in \cH$ satisfying $a \xi = \xi \al_v(a)$ for all $a \in A$, where $\al_v(a) = v a v^*$. Again, $\theta_v$ extends to a linear bijection, still denoted by $\theta_v$, between $C^0$ and the space of vectors $\xi \in \cH^c$ satisfying $a \xi = \xi \al_v(a)$ for all $a \in A$.

We can then define the linear bijection
$$C^n \recht \Mor_{A-A}(\cS \ot_A \cdots \ot_A \cS, \cH^c)$$
given by $\xi \mapsto c_\xi$ where, for all $v_1,\ldots,v_n \in \cN_M(A)$,
$$c_\xi(v_1 \ot \cdots \ot v_n) = \theta_{v_1\cdots v_n}\bigl(x \mapsto U(x,\al_{v_1\cdots v_n}(x)) \xi(\al_{v_1 \cdots v_n}(x),\cdots,\al_{v_n}(x),x)\bigr) \; .$$
These maps define an isomorphism between the bar complexes defining $H^n(\cR,\cK)$ and $H^n(A \subset \cS,\cH)$, so that these cohomology spaces are isomorphic.

Since $A \subset M$ is regular, the coarse $M$-bimodule $\cHreg$ defined in \eqref{eq.coarse} is isomorphic with an infinite amplification of $L^2(M) \ovt_A L^2(M)$. We have the following canonical isomorphism $\Psi : L(\cR) \recht \End_{M-M}(L^2(M) \ovt_A L^2(M))$. For every $\vphi$ in the full group of the equivalence relation $\cR$, we have a unitary element $u_\vphi \in L(\cR)$ and we can choose $v \in \cN_M(A)$ such that $\al_v = \vphi$. We define $(\Psi(u_\vphi))(x \ot y) = xv^* \ot vy$. Note that the definition of $\Psi(u_\vphi)$ is independent of the choice of $v$. We also define $(\Psi(a))(x \ot y) = xa \ot y = x \ot ay$ for all $a \in A$. Together, we have found a trace preserving isomorphism $\Psi : L(\cR) \recht \End_{M-M}(L^2(M) \ovt_A L^2(M))$. So, we also have a canonical trace preserving isomorphism between $\cM(A \subset M)$ and an infinite amplification of $L(\cR)$. In particular, we find that $\bns(A \subset \cS) = \bns(\cR)$ for all $n \geq 0$.
\end{proof}

\section{Homology of irreducible quasi-regular inclusions}

Throughout this section, we fix a II$_1$ factor $S$ with separable predual and an irreducible quasi-regular subfactor $T \subset S$. We put $\cS = \QN_S(T)$. For any $T$-bimodule $\cK$, we denote by $\cK_T$ the subspace of $T$-central vectors.

\begin{definition}\label{def.homology-quasiregular}
For any Hilbert $S$-bimodule $\cH$, we define $H_n(T \subset \cS,\cH)$ as the homology of the complex
$$\cdots \overset{\partial}{\recht} C_2 \overset{\partial}{\recht} C_1 \overset{\partial}{\recht} C_0$$
where $C_0 = \cH_T$, and
\begin{align*}
& C_n = \bigl(\cH \ot_T \underbrace{\cS \ot_T \cdots \ot_T \cS}_{\text{$n$ factors}}\bigr)_T \quad\text{with}\quad \partial : C_n \recht C_{n-1} : \partial = \sum_{i=0}^n (-1)^i \partial_i \quad\text{given by}\\
& \partial_0(\xi \ot x_1 \ot \cdots \ot x_n) = \xi \cdot x_1 \ot x_2 \ot \cdots \ot x_n \;\; ,\\
& \partial_i(\xi \ot x_1 \ot \cdots \ot x_n) = \xi \ot x_1 \ot \cdots x_i x_{i+1} \ot \cdots \ot x_n \quad\text{for}\;\; i = 1,\ldots,n-1 \;\; ,\\
& \partial_n(\xi \ot x_1 \ot \cdots \ot x_n) = p_T(x_n \cdot \xi \ot x_1 \ot \cdots \ot x_{n-1}) \; .
\end{align*}
Here $p_T$ denotes the orthogonal projection onto the subspace of $T$-central vectors. In Remark \ref{rem.homology}, we explain why $\partial$ is well defined and satisfies $\partial^2 = 0$.
\end{definition}

\begin{remark}\label{rem.homology}
\begin{enumerate}
\item The boundary maps $\partial_i$ are well defined for the following reasons. Write $\cS_n = \cS \ot_T \cdots \ot_T \cS$ for the $n$-fold algebraic relative tensor product. For $0 \leq i \leq n-1$, the maps $\partial_i$ are first defined as $T$-bimodular maps from $\cH \ot_T \cS_{n}$ to $\cH \ot_T \cS_{n-1}$ and then restricted to the $T$-central vectors. For $i = n$, it follows from Lemma \ref{lem.well-def-pT} below that for all $\xi \in \cH$ and all $x_1,\ldots,x_n \in \cS$, the element $p_T(x_n \cdot \xi \ot x_1 \ot \cdots \ot x_{n-1})$ belongs to $\cH \ot_T \cS_{n-1}$. Since $p_T(a \cdot \eta - \eta \cdot a) = 0$ for all $a \in T$ and all $\eta \in \cH \ot_T \cS_{n-1}$, we conclude that $\partial_n$ is a well defined map from $\cH \ot_T \cS_{n}$ to $(\cH \ot_T \cS_{n-1})_T$ and we restrict this map to $(\cH \ot_T \cS_{n})_T$.

\item There are two ways to see that $\partial^2 = 0$. First, it would be more natural to consider as chain spaces the \emph{cyclic relative tensor products} $(\cH \ot_T \cS_n)/T$, defined as the quotient of $\cH \ot_T \cS_n$ by the subspace generated by $a \cdot \xi \ot x - \xi \ot x \cdot a$, $a \in T$, $\xi \in \cH$, $x \in \cS_n$. Defined as such, the chain spaces are however too large. Defining $\cH^0 \subset \cH$ as the ``algebraic part'' of $\cH$ consisting of all $T$-bounded vectors $\xi$ for which the closed linear span of $T \xi T$ is a finite index $T$-bimodule, it follows from \cite{Hu98} (based on \cite{FH80,Po97a}) that the inclusion map
    $$(\cH \ot_T \cS_n)_T \recht (\cH^0 \ot_T \cS_n)/T$$
    from the space of $T$-central vectors to the cyclic relative tensor product is indeed an isomorphism. In this way, we find an isomorphism between the complex $(C_n)$ and the natural bar complex $(\cH^0 \ot_T \cS_n)/T$. In particular, $\partial^2 = 0$.

	Secondly, the formula
$$\langle \xi \ot x_1 \ot \cdots \ot x_n , c \rangle = \langle \xi, c(x_n^* \ot \cdots \ot x_1^*) \rangle$$
defines a nondegenerate sesquilinear pairing between $(\cH \ot_T \cS_n)_T$ and $\Mor_{T-T}(\cS_n,\cH)$. Under this duality pairing, the complex $(C_n)$ is dual to the complex $(C^n)$ in Definition \ref{def.cohom-quasireg}. In particular, $\partial^2 = 0$.

Also note that we find in this way a natural sesquilinear pairing between the homology $H_n(T \subset \cS,\cH)$ and the cohomology $H^n(T \subset \cS,\cH)$ defined in \ref{def.cohom-quasireg}. This pairing can be degenerate, see Propositions \ref{prop.0-homology-cohomology} and \ref{prop.characterization-amenable-inclusion}.

\item As in Remark \ref{rem.cohom-quasireg}, note that the homology $H_n(T \subset \cS,\cH)$ only sees the part of $\cH$ that, as a Hilbert $T$-bimodule, is a direct sum of Hilbert $T$-subbimodules of some tensor power $L^2(S) \ovt_T \cdots \ovt_T L^2(S)$.
\end{enumerate}
\end{remark}

\begin{lemma} \label{lem.well-def-pT}
The natural map $V : \cH \ot_T \cS \ot_T \cdots \ot_T \cS \recht \cH \ovt_T L^2(S) \ovt_T \cdots \ovt_T L^2(S)$ is injective and the orthogonal projection onto the $T$-central vectors leaves the image of $V$ globally invariant.
\end{lemma}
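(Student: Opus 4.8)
The plan is to reduce everything to \emph{finite index} $T$-subbimodules of $L^2(S)$, where the algebraic and the Connes relative tensor products coincide, and then to exploit that the projection onto the $T$-central vectors commutes with the projection onto any closed $T$-subbimodule. First I would fix an element $\omega \in \cH \ot_T \cS \ot_T \cdots \ot_T \cS$. Only finitely many elements of $\cS$ occur in its (finite) expression, and since $\cS = \QN_S(T)$, the closed linear span of $T \cdot \{\text{these elements}\} \cdot T$ in the $r$-th slot is a finite index $T$-subbimodule $\cK_r \subset L^2(S)$ (a finite sum of finite index bimodules still has finite index). Hence $\omega$ already lies in the image of $\cH \ot_T \cK_1^0 \ot_T \cdots \ot_T \cK_n^0$, where $\cK_r^0 = \cK_r \cap S$ is the space of $T$-bounded vectors by Lemma \ref{lem.well-behaved}.3. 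It therefore suffices to analyze $V$ on such finite index pieces.

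The key step is the following: for a finite index $T$-bimodule $\cK \subset L^2(S)$ and \emph{any} Hilbert $T$-bimodule $\cN$, the natural map $\cN \ot_T \cK^0 \recht \cN \ovt_T \cK$ is a bijection. Indeed, since $\cK$ has finite index it is finitely generated and projective as a left $T$-module: there is $p \in \N$, a projection $q \in M_p(\C) \ot T$, and a left $T$-module isomorphism identifying $\cK$ with the image of $q$ inside the free module $\C^p \ot L^2(T)$, under which $\cK^0$ is identified with the image of $q$ inside $\C^p \ot T$ (this is, for the left module structure, the content of the realization $\cH(\psi)$ together with Lemma \ref{lem.well-behaved}, using the \emph{same} idempotent $q$ on both levels). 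Since $\cN \ot_T (\C^p \ot T) \cong \cN^{\oplus p} \cong \cN \ovt_T (\C^p \ot L^2(T))$ compatibly with $V$ and with right multiplication by $q$, both tensor products get identified with the corner $\cN^{\oplus p}q$ and $V$ becomes the identity there. Iterating over the $n$ factors (using associativity of both tensor products and that $\cK_1 \ovt_T \cdots \ovt_T \cK_n$ is again of finite index), I obtain a natural isomorphism
$$\cH \ot_T \cK_1^0 \ot_T \cdots \ot_T \cK_n^0 \;\cong\; \cH \ovt_T \cK_1 \ovt_T \cdots \ovt_T \cK_n =: \cW_{\vec{\cK}},$$
realized by $V$. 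In particular $V$ is injective, and $\Im V = \bigcup_{\vec{\cK}} \cW_{\vec{\cK}}$, the union running over all tuples of finite index $T$-subbimodules $\cK_r \subset L^2(S)$. Via the isometric $T$-bimodular inclusions $\cK_r \hookrightarrow L^2(S)$ and functoriality of the Connes tensor product, each $\cW_{\vec{\cK}}$ is a closed $T$-subbimodule of $\cH \ovt_T L^2(S) \ovt_T \cdots \ovt_T L^2(S)$.

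For the invariance statement, let $e_{\vec{\cK}}$ denote the orthogonal projection onto the closed $T$-subbimodule $\cW_{\vec{\cK}}$, and let $p_T$ be the orthogonal projection onto the $T$-central vectors. Since $e_{\vec{\cK}}$ is $T$-bimodular it maps $T$-central vectors to $T$-central vectors, so it leaves $\Im p_T$ invariant; being self-adjoint it then also leaves $(\Im p_T)^\perp$ invariant and hence commutes with $p_T$. Therefore $p_T \, \cW_{\vec{\cK}} = e_{\vec{\cK}} \, p_T \, \cW_{\vec{\cK}} \subseteq \cW_{\vec{\cK}} \subseteq \Im V$, and since every vector of $\Im V$ lies in some $\cW_{\vec{\cK}}$, the projection $p_T$ leaves $\Im V$ globally invariant.

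I expect the main obstacle to be the careful bookkeeping in the key step, namely verifying that the \emph{same} idempotent $q$ simultaneously presents $\cK$ as a finitely generated projective Hilbert module and $\cK^0$ as a finitely generated projective algebraic module, so that the algebraic and the Connes relative tensor products literally coincide for finite index bimodules. Once this finite index identification is in place, the reduction step and the commutation of $p_T$ with subbimodule projections are soft.
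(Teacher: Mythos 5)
Your proof is correct and follows essentially the same route as the paper: the paper's (very short) proof likewise writes $\cS$ as an increasing union of the spaces $\cK^0 = \cK \cap S$ of bounded vectors of finite index $T$-subbimodules $\cK \subset L^2(S)$, and invokes the bijectivity of the natural map $\cH \ot_T \cK^0 \ot_T \cdots \ot_T \cK^0 \recht \cH \ovt_T \cK \ovt_T \cdots \ovt_T \cK$ on these pieces. Your write-up simply supplies the details the paper leaves as ``the lemma follows immediately'', namely the finite projectivity of $\cK^0$ (which is exactly what Lemma \ref{lem.well-behaved} provides, via the left and right bases) and the commutation of $p_T$ with $T$-bimodular projections onto the closed subbimodules $\cH \ovt_T \cK_1 \ovt_T \cdots \ovt_T \cK_n$.
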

\begin{proof}
By Lemma \ref{lem.well-behaved}, we can write $\cS$ as an increasing union of $T$-subbimodules of the form $\cK^0 = \cK \cap S$, where $\cK \subset L^2(S)$ is a finite index $T$-subbimodule. Since for such a $\cK$, the natural map
$$\cH \ot_T \cK^0 \ot_T \cdots \ot_T \cK^0 \recht \cH \ovt_T \cK \ovt_T \cdots \ovt_T \cK$$
is bijective, the lemma follows immediately.
\end{proof}

The $L^2$-Betti numbers of a quasi-regular inclusion were defined in Definition \ref{def.L2-cohom-quasireg} using cohomology with values in the $S$-bimodule $\cHreg$ (see \eqref{eq.coarse}). We now show that, in the case of an irreducible quasi-regular inclusion, they can as well be computed using homology.

\begin{proposition}\label{prop.betti-quasiregular-homology}
Let $S$ be a II$_1$ factor with separable predual and $T \subset S$ an irreducible quasi-regular subfactor. Put $\cS = \QN_S(T)$.
The natural weight $\mu$ on $\cM(T \subset S)$ defined in \eqref{eq.weight-on-End} is a trace if and only if $T \subset S$ is unimodular in the sense of Definition \ref{def.inclusion-unimodular}.
In that case,
$$\bnl(T \subset \cS) = \dim_{\cM(T \subset S)} H_n(T \subset \cS,\cHreg) \; .$$
\end{proposition}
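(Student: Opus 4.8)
The plan is to route everything through the tube $*$-algebra, which simultaneously identifies $\cM(T\subset S)$ together with its weight $\mu$ (settling the first claim) and exhibits the homology complex as the adjoint of the cohomology complex (settling the second). First I would describe $\cM(T\subset S)$ concretely. Writing $\cH = L^2(T) \oplus L^2(S) \oplus (L^2(S)\ovt_T L^2(S)) \oplus \cdots$ as in \eqref{eq.coarse} and decomposing it as a $T$-bimodule $\cH = \bigoplus_{i \in \Irr(\cC)} \cH_i \ot \cL_i$ with (infinite-dimensional) multiplicity spaces $\cL_i$, Lemma \ref{lem.reg-triv-A} and Theorem \ref{thm.tube-vs-corr} show that the coarse $S$-bimodule $\cHreg = L^2(S) \ovt_T \cH \ovt_T L^2(S)$ corresponds to the right Hilbert $\cA$-module $\bigoplus_i \cL_i \ot L^2(p_i \cdot \cA)$, that is, to a countable multiple of the regular representation $L^2(\cA)$ of Proposition \ref{prop.trace-tube-quasi-reg}. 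Since Theorem \ref{thm.tube-vs-corr} turns $S$-bimodular endomorphisms into the commutant of the right $\cA$-action, $\cM(T\subset S) = \End_{S-S}(\cHreg)$ is an infinite amplification $B(\ell^2(\N)) \ovt \cA\dpr$ of the tube von Neumann algebra $\cA\dpr$, under which $\mu$ corresponds to $\Tr \ot \tau$. The one genuine computation here is to match the canonical trace $\Tr_{\cH_1}$ of \eqref{eq.canonical-trace} used to define $\mu$ with the weight $\tau$; the factor $\Delta_\cS^{1/2}$ appearing both in \eqref{eq.canonical-trace} and in the module map \eqref{eq.Amodule} makes this consistent.

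Granting this identification, the first assertion is immediate. By Proposition \ref{prop.trace-tube-quasi-reg} the modular automorphism group of $\tau$ is $\sigma^\tau_t(V) = (1 \ot \Delta_\cS^{it}) V$, so $\mu$ is a trace if and only if $\Delta_\cS = 1$. By definition of $\Delta_\cS$ this means that every finite index $T$-subbimodule of $L^2(S)$ has equal left and right dimension, i.e.\ that $T \subset S$ is unimodular in the sense of Definition \ref{def.inclusion-unimodular}.

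For the second assertion, assume unimodularity, so that $\mu$ is a normal semifinite faithful trace on $\cM := \cM(T \subset S)$. I would complete the (co)chain spaces of Definitions \ref{def.cohom-quasireg} and \ref{def.homology-quasiregular} to Hilbert $\cM$-modules. The algebra $\cM = \End_{S-S}(\cHreg)$ acts on the first leg of $\cHreg \ovt_T L^2(S)^{\ovt_T n}$, commuting with the right $T$-action and preserving $T$-central vectors, so completing $C_n$ yields a Hilbert $\cM$-module $\overline{C_n}$, and dually the completion $\overline{C^n}$ of $C^n = \Mor_{T-T}(\cS_n, \cHreg)$ is a Hilbert $\cM$-module. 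The nondegenerate sesquilinear pairing of Remark \ref{rem.homology}(2), taken now with respect to the trace $\mu$, should extend to an anti-unitary $\cM$-module isomorphism $\overline{C_n} \cong \overline{C^n}$ conjugating the closure of the boundary map $\partial_{n+1}$ into the adjoint $\delta_n^*$ of the coboundary map. This is the step where unimodularity is indispensable and where the main technical difficulty lies: one must verify that the $\mu$-inner products are positive, that the completions have finite $\cM$-dimension, and that $\delta_n$ is a closable $\cM$-linear operator whose adjoint is exactly $\partial_{n+1}$.

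Once the homology complex is identified with the adjoint of the cohomology complex, Hodge theory for complexes of Hilbert $\cM$-modules finishes the argument. The reduced cohomology $\overline{H^n} = \Ker \delta_n \ominus \overline{\Im \delta_{n-1}}$ and the reduced homology $\overline{H_n} = \Ker \partial_n \ominus \overline{\Im \partial_{n+1}}$ both coincide with the common harmonic space $\Ker \delta_n \cap \Ker \delta_{n-1}^*$, hence are isomorphic as Hilbert $\cM$-modules and have equal L\"uck dimension. Finally, since L\"uck's dimension vanishes on the torsion submodule $\overline{\Im}/\Im$ of any $\cM$-linear map (see \cite[Section 6.1]{Lu02}), we get $\dim_\cM H^n = \dim_\cM \overline{H^n}$ and $\dim_\cM H_n = \dim_\cM \overline{H_n}$. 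Combining these equalities with the definition $\bnl(T \subset \cS) = \dim_\cM H^n(T \subset \cS, \cHreg)$ gives $\bnl(T \subset \cS) = \dim_\cM H_n(T \subset \cS, \cHreg)$, as required.
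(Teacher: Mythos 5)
Your first half is correct and is essentially the paper's own argument: Lemma \ref{lem.reg-triv-A} identifies $\cHreg$ with a multiple of the regular representation of the tube algebra, so $\cM(T \subset S)$ becomes an amplification of $\cA\dpr$ carrying (an amplification of) the canonical weight, and Proposition \ref{prop.trace-tube-quasi-reg} then gives ``$\mu$ is a trace iff $\Delta_\cS = 1$ iff $T \subset S$ is unimodular''. The paper phrases this via the weight $\tau_1$ built from the categorical trace rather than the left trace, which is exactly the normalization point you flag, so up to that bookkeeping this part is fine.

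The second half has a genuine gap, and it sits exactly where you locate the ``main technical difficulty''. You propose to complete $C^n = \Mor_{T-T}(\cS_n,\cHreg)$ to a Hilbert $\cM$-module $\overline{C^n}$ and to run Hodge theory. But $C^n$ carries no natural norm: a $T$-bimodular map $c : \cS_n \recht \cHreg$ is only defined on the algebraic tensor power $\cS_n = \bigcup_\cF \cS_\cF^n$, i.e.\ it is a compatible family of restrictions $c_\cF \in C^n_\cF$, and nothing forces $\sup_\cF \|c_\cF\|$ to be finite. So $C^n$ is an \emph{inverse limit} of Hilbert $\cM$-modules, not a pre-Hilbert module, and ``its completion'' is undefined. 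If you instead complete only the bounded cochains, you have changed the complex: the algebraic and the completed complexes have different \emph{cocycle} spaces (unbounded cocycles exist), not merely different coboundary spaces, so the discrepancy is not a torsion quotient $\overline{\Im}/\Im$ and the fact that such quotients have dimension zero does not give $\dim_\cM H^n(\text{algebraic}) = \dim_\cM \overline{H^n}(\text{completed})$. A milder version of the same problem occurs on the homology side: $C_n$ is a direct limit of Hilbert modules, the boundary maps have norms growing with $\cF$ (they involve the multiplication map $m(e_\cF \ot 1)$), hence are unbounded on $\overline{C_n}$, so your Hodge argument would have to be carried out for complexes of closed unbounded operators, and you would still need to return from the reduced homology of the completion to the algebraic homology that actually defines $H_n(T \subset \cS,\cHreg)$.

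This is precisely the work the paper does instead of Hodge theory. It first replaces $\cHreg$ by $\cHreg'$ (one copy of each irreducible), identifying $\End_{S-S}(\cHreg')$ with $A := \cA\dpr$; it then introduces the finite-stage complexes: $C^\cF_n$ (chains) and $C^n_\cF$ (cochains), proves each $C^\cF_n$ is a Hilbert $A$-module of \emph{finite} $A$-dimension (this is where unimodularity and $\tau(p_i) < \infty$ enter, via $C_n^\cF \cong \bigoplus_{i} L^2(\cA \cdot p_i) \ot (i\cS_\cF^n,\eps)$), observes that the chain complex is the direct limit of the $C^{\cF_k}_\bullet$ and the cochain complex is the inverse limit of their adjoints, and finally invokes the approximation formulae for L\"{u}ck dimension under direct and inverse limits (\cite{CG85,Lu02} and \cite[Section A.3]{KPV13}). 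Those formulae are theorems with hypotheses (finite dimension of the stages), not formal consequences of ``torsion has dimension zero'', and they are the step your proposal is missing; without them, or an equivalent control of the limits, the displayed equality $\bnl(T \subset \cS) = \dim_{\cM(T \subset S)} H_n(T \subset \cS,\cHreg)$ is not established.
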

\begin{proof}
Write $\cHreg = L^2(S) \ovt_T \cH \ovt_T L^2(S)$ as in \eqref{eq.coarse}. Denote by $\cC$ the tensor category of finite index $T$-bimodules generated by $L^2(S)$. Let $\cA$ be the associated tube $*$-algebra, with von Neumann algebra $\cA\dpr$ given by Proposition \ref{prop.trace-tube-quasi-reg}. From Lemma \ref{lem.reg-triv-A}, we get that $\cM(T \subset S)$ is isomorphic with
the von Neumann algebra
$$\cM := \bigoplus_{i,j \in \Irr(\cC)} \Bigl( (\cH,i) \ovt p_i \cdot \cA\dpr \cdot p_j \ovt (j,\cH) \Bigr) \; .$$
In Proposition \ref{prop.trace-tube-quasi-reg}, we introduced the weight $\tau$ on $\cA\dpr$. Replacing in the definition of $\tau$, the left trace $\Tr^\ell_i$ by the categorical trace $\Tr_i$, we also have the weight $\tau_1$ on $\cA\dpr$. The isomorphism $\cM(T \subset S)$ sends the weight $\mu$ to the amplification of the weight $\tau_1$ to a weight on $\cM$. Noting that $\tau_1$ is a trace iff $\tau$ is a trace iff $T \subset S$ is unimodular, we conclude that $\mu$ is a trace iff $T \subset S$ is unimodular. In that case, also $\tau_1 = \tau$.

For the rest of the proof, assume that $T \subset S$ is unimodular. Note that all irreducible $T$-bimodules in $\cC$ appear in $\cH$. Choosing one copy of each, we define $\cH_1 = \bigoplus_{i \in \Irr(\cC)} \cH_i$ and put $\cHreg' = L^2(S) \ovt_T \cH_1 \ovt_T L^2(S)$. We have a canonical weight preserving identification between $\End_{S-S}(\cHreg')$ and $\cA\dpr$. Under this identification,
$$\bnl(T \subset \cS) = \dim_{\cA\dpr} H^n(T\subset \cS , \cHreg') \; .$$

Denote by $C^n$, resp.\ $C_n$, the bar complexes defining $H^n(T \subset \cS,\cHreg')$, resp.\ $H_n(T \subset \cS,\cHreg')$. Write $A := \cA\dpr$. Both $C^n$ and $C_n$ are $A$-modules.

For every finite subset $\cF \subset \Irr(\cF)$, define $\cS_\cF := e_\cF(\cS)$ and denote by $\cS_\cF^n$ the $n$-fold relative tensor product $\cS_\cF \ot_T \cdots \ot_T \cS_\cF$.
We also define $C^n_\cF$ as the space of $T$-bimodular maps from $\cS_\cF^n$ to $\cHreg'$, and we define $C^\cF_n$ as the space of $T$-central vectors in $\cHreg' \ot_T \cS^n_\cF$. Note that for every finite set $\cF \subset \Irr(\cF)$, $C_\cF^n$ and $C^\cF_n$ are Hilbert $A$-modules. Choosing an increasing sequence of finite subsets $\cF_k \subset \Irr(\cF)$ with $\bigcup_k \cF_k = \Irr(\cF)$, we can view $C_n$ as the algebraic direct limit of the Hilbert $A$-modules $C_n^{\cF_k}$ and we can view $C^n$ as the inverse limit of the Hilbert $A$-modules $C^n_{\cF_k}$.

For every $n$ and every finite subset $\cF \subset \Irr(\cC)$, consider the finite subset $\overline{\cF^n} \subset \Irr(\cC)$ of all $\al \in \Irr(\cC)$ such that $\albar$ is contained in an $n$-fold tensor product of elements of $\cF$. Because $(\cHreg',i) \cong L^2(\cA \cdot p_i)$, it follows that, as an $A$-module,
$$C_n^\cF \cong \bigoplus_{i \in \overline{\cF^n}} L^2(\cA \cdot p_i) \ot (i \cS_\cF^n ,\eps) \;\; .$$
Since $\tau$ is a trace on $\cA$ and $\tau(p_i) < \infty$, every $L^2(\cA \cdot p_i)$ is an $A$-module of finite $A$-dimension. Every $(i \cS_\cF^n ,\eps)$ is finite dimensional and we conclude that all $C_n^\cF$ have finite $A$-dimension.

The adjoint of $C_n^\cF$ is $C^n_\cF$ and this duality is compatible with the (co)boundary maps (see Remark \ref{rem.homology}). The conclusion
$$\dim_A H_n(T \subset \cS, \cHreg') = \dim_A H^n(T \subset \cS,\cHreg')$$
now follows from the approximation formulae for the $A$-dimensions of direct and inverse limits (see \cite{CG85,Lu02}, and see also \cite[Section A.3]{KPV13} for a self-contained treatment that is directly applicable here).

Amplifying from $\cHreg'$ to $\cHreg$, we get that
$$\bnl(T \subset \cS) = \dim_A H^n(T \subset \cS,\cHreg') = \dim_A H_n(T \subset \cS, \cHreg') = \dim_{\cM(T \subset S)} H_n(T \subset \cS,\cHreg) \; .$$
\end{proof}

We end this section with the following expected result on the $0$'th (co)homology.

\begin{proposition}\label{prop.0-homology-cohomology}
Let $S$ be a II$_1$ factor with separable predual and $T \subset S$ an irreducible quasi-regular subfactor. Put $\cS = \QN_S(T)$. For any Hilbert $S$-bimodule $\cH$, the following holds.
\begin{enumerate}
\item $H^0(T \subset \cS,\cH) = 0$ if and only if $0$ is the only $S$-central vector in $\cH$.
\item $H_0(T \subset \cS,\cH) = 0$ if and only if $\cH_T$ admits no sequence $\xi_n$ of approximately $S$-central unit vectors (meaning that $\lim_n \|x \xi_n - \xi_n x\| = 0$ for all $x \in S$).
\end{enumerate}
\end{proposition}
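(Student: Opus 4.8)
The plan is to handle both statements using two facts: since $T$ is a factor, $\cZ(T)=\C$ and the rank completion in Definition~\ref{def.cohom-quasireg} is trivial, so $C^0=\cH_T$; and the nondegenerate duality pairing of Remark~\ref{rem.homology} makes the homology and cohomology complexes mutually adjoint. For statement~(1), note $H^0=Z^0=\Ker(\delta\colon C^0\to C^1)$ with $(\delta\xi)(x)=x\xi-\xi x$, so $H^0=\{\xi\in\cH_T : x\xi=\xi x \text{ for all } x\in\cS\}$. First I would show this equals the space of $S$-central vectors: one inclusion is trivial, and for the other I fix $\eta\in\cH$ and observe that $x\mapsto\langle x\xi-\xi x,\eta\rangle$ is a normal functional on $S$ (both actions of $S$ on $\cH$ are normal) vanishing on the weak-$*$ dense subalgebra $\cS=\QN_S(T)$, hence on all of $S$. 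Thus $H^0\cong\{S\text{-central vectors}\}$, which is $0$ iff $0$ is the only $S$-central vector.

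For statement~(2), write $H_0=\cH_T/\Im(\partial\colon C_1\to C_0)$ with $\partial(\xi\ot x)=\xi x-p_T(x\xi)$. Via the pairing, the closures $\bar\partial$ and $\bar\delta$ are mutually adjoint closed operators, so $(\Im\partial)^\perp=\Ker\delta=\{S\text{-central vectors}\}$ inside $\cH_T$. In particular a nonzero $S$-central vector already forces $\Im\partial\neq\cH_T$ while giving a constant approximately central sequence; so I may reduce to the case of no nonzero $S$-central vector, where $\overline{\Im\partial}=\cH_T$ and the only question is whether the \emph{dense} subspace $\Im\partial$ exhausts $\cH_T$.

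For the implication (no approximately $S$-central unit vectors $\Rightarrow H_0=0$): a standard diagonal argument, using that $\cS$ is strongly dense in $S$ (separable predual), turns the absence of such a sequence into a uniform gap, i.e.\ a finite $F\subset\cS$ and $c>0$ with $\sum_{x\in F}\|x\xi-\xi x\|^2\ge c\|\xi\|^2$ for all $\xi\in\cH_T$. For each $x\in\cS$ I would realize the associated averaging deficiency as a boundary: generalizing the clean identity $\partial(\xi u^*\ot u)=\xi-p_T(u\xi u^*)$ valid for $u\in\cN_S(T)$ (here $\xi u^*\ot u$ is genuinely $T$-central since $uau^*\in T$), I use a Pimsner--Popa basis of the finite-index bimodule $\overline{TxT}$ (Lemma~\ref{lem.well-behaved}) to build a $T$-central chain in $C_1$ whose boundary is a bounded positive operator $D_x$ on $\cH_T$ whose quadratic form dominates $\|x\xi-\xi x\|^2$. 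Then $D_F=\sum_{x\in F}D_x$ is a \emph{finite} sum, so $\Im D_F\subseteq\Im\partial$, and $D_F\ge c'>0$ is invertible; hence $\cH_T=\Im D_F\subseteq\Im\partial$ and $H_0=0$.

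For the converse (approximately central vectors $\Rightarrow H_0\neq0$) I would argue by contradiction via Baire category. Filter $C_1=\bigcup_{\cF}C_1^{\cF}$ over finite $\cF\subset\Irr(\cC)$, where each $C_1^{\cF}$ is a Hilbert space on which $\partial$ is bounded (multiplication by finite-index intertwiners is bounded, Lemma~\ref{lem.well-behaved}). If $\Im\partial=\cH_T$ then $\cH_T=\bigcup_{\cF,m}\overline{\partial(m\text{-ball of }C_1^{\cF})}$, so some $\overline{\partial(m_0\text{-ball of }C_1^{\cF_0})}$ contains a ball; for $\eta$ in that ball with $\partial\zeta_j\to\eta$, $\|\zeta_j\|\le m_0$, adjointness gives $|\langle\eta,\xi_n\rangle|=\lim_j|\langle\zeta_j,\delta\xi_n\rangle|\le m_0\bigl(\sum_{i\in\cF_0}\|x_i\xi_n-\xi_n x_i\|^2\bigr)^{1/2}$, a \emph{finite} sum of commutators tending to $0$, contradicting $\|\xi_n\|=1$ after taking the supremum over $\eta$. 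Hence $\Im\partial\neq\cH_T$. The main obstacle is precisely this functional-analytic bookkeeping separating three a priori distinct conditions — algebraic surjectivity of $\partial$, closedness of $\Im\bar\partial$, and the absence of approximately central vectors — and above all the explicit construction of $T$-central chains over a finite-index bimodule basis realizing the operators $D_x$; the normalizer case is immediate, but the general quasi-normalizer case requires genuine care with the Pimsner--Popa bases and the conditional expectation.
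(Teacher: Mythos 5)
Your proposal is correct and follows essentially the same route as the paper's own proof: statement (1) is handled by identifying $\Ker(\delta : C^0 \recht C^1)$ with the $S$-central vectors via normality and the density of $\cS$ in $S$, and statement (2) rests on the equivalence of "no approximately $S$-central unit vectors" with a uniform gap over a finite subset of $\cS$, the adjointness between $\partial$ on $(\cH \ot_T \cS_\cF)_T$ and $\xi \mapsto (x \mapsto x\xi - \xi x)$ (Remark \ref{rem.homology}), and a Baire category argument over the filtration by finite $\cF \subset \Irr(\cC)$. Your operator $D_F$ built from Pimsner--Popa chains is exactly $\delta_\cF^* \delta_\cF$ in disguise and your invertibility step is the paper's open-mapping-theorem step, so the differences are presentational rather than substantive.
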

\begin{proof}
From Definition \ref{def.cohom-quasireg}, we get that $H^0(T \subset \cS,\cH)$ equals the space of $S$-central vectors in $\cH$, so that 1 follows.

Denote by $\cC$ the tensor category generated by the finite index $T$-subbimodules of $L^2(S)$. To prove 2, note that the absence of approximately $S$-central unit vectors in $\cH_T$ is equivalent with the existence of a finite subset $\cG \subset \cS$ satisfying
\begin{equation}\label{eq.my-approx-inv}
\|\xi\| \leq \sum_{x \in \cG} \|x \xi - \xi x\| \quad\text{for all}\;\; \xi \in \cH_T \; .
\end{equation}
For every finite subset $\cF \subset \Irr(\cC)$, we write $\cS_\cF := e_\cF(\cS)$. Note that $\Mor_{T-T}(\cS_\cF,\cH)$ is a Hilbert space and $\delta_\cF : \cH_T \recht \Mor_{T-T}(\cS_\cF,\cH) : (\delta_\cF \xi)(x) = x \xi - \xi x$ is a bounded operator. As in Remark \ref{rem.homology}, the adjoint $\delta_\cF^*$ can be identified with the restriction of the boundary operator $\partial : (\cH \ot_T \cS)_T \recht \cH_T$ to the Hilbert space $(\cH \ot_T \cS_\cF)_T$.

With this notation, the existence of a finite subset $\cG \subset \cS$ satisfying \eqref{eq.my-approx-inv} is equivalent with the existence of a finite subset $\cF \subset \Irr(\cC)$ and an $\eps > 0$ such that $\eps \|\xi\| \leq \|\delta_\cF(\xi)\|$ for all $\xi \in \cH_T$. By the open mapping theorem and the above description of $\delta_\cF^*$, this is equivalent with the existence of a finite subset $\cF \subset \Irr(\cC)$ such that $\partial((\cH \ot_T \cS_\cF)_T) = \cH_T$. By the Baire category theorem, this is equivalent with $\partial((\cH \ot_T \cS)_T) = \cH_T$, i.e.\ with $H_0(T \subset \cS,\cH) = 0$.
\end{proof}

\section{\boldmath A Hochschild type (co)homology of the tube $*$-algebra}\label{sec.hom-cohom-tube}

\subsection{(Co)homology of irreducible quasi-regular inclusions}

Fix an irreducible quasi-regular inclusion of II$_1$ factors $T \subset S$ together with a tensor category $\cC$ of finite index $T$-bimodules containing all finite index $T$-subbimodules of $L^2(S)$. Put $\cS = \QN_S(T)$. Denote by $\cA$ the associated tube $*$-algebra.

In Theorem \ref{thm.tube-vs-corr}, we constructed a bijection between nondegenerate right Hilbert $\cA$-modules $\cK$ and Hilbert $S$-bimodules $\cH$ that, as a $T$-bimodule, are a direct sum of $T$-bimodules contained in $\cC$. In Definitions \ref{def.cohom-quasireg} and \ref{def.homology-quasiregular}, we defined the (co)homology spaces $H^n(T \subset \cS,\cH)$ and $H_n(T \subset \cS,\cH)$. The following is the main result of this section, identifying this (co)homology theory with purely algebraic (co)homology for the tube algebra $\cA$.

For this, we make use of the trivial left and right $\cA$-modules $\cE^r$ and $\cE^\ell$ defined in Lemma \ref{lem.reg-triv-A} and Remark \ref{rem.left-triv}. Whenever $\cK$ is a right Hilbert $\cA$-module, we define $\cK^0$ as the linear span of the Hilbert subspaces $\cK \cdot p_i$, $i \in \Irr(\cC)$.

\begin{theorem} \label{thm.tor-ext}
Let $\cH$ be the Hilbert $S$-bimodule that corresponds to the right Hilbert $\cA$-module $\cK$ through Theorem \ref{thm.tube-vs-corr}. There are natural isomorphisms
$$H_n(T \subset \cS,\cH) \cong \Tor_n^\cA(\cK^0,\cE^\ell) \quad\text{and}\quad H^n(T \subset \cS,\cH) \cong \Ext^n_\cA(\cE^r,\cK^0) \; .$$
\end{theorem}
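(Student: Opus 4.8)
The plan is to compute both derived functors from a single explicit projective resolution of the trivial $\cA$-module and then identify the resulting complex, term by term and differential by differential, with the bar complex of Definition \ref{def.homology-quasiregular} (respectively \ref{def.cohom-quasireg}). A preliminary simplification: since $T$ is a II$_1$ factor we have $\cZ(T) = \C$, so the rank completion in Definition \ref{def.cohom-quasireg} is trivial and $\cH^c = \cH$ throughout; thus no completion issues obstruct the identification (the rank completion only plays a role in the Cartan setting, cf.\ Remark \ref{rem.cohom-quasireg}).

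First I would isolate the semisimple subalgebra $\cB = \bigoplus_{i \in \Irr(\cC)} \C\, p_i \subset \cA$ spanned by the idempotents $p_i$. Because $p_i \cdot \cA \cdot p_j = (i\cS,\cS j)$ is finite dimensional, $\cA$ is free both as a left and as a right $\cB$-module; and because $\cB$ is semisimple, every short exact sequence of nondegenerate $\cA$-modules is automatically $\cB$-split. Hence $\cB$-relative projective resolutions are honest projective resolutions over $\cA$, and $\cB$-relative $\Tor/\Ext$ coincide with the absolute ones. This is the standard mechanism by which $\Tor^\cA$ and $\Ext_\cA$ of an algebra with local units $\{p_i\}$ are computed, and it is exactly why $\cB$ and the tensor-power identification of Lemma \ref{lem.tensor-product-planar} were set up.

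Next I would write down the resolution of the trivial left module $\cE^\ell = \bigoplus_i (i,\cS)$. Starting from the augmentation $\cA p_\eps \to \cE^\ell$, $V \mapsto V \iota$, where $\iota : \cH_i \hookrightarrow \cH_i \ovt_T L^2(S)$ is induced by $\delta : L^2(T) \to L^2(S)$, I would take
\[
\cdots \longrightarrow \cA \ot_\cB \cA \ot_\cB \cA p_\eps \longrightarrow \cA \ot_\cB \cA p_\eps \longrightarrow \cA p_\eps \longrightarrow \cE^\ell \longrightarrow 0,
\]
so that $P_n$ is the $(n{+}1)$-fold relative tensor power acting on $p_\eps$, with the usual bar differential. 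Each $P_n$ is $\cA$-projective (a direct sum of modules $\cA p_i$, the multiplicities being read off from the $\cB$-tensor powers), and exactness follows from the standard contracting homotopy available because the complex is $\cB$-split; recognising $\cA^{\ot_\cB k}$ concretely here is the role of the quasi-regular analogue of Lemma \ref{lem.tensor-product-planar}. The crucial computation is then the chain isomorphism
\[
\cK^0 \ot_\cA P_n \;\cong\; \bigl(\cH \ot_T \underbrace{\cS \ot_T \cdots \ot_T \cS}_{n}\bigr)_T = C_n,
\]
in which the leftmost $\cA$ is absorbed by $\cK^0$ and each remaining $\ot_\cB \cA$ produces exactly one $\ot_T \cS$. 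This uses Theorem \ref{thm.tube-vs-corr} (giving $\cK_i = (\cH,i)$) together with the decomposition $\cS \cong \bigoplus_j (\cS,j) \ot \cH_j^0$ of Lemma \ref{lem.well-behaved}. Checking that the bar differential transports to $\partial = \sum_i (-1)^i \partial_i$ and passing to homology then gives $\Tor_n^\cA(\cK^0,\cE^\ell) \cong H_n(T \subset \cS,\cH)$.

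For cohomology I would run the mirror argument: resolve the trivial right module $\cE^r = \bigoplus_i (\cS,i)$ by the projective right modules $Q_n = p_\eps \cA \ot_\cB \cdots \ot_\cB \cA$ and apply $\Hom_\cA(-,\cK^0)$. Since $\Hom_\cA(p_i \cA,\cK^0) = \cK^0 p_i = (\cH,i)$ and $\Hom$ out of a direct sum is a product with no boundedness constraint, one matches $\Hom_\cA(Q_n,\cK^0) \cong \Mor_{T-T}(\cS_n,\cH) = C^n$ compatibly with the coboundary, yielding $\Ext^n_\cA(\cE^r,\cK^0) \cong H^n(T \subset \cS,\cH)$. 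I expect the main obstacle to be precisely the chain-level matching of differentials, and specifically the identification of the ``wrap-around'' face map $\partial_n$, defined via $p_T(x_n \cdot \xi \ot \cdots)$, with the last face of the relative bar differential: this is where the tube/cyclic structure of $\cA$ and the adjoint $V \mapsto V^\#$ genuinely enter. The nondegenerate duality pairing of Remark \ref{rem.homology} between $(\cH \ot_T \cS_n)_T$ and $\Mor_{T-T}(\cS_n,\cH)$ should make the homological and cohomological bookkeeping mutually consistent, effectively reducing the two verifications to one.
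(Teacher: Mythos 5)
Your high-level plan (one explicit projective resolution of the trivial module, then a termwise identification with the bar complexes of Definitions \ref{def.cohom-quasireg} and \ref{def.homology-quasiregular}) is indeed the paper's plan, and several of your preliminary points are correct: the rank completion is trivial because $\cZ(T)=\C 1$, modules of the form $\cA\cdot p_i\ot(\text{vector space})$ are projective, and the duality pairing makes the homological and cohomological verifications mirror each other. But the resolution you chose is the wrong one, and your central claim $\cK^0\ot_\cA P_n\cong C_n$ is false. A single factor $\ot_\cB\cA$ contributes a corner $p_k\cdot\cA\cdot p_l=(k\cS,\cS l)$, i.e.\ finite rank intertwiners from $L^2(S)\ovt_T\cH_l$ to $\cH_k\ovt_T L^2(S)$; by Frobenius reciprocity such a corner involves \emph{two} copies of $L^2(S)$, not one. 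Already at $n=1$: $\cK^0\ot_\cA P_1=\cK^0\ot_\cB\cA\cdot p_\eps=\bigoplus_k(\cH,k)\ot p_k\cdot\cA\cdot p_\eps$, whereas $C_1=(\cH\ot_T\cS)_T\cong\bigoplus_k(\cH,k)\ot(k\cS,\eps)$. Here $(k\cS,\eps)$ is finite dimensional (its dimension is the multiplicity of $\overline{\cH_k}$ in $L^2(S)$), while $p_k\cdot\cA\cdot p_\eps$ is in general infinite dimensional; for $T\subset T\rtimes\Gamma$ with an outer action one gets $\dim(k\cS,\eps)=1$ but $p_k\cdot\cA\cdot p_\eps\cong\C[\Gamma]$. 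So ``each remaining $\ot_\cB\cA$ produces exactly one $\ot_T\cS$'' is off by a factor of two per tensor factor, and no appeal to Lemma \ref{lem.well-behaved} repairs this.

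There is a second, related defect: in the general quasi-regular setting your complex does not even have well defined bar differentials. The augmentation $\cA\cdot p_\eps\recht\cE^\ell$ of \eqref{eq.first-A-module-map} is not a character --- by Remark \ref{rem.reg-triv-as-cp-maps-and-states}, $\counit$ is a character only on $p_\eps\cdot\cA\cdot p_\eps$, and $\cE^\ell$ is an infinite dimensional module, not $\C$ --- so the ``last face map'' of a bar differential, which must send $\cA^{\ot_\cB n}\ot_\cB\cA\cdot p_\eps$ back into $\cA^{\ot_\cB(n-1)}\ot_\cB\cA\cdot p_\eps$, has nowhere to put the augmented last factor. This is precisely why the bar-over-$\cB$ resolution \eqref{eq.bar-resolution-tube-category} appears in the paper only \emph{after} Theorem \ref{thm.tor-ext}, in Section \ref{sec.L2-Betti-category}, where the Morita equivalence of Proposition \ref{prop.morita} has replaced $\cE^\ell$ by the one-dimensional module $\C$ over Ocneanu's tube algebra. (If instead you keep all face maps as multiplications, you get the two-sided bar construction resolving $\cA\cdot p_\eps$, which is already projective, so all higher $\Tor$'s vanish --- again not what is needed.) What the paper actually builds is the much smaller resolution $\cA^\ell_n=\bigoplus_i(i\cS^{n+1},\cS)\cong\bigoplus_i\cA\cdot p_i\ot(i\cS^n,\eps)$ (see \eqref{eq.iso-left-A-modules}): this is the bar construction of the \emph{inclusion} $T\subset\cS$ promoted to $\cA$-modules, not the bar construction of the algebra $\cA$ over $\cB$, with differentials \eqref{eq.hom-in-resolution} built from $m$, $a$, $\Delta_\cS$ and exactness proved by the explicit homotopy $\gamma(V)=(1\ot\delta\ot1^{n+1})V$; only for this resolution do the identifications $\cK^0\ot_\cA\cA^\ell_n\cong C_n$ and, dually, $\Hom_\cA(\cA^r_n,\cK^0)\cong C^n$ hold on the nose. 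To salvage your approach you would either have to switch to this resolution, or use the legitimate two-sided bar resolution with terms $\cA^{\ot_\cB n}\ot_\cB\cE^\ell$ and then supply a separate comparison-of-resolutions argument connecting its homology to $H_n(T\subset\cS,\cH)$ --- which is exactly the content that is currently missing.
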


Theorem \ref{thm.tor-ext} says the following. Whenever
$\cdots \recht L_1 \recht L_0 \recht \cE^\ell \recht 0$
is a resolution of $\cE^\ell$ by projective left $\cA$-modules $L_k$, we can compute $H_n(T \subset \cS,\cH)$ as the homology of
$$\cdots  \recht \cK^0 \ot_\cA L_1  \recht \cK^0 \ot_\cA L_0 \; .$$
Whenever
$\cdots \recht R_1 \recht R_0 \recht \cE^r \recht 0$
is a resolution of $\cE^r$ by projective right $\cA$-modules $R_k$, we can compute $H^n(T \subset \cS,\cH)$ as the cohomology of
$$\Hom_\cA(R_0,\cK^0) \recht \Hom_\cA(R_1,\cK^0) \recht \cdots \; .$$

\begin{proof}[Proof of Theorem \ref{thm.tor-ext}]
We construct a concrete resolution $\cdots \recht \cA^\ell_1 \recht \cA^\ell_0 \recht \cE^\ell \recht 0$ of the left $\cA$-module $\cE^\ell$ and then identify the complex $\cdots  \recht  \cK^0 \ot_\cA \cA^\ell_1  \recht \cK^0 \ot_\cA \cA^\ell_0$ with the bar complex in Definition \ref{def.homology-quasiregular}. Next, we construct a concrete resolution $\cdots \recht \cA^r_1 \recht \cA^r_0 \recht \cE^r \recht 0$ of the right $\cA$-module $\cE^r$ and identify
$\Hom_\cA(\cA^r_0,\cK^0) \recht \Hom_\cA(\cA^r_1,\cK^0) \recht \cdots$ with the bar complex in Definition \ref{def.cohom-quasireg}.

For every $n \geq 0$, define $\cA^\ell_n$ as the algebraic direct sum
$$\cA^\ell_n = \bigoplus_{i \in \Irr(\cC)} (i \cS^{n+1}, \cS)$$
where $\cS^k$ denotes the $k$-fold relative tensor product $\cS \ot_T \cdots \ot_T \cS$. Turn $\cA^\ell_n$ into a left $\cA$-module by
\begin{equation}\label{eq.left-A-module-n}
V \cdot W = (1 \ot m \ot 1^n) (V \ot 1^{n+1}) (1 \ot W) m^*
\end{equation}
for all $V \in (i\cS,\cS j)$ and $W \in (j \cS^{n+1},\cS)$. Note that $\cA^\ell_0 = \cA \cdot p_\eps$.

More generally, we have the isomorphism of left $\cA$-modules
\begin{equation}\label{eq.iso-left-A-modules}
\bigoplus_{i \in \Irr(\cC)} \cA \cdot p_i \ot (i \cS^n,\eps) \recht \cA^\ell_n : V \ot W \mapsto (V \ot 1)(1 \ot W) \; .
\end{equation}
It follows that every $\cA^\ell_n$, $n \geq 0$, is a projective left $\cA$-module.

One checks that the map
\begin{equation}\label{eq.first-A-module-map}
\partial : \cA^\ell_0 \recht \cE^\ell : \partial(V) = (1 \ot a^*)((1 \ot \Delta_\cS^{-1/2})V \ot 1) m^*
\end{equation}
for all $V \in (i \cS, \cS)$ is a left $\cA$-module map. For all $n \geq 1$, we also define the left $\cA$-module maps
\begin{equation}\label{eq.hom-in-resolution}
\begin{split}
& \partial : \cA^\ell_n \recht \cA^\ell_{n-1} : \partial = \sum_{k=0}^n (-1)^k \partial_k \;\; ,\\
\text{where}\;\; & \partial_k(V) = (1^{k+1} \ot m \ot 1^{n-1-k})V \;\;\text{when}\;\; 0 \leq k \leq n-1 \;\; , \\
\text{and}\;\; & \partial_n(V) = (1^{n+1} \ot a^*)((1^{n+1} \ot \Delta_\cS^{-1/2})V \ot 1)m^*
\end{split}
\end{equation}
for all $V \in (i \cS^{n+1},\cS)$.

In this way, we find a complex $\cdots \recht \cA^\ell_1 \recht \cA^\ell_0 \recht \cE^\ell \recht 0$. The maps
\begin{equation}\label{eq.homotopy}
\gamma : \cE^\ell \recht \cA^\ell_0 \quad\text{and}\quad \gamma : \cA^\ell_n \recht \cA^\ell_{n+1}
\end{equation}
given by $\gamma(V) = (1 \ot \delta \ot 1^{n+1})V$ provide a homotopy for this complex, so that we have found a projective resolution of the left $\cA$-module $\cE^\ell$.

Assume now that $\cK$ is a nondegenerate right Hilbert $\cA$-module with corresponding Hilbert $S$-bimodule $\cH$ as in Theorem \ref{thm.tube-vs-corr}. Consider the bar complex $C_n$ defining $H_n(T \subset \cS,\cH)$ as in Definition \ref{def.homology-quasiregular}. By definition, $C_n$ consists of the $T$-central vectors in $\cH \ot_T \cS^n$ and this gives the natural isomorphism
$$C_n \cong \bigoplus_{i \in \Irr(\cC)} (\cH,i) \ot (i\cS^n , \eps) \; .$$
Recall from Theorem \ref{thm.tube-vs-corr} that $(\cH,i) = \cK \cdot p_i$. In combination with \eqref{eq.iso-left-A-modules}, we thus find the isomorphism
$$C_n \cong \bigoplus_{i \in \Irr(\cC)} \cK \cdot p_i \ot (i\cS^n , \eps) \cong \cK^0 \ot_\cA \cA^\ell_n \; .$$
A lengthy, but straightforward computation gives that this is actually an isomorphism between the complexes $(C_n)_{n \geq 0}$ and $(\cK^0 \ot_\cA \cA^\ell_n)_{n \geq 0}$. So, we have found the isomorphism
$$H_n(T \subset \cS,\cH) \cong \Tor_n^\cA(\cK^0,\cE^\ell) \; .$$

Dualizing everything, we find as follows the resolution $\cdots \recht \cA^r_1 \recht \cA^r_0 \recht \cE^r \recht 0$ of the right $\cA$-module $\cE^r$. Define for $n \geq 0$,
$$\cA^r_n = \bigoplus_{i \in \Irr(\cC)} (\cS^{n+1},\cS i)$$
with right $\cA$-module structure
$$V \cdot W = (1^n \ot m)(V \ot 1)(1 \ot W)(m^* \ot 1)$$
for all $V \in (\cS^{n+1},\cS i)$ and $W \in (i \cS, \cS j)$. Note that $\cA^r_0 = p_\eps \cdot \cA$. In general, we have the isomorphism of right $\cA$-modules
\begin{equation}\label{eq.iso-right-A-modules}
\bigoplus_{i \in \Irr(\cC)} (\cS^n , i) \ot p_i \cdot \cA \recht \cA^r_n : V \ot W \mapsto (V \ot 1) W \;\; ,
\end{equation}
so that every $\cA^r_n$, $n \geq 0$, is a projective right $\cA$-module.

The map defined as
$$\partial : \cA_0^r \recht \cE^r : \partial(V) = m(1 \ot \Delta_\cS^{1/2}V)(a \ot 1)$$
for all $V \in (\cS,\cS i)$ is a right $\cA$-module map. Together with the right $\cA$-module maps
\begin{align*}
& \partial : \cA^r_n \recht \cA^r_{n-1} : \partial = \sum_{k=0}^n (-1)^k \partial_k \;\; ,\\
\text{where}\;\; & \partial_0(V) = (a^* \ot 1^n)(1 \ot (\Delta_\cS^{1/2} \ot 1^n)V) (m^* \ot 1) \;\; , \\
\text{and}\;\; & \partial_k(V) = (1^{k-1} \ot m \ot 1^{n-k}) V \;\;\text{for all}\;\; 1 \leq k \leq n \;\; ,
\end{align*}
we find the resolution $\cdots \recht \cA^r_1 \recht \cA^r_0 \recht \cE^r \recht 0$.

Finally, consider the bar complex $C^n$ defining $H^n(T \subset \cS,\cH)$ as in Definition \ref{def.cohom-quasireg}. By definition, $C^n$ consists of all $T$-bimodular linear maps from $\cS^n$ to $\cH$. Using \eqref{eq.iso-right-A-modules}, we identify $\Hom_\cA(\cA^r_n,\cK^0)$ with the direct product
$$\Hom_\cA(\cA^r_n,\cK^0) = \prod_{i \in \Irr(\cC)} \cL\bigl( (\cS^n,i) , \cK \cdot p_i \bigr)$$
of all spaces of linear maps from $(\cS^n,i)$ to $\cK \cdot p_i$. Using the identification $\cK \cdot p_i = (\cH,i)$, we then find the isomorphism
$$\Psi : C^n \recht \Hom_\cA(\cA^r_n,\cK^0) \;\; ,$$
where for $c \in C^n$, $\Psi(c)$ is defined as the collection of linear maps from $(\cS^n,i)$ to $\cK \cdot p_i$ given by $\Psi(c)(V) = c \circ V$, which indeed makes sense because $c$ is a $T$-bimodular map from $\cS^n$ to $\cH$ and thus, $c \circ V$ is an intertwiner from $i \in \Irr(\cC)$ to $\cH$.

It is again straightforward, though a bit tedious, to check that $\Psi$ is an isomorphism of the complexes $(C^n)_{n \geq 0}$ and $(\Hom_\cA(\cA^r_n,\cK^0))_{n \geq 0}$. The conclusion
$$H^n(T \subset \cS,\cH) \cong \Ext^n_\cA(\cE^r,\cK^0)$$
follows.
\end{proof}

Using Proposition \ref{prop.morita}, we obtain as a special case, the following isomorphisms for the (co)ho\-mo\-logy of an SE-inclusion.

\begin{corollary}\label{cor.tor-ext-SE-inclusion}
Let $M$ be a II$_1$ factor and $\cC$ a tensor category of finite index $M$-bimodules having equal left and right dimension. Denote by $T \subset S$ the associated SE-inclusion and let $\cA$ be the tube $*$-algebra of the tensor category $\cC$, together with its co-unit $\counit : \cA \recht \C$.

Whenever $\cH$ is the generalized SE-correspondence associated with the nondegenerate right Hilbert $\cA$-module $\cK$ through Corollary \ref{cor.corr-vs-tube-category}, we have the natural isomorphisms
$$H_n(T \subset \cS,\cH) \cong \Tor_n^\cA(\cK^0,\C) \quad\text{and}\quad H^n(T \subset \cS,\cH) \cong \Ext^n_\cA(\C,\cK^0) \;\; ,$$
where we view $\C$ as a left or right $\cA$-module using the co-unit $\counit$.
\end{corollary}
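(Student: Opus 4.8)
The plan is to deduce this corollary by combining the general identification of Theorem~\ref{thm.tor-ext} with the Morita equivalence of Proposition~\ref{prop.morita}, together with the bijection of Corollary~\ref{cor.corr-vs-tube-category}. The key observation is that under the Morita equivalence $\Psi : \cA_1 \recht \cA_\cO$ between the tube $*$-algebra $\cA_1$ of the SE-inclusion $T \subset S$ (with respect to $\cC_1$) and Ocneanu's tube $*$-algebra $\cA$ of $\cC$, the trivial modules $\cE^\ell$ and $\cE^r$ of $\cA_1$ should correspond, up to Morita equivalence, to the one-dimensional modules $\C$ over $\cA$ given by the co-unit $\counit$. So the main task is to trace through the identifications and check that this correspondence of trivial modules holds.

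First I would recall that $H_n(T \subset \cS,\cH) \cong \Tor_n^{\cA_1}(\cK_1^0,\cE^\ell)$ and $H^n(T \subset \cS,\cH) \cong \Ext^n_{\cA_1}(\cE^r,\cK_1^0)$ from Theorem~\ref{thm.tor-ext}, where $\cK_1$ is the right Hilbert $\cA_1$-module corresponding to the generalized SE-correspondence $\cH$ via Theorem~\ref{thm.tube-vs-corr}. Since Morita equivalence preserves $\Tor$ and $\Ext$, I would transport these along $\Psi$: the Morita context of Proposition~\ref{prop.morita} gives an equivalence of module categories, so $\Tor_n^{\cA_1}(\cK_1^0,\cE^\ell) \cong \Tor_n^{\cA}(\cK^0,N^\ell)$ and $\Ext^n_{\cA_1}(\cE^r,\cK_1^0) \cong \Ext^n_{\cA}(N^r,\cK^0)$, where $N^\ell, N^r$ are the $\cA$-modules that correspond to $\cE^\ell, \cE^r$ under the equivalence, and $\cK$ is the right Hilbert $\cA$-module of Corollary~\ref{cor.corr-vs-tube-category}. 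The compatibility of the two bijections — Theorem~\ref{thm.tube-vs-corr} for $T \subset S$ versus Corollary~\ref{cor.corr-vs-tube-category} for $\cC$ — under the Morita equivalence is exactly what guarantees that $\cK_1^0$ maps to $\cK^0$.

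The heart of the argument, and what I expect to be the main obstacle, is identifying $N^\ell$ and $N^r$ concretely as the $1$-dimensional $\cA$-modules $\C$ defined by $\counit$. I would verify this by unwinding the trivial-module structures $\cE^r = \bigoplus_i (\cS,i)$ and $\cE^\ell = \bigoplus_i (i,\cS)$ from Lemma~\ref{lem.reg-triv-A} and Remark~\ref{rem.left-triv} through the explicit formula \eqref{eq.concrete-Morita} for $\Psi$. The intertwiners $\delta_\eta \in (\cS, \eta \ot \overline{\eta})$ decompose $\cS = L^2(S)$ as a $T$-bimodule into $\bigoplus_\eta \eta \ot \overline{\eta}$, and under $\Psi$ the trivial representation of $\cA_1$ (which is characterized via Remark~\ref{rem.reg-triv-as-cp-maps-and-states} as the GNS-module of the state $\counit$ coming from the identity map $S \recht S$) should match the co-unit $\counit$ of $\cA$ described in Section~\ref{sec.tube-tensor}, since the co-unit $\counit(\al) = \rd(\al)$ on $\cA$ is precisely the categorical trace that governs the trivial $T$-central vector $1 \in L^2(S)$. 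Once this identification is checked, which amounts to comparing the two defining formulas for $\counit$ on the matching generators, the stated isomorphisms follow immediately by substituting $N^\ell = N^r = \C$ into the transported $\Tor$ and $\Ext$ groups.
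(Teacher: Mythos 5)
Your proposal is correct and takes essentially the same route as the paper: the paper derives this corollary in a single line by combining Theorem \ref{thm.tor-ext} with the Morita equivalence of Proposition \ref{prop.morita}, which is exactly your plan. The verifications you single out as the "heart" of the argument — that the trivial modules $\cE^\ell$, $\cE^r$ transport to the co-unit module $\C$ over Ocneanu's tube algebra $\cA$, and that $\cK_1^0$ transports to $\cK^0$ (which holds by the very construction of the bijection in Corollary \ref{cor.corr-vs-tube-category} as the composition of Theorem \ref{thm.tube-vs-corr} with the Morita correspondence) — are precisely the details the paper leaves implicit, and they do check out as you describe.
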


\subsection[(Co)homology and $L^2$-Betti numbers for rigid C$^*$-tensor categories]{\boldmath (Co)homology and $L^2$-Betti numbers for rigid C$^*$-tensor categories}\label{sec.L2-Betti-category}

The representation theory of a rigid C$^*$-tensor category $\cC$ can be equivalently expressed by unitary half braidings or Hilbert space representations of the tube $*$-algebra $\cA$, see Section \ref{sec.rep-tube-vs-half-braiding}.

By Corollary \ref{cor.tor-ext-SE-inclusion}, the natural (co)homology theory for $\cC$ is precisely the Hochschild (co)homo\-logy of $\cA$ w.r.t.\ the augmentation $\counit : \cA \recht \C$. Given a right Hilbert $\cA$-module $\cK$, these are given by $\Tor_n^\cA(\cK^0,\C)$ and $\Ext^n_\cA(\C,\cK^0)$.
We thus define
$$\bnl(\cC) = \dim_{\cA\dpr} \Tor_n^\cA(L^2(\cA)^0,\C) = \dim_{\cA\dpr} \Ext^n_\cA(\C,L^2(\cA)^0) \; ,$$
where $L^2(\cA)^0$ is the linear span of all $L^2(\cA \cdot p_i)$, $i \in \Irr(\cC)$.

Defining the subalgebra $\cB \subset \cA$ given by $\cB = \lspan \{p_i \mid i \in \Irr(\cC)\}$, the bar resolution for the $\cA$-module $\C$ is
\begin{equation}\label{eq.bar-resolution-tube-category}
\cdots \recht C_2 \recht C_1 \recht C_0 \recht \C \quad\text{with}\;\; C_n = \underbrace{\cA \ot_\cB \cdots \ot_\cB \cA}_{n \;\;\text{factors}} \; \ot_\cB \; \cA \cdot p_\eps
\end{equation}
and $\partial : C_n \recht C_{n-1}$ given by $\partial = \sum_{k=0}^n (-1)^k \partial_k$ where
\begin{align*}
& \partial_k(V_0 \ot \cdots \ot V_n) = V_0 \ot \cdots \ot V_k \cdot V_{k+1} \ot \cdots \ot V_n \quad\text{for}\;\; 0 \leq k \leq n-1 \quad\text{and}\\
&\partial_n(V_0 \ot \cdots \ot V_n) = V_0 \ot \cdots \ot V_{n-1} \cdot p_\eps \; \counit(V_n) \; .
\end{align*}

Also, when $\cC$ is realized as a category of finite index $M$-bimodules having equal left and right dimension, we put $T = M \ovt M\op$ and we consider the SE-inclusion $T \subset S$ as in Section \ref{subsec.SEinclusion}. Combining Proposition \ref{prop.morita} and Theorem \ref{thm.tor-ext}, we get natural isomorphisms between the (co)homology of $T \subset S$ and the (co)homology of $\cC$. In particular, we get that
$$\bnl(\cC) = \bnl(T \subset \cS) \; .$$

To a finite index subfactor $N \subset M$ with Jones tower $N \subset M \subset M_1 \subset \cdots$, we associate the rigid C$^*$-tensor category $\cC_{M-M}$ of finite index $M$-bimodules generated by the $M$-$M$-bimodule $L^2(M_1)$. We similarly have the category $\cC_{N-N}$ of finite index $N$-$N$-bimodules generated by the $N$-$N$-bimodule $L^2(M)$. In Proposition \ref{prop.L2-Betti-Morita}, we prove that $\cC_{N-N}$ and $\cC_{M-M}$ have the same $L^2$-Betti numbers. The reason for this is that $\cC_{M-M}$ and $\cC_{N-N}$ are Morita equivalent and that this Morita equivalence induces a strong Morita equivalence between their tube algebras.

\begin{definition}[{see \cite[Section 4]{Mu01}}]\label{def.Morita-equivalence}
Two rigid C$^*$-tensor categories $\cC_1$ and $\cC_2$ are called \emph{Morita equivalent\footnote{In \cite[Section 4]{Mu01}, the terminology {\it weakly Morita equivalent} is used.}} if there exist nonzero $C^*$-categories $\cC_{12}$ and $\cC_{21}$ with finite dimensional morphism spaces and with tensor functors $\cC_1 \ot \cC_{12} \recht \cC_{12}$, $\cC_{12} \ot \cC_{21} \recht \cC_{11}$, etc., and with a duality functor $\cC_{12} \recht \cC_{21}$ satisfying exactly the same properties as a rigid C$^*$-tensor category.
\end{definition}

For a finite index subfactor $N \subset M$, the tensor categories $\cC_1 = \cC_{N-N}$ and $\cC_2 = \cC_{M-M}$ are Morita equivalent by considering the categories $\cC_{12} = \cC_{N-M}$ and $\cC_{21} = \cC_{M-N}$ of $N$-$M$-bimodules, resp.\ $M$-$N$-bimodules, that are direct sums of subbimodules of some $L^2(M_n)$.

Given a Morita equivalence between $\cC_1$ and $\cC_2$, there is a strong Morita equivalence between the tube $*$-algebras $\cA_1$ and $\cA_2$. This result was obtained in \cite[Section 3]{NY15b} using the notion of $Q$-systems in a tensor category. We provide the following more direct approach.  For all $i \in \Irr(\cC_1)$ and $j \in \Irr(\cC_2)$, define the vector spaces
$$p^1_i \cdot \cA_{12} \cdot p^2_j = \bigoplus_{\al \in \Irr(\cC_{12})} (i \al, \al j) \quad\text{and}\quad p^2_j \cdot \cA_{21} \cdot p^1_i = \bigoplus_{\al \in \Irr(\cC_{21})} (j \al, \al i) \; .$$
The obvious product and adjoint operations are defined in the same way as for the tube $*$-algebra of a rigid C$^*$-tensor category. In this way, we obtain the $*$-algebra
\begin{equation}\label{eq.tube-Morita-entire-A}
\cA = \begin{pmatrix} \cA_1 & \cA_{12} \\ \cA_{21} & \cA_2 \end{pmatrix} \; .
\end{equation}

Similar formulas as in Lemma \ref{lem.sum} still hold~: for every $\al \in \Irr(\cC_{12})$, $i \in \Irr(\cC_1)$ and $j \in \Irr(\cC_2)$, we have
\begin{align*}
& \sum_{k \in \Irr(\cC_2)} \sum_{W \in \onb(i \al, \al k)} \rd(k) \; W \cdot W^\# = \rd(\al) p^1_i \;\; , \\
& \sum_{k \in \Irr(\cC_1)} \sum_{W \in \onb(k \al, \al j)} \rd(k) \; W^\# \cdot W = \rd(\al) p^2_j \;\; .
\end{align*}

It follows that the $\cA_1$-$\cA_2$-bimodule $\cA_{12}$ is a strong Morita equivalence, in the sense that the product maps (inside $\cA$) are isomorphisms
$$\cA_{12} \ot_{\cA_2} \cA_{21} \cong \cA_1 \quad\text{and}\quad \cA_{21} \ot_{\cA_1} \cA_{12} \cong \cA_2 \; .$$

\begin{proposition} \label{prop.L2-Betti-Morita}
If the rigid C$^*$-tensor categories $\cC_1$ and $\cC_2$ are Morita equivalent, then $\bns(\cC_1) = \bns(\cC_2)$ for all $n \in \N$.
\end{proposition}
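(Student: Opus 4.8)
The plan is to show that both Betti numbers arise as Lück dimensions of the same tube--algebra Tor, transported across the strong Morita equivalence $\cA_{12}\ot_{\cA_2}\cA_{21}\cong\cA_1$, $\cA_{21}\ot_{\cA_1}\cA_{12}\cong\cA_2$ recorded above. Throughout I use that for a rigid C$^*$-tensor category the functional $\tau$ of Proposition \ref{prop.tube-von-neumann-tensor} is genuinely a trace, so each $\cA_k\dpr$ is semifinite and $\bns(\cC_k)=\dim_{\cA_k\dpr}\Tor_n^{\cA_k}(L^2(\cA_k)^0,\C)$ is defined, with $\C$ the trivial module given by $\counit$. I moreover regard $\cA_1,\cA_2$ as the diagonal corners $p^1\cA p^1$ and $p^2\cA p^2$ of the combined $*$-algebra $\cA$ in \eqref{eq.tube-Morita-entire-A}, where $p^1=\sum_{i\in\Irr(\cC_1)}p^1_i$ and $p^2=\sum_{j\in\Irr(\cC_2)}p^2_j$; the relations $\cA_{12}\cA_{21}=\cA_1$, $\cA_{21}\cA_{12}=\cA_2$ say exactly that $p^1,p^2$ are full, so $\cA$ carries a single trace $\tau$ restricting to $\tau_1,\tau_2$ and $\cA\dpr$ is semifinite.

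First I would carry out the purely homological transport. Since $\cA_{12}$ is a progenerator, the functors $-\ot_{\cA_1}\cA_{12}$ and $-\ot_{\cA_2}\cA_{21}$ are mutually inverse exact equivalences preserving projectives, yielding the balanced isomorphism
\[
\Tor_n^{\cA_1}(P,N)\cong\Tor_n^{\cA_2}(P\ot_{\cA_1}\cA_{12},\,\cA_{21}\ot_{\cA_1}N)
\]
for every right $\cA_1$-module $P$ and left $\cA_1$-module $N$ (resolve $N$ by projective left $\cA_1$-modules $L_\bullet$, note that $\cA_{21}\ot_{\cA_1}L_\bullet$ resolves $\cA_{21}\ot_{\cA_1}N$, and use $\cA_{12}\ot_{\cA_2}\cA_{21}\cong\cA_1$). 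Taking $P=L^2(\cA_1)^0$ and $N=\C$, I need two identifications. The trivial modules correspond, $\cA_{21}\ot_{\cA_1}\C\cong\C$ as left $\cA_2$-modules, because a Morita equivalence of rigid C$^*$-tensor categories preserves categorical dimension and hence intertwines the dimension characters $\counit_1,\counit_2$. The regular module transports to the off-diagonal corner, $L^2(\cA_1)^0\ot_{\cA_1}\cA_{12}\cong L^2(\cA_{12})^0:=p^1L^2(\cA)^0p^2$, as an $\cA_1\dpr$-$\cA_2$-bimodule, the left action being untouched by tensoring on the right. This gives
\[
\bns(\cC_1)=\dim_{\cA_1\dpr}\Tor_n^{\cA_2}(L^2(\cA_{12})^0,\C).
\]

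It remains to match this with $\bns(\cC_2)=\dim_{\cA_2\dpr}\Tor_n^{\cA_2}(L^2(\cA_2)^0,\C)$, and this dimension bookkeeping is the main obstacle. At the von Neumann level $\cA_{12}\dpr:=p^1\cA\dpr p^2$ is an equivalence bimodule between the corners $\cA_1\dpr=p^1\cA\dpr p^1$ and $\cA_2\dpr=p^2\cA\dpr p^2$, and because $\tau_1,\tau_2$ are both restrictions of the single trace $\tau$ this equivalence is trace-compatible; concretely one checks $\dim_{\cA_1\dpr}(p^1L^2(\cA\dpr)p)=\tau(p)$ for projections $p\le p^2$, exactly as for a corner of a semifinite algebra, which is why the Lück dimension is unchanged. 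I would then pull the equivalence bimodule through the derived functor, using exactness of $\cA_{12}\dpr\ot_{\cA_2\dpr}-$ to obtain $\Tor_n^{\cA_2}(L^2(\cA_{12})^0,\C)\cong\cA_{12}\dpr\ot_{\cA_2\dpr}\Tor_n^{\cA_2}(L^2(\cA_2)^0,\C)$, and invoke Morita invariance of Lück dimension for semifinite von Neumann algebras (as in the dimension calculus of \cite{Lu02,KPV13}) to conclude $\dim_{\cA_1\dpr}(\cdots)=\dim_{\cA_2\dpr}\Tor_n^{\cA_2}(L^2(\cA_2)^0,\C)=\bns(\cC_2)$. The hard part will be the two delicate points here: verifying trace-compatibility of the von Neumann equivalence $p^1\cA\dpr p^2$ (equivalently that $p^1,p^2$ have equal trace on each minimal central block, which comes from categorical dimension preservation), and justifying that tensoring by this equivalence bimodule commutes with $\Tor$ and preserves the dimension function.
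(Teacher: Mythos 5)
Your setup and the algebraic half of your argument coincide with the paper's own proof: both work in the linking algebra $\cA$ of \eqref{eq.tube-Morita-entire-A}, transport a projective resolution $L_\bullet$ of the trivial $\cA_1$-module through $\cA_{21} \ot_{\cA_1} -$ (exact and projectivity-preserving thanks to the local units coming from the identities $\sum_{k,W} \rd(k)\, W \cdot W^\# = \rd(\al)\, p^1_i$), identify $\cA_{21} \ot_{\cA_1} \C \cong \C$, and match the regular coefficient module with the off-diagonal corner; your $L^2(\cA_1)^0 \ot_{\cA_1} \cA_{12} \cong L^2(\cA_{12})^0$ is the mirror image of the paper's $L^2(\cA_2)^0 \ot_{\cA_2} \cA_{21} \cong L^2(\cA_{21})^0$.

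The gap is exactly in the step you flag as ``the hard part'', and the route you propose for it would fail. The isomorphism $\Tor_n^{\cA_2}(L^2(\cA_{12})^0,\C) \cong \cA_{12}\dpr \ot_{\cA_2\dpr} \Tor_n^{\cA_2}(L^2(\cA_2)^0,\C)$ needs both that $L^2(\cA_{12})^0 \cong \cA_{12}\dpr \ot_{\cA_2\dpr} L^2(\cA_2)^0$ and that $\cA_{12}\dpr \ot_{\cA_2\dpr} -$ be exact, i.e.\ that $p^1 \cA\dpr p^2$ be flat as a right module over $p^2 \cA\dpr p^2$. Neither is available: von Neumann algebraic induction functors are in general only \emph{dimension exact}, not exact (this is precisely why the dimension calculus of \cite{Lu02,KPV13} is formulated in terms of dimension isomorphisms rather than isomorphisms), and the multiplication map $\cA_{12}\dpr \ot_{\cA_2\dpr} L^2(\cA_2)^0 \recht L^2(\cA_{12})^0$, while surjective by polar decomposition, has no evident injectivity. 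Your parenthetical reformulation of trace-compatibility (``$p^1, p^2$ have equal trace on each minimal central block'') is also not the right condition --- it is neither needed nor generally true; what is needed is that $p^1, p^2$ have central support $1$ in $\cA\dpr$ (a consequence of $\cA_{12} \cdot \cA_{21} = \cA_1$ and $\cA_{21} \cdot \cA_{12} = \cA_2$) and that both corner traces are restrictions of the single trace $\tau$ on $\cA\dpr$, which holds by construction. The paper closes this step by transporting the coefficients \emph{before} taking homology, so that no von Neumann level tensor functor is ever pushed through a derived functor: since $L^2(\cA_{21})$ is an imprimitivity $\cM_2$-$\cM_1$-bimodule, the single complex $(L^2(\cA_{21})^0 \ot_{\cA_1} L_n)_{n \geq 0}$ computes $\bns(\cC_1)$ when one takes $\cM_2$-dimensions of its homology, and via $L^2(\cA_2)^0 \ot_{\cA_2} \cA_{21} \cong L^2(\cA_{21})^0$ it is literally the complex $(L^2(\cA_2)^0 \ot_{\cA_2} (\cA_{21} \ot_{\cA_1} L_n))_{n \geq 0}$ computing $\bns(\cC_2)$. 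Concretely, the dimension bookkeeping reduces to compressing one complex of $\cA\dpr$-modules by the two full projections $p^1$ and $p^2$: compression is an exact operation and preserves L\"{u}ck dimension for full projections with restricted traces (the \cite[Lemma A.15]{KPV13} type statement the paper uses elsewhere). If you reorganize your final step this way, keeping the purely algebraic corner $\cA_{12}$ where you wrote $\cA_{12}\dpr$, your argument closes and agrees with the paper's.
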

\begin{proof}
Write $\cM_k = \cA_k\dpr$. The $*$-algebra $\cA$ in \eqref{eq.tube-Morita-entire-A} has a natural semifinite trace $\tau$ and we find the imprimitivity $\cM_2$-$\cM_1$-bimodule $L^2(\cA_{21})$~: the left $\cM_2$ action and the right $\cM_1$ action on $L^2(\cA_{21})$ are each other's commutant.

Given a projective resolution $(L_n)$ of the trivial $\cA_1$-module $\C$, the $L^2$-Betti numbers $\bns(\cC_1)$ are computed as the $\cM_1$-dimension of the homology of the complex $(L^2(\cA_1)^0 \ot_{\cA_1} L_n)_{n \geq 0}$, and thus also as the $\cM_2$-dimension of the homology of the complex $(L^2(\cA_{21})^0 \ot_{\cA_1} L_n)_{n \geq 0}$.

Since the left $\cA_2$-modules $\cA_{21} \ot_{\cA_1} L_n$ form a projective resolution of the $\cA_2$-module $\cA_{21} \ot_{\cA_1} \C$, and since the latter is isomorphic with the trivial $\cA_2$-module, the $L^2$-Betti numbers $\bns(\cC_2)$ can be computed as the $\cM_2$-dimension of the homology of the complex
$$(L^2(\cA_2)^0 \ot_{\cA_2} \cA_{21} \ot_{\cA_1} L_n)_{n \geq 0} \; .$$
Since $L^2(\cA_2)^0 \ot_{\cA_2} \cA_{21} \cong L^2(\cA_{21})^0$, it follows that $\bns(\cC_1) = \bns(\cC_2)$ for all $n \in \N$.
\end{proof}

\subsection{A graphical interpretation of the bar complex associated to the affine category $\mathscr{A}$.}

In this section, we give a diagrammatic description of the homology of the tensor category $\cC$ generated by a finite index subfactor $N \subset M$. Denote by $\mathscr{P}$ the associated standard invariant interpreted as a Jones planar algebra. As we explained above, the resulting homology theory depends only on Ocneanu's tube algebra, which itself has a diagrammatic description purely in terms of the planar algebra $\mathscr{P}$, see Section \ref{sec.affine-cat}. Thus given a planar algebra we can right away associate to it a homology theory, which we now describe explicitly.

Let $\mathscr{P}$ be a planar algebra, which we take to be represented as a quotient of the universal planar algebra $\mathscr{U}$ modulo a set of relations $\mathscr{R}$. For each $k=0,1,2,\dots$ we will denote by $\mathscr{A}_{k}=\mathscr{A}_{k}(\mathscr{P})$ the quotient space
\[
\mathscr{U}_{k}/\mathscr{R}_{k}
\]
where $\mathscr{U}_{k}$ is the linear span of all elements of $\mathscr{U}$ (labeled planar networks) drawn on a two-sphere $S^{2}$ with the ordered collection of $k+2$ disks $r_{0},\dots,r_{k+1}$ removed. We require that the disks $r_{1},\dots,r_{k+1}$ are not connected to any strings of the diagram while $r_{0}$ may be connected to some number of strings of the diagram. Here $\mathscr{R}_{k}$ is the subspace of relations generated by all isotopies as well as those relations obtained by insisting that each relation from $\mathscr{R}$ holds in any contractible disk in $S^{2}\setminus\{r_{0},\dots,r_{k+1}\}$.
For shaded planar algebras (as considered in this paper), we require that the diagram be shaded so that each string is at the boundary between a shaded and an unshaded region.  Note that this shading is specified once we make a choice of shading of two regions: the region marked by $\star$ in the figure below (i.e., the shading of the distinguished interval of $x$) as well as the shading of the region surrounding the point $r_k$ (this shading is actually determined by the parity of the total number of strings of $x$).
We shall denote by $\mathscr{A}_{k}^{(p)}\subset\mathscr{A}_{k}$ the subspace spanned by diagrams having exactly $p$ strings connected to the interior disk $r_{0}$ (note that $p$ has to be even).

In what follows, the disk $r_{0}$ plays a different role than $r_{j}$ for $j\geq1$. To facilitate drawing pictures, we will always identify $S^{2}\setminus\{r_{0},\dots,r_{k+1}\}$ with $\mathbb{R}^{2}\setminus\{r_{0},\dots,r_{k}\}$ by shrinking $r_{1}$ to the point at infinity and shrinking $r_{2},\cdots,r_{k+1}$ to points. We draw an example of an element of $\mathscr{A}_{k}$:

$$
\begin{tikzpicture}
[manyStrings/.style={line width=2.5pt}]
\node [circle,draw] (p0) at (0.25,1) {${}_{r_0}$};
\node [rectangle,draw] (x) at (1,1) {$x$};
\node  at (0.65,1.20){$\star$};
\node [circle] (p1) at (1.75,1) {$\cdot_{r_{k+1}}$};
\node [circle] (p2) at (2.5,1) {$\cdot_{r_{k}}$};
\node [circle] (dots) at (3.25,1) {$\cdots$};
\node [circle] (pkPlus1) at (4,1) {$\cdot_{r_{2}}$};
\draw (x) .. controls +(130:1) and +(90:1) .. (-0.25,1) .. controls +(-90:1) and +(-130:1) ..  (x);
\draw (p0.10) -- (x.170);
\draw (p0.-10) -- (x.-170);
\draw (x) .. controls +(90:1) and +(90:1.0) .. (2.75,1)
	.. controls +(-90:0.6) and +(-90:0.35) .. (2.1,1)
    .. controls +(90:0.75) and +(60:0.5) .. (x);
\end{tikzpicture}$$ where, once again $r_1$ is the point at infinity.  The particular placement of the points $r_1, \ldots, r_{k+1}$ is in principle irrelevant since the whole picture is drawn up to isotopy; however this particular ordering will be useful later in identifying a certain differential complex with a tensor product.

Our convention is that the upper-left corner of $x$ is always the marked boundary segment of $x$. For ease of drawing we denote by thick lines zero or more parallel strings. We will
 frequently omit the labels for the points $r_{1},\dots,r_{k+1}$.

It is not hard to see that,
using the same isotopy as in \eqref{eq:addcup} we can redraw any element in the spanning set for $\mathscr{A}_{k}$ to have the form

$$
\begin{tikzpicture}
[manyStrings/.style={line width=2.5pt}]
\node [circle,draw] (p0) at (0.25,1) {$_{r_0}$};
\node [rectangle,draw] (x) at (1,1) {$x$};
\node  at (0.65,1.20){$\star$};
\node [circle] (p1) at (2,1) {$\cdot_{r_{k+1}}$};
\node [circle] (p2) at (3,1) {$\cdot_{r_{k+2}}$};
\node [circle] (dots) at (4.35,1) {$\cdots$};
\node [circle] (pkPlus1) at (5,1) {$\cdot_{r_{2}}$};
\draw [manyStrings] (x) .. controls +(130:1) and +(90:1) .. (-0.25,1) .. controls +(-90:1) and +(-130:1) ..  (x);
\draw [manyStrings] (x.west) -- (p0.east);
\draw [manyStrings] (x) .. controls +(23:2.1) and +(157:-2.1) ..  (x);
\draw [manyStrings] (x) .. controls +(50:1) and +(90:1) .. (3.5,1) .. controls +(-90:1) and +(-50:1) .. (x);
\draw [manyStrings] (x) .. controls +(70:1.4) and +(90:1) .. (5.3,1)
.. controls +(-90:1) and +(-70:1.4) .. (x);
\end{tikzpicture}$$

We note that by Lemma \ref{lem.tensor-product-planar}, $\mathscr{A}_{k}$ is exactly the tensor product $\underbrace{\mathscr{A} \ot_\mathscr{B} \cdots \ot_\mathscr{B} \mathscr{A}}_{k} \ot_{\mathscr{B}} \mathscr{A}_{\cdot,0}$ (the last tensor factor accounts for the fact that we do not permit any strings from $x$ to the point $r_{k+1}$). We denote by $d_{j}$ the map from $\mathscr{A}_{k}\to\mathscr{A}_{k-1}$ defined on the spanning set by associating to a diagram in $\mathscr{A}_{k}$ drawn on $S^{2}\setminus\{r_{0},\dots,r_{k+1}\}$ the same diagram but drawn on $S^{2}\setminus\{r_{0},\dots,r_{j},r_{j+2},\dots,r_{k+1}\}$ (these $k+1$ points are ordered as written).
In particular, $d_{0}$ is given by
$$d_0 \left(\begin{array}{c}\begin{tikzpicture}
[manyStrings/.style={line width=2.5pt}]
\node [circle,draw] (p0) at (0.25,1) {$_{r_0}$};
\draw [manyStrings] (x) .. controls +(130:1) and +(90:1) .. (-0.25,1) .. controls +(-90:1) and +(-130:1) ..  (x);
\draw [manyStrings] (x.west) -- (p0.east);
\node [rectangle,draw] (x) at (1,1) {$x$};
\node [circle] (p1) at (1.75,1) {$\cdot$};
\node [circle] (p2) at (2.5,1) {$\cdot$};
\node [circle] (dots) at (3.35,1) {$\cdots$};
\node [circle] (pkPlus1) at (4,1) {$\cdot$};
\draw [manyStrings] (x) .. controls +(23:1.6) and +(157:-1.6) ..  (x);
\draw [manyStrings] (x) .. controls +(50:1) and +(90:1) .. (3.0,1) .. controls +(-90:1) and +(-50:1) .. (x);
\draw [manyStrings,color=blue] (x) .. controls +(70:1.4) and +(90:1) .. (4.5,1) .. controls +(-90:1) and +(-70:1.4) .. (x);
\end{tikzpicture}
\end{array}\right)
=
\begin{array}{c}\begin{tikzpicture}
[manyStrings/.style={line width=2.5pt}]
\node [circle,draw] (p0) at (0.25,1) {$_{r_0}$};
\draw [manyStrings] (x) .. controls +(130:1) and +(90:1) .. (-0.25,1) .. controls +(-90:1) and +(-130:1) ..  (x);
\draw [manyStrings] (x.west) -- (p0.east);
\node [rectangle,draw] (x) at (1,1) {$x$};
\node [circle] (p1) at (1.75,1) {$\cdot$};
\node [circle] (p2) at (2.5,1) {$\cdot$};
\node [circle] (dots) at (3.35,1) {$\cdots$};
\draw [manyStrings] (x) .. controls +(23:1.6) and +(157:-1.6) ..  (x);
\draw [manyStrings] (x) .. controls +(50:1) and +(90:1) .. (3.0,1) .. controls +(-90:1) and +(-50:1) .. (x);
\draw [manyStrings,color=blue] (x) .. controls +(70:1.4) and +(90:1) .. (-1,1) .. controls +(-90:1) and +(-70:1.4) .. (x);
\end{tikzpicture}
\end{array}
$$
where we have drawn the strings of $x$ that pass between the point $r_2$ and the point at infinity $r_{1}$ in blue for emphasis.

It is not hard to see that $\sum_{j=0}^{n} (-1)^{j} d_{j}$ corresponds precisely to the differential on the bar complex for the tube algebra as in \eqref{eq.bar-resolution-tube-category}.

\section{\boldmath Vanishing of $L^2$-Betti numbers for amenable quasi-regular inclusions}

Given a tracial von Neumann algebra $(S,\tau)$ with von Neumann subalgebra $T \subset S$, there are several notions of amenability, which for a crossed product inclusion $T \subset T \rtimes \Gamma$ all coincide with the amenability of the group $\Gamma$. In \cite[Definition 3.2.1]{Po86}, the amenability of $S$ relative to $T$ was defined as the trivial $S$-bimodule $L^2(S)$ being weakly contained in the relative coarse $S$-bimodule $L^2(S) \ovt_T L^2(S)$, meaning that there exists a sequence of vectors $\xi_n \in L^2(S) \ovt_T L^2(S)$ such that $\lim_n \|x \xi_n - \xi_n x\| = 0$ and $\lim_n \langle x \xi_n,\xi_n\rangle = \tau(x)$ for all $x \in S$.

When $T \subset S$ is an irreducible quasi-regular subfactor, the above weak containment does not exactly correspond to weak containment of tube algebra representations, where the natural requirement is that the vectors $\xi_n$ can be chosen $T$-central. So for our purposes, the following relative amenability notion is more natural, and we prove in Proposition \ref{prop.characterization-amenable-inclusion} that it indeed has the expected properties.

\begin{definition}\label{def.amenable-quasi-reg-inclusion}
Let $S$ be a II$_1$ factor with irreducible quasi-regular subfactor $T \subset S$. The inclusion $T \subset S$ is called \emph{amenable} if there exists a net of unital, trace preserving, completely positive, $T$-bimodular maps $\vphi_i : S \recht S$ such that $\lim_i \|\vphi_i(x) - x\|_2 = 0$ for all $x \in S$ and such that $\vphi_i$ has finite rank for every fixed $i$, in the sense that the closure of $\vphi_i(S)$ is a finite index $T$-subbimodule of $L^2(S)$.
\end{definition}

\begin{proposition}\label{prop.characterization-amenable-inclusion}
Let $T \subset S$ be an irreducible quasi-regular inclusion of II$_1$ factors. Let $\cC$ be the tensor category generated by the finite index $T$-subbimodules of $L^2(S)$. Denote by $\cA$ the associated tube $*$-algebra. Also denote $\cS = \QN_S(T)$.

The following statements are equivalent.
\begin{enumerate}
\item The inclusion $T \subset S$ is amenable in the sense of Definition \ref{def.amenable-quasi-reg-inclusion}.
\item There exists a net of $T$-central, approximately $S$-central unit vectors in $L^2(S) \ovt_T L^2(S)$.
\item The trivial representation of $\cA$ on $\cE^r$ is weakly contained in the regular representation of $\cA$ on $L^2(p_\eps \cdot \cA)$.
\item There exists a net $\xi_i \in (\cS,\cS) = p_\eps \cdot \cA \cdot p_\eps$ satisfying
$$\|\xi_i\|_{2,\tau} = 1 \;\;\text{for all $i$, and}\quad \lim_i \| V \cdot \xi_i - \Tr(V) \xi_i \|_{2,\tau} = 0 \;\;\text{for all $V \in p_\eps \cdot \cA \cdot p_\eps$}$$
where we use the notation $\|V\|_{2,\tau} := \sqrt{\tau(V^\# \cdot V)}$.
\end{enumerate}

When $S$ has separable predual, these statements are moreover equivalent with the non vanishing of $H_0(T \subset \cS, L^2(S) \ovt_T L^2(S))$.
\end{proposition}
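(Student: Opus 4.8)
The plan is to deduce the final equivalence from Proposition~\ref{prop.0-homology-cohomology} together with the already established equivalence of statements (1)--(4), the only genuine work being a net-to-sequence reduction that is exactly where the separability hypothesis is used.

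First I would invoke Proposition~\ref{prop.0-homology-cohomology}(2), applied to $\cH = L^2(S) \ovt_T L^2(S)$: it says precisely that $H_0(T \subset \cS, L^2(S) \ovt_T L^2(S)) \neq 0$ if and only if the space of $T$-central vectors in $L^2(S) \ovt_T L^2(S)$ contains a \emph{sequence} $\xi_n$ of approximately $S$-central unit vectors. Since a sequence is in particular a net, this condition trivially implies statement (2), and hence the non-vanishing of $H_0$ implies all of (1)--(4). The entire content of the final statement is therefore the converse implication, namely upgrading the \emph{net} furnished by (2) to a \emph{sequence}.

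For the converse I would argue on the tube-algebra side. Assuming the equivalent statements hold, statement (4) provides a net $\xi_i \in p_\eps \cdot \cA \cdot p_\eps = (\cS,\cS)$ with $\|\xi_i\|_{2,\tau} = 1$ and $\lim_i \|V \cdot \xi_i - \Tr(V)\xi_i\|_{2,\tau} = 0$ for every $V \in p_\eps\cdot\cA\cdot p_\eps$. The key observation is that, because $S$ has separable predual, $L^2(S)$ contains only countably many inequivalent irreducible finite index $T$-subbimodules, so $\Irr(\cC)$ is countable and the $*$-algebra $p_\eps\cdot\cA\cdot p_\eps$ -- the space of finite rank $T$-bimodular operators on $L^2(S)$ -- is countable-dimensional over $\C$. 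Fixing a countable linear basis $(V_k)$ and using that $V \mapsto V\cdot\xi - \Tr(V)\xi$ is linear in $V$, the approximation condition in (4) is equivalent to its validity on the $V_k$ alone. A diagonal extraction then selects indices $i_n$ for which $\|V_k\cdot\xi_{i_n} - \Tr(V_k)\xi_{i_n}\|_{2,\tau} < 1/n$ for all $k \le n$, producing a genuine \emph{sequence} satisfying the condition of (4).

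Finally I would transport this sequence back across the correspondence of Theorem~\ref{thm.tube-vs-corr} and Lemma~\ref{lem.reg-triv-A}(1), under which the $S$-bimodule $L^2(S)\ovt_T L^2(S)$ corresponds to the right Hilbert $\cA$-module $L^2(p_\eps\cdot\cA)$ and its $T$-central vectors are identified isometrically with $L^2(p_\eps\cdot\cA\cdot p_\eps)$. This is the same isometric identification underlying the proof of (2)$\Leftrightarrow$(4), so the sequence $(\xi_{i_n})$ corresponds to a sequence of $T$-central unit vectors in $L^2(S)\ovt_T L^2(S)$ whose approximate $S$-centrality is precisely the translated condition of (4); by Proposition~\ref{prop.0-homology-cohomology}(2) this yields $H_0(T\subset\cS, L^2(S)\ovt_T L^2(S)) \neq 0$. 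The main obstacle is exactly this net-to-sequence passage: approximate $S$-centrality is a priori a condition indexed by the operator-norm-nonseparable algebra $S$, and the crucial point that makes separability of the predual bite is to test it instead against the countable-dimensional algebra $p_\eps\cdot\cA\cdot p_\eps$.
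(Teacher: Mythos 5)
There is a genuine gap: your proposal proves only the final clause of the proposition and treats the equivalence of statements (1)--(4) as ``already established.'' It is not established anywhere earlier in the paper --- that equivalence \emph{is} the core content of this proposition. The paper's own proof derives it from Corollary \ref{cor.cp-maps-vs-states-tube-algebra} (the bijection between unital, trace preserving, completely positive, $T$-bimodular maps $\vphi : S \recht S$ and states $\om$ on $\cA$ with $\om(p_\eps)=1$), combined with Remark \ref{rem.reg-triv-as-cp-maps-and-states} (which identifies the identity map with the state $\counit$ whose GNS space is the trivial module $\cE^r$, and the map $x \mapsto \tau(x)1$ with the state whose GNS space is $L^2(p_\eps \cdot \cA)$), together with an adjoint/duality argument to pass between 3 and 4. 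None of this dictionary appears in your argument, so the heart of the statement is left unproved; moreover, the transport step at the end of your proposal silently relies on exactly this dictionary.

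Concerning the part you do treat: your diagnosis that the final clause reduces to a net-to-sequence passage, with separability entering there, is correct and is consistent with (indeed more explicit than) the paper's one-line appeal to Proposition \ref{prop.0-homology-cohomology}. However, your detour through the tube algebra makes the last step delicate: knowing that (2) and (4) are equivalent \emph{as existence statements} does not let you move a particular sequence satisfying (4) to a particular sequence satisfying (2); for that you need the proof of $(2)\Leftrightarrow(4)$ to be a vector-by-vector transport (it is, via Corollary \ref{cor.cp-maps-vs-states-tube-algebra}, but since you assumed rather than proved that equivalence, this is not justified within your argument). A cleaner, self-contained route stays inside the bimodule: for a $T$-central unit vector $\xi$, irreducibility of $T \subset S$ forces the normal functional $x \mapsto \langle x\xi,\xi\rangle$ to be a multiple of $\tau$, whence $\|x\xi\| = \|x\|_2 = \|\xi x\|$ and so $\|x\xi - \xi x\| \leq 2\|x\|_2$; therefore approximate $S$-centrality of $T$-central unit vectors can be tested on a countable $\|\cdot\|_2$-dense subset of $S$, and the diagonal extraction can be performed directly on the net furnished by (2) inside $L^2(S) \ovt_T L^2(S)$. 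Alternatively, the proof of Proposition \ref{prop.0-homology-cohomology}(2) characterizes the vanishing of $H_0$ by a coercivity inequality over finite subsets of $\cS$, a formulation that is insensitive to the net-versus-sequence distinction, so no extraction is needed at all.
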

\begin{proof}
The proposition follows immediately from Corollary \ref{cor.cp-maps-vs-states-tube-algebra} and Remark \ref{rem.reg-triv-as-cp-maps-and-states}, and by taking the adjoint to prove the equivalence of 3 and 4. The final statement then follows from Proposition \ref{prop.0-homology-cohomology}.
\end{proof}

The goal of this section is to prove that $\bns(T \subset S) = 0$ for all $n \geq 1$ whenever $T \subset S$ is amenable. We can however only prove this under a possibly stronger, but natural amenability condition on the inclusion $T \subset S$, formulated as a F{\o}lner condition. As we prove in Lemma \ref{lem.strongly-amenable-vs-amenable} at the end of this section, this F{\o}lner property is equivalent with amenability as in Definition~\ref{def.amenable-quasi-reg-inclusion} for several families of quasi-regular inclusions, including all SE-inclusions of extremal subfactors, all crossed product inclusions and all inclusions of the form $N \rtimes \Lambda \subset N \rtimes \Gamma$ where $\Lambda < \Gamma$ is an almost normal subgroup.

Before defining the F{\o}lner property of an arbitrary irreducible quasi-regular inclusion, consider the SE-inclusion $T \subset S$ of an extremal subfactor $N \subset M$ with standard invariant $\cG_{N,M}$. In \cite{Po93}, the standard invariant $\cG_{N,M}$ is called amenable if the weighted principal graph $(\Gamma_{N,M},\vec{v})$ satisfies a F{\o}lner condition as a weighted graph. In \cite{Po94a} and \cite[Theorem 5.3]{Po99}, it is proved that this F{\o}lner condition is equivalent with the amenability of $S$ relative to $T$, and also with the Kesten type condition $\|\Gamma_{N,M}\|^2 = [M:N]$. Note that this last property is also used to define amenability of an abstract rigid C$^*$-tensor category. Reformulating the F{\o}lner property for the weighted principal graph directly in terms of the SE-inclusion $T \subset S$, we define as follows the F{\o}lner property for an arbitrary irreducible quasi-regular inclusion.

So, fix an irreducible quasi-regular inclusion of II$_1$ factors $T \subset S$. Denote by $\cC$ the tensor category generated by the finite index $T$-subbimodules of $L^2(S)$, and write $\cS = \QN_S(T)$. For every $\al \in \Irr(\cC)$, we denote by $e_\al \in (\cS,\cS)$ the orthogonal projection of $L^2(S)$ onto the span of the $T$-subbimodules of $L^2(S)$ that are isomorphic with $\al$. We write $\cS_\al := e_\al(\cS)$. Given a finite symmetric subset $\cG \subset \Irr(\cC)$, we turn $\Irr(\cC)$ into a locally finite graph by putting an edge between $\al,\be \in \Irr(\cC)$ if there exists a $\gamma \in \cG$ such that $e_\beta(\cS_\gamma \cS_\al)$ is nonzero\footnote{Equivalently, we put an edge between $\al$ and $\beta$ iff $\tau(\cS_{\overline{\beta}} \cS_\cG \cS_\al) \neq \{0\}$. Taking the complex conjugate, the latter is equivalent with $\tau(\cS_{\albar} \cS_{\overline{\cG}} \cS_\be) \neq \{0\}$. So, for a symmetric subset $\cG \subset \Irr(\cC)$, we obtain a symmetric condition in $\al,\be$.}. For every finite subset $\cF \subset \Irr(\cC)$, we then denote by $\partial_\cG(\cF)$ the boundary of $\cF$ in this graph, which we define as the union of the inner and outer boundary of $\cF$. By definition, $\partial_\cG(\cF)$ consists of all $\al \in \cF$ that are connected by an edge to some $\beta \not\in \cF$, and of all $\al \not\in \cF$ that are connected to some $\beta \in \cF$. We define the measure $\mu$ on $\Irr(\cC)$ by $\mu(\{\al\}) = \Tr^r(e_\al)$ for every $\al \in \Irr(\cC)$.

\begin{definition}\label{def.strongly-amenable-quasi-reg-inclusion}
An irreducible quasi-regular inclusion of II$_1$ factors $T \subset S$ is said to have the \emph{F{\o}lner property} if the following holds: for every finite subset $\cG \subset \Irr(\cC)$ and every $\eps > 0$, there exists a finite subset $\cF \subset \Irr(\cC)$ such that
$$\mu(\partial_\cG(\cF)) < \eps \, \mu(\cF) \; .$$
\end{definition}

In Lemma \ref{lem.strongly-amenable-vs-amenable}, we prove that the F{\o}lner property implies amenability in the sense of Definition~\ref{def.amenable-quasi-reg-inclusion} and that the converse holds for large classes of quasi-regular inclusions. We do not know whether the converse holds in general.

We now turn to $L^2$-Betti numbers. So, we assume that $S$ has separable predual and that the inclusion $T \subset S$ is unimodular, i.e.\ that all $T$-subbimodules of $L^2(S)$ have equal left and right dimension, so that the $L^2$-Betti numbers $\bns(T \subset S)$ are well defined.

\begin{theorem}\label{thm.vanish-amenable}
If $T \subset S$ satisfies the F{\o}lner property, then $\bns(T \subset S) = 0$ for all $n \geq 1$.
\end{theorem}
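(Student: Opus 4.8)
The plan is to push the F{\o}lner property through the tube-algebra description of the homology and then close with L\"uck's dimension theory. First I would reduce to a concrete complex of Hilbert modules. By Proposition \ref{prop.betti-quasiregular-homology}, unimodularity makes $\mu$ a trace on $\cM(T \subset S)$ and gives $\bns(T \subset \cS) = \dim_{\cM(T \subset S)} H_n(T \subset \cS,\cHreg)$; passing to $\cHreg' = L^2(S) \ovt_T \cH_1 \ovt_T L^2(S)$ with $\cH_1 = \bigoplus_{i \in \Irr(\cC)} \cH_i$ as in the proof of that proposition, I identify $\End_{S-S}(\cHreg')$ with $A := \cA\dpr$ and, via Theorem \ref{thm.tor-ext}, compute $\bns(T \subset \cS) = \dim_A H_n$ of the bar complex $C_\bullet$ of Definition \ref{def.homology-quasiregular}, which is the complex $\bigl(L^2(\cA)^0 \ot_\cA \cA^\ell_n\bigr)_n$ computing $\Tor_n^\cA(L^2(\cA)^0,\cE^\ell)$. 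Concretely $C_n = \bigoplus_{i \in \Irr(\cC)} L^2(\cA \cdot p_i) \ot (i\cS^n,\eps)$, a complex of Hilbert $A$-modules with $\dim_A L^2(\cA \cdot p_i) = \tau(p_i) = \mu(\{i\})$. Thus $\mu$ is exactly the device reading off $A$-dimensions, and the edge relation defining $\partial_\cG(\cF)$ records which blocks $p_i$ are linked by left multiplication with the generating bimodules in $\cG$.

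Next I would reduce to reduced homology. Over a semifinite $(A,\tau)$, L\"uck's dimension cannot distinguish the unreduced homology $H_n = \Ker\partial_n/\Im\partial_{n+1}$ from the reduced homology $\overline{H}_n = \Ker\partial_n/\overline{\Im\partial_{n+1}}$, because the torsion module $\overline{\Im\partial_{n+1}}/\Im\partial_{n+1}$ has $\dim_A = 0$ (see \cite[Section 6.1]{Lu02} and \cite[Section A.4]{KPV13}). Hence it suffices to show $\dim_A\overline{H}_n = \dim_A\bigl(\Ker\partial_n \ominus \overline{\Im\partial_{n+1}}\bigr) = 0$ for $n \geq 1$, i.e.\ that the space of harmonic chains has zero $A$-dimension. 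As in the proof of Proposition \ref{prop.betti-quasiregular-homology} I would view $C_n$ as the direct limit of the finitely generated Hilbert $A$-modules $C_n^\cF$ obtained by building chains out of $\cS_\cF = e_\cF(\cS)$ for finite $\cF \subset \Irr(\cC)$, so that $\dim_A\overline{H}_n$ is controlled, by continuity of $\dim_A$ under direct limits, by the harmonic parts of these truncations.

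The heart of the argument is a F{\o}lner averaging homotopy. The resolution $\cA^\ell_\bullet \recht \cE^\ell$ carries the $\C$-linear contraction $\gamma(V) = (1 \ot \delta \ot 1^{n+1})V$ of \eqref{eq.homotopy}, which fails to be $\cA$-linear only through the insertion of the canonical unit $\delta \in (\cS,\eps)$. Given a finite symmetric $\cG \subset \Irr(\cC)$ and a F{\o}lner set $\cF$ with $\mu(\partial_\cG(\cF)) < \eps\,\mu(\cF)$, I would replace $\delta$ by the normalized weighted average $\mu(\cF)^{-1}\sum_{\al \in \cF}\Tr^r(e_\al)\,(\text{inclusion of }\cH_\al)$ over $\cF$, producing after tensoring an approximate contracting homotopy $\gamma_\cF$ on $C_\bullet$. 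Applying $\partial$, the defect $\id - (\partial\gamma_\cF + \gamma_\cF\partial)$, restricted to the truncation matched to $\cG$, receives a contribution only from bimodules that $\cG$-connect $\cF$ to its complement; thus it is supported on $\partial_\cG(\cF)$ and, with the $\mu(\cF)^{-1}$ normalization, has Hilbert-module norm bounded by a constant times $\sqrt{\mu(\partial_\cG(\cF))/\mu(\cF)} < \sqrt{\eps}$. For a harmonic chain $z$ in the truncation, $\gamma_\cF\partial z = 0$, so $z - \partial(\gamma_\cF z)$ equals this defect applied to $z$; since $z \perp \overline{\Im\partial}$ this forces $\|z\|^2 \leq \sqrt{\eps}\,\|z\|^2$, hence the truncated harmonic part shrinks as $\eps \to 0$. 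Combining with the direct-limit continuity of $\dim_A$ yields $\dim_A\overline{H}_n = 0$ for all $n \geq 1$, and therefore $\bns(T \subset \cS) = 0$.

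The main obstacle is the uniformity of this estimate. No single finite $\cG$ governs the full boundary operator $\partial$, which uses all of $\cS$, so the defect bound is only valid after matching the generating set $\cG$ to a finitely generated truncation $C_n^\cF$ and letting both grow through a F{\o}lner net; making the passage from these truncated estimates to $\dim_A\overline{H}_n = 0$ rigorous requires the dimension-approximation machinery exactly as in Proposition \ref{prop.betti-quasiregular-homology}. Equally delicate is verifying that the defect is genuinely \emph{supported on} $\partial_\cG(\cF)$ with the correct $\mu(\cF)^{-1}$ normalization, so that the F{\o}lner ratio rather than the absolute boundary measure $\mu(\partial_\cG(\cF))$ controls its size. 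Tracking the non-normalized traces $\Tr^\ell,\Tr^r$ and the modular operator $\Delta_\cS$ through the module formula \eqref{eq.Amodule} is where unimodularity enters: it makes $\Delta_\cS$ a positive scalar on each isotypic block, so that the averaging weights $\Tr^r(e_\al) = \mu(\{\al\})$ are precisely the dimensions read off by $\tau$, which is what makes the ratio estimate close.
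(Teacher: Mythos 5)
Your overall framing (reduce to the tube-algebra bar complex, then exploit the F{\o}lner property) matches the paper's starting point, but the heart of your argument --- a F{\o}lner-averaged approximate contracting homotopy with an operator-norm bound on its defect --- is not how the paper proceeds, and as written it has genuine gaps. The paper instead proves a kernel-density statement (Lemma \ref{lem.general-vanishing}: if $\Ker V \cap \cA^{\oplus k}$ is dense in $\Ker V$ for every matrix $V$ over the corners $p_i \cdot \cA \cdot p_i$, then $\bns(T \subset S) = 0$ for all $n \geq 1$) and then verifies this density under the F{\o}lner property (Theorem \ref{thm.density}) by trace estimates (Lemma \ref{lem.trace-formula}), following L\"uck's proof for amenable groups \cite[Theorem 6.37]{Lu02}. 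That detour through kernels of matrices is precisely what makes the interface with L\"uck's dimension theory work; your outline has to rebuild that interface and does not.

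Concretely: (1) your averaged homotopy is not well defined. The contraction \eqref{eq.homotopy} inserts $\delta \in (\cS,\eps)$, a morphism $L^2(T) \recht L^2(S)$; your proposed replacement $\mu(\cF)^{-1}\sum_{\al \in \cF}\Tr^r(e_\al)\,(\text{inclusion of } \cH_\al)$ is a sum of morphisms $\cH_\al \recht L^2(S)$ with varying sources, so it cannot be substituted into $(1 \ot \delta \ot 1^{n+1})V$. The correct analogue of the group-case averaging $|F|^{-1}\sum_{h \in F} h \ot h^{-1}(\,\cdot\,)$ is the normalized splitting $(e_\cF \ot 1)m^*$, and once you use that, the defect is no longer a simple cut-down by a boundary projection. (2) The claim that the defect is supported on $\partial_\cG(\cF)$ with operator norm bounded by $\sqrt{\mu(\partial_\cG(\cF))/\mu(\cF)}$ is asserted, not proven; in the tube-algebra setting the defect involves quantum dimensions and multiplicities, and the paper's own F{\o}lner estimates are trace-class bounds (the term $\|D p_\cI P_{\cF'}\|_{1,\Tr}$ in the proof of Theorem \ref{thm.density}), not operator-norm bounds --- operator-norm smallness is a strictly stronger statement that you would have to establish. (3) Even granting (1) and (2), your harmonic-chain argument only shows that harmonic chains lying in a fixed truncation $C_n^\cF$ vanish. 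Harmonic chains of the full complex are limits of truncated chains that are only approximately cycles (the boundary operator is unbounded on the completion), and identifying the L\"uck dimension of the unreduced, algebraic homology with the dimension of a reduced harmonic space for these infinitely generated modules is exactly the step that needs an argument; citing the machinery of Proposition \ref{prop.betti-quasiregular-homology} does not supply it, since that proposition proves $\dim_{\cM(T \subset S)} H^n = \dim_{\cM(T \subset S)} H_n$, not a mechanism for converting approximate-homotopy estimates into dimension vanishing. The paper's Lemma \ref{lem.general-vanishing} exists precisely to bypass these issues, and your proposal needs a substitute for it.
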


In combination with Lemma \ref{lem.strongly-amenable-vs-amenable} and Remark \ref{rem.amenable-SE-inclusion} below, we then get the following.

\begin{corollary}\label{cor.vanish-amenable-tensor-category}
For every amenable rigid C$^*$-tensor category $\cC$, we have that $\bns(\cC) = 0$ for all $n \geq 1$.
\end{corollary}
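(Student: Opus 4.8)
The plan is to deduce this from Theorem \ref{thm.vanish-amenable} by passing to an associated SE-inclusion, so that the abstract categorical statement becomes a statement about a quasi-regular inclusion of II$_1$ factors. First I would realize the given rigid C$^*$-tensor category $\cC$ as a category of finite index $M$-bimodules having equal left and right dimension; this is always possible using the spherical (categorical) dimension function, and it allows us to form the SE-inclusion $T \subset S$ with $T = M \ovt M\op$ as in Section \ref{subsec.SEinclusion}. Via the Morita equivalence of Proposition \ref{prop.morita} between Ocneanu's tube algebra of $\cC$ and the tube algebra of $T \subset S$, combined with Theorem \ref{thm.tor-ext}, we have the identity $\bns(\cC) = \bns(T \subset \cS)$ established in Section \ref{sec.L2-Betti-category}. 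In particular the left-hand side is intrinsic to $\cC$ and independent of the chosen realization, so the problem is reduced to showing $\bns(T \subset S) = 0$ for all $n \geq 1$.

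The second step is to convert amenability of $\cC$ into the F{\o}lner property of $T \subset S$. By Remark \ref{rem.amenable-SE-inclusion}, amenability of $\cC$ as a rigid C$^*$-tensor category (equivalently, the Kesten-type condition $\|\Gamma_{N,M}\|^2 = [M:N]$ for its weighted principal graph, cf.\ \cite{Po93,Po94a,Po99}) is equivalent to amenability of the SE-inclusion $T \subset S$ in the sense of Definition \ref{def.amenable-quasi-reg-inclusion}. By Lemma \ref{lem.strongly-amenable-vs-amenable}, for SE-inclusions of extremal subfactors this amenability is in turn equivalent to the F{\o}lner property of Definition \ref{def.strongly-amenable-quasi-reg-inclusion}. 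Hence $T \subset S$ has the F{\o}lner property.

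Finally, since the realization has equal left and right dimension, the inclusion $T \subset S$ is unimodular, so the $L^2$-Betti numbers $\bns(T \subset S)$ are defined, and Theorem \ref{thm.vanish-amenable} yields $\bns(T \subset S) = 0$ for all $n \geq 1$. Combined with $\bns(\cC) = \bns(T \subset \cS)$ from the first step, this gives $\bns(\cC) = 0$ for all $n \geq 1$, as claimed.

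I expect the genuinely delicate points to be hidden in the two auxiliary results invoked here rather than in the assembly above: namely the precise matching, carried out in Lemma \ref{lem.strongly-amenable-vs-amenable} and the discussion preceding Definition \ref{def.strongly-amenable-quasi-reg-inclusion}, between the combinatorial F{\o}lner condition on $(\Irr(\cC),\mu)$ with $\mu(\{\al\}) = \Tr^r(e_\al)$ and the classical amenability of the weighted principal graph, together with the verification that categorical amenability transfers to amenability of the SE-inclusion independently of the realization. Once these equivalences are in place, the vanishing is immediate from Theorem \ref{thm.vanish-amenable}.
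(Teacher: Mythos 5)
Your proposal is correct and follows essentially the same route as the paper, whose proof of this corollary is precisely the combination of Theorem \ref{thm.vanish-amenable}, Lemma \ref{lem.strongly-amenable-vs-amenable} (whose extra hypotheses are verified there for SE-inclusions) and Remark \ref{rem.amenable-SE-inclusion}, together with the identification $\bns(\cC) = \bns(T \subset \cS)$ from Section \ref{sec.L2-Betti-category}. The only point treated more casually than it deserves is the existence of a realization of an abstract $\cC$ as finite index $M$-bimodules with equal left and right dimension (a nontrivial theorem, e.g.\ of Popa--Shlyakhtenko type, rather than a consequence of sphericality alone), but the paper relies on this implicitly in exactly the same way.
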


Before proving Theorem \ref{thm.vanish-amenable}, we introduce the following notation and prove a general vanishing lemma for $L^2$-Betti numbers.

\begin{definition}\label{def.L2-Betti-map}
Let $(\cM,\tau)$ be a von Neumann algebra with a normal semifinite faithful trace $\tau$ and let $\cA \subset \cM$ be a dense $*$-subalgebra contained in the domain of $\tau$.

For every $V \in M_{m,n}(\C) \ot \cA$, viewed as an operator from $L^2(\cM)^{\oplus n}$ to $L^2(\cM)^{\oplus m}$ given by left multiplication, we define
$$\bel(V) = \dim_\cM\bigl(\Ker V \cap (\Ker V \cap \cA^{\oplus n})^\perp\bigr) \; .$$
\end{definition}

Note that $\bes(V) = 0$ iff $\Ker V \cap \cA^{\oplus n}$ is dense in $\Ker V$.

The proof of the following lemma is basically identical to the end of the proof of \cite[Theorem 6.37]{Lu02}.

\begin{lemma}\label{lem.general-vanishing}
Let $T \subset S$ be an irreducible quasi-regular inclusion that is unimodular. Let $\cA$ be the tube $*$-algebra as above. If $\bes(V) = 0$ for every $i \in \Irr(\cC)$ and every $V \in M_{m,k}(\C) \ot p_i \cdot \cA \cdot p_i$, then $\bns(T \subset S) = 0$ for all $n \geq 1$.
\end{lemma}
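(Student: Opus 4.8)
The plan is to identify $\bnl(T\subset S)$ with a $\Tor$-dimension over the tube algebra and then run Lück's dimension-flatness computation \cite[Theorem 6.37]{Lu02}, with the hypothesis $\bes(V)=0$ playing the role that the Følner estimate plays there. Since $T\subset S$ is unimodular, $\cM=\cA\dpr$ carries the trace of Proposition \ref{prop.trace-tube-quasi-reg}, so by Proposition \ref{prop.betti-quasiregular-homology} together with Theorem \ref{thm.tor-ext} I have
$$\bnl(T\subset S)=\dim_{\cM}\Tor_n^\cA\bigl(L^2(\cA)^0,\cE^\ell\bigr),$$
which I would compute through the explicit projective resolution $\cdots\recht\cA^\ell_1\recht\cA^\ell_0\recht\cE^\ell\recht 0$ of \eqref{eq.hom-in-resolution}, exact because of the contracting homotopy $\gamma$ of \eqref{eq.homotopy}. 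Thus $\bnl(T\subset S)$ is the $\cM$-dimension of the homology of $L^2(\cA)^0\ot_\cA\cA^\ell_*$, which by the isomorphism \eqref{eq.iso-left-A-modules} takes the form $\bigoplus_{i\in\Irr(\cC)}L^2(\cA)^0 p_i\ot(i\cS^n,\eps)$, with differentials $A_n$ given, after choosing bases of the finite-dimensional spaces $(i\cS^n,\eps)$, by finite matrices whose entries are the concrete intertwiners lying in the corners $p_i\cdot\cA\cdot p_j$.

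The heart of the matter is the identity $\dim_\cM H_n=\bes(A_n)$ for $n\geq 1$, which is exactly the end of the proof of \cite[Theorem 6.37]{Lu02}. Writing $\overline{A_n}$ for the bounded extension of the differential to the Hilbert $\cM$-module completion, additivity and cofinality of Lück's dimension give $\dim_\cM H_n=\dim_\cM\Ker\overline{A_n}-\dim_\cM\overline{\Im A_{n+1}}$ (the defect $\overline{\Im A_{n+1}}/\Im A_{n+1}$ has dimension zero by cofinality); exactness of $\cA^\ell_*$ over $\cA$ yields $\Ker_\cA A_n=\Im_\cA A_{n+1}$ and hence $\overline{\Ker_\cA A_n}=\overline{\Im A_{n+1}}$, while by Definition \ref{def.L2-Betti-map} one has $\bes(A_n)=\dim_\cM\Ker\overline{A_n}-\dim_\cM\overline{\Ker_\cA A_n}$. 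Combining these three facts gives $\dim_\cM H_n=\bes(A_n)$. Because everything in sight is infinitely generated, I would carry this out on the finite truncations $\cS_\cF=e_\cF(\cS)$, $\cF\subset\Irr(\cC)$ finite, exactly as in the proof of Proposition \ref{prop.betti-quasiregular-homology}, using the approximation formulae for $\dim_\cM$ of direct and inverse limits \cite{CG85,Lu02,KPV13}; each truncated chain module has finite $\cM$-dimension since $\tau(p_i)<\infty$.

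It then remains to deduce $\bes(A_n)=0$ from the hypothesis. Here the differential matrices $A_n$ a priori carry off-diagonal entries in $p_i\cdot\cA\cdot p_j$ with $i\neq j$, whereas the hypothesis only controls the diagonal corners $p_i\cdot\cA\cdot p_i$; bridging this gap is the step I expect to be the main obstacle. My intended route is to pass to $A_n^\#A_n$, so that $\Ker A_n=\Ker A_n^\#A_n$ (and the two algebraic $\cA$-kernels coincide, by faithfulness of $\tau$), whence $\bes(A_n)=\bes(A_n^\#A_n)$, and then to exploit the orthogonality $p_iL^2(\cM)\perp p_jL^2(\cM)$ of the corner subspaces to test the density of the algebraic kernel corner by corner, thereby reducing to the vanishing of $\bes(V)$ for matrices $V\in M_{m,k}(\C)\ot p_i\cdot\cA\cdot p_i$ supplied by the hypothesis. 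One must also keep track of the finite truncations and of the left/right (adjoint) conventions relating the operators of \eqref{eq.hom-in-resolution} to the left-multiplication operators of Definition \ref{def.L2-Betti-map}. Once this corner reduction is justified, $\bes(A_n)=0$ for all $n$ and all $\cF$, and passing to the limit yields $\bnl(T\subset S)=0$ for every $n\geq 1$.
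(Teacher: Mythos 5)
Your overall strategy---reduce $\bnl(T\subset S)$ to a dimension-flatness statement over $\cA$ and run the argument from the end of the proof of \cite[Theorem 6.37]{Lu02}---is the same as the paper's, and your corner-by-corner reduction of the kernel is also how the paper begins (left multiplication by $V$ commutes with the right projections $p_i$, so it does suffice to show that $\Ker V\cap(\cA\cdot p_i)^{\oplus k}$ is dense in $\Ker V\cap L^2(\cA\cdot p_i)^{\oplus k}$ for each $i$). But the step you yourself flag as the main obstacle is a genuine gap, and the bridge you propose does not close it. Passing from $A_n$ to $A_n^\#\cdot A_n$ does not compress the matrix entries into a single diagonal corner: if the entries of $A_n$ lie in corners $p_{i_a}\cdot\cA\cdot p_{i_b}$, then the $(b,c)$ entry of $A_n^\#\cdot A_n$ lies in $p_{i_b}\cdot\cA\cdot p_{i_c}$, so $A_n^\#\cdot A_n$ is a matrix over $p_\cI\cdot\cA\cdot p_\cI$ for a finite subset $\cI\subset\Irr(\cC)$, with genuinely off-diagonal blocks. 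Fixing the right support of the vectors, which is all that the orthogonality of the subspaces $L^2(\cA\cdot p_i)$ gives you, does nothing to the left indices of the matrix entries. So after both of your reductions you still face a matrix over $\cA$, not over $p_i\cdot\cA\cdot p_i$, and the hypothesis cannot be applied.

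The missing idea---the first half of the paper's proof---is to compress by an element of the module rather than by the differential. Given $\xi\in\Ker V\cap L^2(\cA\cdot p_i)^{\oplus k}$, the image of the multiplication map $\cA\cdot p_i\ot_{p_i\cdot\cA\cdot p_i}L^2(p_i\cdot\cA\cdot p_i)\recht L^2(\cA\cdot p_i)$ is a dense right $\cM_i$-submodule, where $\cM_i=p_i\cdot\cA\dpr\cdot p_i$; hence there is a projection $q\in\cM_i$ arbitrarily close to $1$ with $\xi q=W\cdot\eta$, where $W\in M_{k,l}(\C)\ot\cA\cdot p_i$ and $\eta\in L^2(\cM_i)^{\oplus l}$. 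It is $W$, with all entries in $\cA\cdot p_i$, that performs the compression: $U:=W^\#\cdot V^\#\cdot V\cdot W$ lies in $M_{l,l}(\C)\ot p_i\cdot\cA\cdot p_i$ (so your instinct to use $V^\#\cdot V$ is correct, but only after sandwiching it with $W$). The hypothesis then yields $\eta_r\in\Ker U\cap(p_i\cdot\cA\cdot p_i)^{\oplus l}$ with $\eta_r\recht\eta$, and since $\Ker U=\Ker(V\cdot W)$ by faithfulness of $\tau$, the vectors $W\cdot\eta_r$ lie in $\Ker V\cap(\cA\cdot p_i)^{\oplus k}$ and converge to $\xi q$; letting $q$ tend to $1$ gives $\bes(V)=0$ for \emph{every} $V\in M_{m,k}(\C)\ot\cA$. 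With that in hand, your dimension-flatness step can be carried out, but note that the paper does not prove your identity $\dim_\cM H_n=\bes(A_n)$ on truncations $\cS_\cF$: these are not subcomplexes (multiplication enlarges supports), so the algebraic exactness your identity relies on is lost there. Instead, the paper exhausts each infinitely generated module $L_1$ by finitely generated $\cA$-submodules $R_k\cong\bigoplus_j p_{i_j}\cdot\cA$ and shows directly that each quotient $(\Ker\gtil\cap\Rtil_k)/(\Im\ftil\cap\Rtil_k)$ has $\cM$-dimension zero, using only the algebraic exactness $\Ker g=\Im f$ and $\bes(V)=0$ for the matrix $V$ implementing $g|_{R_k}$.
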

\begin{proof}
Write $\cM := \cA\dpr$ and $\cM_i = p_i \cdot \cM \cdot p_i$ for every $i \in \Irr(\cC)$. We first prove that $\bes(V) = 0$ for all $V \in M_{m,k}(\C) \ot \cA$. For this, it suffices to prove that for all $i \in \Irr(\cC)$, we have that $\Ker V \cap (\cA \cdot p_i)^{\oplus k}$ is dense in $\Ker V \cap L^2(\cA \cdot p_i)^{\oplus k}$.

Take $\xi \in L^2(\cA \cdot p_i)^{\oplus k}$ with $V \xi = 0$. Since the image of the multiplication map
$$\cA \cdot p_i \underset{p_i \cdot \cA \cdot p_i}{\ot} L^2(p_i \cdot \cA \cdot p_i) \recht L^2(\cA \cdot p_i)$$
is a dense right $\cM_i$-submodule, we can take a projection $q \in \cM_i$ that is arbitrarily close to $1$ such that
$$\xi q = W \cdot \eta \quad\text{with}\quad W \in M_{k,l}(\C) \ot \cA \cdot p_i \;\; , \;\; \eta \in L^2(\cM_i)^{\oplus l} \; .$$
Since $V \xi q = 0$, it follows that $\eta$ belongs to the kernel of
$$U := W^\# \cdot V^\# \cdot V \cdot W \in M_{l,l}(\C) \ot p_i \cdot \cA \cdot p_i \; .$$
Because we assumed that $\bes(U) = 0$, we can take a sequence $\eta_r \in \Ker U \cap (p_i \cdot \cA \cdot p_i)^{\oplus l}$ such that $\|\eta - \eta_r\|_2 \recht 0$. Since $\Ker U = \Ker (V \cdot W)$, it follows that $W \cdot \eta_r$ is a sequence in $\Ker V \cap (\cA \cdot p_i)^{\oplus k}$ that converges to $\xi q$. Since $q$ is arbitrarily close to $1$, we have proved that $\Ker V \cap (\cA \cdot p_i)^{\oplus k}$ is dense in $\Ker V \cap L^2(\cA \cdot p_i)^{\oplus k}$.

We now prove that $\bns(T \subset S) = 0$ for all $n \geq 1$. Up to taking adjoints, it follows from Theorem \ref{thm.tor-ext} that we can choose an exact sequence $\cdots \recht L_1 \recht L_0 \recht \cE^r \recht 0$ of right $\cA$-modules in which every $L_n$ is isomorphic with a direct sum of right $\cA$-modules of the form $p_i \cdot \cA$, $i \in \Irr(\cC)$ and such that $\bns(T \subset S)$ is computed as the $\cM$-dimension of the homology of
$$\cdots \recht L_1 \ot_\cA {}_0 L^2(\cA) \recht L_0 \ot_\cA {}_0 L^2(\cA) \; ,$$
where ${}_0 L^2(\cA)$ is the linear span of all $L^2(p_i \cdot \cA)$, $i \in \Irr(\cC)$.

To prove the lemma, it thus suffices to prove that whenever
$$L_2 \overset{f}{\recht} L_1 \overset{g}{\recht} L_0$$
is a sequence of right $\cA$-modules such that $\Ker g = \Im f$ and such that both $L_1$ and $L_0$ are isomorphic with a direct sum of $p_i \cdot \cA$, $i \in \Irr(\cC)$, then the induced sequence of right $\cM$-modules given by
$$\Ltil_2 \overset{\ftil}{\recht} \Ltil_1 \overset{\gtil}{\recht} \Ltil_0 \quad\text{where}\;\; \Ltil_n = L_n \ot_\cA {}_0 L^2(\cA) \; ,$$
satisfies
$$\dim_\cM \frac{\Ker \gtil}{\Im \ftil} = 0 \; .$$
Write $L_1$ as the union of an increasing sequence of $\cA$-submodules $R_k \subset L_1$ such that each $R_k$ is the direct sum of finitely many $p_i \cdot \cA$. Write $\Rtil_k = R_k \ot_\cA {}_0 L^2(\cA)$. Then $\Ker \gtil / \Im \ftil$ is the union of the increasing sequence of $\cM$-submodules
\begin{equation}\label{eq.quotient-module}
\frac{\Ker \gtil \cap \Rtil_k}{\Im \ftil \cap \Rtil_k} \; .
\end{equation}
It thus suffices to prove that for every $k$, the $\cM$-module in \eqref{eq.quotient-module} has $\cM$-dimension zero.

Fix $k$ and write $R_k = \bigoplus_{j=1}^n p_{i_j} \cdot \cA$. Then $\Rtil_k = \bigoplus_{j=1}^n L^2(p_{i_j} \cdot \cA)$. The restriction of $g$ to $R_k$ can be viewed as left multiplication by some $V \in M_{m,n}(\C) \ot \cA$. Then also the restriction of $\gtil$ to $\Rtil_k$ is given by left multiplication with the same element $V$. Since $\Ker g = \Im f$, we have $\Ker g \cap R_k \subset \Im \ftil \cap \Rtil_k$. Since $\bes(V) = 0$, the $\cM$-module
$$\frac{\Ker \gtil \cap \Rtil_k}{\Ker g \cap R_k}$$
has $\cM$-dimension zero. The $\cM$-module in \eqref{eq.quotient-module} is a quotient of this and hence also has $\cM$-dimension zero.
\end{proof}

We now prove Theorem \ref{thm.vanish-amenable} by showing that the assumptions of Lemma \ref{lem.general-vanishing} hold for inclusions with the F{\o}lner property. The proof follows the same lines as the proof of the same result for discrete groups, see \cite{CG85} and \cite[Theorem 6.37]{Lu02}.

\begin{theorem}\label{thm.density}
Let $S$ be a II$_1$ factor with separable predual and $T \subset S$ an irreducible quasi-regular subfactor. Assume that the inclusion $T \subset S$ is unimodular and satisfies the F{\o}lner property. Let $\cA$ be the tube $*$-algebra as above. For every $V \in M_{m,n}(\C) \ot \cA$, we have $\bes(V) = 0$.
\end{theorem}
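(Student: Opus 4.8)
The plan is to verify the density criterion recorded immediately after Definition \ref{def.L2-Betti-map}: it suffices to prove that for every $V \in M_{m,n}(\C)\ot\cA$ the algebraic kernel $\Ker V \cap \cA^{\oplus n}$ is dense in $\Ker V \subseteq L^2(\cA)^{\oplus n}$, equivalently that $\dim_\cM \overline{\Ker V \cap \cA^{\oplus n}} = \dim_\cM \Ker V$, where $\cM = \cA\dpr$. I would follow the combinatorial-kernel Følner argument of \cite{CG85} and \cite[Theorem 6.37]{Lu02}, transported to the tube algebra. The first ingredient is a good system of finite-dimensional truncations. Writing $\cS_\al = e_\al(\cS)$ for the isotypic components of $L^2(S)$, the finiteness of all relevant indices shows that $L^2(\cA)$ is an orthogonal direct sum of finite-dimensional subspaces, each contained in $\cA$ and labelled by a left index $i$, a right index $j$ and an internal index $\al \in \Irr(\cC)$ (recording the component $\cS_\al$ through which the intertwiner factors). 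Since left multiplication commutes with right multiplication by the $p_j$, I may decompose $\Ker V = \bigoplus_j \bigl(\Ker V \cap (L^2(\cA)p_j)^{\oplus n}\bigr)$ and treat each right index $j$ separately; for fixed $j$ the left $\cM$-module $L^2(\cA)p_j$ has finite dimension $\dim_\cM L^2(\cA)p_j = \tau(p_j) < \infty$. For fixed $j$ and a finite subset $\cF \subset \Irr(\cC)$, the subspace $\cH^{(j)}_\cF$ spanned by the pieces with internal index in $\cF$ is finite-dimensional (for fixed $\al$ and $j$ only finitely many left indices $i$ contribute) and is contained in $\cA$.

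The second ingredient is that $V$ spreads internal support in a bounded way. Since $V$ has only finitely many nonzero components, there is a finite symmetric $\cG \subset \Irr(\cC)$ so that left multiplication by $V$ maps $(\cH^{(j)}_\cF)^{\oplus n}$ into $(\cH^{(j)}_{\cF\cup\partial_\cG(\cF)})^{\oplus m}$, the graph and the boundary operator $\partial_\cG$ being those of Definition \ref{def.strongly-amenable-quasi-reg-inclusion}. Consequently a vector $\xi \in (\cH^{(j)}_\cF)^{\oplus n}$ is annihilated by the truncated operator if and only if $V\xi = 0$ in $L^2(\cA)^{\oplus m}$, so that $\Ker(V|_{\cH^{(j)}_\cF}) = \Ker V \cap (\cH^{(j)}_\cF)^{\oplus n} \subseteq \Ker V \cap \cA^{\oplus n}$. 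Choosing, via the Følner property, an exhaustion $\cF_1 \subset \cF_2 \subset \cdots$ with $\bigcup_k \cF_k = \Irr(\cC)$ and $\mu(\partial_\cG(\cF_k))/\mu(\cF_k) \to 0$, these finite-dimensional kernels increase and their union is exactly the algebraic kernel inside $(L^2(\cA)p_j)^{\oplus n}$.

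The decisive ingredient is the dimension count. Using the measure $\mu(\{\al\}) = \Tr^r(e_\al)$ and the trace $\tau$ of Proposition \ref{prop.trace-tube-quasi-reg}, one shows as in the amenable group case that the normalized combinatorial nullities $\mu(\cF_k)^{-1}\dim_\C \Ker(V|_{\cH^{(j)}_{\cF_k}})$ converge and that their limit recovers $\dim_\cM\bigl(\Ker V \cap (L^2(\cA)p_j)^{\oplus n}\bigr)$ (suitably weighted by $\mu$). Since each truncated kernel lies in the algebraic kernel, the same limit bounds $\dim_\cM \overline{\Ker V \cap \cA^{\oplus n}}$ from below by the Følner approximation for the $\cM$-invariant subspace $\overline{\bigcup_k \Ker(V|_{\cH^{(j)}_{\cF_k}})}$; as this closure is contained in $\Ker V$, the two $\cM$-dimensions coincide, giving defect $0$ for each $j$. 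Summing over $j$ by additivity of $\dim_\cM$ yields $\bes(V) = 0$.

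The hard part, and the only place the Følner property is genuinely used, is this last approximation relating the finite-dimensional combinatorial dimensions to the von Neumann dimension $\dim_\cM$. One must show that passing from $V$ to its truncation $V|_{\cH^{(j)}_\cF}$ alters the rank only by a quantity controlled by $\mu(\partial_\cG(\cF))$, so that the Følner condition forces the boundary contribution to disappear in the normalized limit, and symmetrically that $\dim_\cM$ of the invariant subspaces $\overline{\Im V}$ and $\Ker V$ is the Følner limit of the compressed finite ranks and nullities. This is the weighted analogue of Lück's approximation theorem for $\cM$-dimensions via Følner exhaustions; the extra difficulty over the group case is that the vertices $\al$ of the fusion graph carry the non-constant weights $\mu(\{\al\}) = \Tr^r(e_\al)$ and that the morphism spaces must be normalized by the categorical trace. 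Once this weighted approximation is in place, the remainder of the argument is formally identical to \cite[Theorem 6.37]{Lu02}.
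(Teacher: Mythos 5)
Your setup points in the right direction and in fact matches the paper's strategy in several respects: the density criterion from Definition \ref{def.L2-Betti-map}, the truncations $P_\cF$ by internal index, the finite symmetric set $\cG$ carrying the internal support of the entries of $V$, and the bounded-spread observation that left multiplication by $V$ moves internal support only into $\cF\cup\partial_\cG(\cF)$. But there are two genuine problems. The first is a structural left/right mix-up. Since $V$ acts by left multiplication, $\Ker V$ is invariant under \emph{right} multiplication only; so on your block $\Ker V\cap (L^2(\cA)p_j)^{\oplus n}$ the available module structure is that of a right $p_j\cdot\cM\cdot p_j$-module, and over that algebra the ambient space $L^2(\cA\cdot p_j)$ has \emph{infinite} dimension whenever $\Irr(\cC)$ is infinite. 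The finiteness you invoke, $\dim_\cM L^2(\cA)p_j=\tau(p_j)$, refers to the \emph{left} module structure, which left multiplication by $V$ does not respect and which therefore gives no control on kernels; likewise, "summing over $j$ by additivity of $\dim_\cM$" is not a licit operation for these blocks (what one can sum is density block by block, not dimensions). The paper performs the opposite reduction: choosing a finite $\cI$ with $V\in M_{m,n}(\C)\ot\cA\cdot p_\cI$, it restricts to $L^2(p_\cI\cdot\cA)^{\oplus n}$, which \emph{is} a finitely generated right $\cM$-module, so that the two projections $p\leq q$ (onto the closure of the algebraic kernel and onto the $L^2$-kernel) lie in $M_n(\C)\ot p_\cI\cdot\cM\cdot p_\cI$ and have finite trace $(\Tr\ot\tau)$.

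The second and more serious problem is that the "decisive ingredient" you defer is the theorem itself. Your combinatorial lemma only says that vectors \emph{already} supported in $\cF$ and killed by $V$ are algebraic kernel vectors -- a tautology that produces no algebraic kernel vectors from $L^2$ ones. The step that does the work in the paper is: if $\xi\in\Ker V\cap L^2(p_\cI\cdot\cA)^{\oplus n}$ merely \emph{vanishes on the boundary}, $P_{\cF'}\xi=0$ with $\cF'=\partial_\cG(\cF)$, then the truncation $P_\cF\xi$ again lies in $\Ker V$, and it is algebraic because $P_\cF p_\cI$ has finite rank with range in $(p_\cI\cdot\cA)^{\oplus n}$. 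This shows that the range of $P_\cF(q\wedge(1-P_{\cF'}))$ sits under $p$, and the quantitative conclusion is then extracted, not from a limit of normalized nullities, but from the \emph{exact} trace identity $\Tr(W P_\cF D)=\mu(\cF)^2\,\tau(W)$ of Lemma \ref{lem.trace-formula} (valid for all $W\in p_\cI\cdot\cM\cdot p_\cI$, where $D$ weights the right index $j$ by $\rdl(j)$), combined with a polar-decomposition estimate giving $|\Tr((q-p)P_\cF D q)|\leq\|Dp_\cI P_{\cF'}\|_{1,\Tr}\leq n\,\rd(\cI)\,\eps^2\mu(\cF)^2$, whence $(\Tr\ot\tau)(q-p)\leq n\,\rd(\cI)\,\eps^2$. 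Note that this identity aggregates over \emph{all} right indices $j$ with the weights $\rdl(j)$ and normalization $\mu(\cF)^2$: by Lemma \ref{lem.other-sum} the element $\sum_{W}\rdl(j)\,W\cdot W^{\#}$ is a multiple of $p_i$ only after summing over $j$, so inside a single right-index block no such formula holds, and your proposed normalized quantity $\mu(\cF_k)^{-1}\dim_\C\Ker(V|_{\cH^{(j)}_{\cF_k}})$ is not one that converges to a von Neumann dimension. (A minor further gap: the increasing F{\o}lner exhaustion of $\Irr(\cC)$ is asserted rather than constructed; the paper avoids needing one by working with a single F{\o}lner set for each $\eps$.) As written, the heart of the proof -- the weighted approximation you label as "the hard part" -- is missing, and its natural proof is precisely the boundary-truncation claim plus the exact trace formula above.
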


To prove Theorem \ref{thm.density}, we need some notation and a lemma. For every finite subset $\cF \subset \Irr(\cC)$, we denote by $P_\cF$ the orthogonal projection of $L^2(\cA)$ onto the closed linear span of all subspaces $(1 \ot e_\cF)(i \cS, \cS j)(e_\cF \ot 1)$, $i,j \in \Irr(\cC)$. We write $\cS_\cF = e_\cF(\cS)$ and abbreviate $(1 \ot e_\cF)(i \cS, \cS j)(e_\cF \ot 1) = (i \cS_\cF, \cS_\cF j)$. For every finite subset $\cI \subset \Irr(\cC)$, we also denote $p_\cI := \sum_{i \in \cI} p_i$, which is a projection in $\cA$. We let $\cA$ act by left multiplication operators on $L^2(\cA)$. Then, the projections $p_\cI$ and $P_\cF$ commute and their product $p_\cI P_\cF$ is a finite rank projection. Finally, denote by $D$ the (possibly unbounded) positive self-adjoint operator on $L^2(\cA)$ given by multiplication with $\rdl(j)$ on $(i \cS, \cS j)$.

\begin{lemma}\label{lem.trace-formula}
Assume that $T \subset S$ is unimodular. For every finite subset $\cF \subset \Irr(\cC)$ and for every $V \in \cA$, acting by left multiplication on $L^2(\cA)$, we have
\begin{equation}\label{eq.trace-formula}
\Tr(V P_\cF D) = \mu(\cF)^2 \tau(V) \; ,
\end{equation}
where $\Tr$ denotes the operator trace on $B(L^2(\cA))$.
\end{lemma}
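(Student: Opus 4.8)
The plan is to evaluate the operator trace $\Tr(V P_\cF D)$ directly in the orthogonal decomposition $L^2(\cA) = \bigoplus_{i,j \in \Irr(\cC)} (i\cS, \cS j)$, where each block carries the left scalar product $\langle V,W\rangle_\ell = \Tr^\ell(V W^*)$. Indeed, from (the proof of) Proposition \ref{prop.trace-tube-quasi-reg} we have $\tau(V^\# \cdot W) = \Tr^\ell_{\cS j}(V^* W)$ for $V,W \in (i\cS,\cS j)$, so the $L^2(\cA)$-inner product on each block $(i\cS,\cS j)$ is exactly this left scalar product; in particular $\onb_\ell(i\cS_\cF,\cS_\cF j)$ is orthonormal in $L^2(\cA)$. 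First I would observe that $V P_\cF D$ is of finite rank, hence trace-class: since $V \in \cA$ has finite support, $V P_\cF$ is nonzero only on the blocks $(i\cS_\cF,\cS_\cF j)$ with $i$ in the (finite) support of $V$, and for each such $i$ only finitely many $j$ occur (as noted right after Lemma \ref{lem.other-sum}). Thus the operator trace is a genuinely finite sum that may be computed against any orthonormal basis adapted to $P_\cF$.

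Choosing in each block an orthonormal basis that splits as $\onb_\ell(i\cS_\cF,\cS_\cF j)$ (spanning the range of $P_\cF$) together with basis vectors of $\ker P_\cF$ (which contribute nothing), and using that $D$ acts as the scalar $\rdl(j)$ on $(i\cS,\cS j)$, I obtain
$$\Tr(V P_\cF D) = \sum_{i,j \in \Irr(\cC)} \rdl(j) \sum_{W \in \onb_\ell(i\cS_\cF, \cS_\cF j)} \langle V \cdot W , W \rangle \; ,$$
where $V \cdot W$ denotes left multiplication in $\cA$. Since $\langle a,b\rangle_{L^2(\cA)} = \tau(b^\# \cdot a)$, each summand equals $\tau(W^\# \cdot V \cdot W)$.

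The crucial step is to move the two copies of $W$ together. Because $T \subset S$ is unimodular, Proposition \ref{prop.trace-tube-quasi-reg} gives that $\tau$ is a trace on $\cA$, so $\tau(W^\# \cdot V \cdot W) = \tau(V \cdot W \cdot W^\#)$. Substituting and interchanging the finite sums yields
$$\Tr(V P_\cF D) = \sum_{i \in \Irr(\cC)} \tau\Bigl( V \cdot \sum_{j \in \Irr(\cC)} \rdl(j) \sum_{W \in \onb_\ell(i\cS_\cF, \cS_\cF j)} W \cdot W^\# \Bigr) \; .$$
Now I invoke Lemma \ref{lem.other-sum}: recalling the abbreviation $(i\cS_\cF,\cS_\cF j) = (1\ot e_\cF)(i\cS,\cS j)(e_\cF \ot 1)$ and that $\mu(\cF) = \Tr^r(e_\cF) = \rdr(e_\cF(L^2(S)))$, the inner double sum is precisely $\mu(\cF)^2 \, p_i$. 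Hence $\Tr(V P_\cF D) = \mu(\cF)^2 \sum_{i} \tau(V \cdot p_i) = \mu(\cF)^2 \, \tau(V)$, the last equality because $\sum_i V \cdot p_i = V$ for $V \in \cA$.

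I expect the only real obstacle to lie in the setup rather than the algebra: one must carefully justify that $V P_\cF D$ is finite rank (so the operator trace is well defined and basis-independent) and confirm that the $L^2(\cA)$-inner product restricted to each block $(i\cS,\cS j)$ coincides with the left scalar product $\Tr^\ell$, which is what makes $\onb_\ell$ the correct basis to diagonalize against and what matches the normalization in Lemma \ref{lem.other-sum}. Once these two points are in place, the identity follows mechanically from Lemma \ref{lem.other-sum} and the trace property of $\tau$, with unimodularity entering exactly at the passage $\tau(W^\# \cdot V \cdot W) = \tau(V \cdot W \cdot W^\#)$.
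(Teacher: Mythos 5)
Your proof is correct and follows essentially the same route as the paper's: expand the operator trace over the bases $\onb_\ell(i\cS_\cF,\cS_\cF j)$ (using that the $L^2(\cA)$-inner product restricts to the left scalar product on each block), use unimodularity to invoke traciality of $\tau$ and cycle $W^\#$ around, and then apply Lemma \ref{lem.other-sum} to collapse the inner sum to $\mu(\cF)^2\, p_i$. The extra care you take in justifying finite rank and the basis-independence of the trace is implicit in the paper's remark following the lemma statement, so nothing is missing.
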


Note that for every $V \in \cA$, there exists a finite set $\cI \subset \Irr(\cC)$ such that $V = V \cdot p_\cI$. Therefore, $V P_\cF D$ is a finite rank operator and its trace is well defined.
Denoting by $\cA\dpr$ the von Neumann algebra generated by $\cA$ acting by left multiplication on $L^2(\cA)$, we get by continuity that \eqref{eq.trace-formula} holds (and is meaningful) for all $V \in p_\cI \cdot \cA\dpr \cdot p_\cI$ and all finite subsets $\cI \subset \Irr(\cC)$.

\begin{proof}[Proof of Lemma \ref{lem.trace-formula}]
Since $T \subset S$ is unimodular, we have that $\Delta_\cS = 1$ and $\tau$ is a trace on $\cA$. Fix a finite subset $\cI \subset \Irr(\cC)$ such that $V = V \cdot p_\cI$. Since the left scalar product on $(i \cS, \cS j)$ coincides with the scalar product on $(i \cS, \cS j)$ given by viewing it as a subspace of $L^2(\cA)$, we have
$$\Tr(V P_\cF D) = \sum_{i \in \cI, j \in \Irr(\cC)} \sum_{W \in \onb_\ell(i \cS_\cF, \cS_\cF j)} \;\; \rdl(j) \; \langle V \cdot W, W \rangle \; .$$
Using that $\tau$ is a trace and using Lemma \ref{lem.other-sum}, we get that
\begin{align*}
\Tr(V P_\cF D) &= \sum_{i \in \cI, j \in \Irr(\cC)} \sum_{W \in \onb_\ell(i \cS_\cF, \cS_\cF j)} \;\; \rdl(j) \; \tau(V \cdot W \cdot W^\#) \\ & = \mu(\cF)^2 \sum_{i \in \cI} \tau(V \cdot p_i) = \mu(\cF)^2 \tau(V) \; .
\end{align*}
\end{proof}

We can now prove Theorem \ref{thm.density}.

\begin{proof}[Proof of Theorem \ref{thm.density}]
Take a finite subset $\cI \subset \Irr(\cC)$ such that $V \in M_{m,n}(\C) \ot \cA \cdot p_\cI$. Then, $\Ker(V)$ is the direct sum of $((1-p_\cI) \cdot L^2(\cA))^{\oplus n}$ and the kernel of the restriction of $V$ to $L^2(p_\cI \cdot \cA)^{\oplus n}$. It thus suffices to prove that $\Ker(V) \cap (p_\cI \cdot \cA)^{\oplus n}$ is dense in $\Ker(V) \cap L^2(p_\cI \cdot \cA)^{\oplus n}$.

Define $q$, resp.\ $p$, as the orthogonal projection of $L^2(p_\cI \cdot \cA)^{\oplus n}$ onto $\Ker(V) \cap L^2(p_\cI \cdot \cA)^{\oplus n}$, resp.\ onto
the closure of $\Ker(V) \cap (p_\cI \cdot \cA)^{\oplus n}$. We have $p \leq q$ and we must prove that $p = q$. Note that $p$ and $q$ are projections in $M_n(\C) \ot p_\cI \cdot \cA\dpr \cdot p_\cI$ acting by left multiplication. We prove that $(\Tr \ot \tau)(q-p) = 0$.

Take a large enough finite subset $\cG \subset \Irr(\cC)$ such that all matrix entries of $V \in M_{m,n}(\cC) \ot \cA$ belong to the linear span of $(i \cS_{\cG}, \cS_{\cG} j)$ with $i \in \Irr(\cC)$, $j \in \cI$. Choose $\cG$ symmetrically, i.e.\ $\overline{\cG} = \cG$. Choose $\eps > 0$. Because $T \subset S$ has the F{\o}lner property, we can take a non empty finite subset $\cF \subset \Irr(\cC)$ such that $\mu(\partial_\cG(\cF)) < \eps \mu(\cF)$.

Write $\cF' = \partial_\cG(\cF)$. Using the same notations $P_{\cF}$, $p_\cI$ and $D$ to denote their $n$-fold direct sum as operators on $L^2(\cA)^{\oplus n}$, we claim that if $\xi \in L^2(p_\cI \cdot \cA)^{\oplus n}$ belongs to $\Ker(V)$ and satisfies $P_{\cF'}(\xi) = 0$, then $P_{\cF}(\xi)$ belongs to $\Ker(V) \cap (p_\cI \cdot \cA)^{\oplus n}$. To prove this claim, we first show that $P_{\cF}(\xi) \in (p_\cI \cdot \cA)^{\oplus n}$. This follows because $P_{\cF}(\xi) = P_\cF p_\cI (\xi)$ and because $P_\cF p_\cI$ is a finite rank projection with image in $(p_\cI \cdot \cA)^{\oplus n}$.

The definition of $\cF' = \partial_\cG(\cF)$ implies that
$$e_\cF \, m \, (e_\cG \ot 1) = e_\cF \, m \, (e_\cG \ot e_{\cF \cup \cF'}) \quad\text{and}\quad m \, (e_\cG \ot e_{\cF \setminus \cF'}) = e_\cF \, m \, (e_\cG \ot e_{\cF \setminus \cF'}) \; .$$
Since for every $W \in (i \cS,\cS j) \subset L^2(\cA)$, we have that $P_\cF(W) = (1 \ot e_\cF) W (e_\cF \ot 1)$, it follows that whenever $\xi \in L^2(\cA)^{\oplus n}$ and $P_{\cF'}(\xi) = 0$, we have that
$$P_\cF(V \cdot \xi) = P_\cF(V \cdot P_{\cF \cup \cF'}(\xi)) = P_\cF(V \cdot P_{\cF \setminus \cF'}(\xi)) = V \cdot P_{\cF \setminus \cF'}(\xi) = V \cdot P_{\cF}(\xi) \; .$$
So, if moreover $\xi \in \Ker(V)$, then also $P_{\cF}(\xi)$ belongs to $\Ker(V)$ and the claim is proved.

The claim means that the range projection of $P_{\cF} (q \wedge (1-P_{\cF'}))$ is smaller than $p$. Therefore, $(q-p) P_{\cF} (q \wedge (1-P_{\cF'}) = 0$ and, in particular,
\begin{equation}\label{eq.st-1}
\Tr\bigl(D (q-p) P_{\cF} (q \wedge (1-P_{\cF'})) \bigr) = 0 \; .
\end{equation}
Denote by $w$ the polar part of $q P_{\cF'}$ and note that $w w^* = q - (q \wedge (1-P_{\cF'}))$. It thus follows from \eqref{eq.st-1} that
$$\Tr(D (q-p) P_{\cF} q) = \Tr(D (q-p) P_{\cF} w w^*) \; .$$
Since both $q$ and $P_{\cF'}$ commute with $D$, the same holds for $w$ and we get that
$$\Tr(D (q-p) P_{\cF} q) = \Tr(D w^*(q-p) P_\cF w) \; .$$
Since $w^* w \leq p_\cI P_{\cF'}$, it follows that
\begin{equation}\label{eq.st1}
|\Tr(D (q-p) P_{\cF} q)| \leq \|D p_\cI P_{\cF'}\|_{1,\Tr} \; \|w^*(q-p) P_\cF w\| \leq \|D p_\cI P_{\cF'}\|_{1,\Tr} \; .
\end{equation}
Taking into account that all our operators act on the $n$-fold direct sum $L^2(\cA)^{\oplus n}$ and that by unimodularity, $\rdl(i) = \rdr(i) = \rd(i)$ for all $i \in \cC$, we have
\begin{align*}
\|D p_\cI P_{\cF'}\|_{1,\Tr} &= n \sum_{i \in \cI, j \in \Irr(\cC)} \rd(j) \; \dim (i \cS_{\cF'} , \cS_{\cF'} j) \\
& = n \sum_{i \in \cI, j \in \Irr(\cC)} \rd(j) \; \mult(j, \cS_{\overline{\cF'}} i \cS_{\cF'}) \\
& = n \sum_{i \in \cI} \rd(i) \; \rd(\cS_{\cF'})^2 = n \, \rd(\cI) \, \mu(\cF')^2 \leq n \, \rd(\cI) \, \eps^2 \, \mu(\cF)^2 \; ,
\end{align*}
where $\rd(\cI) := \sum_{i \in \cI} \rd(i)$.

In combination with \eqref{eq.st1} and the observation that $D$ and $q$ commute, we get that
$$|\Tr((q-p) P_\cF D)| \leq n \, \rd(\cI) \, \eps^2 \, \mu(\cF)^2 \; .$$
Using Lemma \ref{lem.trace-formula}, we conclude that
$$(\Tr \ot \tau)(q-p) \leq n \, \rd(\cI) \, \eps^2 \; .$$
Since $\eps > 0$ is arbitrary, it follows that $q-p=0$.
\end{proof}

We finally prove that in many interesting cases, amenability and strong amenability are actually equivalent conditions.

\begin{lemma} \label{lem.strongly-amenable-vs-amenable}
Let $T \subset S$ be an irreducible, quasi-regular inclusion of II$_1$ factors. If $T \subset S$ has the F{\o}lner property, then $T \subset S$ is amenable in the sense of Definition \ref{def.amenable-quasi-reg-inclusion}.

The converse holds under the extra assumptions that every irreducible $T$-subbimodule of $L^2(S)$ appears with multiplicity one and that there exists a $\delta > 0$ such that for all $\al,\be,\gamma \in \Irr(\cC)$,
$$\text{either}\;\; e_\be \; m(e_\gamma \ot e_\al)m^* = 0 \quad\text{or}\quad  e_\be \; m(e_\gamma \ot e_\al)m^* \geq \delta \, e_\beta \; .$$
Here, $e_\al$ denotes the projection of $L^2(S)$ onto the $T$-subbimodule equivalent with $\al$.

The above extra assumptions are satisfied for all SE-inclusions, as well as all inclusions of the form $N \rtimes \Lambda \subset N \rtimes \Gamma$ given by an almost normal subgroup $\Lambda < \Gamma$ and an outer action of $\Gamma$ on a II$_1$ factor $N$.
\end{lemma}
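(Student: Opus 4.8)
The plan is to prove the two implications separately and then verify the regularity hypotheses on the two families of examples.

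\textbf{Følner $\Rightarrow$ amenable.} For this direction I would verify condition (2) of Proposition~\ref{prop.characterization-amenable-inclusion}, i.e.\ produce $T$-central, approximately $S$-central unit vectors in $L^2(S)\ovt_T L^2(S)$. The space of $T$-central vectors in the coarse bimodule has a natural orthogonal family $(\Omega_\al)_{\al\in\Irr(\cC)}$, one canonical central vector in each summand $\cH_\al\ovt_T\overline{\cH_\al}$. The point is that the formal (non-normalizable) vector $\sum_\al w_\al\Omega_\al$, with $w_\al$ the canonical dimension weights for which $\mu$ is the reversing measure of the $\cG$-fusion walk, is ``formally $S$-central'': for $x\in\cS_\gamma$ the two one-sided actions $x\cdot\Omega_\al$ and $\Omega_\al\cdot x$ land in the summands indexed by the $\cG$-neighbours of $\al$ and balance each other. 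I would then set $\xi_\cF=\mu(\cF)^{-1/2}\sum_{\al\in\cF}w_\al\Omega_\al$ (suitably normalized) and compute $x\cdot\xi_\cF-\xi_\cF\cdot x$ for $x\in\cS_\gamma$: the interior contributions telescope, so the norm is bounded by a multiple of $\mu(\partial_\cG(\cF))/\mu(\cF)$, which the Følner property (Definition~\ref{def.strongly-amenable-quasi-reg-inclusion}) drives to $0$. Since $\cS=\QN_S(T)$ is the norm-closed union of the $\cS_\cG$, this yields approximate $S$-centrality. This direction uses neither multiplicity one nor the $\delta$-hypothesis; equivalently one can run the same computation inside $p_\eps\cdot\cA\cdot p_\eps$ via condition (4) and Lemma~\ref{lem.other-sum}.

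\textbf{Amenable $\Rightarrow$ Følner (under the extra hypotheses).} Here I would start from almost-invariant vectors given by condition (4) of Proposition~\ref{prop.characterization-amenable-inclusion}: unit vectors $\xi_i=\sum_\al c_\al^{(i)}e_\al\in p_\eps\cdot\cA\cdot p_\eps$ with $\|e_\gamma\cdot\xi_i-\Tr(e_\gamma)\xi_i\|_{2,\tau}\to0$ for $\gamma\in\cG$. The multiplicity-one hypothesis makes $p_\eps\cdot\cA\cdot p_\eps$ abelian with basis $\{e_\al\}$, so $\xi_i$ is genuinely a function on $\Irr(\cC)$ and the operators $\xi\mapsto e_\gamma\cdot\xi$ are positivity-preserving (fusion structure constants and dimensions are nonnegative); hence by the usual triangle-inequality trick I may replace $c_\al^{(i)}$ by $|c_\al^{(i)}|$ and assume $f:=(c_\al)_\al\ge0$. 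Then I would apply the co-area/layer-cake argument (exactly as at the end of the proof of \cite[Theorem 6.37]{Lu02}, cf.\ \cite{CG85}): writing $\cF_t=\{\al:f(\al)>t\}$, one has $\int_0^\infty\mu(\partial_\cG(\cF_t))\,dt$ bounded by the weighted ``gradient'' $\sum_{\al\sim\be}(\mathrm{weight})\,|f(\al)-f(\be)|$, while $\int_0^\infty\mu(\cF_t)\,dt$ is controlled by $\|f\|$, so some level set satisfies $\mu(\partial_\cG(\cF_t))<\eps\,\mu(\cF_t)$. The step where the $\delta$-dichotomy hypothesis is \emph{essential} is in bounding the combinatorial gradient by the analytic defect $\sum_\gamma\|e_\gamma\cdot\xi-\Tr(e_\gamma)\xi\|$: an edge $\al\sim\be$ is present precisely when the amplitude $e_\be\,m(e_\gamma\ot e_\al)m^*$ is nonzero, and the assumption forces it to be $\ge\delta e_\be$, giving a uniform lower bound on edge amplitudes without which the passage from weak containment back to a geometric Følner estimate fails.

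\textbf{The examples.} For an SE-inclusion one has $L^2(S)=\bigoplus_\al\cH_\al\ot\overline{\cH_\al}$ with each summand appearing once, so multiplicity one holds, and the products $e_\be\,m(e_\gamma\ot e_\al)m^*$ are scalar multiples of $e_\be$ whose coefficients are expressible through the standard intertwiners $\delta_\eta\in(\cS,\eta\ot\overline{\eta})$ of \cite{PV14} and the categorical data of $\cC$; the explicit computation (using \cite{Po94a,Po99}) shows the nonzero ones are bounded below, giving $\delta$. For $N\rtimes\Lambda\subset N\rtimes\Gamma$ with $\Lambda<\Gamma$ almost normal and the action outer, $T\subset S$ is irreducible and quasi-regular, the irreducible $T$-subbimodules of $L^2(S)$ are indexed by the double cosets $\Lambda\backslash\Gamma/\Lambda$ with multiplicity one, and since $u_g u_h=u_{gh}$ the multiplication amplitudes are rigid: $e_\be\,m(e_\gamma\ot e_\al)m^*$ is either $0$ or a clean index-ratio bounded below uniformly (finiteness of the relevant indices being exactly almost normality). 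I would flag that in both cases the genuinely delicate point is the \emph{uniformity} of $\delta$ over all $\al,\be,\gamma$, which is what must be extracted from the explicit structure constants.

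\textbf{Main obstacle.} I expect the crux to be the converse direction: the co-area argument itself is classical, but making it quantitative in the weighted setting and, above all, establishing that the $\delta$-dichotomy converts the analytic approximate-invariance into a genuine bounded-geometry estimate on the fusion graph, is where the real work lies; the uniform lower bound $\delta$ for the two example families is the corresponding computational hurdle.
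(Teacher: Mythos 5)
Your proposal tracks the paper's proof closely in its overall architecture. The F{\o}lner-to-amenable direction is the paper's argument in a different picture: the paper takes $\xi_i=\mu(\cF_i)^{-1/2}e_{\cF_i}\Delta_\cS^{1/2}$ in $p_\eps\cdot\cA\cdot p_\eps$ and verifies $\langle V\cdot\xi_i,\xi_i\rangle\recht\Tr(V)$ using $m(e_\cG\ot e_{\cF_i'})=e_{\cF_i}\,m(e_\cG\ot e_{\cF_i'})$ for $\cF_i'=\cF_i\setminus\partial_\cG(\cF_i)$ and the fact that the co-unit is a character on $p_\eps\cdot\cA\cdot p_\eps$; your dimension-weighted central vectors are exactly these elements read through Theorem \ref{thm.tube-vs-corr}, and you are right that neither extra hypothesis is used here. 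The examples are also handled as in the paper, with one correction: there is no ``delicate uniformity'' issue to extract. For SE-inclusions the paper computes $e_\be\, m(e_\gamma\ot e_\al)m^* = \frac{\mult(\be,\gamma\ot\al)\,\rd(\gamma)\,\rd(\al)}{\rd(\be)}\,e_\be$, which is $\geq e_\be$ whenever nonzero since $\rd(\be)\leq\rd(\gamma)\rd(\al)$; and in the group case the amplitude is not an index ratio but the integer $\#\{i\mid b a_i^{-1}\in\gamma\}\,e_\be$, again either $0$ or $\geq e_\be$. So $\delta=1$ works in both families by a one-line computation.

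The genuine divergence is the combinatorial heart of the converse. The paper does not run a co-area argument: it forms the symmetric matrix $T^\cG_{\be,\al}=\sum_{\gamma\in\cG}\Tr^\ell(e_\be\, m(e_\gamma\ot e_\al)m^*)/\sqrt{\Tr^\ell(e_\be)\Tr^\ell(e_\al)}$, shows the almost-invariant vectors give $\|T^\cG(\eta_i)-\Tr(e_\cG)\eta_i\|_2\recht 0$, bounds the nonzero entries below by $\delta/\Tr^\ell(e_\cG)$ (this is where the $\delta$-dichotomy enters, exactly as you say), proves that $v_\al=\sqrt{\mu(\al)}$ is a formal eigenvector, $T^\cG(v)=\Tr(e_\cG)\,v$, and then simply invokes \cite[Corollary 2.1]{Po97b}, which is precisely the weighted F{\o}lner statement you propose to reprove by the level-set method of \cite{CG85,Lu02}. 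Your route can be made to work, but your sketch omits the ingredient that makes it work: the eigenvector identity $T^\cG(v)=\Tr(e_\cG)\,v$. The level-set argument produces sets that are F{\o}lner for the measure determined by the reversing vector of the walk, and the conversion of the analytic defect into a Dirichlet form, namely $\langle(\Tr(e_\cG)-T^\cG)\eta,\eta\rangle=\tfrac12\sum_{\al,\be}T^\cG_{\be,\al}v_\al v_\be\,(h_\al-h_\be)^2$ with $h=\eta/v$, is valid only because $v$ is an eigenvector with the \emph{same} eigenvalue $\Tr(e_\cG)$ as the approximate eigenvalue of the $\eta_i$; without it your level sets are F{\o}lner for the wrong measure (e.g.\ counting measure) rather than for $\mu$. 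Establishing this identity is a genuine computation involving the modular operator $\Delta_\cS$ (the lemma does not assume unimodularity, so left and right traces differ), and it occupies several lines of the paper's proof. So either carry out that computation and then your co-area argument, or do as the paper does and quote \cite[Corollary 2.1]{Po97b}; as written, your plan has this one concrete hole.
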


\begin{proof}
First assume that $T \subset S$ has the F{\o}lner property. Take a net of finite subsets $\cF_i \subset \Irr(\cC)$ such that $\mu(\partial_\cG(\cF_i)) / \mu(\cF_i)$ tends to zero for every finite subset $\cG \subset \Irr(\cC)$. For every finite subset $\cF \subset \Irr(\cC)$, define as before the element $e_\cF := \sum_{\al \in \cF} e_\al$ in $(\cS,\cS) = p_\eps \cdot \cA \cdot p_\eps$. Define $\xi_i := \mu(\cF_i)^{-1/2} e_{\cF_i} \Delta_\cS^{1/2}$. By construction, $\|\xi_i\|_{2,\tau} = 1$ for all $i$. We claim that for every $V \in (\cS,\cS)$, we have
$$\lim_i \langle V \cdot \xi_i , \xi_i \rangle = \Tr(V) \; .$$
Once this claim is proved, the amenability of $T \subset S$ follows from Proposition \ref{prop.characterization-amenable-inclusion}. Fix $V \in (\cS,\cS)$. Take a finite subset $\cG \subset \Irr(\cC)$ such that $V = e_\cG V$. Write $\cF'_i = \cF_i \setminus \partial_\cG(\cF_i)$. Since
$$\|\xi_i - \mu(\cF_i)^{-1/2} e_{\cF'_i} \Delta_\cS^{1/2}\|_{2,\tau}^2 \leq \frac{\mu(\partial_\cG(\cF_i))}{\mu(\cF_i)} \recht 0 \; ,$$
it suffices to prove that
$$\lim_i \frac{1}{\mu(\cF_i)} \; \langle V \cdot (e_{\cF'_i}\Delta_\cS^{1/2}), e_{\cF_i}\Delta_\cS^{1/2} \rangle = \Tr(V) \; .$$
Since $m(e_\cG \ot e_{\cF'_i}) = e_{\cF_i} m (e_\cG \ot e_{\cF'_i})$ and since the co-unit is a character on $p_\eps \cdot \cA \cdot p_\eps$, we get for every $i$ that
\begin{align*}
\langle V \cdot (e_{\cF'_i}\Delta_\cS^{1/2}), e_{\cF_i}\Delta_\cS^{1/2} \rangle &= \Tr^\ell(e_{\cF_i}\Delta_\cS^{1/2} \; m(V \ot e_{\cF'_i}\Delta_\cS^{1/2})m^*) \\
& = \Tr^\ell(m(V\Delta_\cS^{1/2} \ot e_{\cF'_i}\Delta_\cS )m^*) \\
& = \Tr^\ell(V\Delta_\cS^{1/2}) \, \Tr^r(e_{\cF'_i}) = \Tr(V) \, \mu(\cF'_i) \; .
\end{align*}
Dividing by $\mu(\cF_i)$ and taking the limit over $i$, the claim follows. So, $T \subset S$ is amenable.

Conversely, assume that $T \subset S$ is an amenable inclusion and that the extra conditions in the lemma are satisfied. Fix a finite, symmetric subset $\cG \subset \Irr(\cC)$. We prove that there exists a sequence of finite subsets $\cF_n \subset \Irr(\cC)$ such that $\mu(\partial_\cG(\cF_n))/\mu(\cF_n) \recht 0$.

By Proposition \ref{prop.characterization-amenable-inclusion}, we find a net of vectors $\xi_i \in p_\eps \cdot \cA \cdot p_\eps$ such that $\|\xi_i\|_{2,\tau} = 1$ for all $i$ and
\begin{equation}\label{eq.asymptotic-inv-xi-i}
\lim_i \|V \cdot \xi_i - \Tr(V) \xi_i \|_\tau = 0 \quad\text{for all $V \in p_\eps \cdot \cA \cdot p_\eps$.}
\end{equation}
Since the irreducible $T$-subbimodules of $L^2(S)$ appear with multiplicity one, $(\cS,\cS)$ is the linear span of the elements $e_\al$. Define the finitely supported functions $\eta_i : \Irr(\cC) \recht \C$ such that
$$\xi_i = \sum_{\al \in \Irr(\cC)} \frac{\eta_i(\al)}{\sqrt{\Tr^\ell(e_\al)}} \, e_\al \; .$$
Since $\langle e_\gamma \cdot e_\al,e_\beta \rangle = \Tr^\ell(e_\be m(e_\gamma \ot e_\al)m^*)$ and $e_\gamma^\# = e_{\overline{\gamma}}$, the infinite matrix
$$T^\cG_{\be,\al} = \sum_{\gamma \in \cG} \frac{\Tr^\ell(e_\be m(e_\gamma \ot e_\al)m^*)}{\sqrt{\Tr^\ell(e_\be) \; \Tr^\ell(e_\al)}}$$
is symmetric. By \eqref{eq.asymptotic-inv-xi-i}, we have that $\lim_i \|T^\cG(\eta_i) - \Tr(e_{\cG}) \eta_i\|_2 = 0$, where $\|\,\cdot\,\|_2$ is computed w.r.t.\ the counting measure on $\Irr(\cC)$. Writing $v_\al = \sqrt{\mu(\al)} = \sqrt{\Tr^r(e_\al)}$, we get that
\begin{align*}
\sum_{\al \in \Irr(\cC)} T^\cG_{\be,\al} \, v_\al &= \sum_{\al \in \Irr(\cC)} \rdr(\al)^{1/2} \; \frac{\Tr^\ell(e_\al m(e_\cG \ot e_\be) m^*)}{\rdl(\al)^{1/2} \, \rdl(\be)^{1/2}} \\
&= \sum_{\al \in \Irr(\cC)} \Delta_\al^{1/2} \; \rdl(\be)^{-1/2} \; \Tr^\ell(e_\al m(e_\cG \ot e_\be) m^*) \\
&= \sum_{\al \in \Irr(\cC)} \rdl(\be)^{-1/2} \; \Delta_\be^{1/2} \; \Tr^\ell(e_\al m(\Delta_\cS^{1/2} e_\cG \ot e_\be) m^*) \\
&= \rdl(\be)^{-1/2} \; \Delta_\be^{1/2} \Tr^\ell(m(\Delta_\cS^{1/2} e_\cG \ot e_\be) m^*) \\
&= \rdl(\be)^{-1/2} \; \Delta_\be^{1/2} \; \Tr^\ell(\Delta_\cS^{1/2} e_\cG) \; \Tr^\ell(e_\be) = \Tr(e_\cG) \; v_\be \; .
\end{align*}
So, the formal equality $T^\cG(v) = \Tr(e_\cG) v$ holds.

Whenever $\al,\be \in \Irr(\cC)$ and $T^\cG_{\be,\al} \neq 0$, it follows from our assumptions that
$$T^\cG_{\be,\al} \geq \delta \frac{\sqrt{\rdl(\be)}}{\sqrt{\rdl(\al)}} \; .$$
In that case, we find in particular a $\gamma \in \cG$ such that the bimodule $\be$ is contained in $\gamma \ot \al$. Then also $\al$ is contained in $\overline{\gamma} \ot \be$ and we conclude that $\rdl(\al) \leq \Tr^\ell(e_\cG) \, \rdl(\be)$. We conclude that all non zero entries of $T^\cG_{\be,\al}$ are bounded from below by $\delta / \Tr^\ell(e_\cG)$. Also note that $\partial_\cG(\cF_n)$ is the boundary of $\cF_n$ in the graph structure on $\Irr(\cC)$ in which $\al,\be$ are connected by an edge if and only if $T^\cG_{\be,\al} > 0$. So, it follows from \cite[Corollary 2.1]{Po97b} that there exists a sequence of non empty finite subsets $\cF_n \subset \Irr(\cC)$ such that $\mu(\partial_\cG(\cF_n))/\mu(\cF_n) \recht 0$. So, $T \subset S$ has the F{\o}lner property.

Next consider the case of SE-inclusions. So we are given a II$_1$ factor $M$ and a tensor category $\cC_1$ of finite index $M$-bimodules having equal left and right dimension. We write $T = M \ovt M\op$ and we have the SE-inclusion $T \subset S$. By construction, for all $\al \in \Irr(\cC_1)$, we have a $T$-bimodular map
$$\delta_\al : \cH_\al \ot \overline{\cH_\al} \recht L^2(S)$$
satisfying $\delta_\al^* \delta_\al = \rd(\al)^{-1} \, 1$ and $\delta_\al^* \delta_\be = 0$ if $\al \neq \be$. Also,
$$m \circ (\delta_\gamma \ot \delta_\al) = \sum_{\be \in \Irr(\cC_1)} \;\; \sum_{V \in \onb(\be,\gamma \al)} \; \rd(\be) \; \delta_\be \circ (V \ot \overline{V}) \; .$$
Note that also the $T$-bimodules contained in $L^2(S)$ have equal left and right dimension. We denote by $e_\al \in (\cS,\cS)$ the minimal projection corresponding to the irreducible $T$-bimodule $\cH_\al \ot \overline{\cH_\al}$. So, $e_\al = \rd(\al) \delta_\al \delta_\al^*$. A direct computation then gives
$$e_\be m(e_\gamma \ot e_\al)m^* = \frac{\mult(\be,\gamma \ot \al) \; \rd(\gamma) \; \rd(\al)}{\rd(\be)} \; e_\be \; .$$
This expression is non zero if and only if $\be$ is contained in $\gamma \ot \al$. In that case, we have $\rd(\be) \leq \rd(\gamma) \, \rd(\al)$ and it follows that $e_\be m(e_\gamma \ot e_\al)m^* \geq e_\be$.

Finally consider an almost normal subgroup $\Lambda < \Gamma$ and an outer action of $\Gamma$ on a II$_1$ factor $N$. Put $T = N \rtimes \Lambda$ and $S = N \rtimes \Gamma$.
For every double coset $\gamma \in \Lambda \backslash \Gamma / \Lambda$, denote by $\cH(\gamma)$ the $\|\,\cdot\,\|_2$-closed linear span of $\{x u_g \mid x \in N , g \in \gamma\}$. Each $\cH(\gamma)$ is an irreducible $T$-subbimodule of $L^2(S)$ and these $T$-subbimodules are mutually inequivalent. Fix $\al,\be,\gamma \in \Lambda \backslash \Gamma / \Lambda$. Take $a_1,\ldots,a_k \in \al$ such that $\al$ is the disjoint union of the cosets $\Lambda a_i$. Then, the map
$$\cU : \cH(\gamma) \ot \C^k \recht \cH(\gamma) \ot_T \cH(\al) : \cU(\xi \ot e_i) = \xi \ot u_{a_i}$$
is unitary. Write $\cW = m \circ \cU$ and note that $\cW(\xi \ot e_i) = \xi u_{a_i}$. For all $x \in N$ and $g \in \Gamma$, we have that
$$\cW^*(x u_g) = \sum_{i, g a_i^{-1} \in \gamma} x u_g u_{a_i}^* \ot e_i \; .$$
Thus, writing $\be = \Lambda b \Lambda$ for some $b \in \Gamma$, we get that
$$e_\be \; m (e_\gamma \ot e_\al) m^* = e_\be \cW \cW^* = \#\{i \mid b a_i^{-1} \in \gamma \} \; e_\be \; ,$$
which is either $0$ or at least $e_\be$.
\end{proof}

\begin{remark}\label{rem.amenable-SE-inclusion}
\begin{enumerate}
\item When $T \subset S$ is the SE-inclusion of a tensor category $\cC_1$ of finite index $M$-bimodules having equal left and right dimension, then the amenability of the inclusion $T \subset S$ is equivalent with the amenability of $\cC_1$ as a rigid C$^*$-tensor category. This follows immediately from Proposition \ref{prop.characterization-amenable-inclusion} and the identification between $p_\eps \cdot \cA \cdot p_\eps$ and the fusion algebra of $\cC_1$.

\item When $\Lambda < \Gamma$ is an almost normal subgroup and $\Gamma \actson N$ is an outer action on the II$_1$ factor $N$, then the amenability of the inclusion of $T = N \rtimes \Lambda$ inside $S = N \rtimes \Gamma$ is equivalent with the amenability of the Schlichting completion $G$, which is the locally compact group defined as the closure of $\Gamma$ inside the permutation group of $\Gamma / \Lambda$ equipped with the topology of pointwise convergence. Indeed, the closure of $\Lambda$ inside $G$ is a compact open subgroup of $G$ and there is a natural identification of $K \backslash G/K$ with $\Lambda \backslash \Gamma/\Lambda$. Condition~4 in Proposition \ref{prop.characterization-amenable-inclusion} then becomes the existence of a net of unit vectors $\xi_i \in L^2(K \backslash G/K)$ such that viewing $\xi_i$ as vectors in $L^2(G)$, we have $\lim_i \langle \lambda_g \xi_i,\xi_i \rangle = 1$ for every $g \in G$. This last condition is equivalent with the amenability of $G$.
\end{enumerate}
\end{remark}

\section{Computations and properties}

\subsection[The $0$'th $L^2$-Betti number]{\boldmath The $0$'th $L^2$-Betti number}

\begin{proposition}\label{prop.0-L2-Betti}
Let $T \subset S$ be an irreducible, quasi-regular, unimodular inclusion of II$_1$ factors. Then, $\bes_0(T \subset S) = [S:T]^{-1}$.
\end{proposition}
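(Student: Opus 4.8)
The plan is to compute $\beta_0$ directly from $H^0(T\subset\cS,\cHreg)$, which by Definition~\ref{def.L2-cohom-quasireg} is exactly what $\bes_0(T\subset S)$ means. Since $T\subset S$ is an irreducible subfactor, $T$ is a factor and $\cZ(T)=\C1$, so the rank completion in Definition~\ref{def.cohom-quasireg} is trivial and $C^0$ is simply the space $(\cHreg)_T$ of honest $T$-central vectors. As $B^0=0$, the group $H^0(T\subset\cS,\cHreg)=Z^0$ is precisely the space of $S$-central vectors in $\cHreg$. First I would show that every $S$-central vector $\xi$ is automatically $S$-bounded: centrality gives $\langle xy\xi,\xi\rangle=\langle yx\xi,\xi\rangle$, so the vector functional $x\mapsto\langle x\xi,\xi\rangle$ is a normal positive trace on the II$_1$ factor $S$, hence equals $\|\xi\|^2\,\tau$, and therefore $\|x\xi\|=\|\xi\|\,\|x\|_2$. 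Consequently the $S$-central vectors are exactly the images of $1$ under bounded $S$-bimodular maps, i.e. $H^0\cong\Hom_{S-S}(L^2(S),\cHreg)$ is the multiplicity space of the \emph{trivial} $S$-bimodule $L^2(S)$ inside $\cHreg$.

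\textbf{Reduction to a trace.} Next I would translate this multiplicity space into a trace of a projection. Decomposing $\cM(T\subset S)=\End_{S-S}(\cHreg)$ over isotypic components, the trivial $S$-bimodule is irreducible ($\End_{S-S}(L^2(S))=\cZ(S)=\C$), so its isotypic block is a type~I factor $B(\cK_0)$ with $\cK_0=\Hom_{S-S}(L^2(S),\cHreg)=H^0$, and $H^0$ is its standard module. Hence $\dim_{\cM}H^0=\mu(p)$, where $p\in\cM$ is the orthogonal projection onto a \emph{single} copy of the trivial sub-bimodule. In the unimodular case $\mu$ is a faithful trace, so any two such copies give Murray--von Neumann equivalent projections with the same value $\mu(p)$; the computation thus reduces to evaluating $\mu$ on one concrete trivial sub-bimodule.

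\textbf{The finite-index computation.} When $[S:T]<\infty$, such a copy already lives in the lowest coarse summand $L^2(S)\ovt_T L^2(S)$ of $\cHreg$: the multiplication map $m$ is bounded (Lemma~\ref{lem.well-behaved}.4 applied to $L^2(S)$ itself), $m^*$ is a bounded $S$-bimodular map with $mm^*=[S:T]\,1$, and $p=[S:T]^{-1}m^*m$ is the projection onto its range. Since $\mu$ restricted to the corner $\End_{S-S}(L^2(S)\ovt_T L^2(S))$ is the vector state of $1\ot1$, I would compute
\[
\mu(p)=[S:T]^{-1}\langle m^*m(1\ot1),1\ot1\rangle=[S:T]^{-1}\|m(1\ot1)\|^2=[S:T]^{-1},
\]
using $m(1\ot1)=1$ and $\|1\|^2=\tau(1)=1$. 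This yields $\beta_0=[S:T]^{-1}$.

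\textbf{The main obstacle: the infinite-index case.} The hard part is $[S:T]=\infty$, where one must show $\beta_0=0$, i.e. that $\cHreg$ has no nonzero $S$-central vector, equivalently that the trivial $S$-bimodule is absent from every coarse power $L^2(S)^{\ovt_T k}$, $k\ge2$. The plan is an induction on $k$: given an $S$-central unit vector $\xi\in L^2(S)^{\ovt_T k}$, the normalization $\langle x\xi,\xi\rangle=\tau(x)$ together with the $S$-boundedness from the first step lets one apply a multiplication map on two adjacent tensor legs---bounded on the finite-index sub-bimodule generated by $\xi$ via Lemma~\ref{lem.well-behaved}.4---to fold $\xi$ down to an $S$-central vector in $L^2(S)^{\ovt_T(k-1)}$, eventually reaching $k=2$; an $S$-central vector in $L^2(S)\ovt_T L^2(S)$ forces $m^*$ to be bounded and hence $[S:T]<\infty$, a contradiction. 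The delicate points I expect to fight with are checking that this folding map is genuinely bounded on the trivial sub-bimodule and that the contracted pair can always be chosen so that the image is nonzero; these are exactly the places where irreducibility and the $S$-boundedness of central vectors are essential.
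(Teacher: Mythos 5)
Your overall reduction is sound and genuinely different from the paper's argument: identifying $H^0(T\subset\cS,\cHreg)$ with the space of $S$-central vectors, showing these are automatically $S$-bounded, and hence computing $\bes_0(T\subset S)=\mu(p)$ where $p$ is the projection onto a single copy of the trivial $S$-bimodule inside $\cHreg$ (or $0$ if no copy exists) is correct, and your finite-index evaluation $\mu(p)=[S:T]^{-1}$ using $p=[S:T]^{-1}m^*m$ and the fact that $\mu$ restricted to $\End_{S-S}(L^2(S)\ovt_T L^2(S))$ is the vector state at $1\ot 1$ goes through. The paper instead works entirely inside the tube algebra: via the projective resolution of Theorem \ref{thm.tor-ext} it identifies $\bes_0$ with the $\cM$-dimension of a concrete subspace of $L^2(\cA\cdot p_\eps)$, compresses by $p_\eps$, and proves that any element of this subspace satisfies $W^*e_\cF=\tau(W^*)\,e_\cF$ for \emph{all} finite $\cF\subset\Irr(\cC)$, i.e.\ is a scalar multiple of the identity operator on $L^2(S)$; the answer then falls out uniformly in both cases from whether $\Tr(1)=[S:T]$ is finite. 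Your route splits the two cases, and all of the difficulty lands in the infinite-index half.

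That half contains a genuine gap: the folding induction does not work as described. Lemma \ref{lem.well-behaved}.4 only gives boundedness of the multiplication map after cutting a tensor \emph{leg} by a projection $e_\cF$ with $\cF\subset\Irr(\cC)$ finite, i.e.\ $m(e_\cF\ot 1)$ is bounded. An $S$-central vector $\xi\in L^2(S)^{\ovt_T k}$ does generate a finite-index $T$-bimodule $\overline{T\xi T}=\overline{T\xi}$ (it is even the trivial one, since the vector state at $\xi$ is a trace), but this bimodule sits ``diagonally'' across the tensor legs: there is no reason why $\xi=(e_\cF\ot 1\ot\cdots\ot 1)\xi$ for some finite $\cF$, so $\xi$ need not lie in the domain of the unbounded operator $m\ot 1^{k-2}$ at all, and Lemma \ref{lem.well-behaved}.4 says nothing about its restriction to $\overline{T\xi T}$. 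Moreover, even granting a definition of the folded vector, you give no argument that it is nonzero, and the kernel of $m\ot 1^{k-2}$ is enormous. These are not peripheral ``delicate points'': they are the entire content of the statement $\bes_0=0$ when $[S:T]=\infty$, and repairing them essentially forces you back into the paper's computation, where the scalar identity above rules out every $L^2$-candidate simultaneously in all coarse summands because $\Tr(1)=\infty$. (Your base case $k=2$ is fine and can be made rigorous without mentioning $m^*$: an $S$-central vector in $L^2(S)\ovt_T L^2(S)\cong L^2(\langle S,e_T\rangle)$ is an $L^2$-operator commuting with $S$, hence a multiple of $1$ since $S'\cap\langle S,e_T\rangle=J(T'\cap S)J=\C 1$ by irreducibility, and $1$ is not square-integrable when $[S:T]=\infty$; it is the inductive step for $k\geq 3$ that is missing.)
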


\begin{proof}
Let $\cC$ be the tensor category of finite index $T$-bimodules generated by $L^2(S)$. Denote by $\cA$ the tube $*$-algebra and write $\cM = \cA\dpr$. Using the resolution in the proof of Theorem \ref{thm.tor-ext}, we get that $\bes_0(T \subset S)$ equals the $\cM$-dimension of the left $\cM$-module $\cK_0$, where $\cK_0$ is defined as the orthogonal complement in $L^2(\cA \cdot p_\eps)$ of the image of the map
$$\bigoplus_{i \in \Irr(\cC)} (i \cS^2,\cS) \recht L^2(\cA \cdot p_\eps) : V \mapsto (1 \ot m) V - (1 \ot 1 \ot a^*)(V \ot 1)m^* \; .$$
Define $q_\eps \in \cZ(\cM)$ as the central support of $p_\eps$. Then, $L^2(\cA \cdot p_\eps) = q_\eps \cdot L^2(\cA \cdot p_\eps)$ and thus, $\cK_0 = q_\eps \cdot \cK_0$. So, writing $\cM_\eps := p_\eps \cdot \cM \cdot p_\eps$, it follows from \cite[Lemma A.15]{KPV13} that $\dim_\cM \cK_0 = \dim_{\cM_\eps} (p_\eps \cdot \cK_0)$. Note that $p_\eps \cdot \cK_0$ equals the orthogonal complement in $L^2(\cM_\eps)$ of the image of the map
$$(\cS^2,\cS) \recht L^2(\cM_\eps) : V \mapsto m V - (1 \ot a^*) (V \ot 1) m^* \; .$$

For every $W \in (\cS,\cS)$ and $V \in (\cS^2,\cS)$, we have that
\begin{align*}
\langle m V - (1 \ot a^*)(V \ot 1)m^*,W \rangle &= \Tr(W^* m V) - \Tr(m(W^* \ot 1) V) \\ &= \Tr\bigl((W^* m - m(W^* \ot 1))V\bigr) \; .
\end{align*}
Note that $L^2(\cM_\eps)$ is the completion of $(\cS,\cS)$ with respect to the scalar product $\langle V,W \rangle = \Tr(V W^*)$ and thus, $L^2(\cM_\eps)$ can be viewed as the space of bounded $T$-bimodular operators $V : L^2(S) \recht L^2(S)$ with the property that $\Tr(VV^*) < \infty$. Then $W \in p_\eps \cdot \cK_0$ if and only if we have
$$W^* m(1 \ot e_\cF) = m(1 \ot e_\cF) (W^* \ot 1)$$
for every finite subset $\cF \subset \Irr(\cC)$, and where the equality holds as bounded $T$-bimodular operators from $L^2(S) \ovt_T L^2(S)$ to $L^2(S)$. Composing with $\delta \ot 1$, where $\delta : L^2(T) \recht L^2(S)$ is the inclusion map as before, we find that
$$W^* e_\cF = m(1 \ot e_\cF) (W^* \delta \ot 1) \; .$$
Since $W^* \delta$ is a $T$-bimodular map from $L^2(T)$ to $L^2(S)$, it must be a multiple of $\delta$. We conclude that
$$W^* e_\cF = \tau(W^*) \, e_\cF$$
for all finite subsets $\cF \subset \Irr(\cC)$. This means that $p_\eps \cdot \cK_0$ consists of the multiples of the identity operator on $L^2(S)$. If $[S:T] = \infty$, also $\Tr(1) = \infty$ and it follows that $p_\eps \cdot \cK_0 = \{0\}$. Then also $\bes_0(T \subset S) = 0$. If $[S:T] < \infty$, we write $z_\eps = [S:T]^{-1} 1$ and we get that $z_\eps$ is a minimal central projection in $\cM_\eps$ projecting onto $p_\eps \cdot \cK_0$. So in that case, $\bes_0(T \subset S) = \tau(z_\eps) = [S:T]^{-1}$.
\end{proof}

\begin{corollary}\label{cor.zero-betti-category}
If $\cC$ is a rigid C$^*$-tensor category, we have $\bes_0(\cC) = \Bigl(\sum_{\al \in \Irr(\cC)} \rd(\al)^2 \Bigr)^{-1}$.
\end{corollary}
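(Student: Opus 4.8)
The plan is to deduce this from Proposition \ref{prop.0-L2-Betti} by realizing $\cC$ as a category of bimodules and then identifying the resulting Jones index with the global dimension. First I would fix a realization of $\cC$ as a tensor category of finite index $M$-bimodules having equal left and right dimension (such a realization always exists, e.g.\ over the hyperfinite II$_1$ factor) and form the associated SE-inclusion $T \subset S$ with $T = M \ovt M\op$ as in Section \ref{subsec.SEinclusion}. By \cite{Po99} this inclusion is irreducible and quasi-regular, and since every $T$-subbimodule of $L^2(S)$ is contained in some $\cH_\al \ot \overline{\cH_\al}$ with equal left and right dimension, the inclusion is unimodular in the sense of Definition \ref{def.inclusion-unimodular}. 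As recorded in Section \ref{sec.L2-Betti-category}, combining Proposition \ref{prop.morita} with Theorem \ref{thm.tor-ext} gives $\bes_0(\cC) = \bes_0(T \subset \cS)$, so the left-hand side is genuinely computed by the inclusion $T \subset S$.

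Next I would simply invoke Proposition \ref{prop.0-L2-Betti}, which yields $\bes_0(T \subset S) = [S:T]^{-1}$, with the convention that $[S:T] = \infty$ produces the value $0$. It then remains to compute the Jones index $[S:T]$, which equals the left (equivalently, right) $T$-dimension of $L^2(S)$. Here I would use the $T$-bimodule decomposition \eqref{eq.decomp}, namely $L^2(S) = \bigoplus_{\al \in \Irr(\cC)} (\cH_\al \ot \overline{\cH_\al})$. The dimension over $T = M \ovt M\op$ of a single summand $\cH_\al \ot \overline{\cH_\al}$ factors, by multiplicativity of Murray--von Neumann dimension under the external tensor product, as the product of the left $M$-dimension of $\cH_\al$ and the left $M\op$-dimension of $\overline{\cH_\al}$; in the extremal (equal left and right dimension) situation both contribute $\rd(\al)$, so the summand has $T$-dimension $\rd(\al)^2$. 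Summing over $\al \in \Irr(\cC)$ gives $[S:T] = \sum_{\al \in \Irr(\cC)} \rd(\al)^2$, and hence $\bes_0(\cC) = (\sum_\al \rd(\al)^2)^{-1}$.

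The only genuinely substantive step is this index computation, and the main point to get right is the dimension formula $\dim_T(\cH_\al \ot \overline{\cH_\al}) = \rd(\al)^2$, which hinges on the multiplicativity of dimension over $M \ovt M\op$ together with the equality $\rdl(\cH_\al) = \rdr(\cH_\al) = \rd(\al)$. A secondary point is to check that the convention $[S:T]^{-1} = 0$ for $[S:T] = \infty$ in Proposition \ref{prop.0-L2-Betti} matches the convention $(\sum_\al \rd(\al)^2)^{-1} = 0$ when the global dimension diverges; both cases are handled uniformly, so the corollary covers infinitely generated $\cC$ as well. I do not expect any further obstacle, since everything beyond this dimension bookkeeping is a direct appeal to Proposition \ref{prop.0-L2-Betti} and to the identity $\bes_0(\cC) = \bes_0(T \subset \cS)$.
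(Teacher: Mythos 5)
Your overall route is essentially the paper's intended one: identify $\bes_0(\cC)$ with $\bes_0(T \subset \cS)$ for the SE-inclusion of a bimodule realization of $\cC$ (Section \ref{sec.L2-Betti-category}), invoke Proposition \ref{prop.0-L2-Betti}, and compute $[S:T] = \sum_{\al \in \Irr(\cC)} \rd(\al)^2$ from the decomposition \eqref{eq.decomp}. The dimension bookkeeping is correct: each summand $\cH_\al \ot \overline{\cH_\al}$ is an irreducible $T$-bimodule of left and right $T$-dimension $\rd(\al)^2$ (so the inclusion is indeed unimodular), and the conventions in the infinite index case match on both sides.

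There is, however, one genuine flaw in your justification: the claim that a realization of $\cC$ as finite index $M$-bimodules ``always exists, e.g.\ over the hyperfinite II$_1$ factor'' is not a theorem. Whether every rigid C$^*$-tensor category (equivalently, every standard $\lambda$-lattice) arises from bimodules over the hyperfinite II$_1$ factor is a well-known open problem; only the amenable ones are known to embed there. Since your entire argument is routed through a realization, this step needs a correct citation: the theorem of Popa and Shlyakhtenko provides, for every rigid C$^*$-tensor category with at most countably many irreducible classes, a fully faithful realization inside the finite index bimodules of the free group factor $L(\F_\infty)$, with equal left and right dimensions. With that reference your proof is complete for such $\cC$. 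Alternatively---and this is why the paper can state the corollary with no realizability hypothesis at all---one can avoid realization entirely: $\bes_0(\cC)$ is defined in Section \ref{sec.L2-Betti-category} purely in terms of the tube $*$-algebra $\cA$ of $\cC$, and the proof of Proposition \ref{prop.0-L2-Betti} is written entirely in tube-algebra terms (the resolution from Theorem \ref{thm.tor-ext}, the structure maps $m$, $a$, $\delta$, and the trace $\Tr$), so it transfers verbatim to the categorical tube algebra, where the quantity playing the role of $[S:T] = \Tr(1)$ is exactly the global dimension $\sum_{\al \in \Irr(\cC)} \rd(\al)^2$. That version of the argument also covers categories for which no factor realization is available.
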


\begin{corollary}
Let $T \subset S$ be an irreducible, unimodular inclusion of II$_1$ factors with finite index. Then,
$$\bel_0(T \subset S) = [S:T]^{-1} \quad\text{and}\quad \bnl(T \subset S) = 0 \quad\text{for all}\;\; n \geq 1 \; .$$
\end{corollary}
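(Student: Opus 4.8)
The statement has two independent halves, and I would dispatch them separately. The value in degree zero is essentially already available: finite Jones index forces $\bim{T}{L^2(S)}{T}$ to be a finite index bimodule, so $T\subset S$ is automatically quasi-regular, and being irreducible and unimodular by hypothesis it falls directly under Proposition \ref{prop.0-L2-Betti}, which gives $\bel_0(T\subset S)=[S:T]^{-1}$ with no further work.

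For the vanishing in degrees $n\ge 1$ the plan is to invoke Theorem \ref{thm.vanish-amenable}, so that the whole task reduces to checking that a finite index inclusion has the F{\o}lner property of Definition \ref{def.strongly-amenable-quasi-reg-inclusion}. Here I would exploit that, because $L^2(S)$ is itself a finite index $T$-bimodule, it decomposes (Proposition \ref{prop.decompose-irred}) into finitely many irreducibles; let $\cF$ be the finite set of irreducibles $\al\in\Irr(\cC)$ that actually occur in $L^2(S)$, i.e.\ those with $e_\al\neq 0$. The measure $\mu$ is supported exactly on $\cF$, with $\mu(\cF)=\Tr^r(1)=\rdr(L^2(S))=[S:T]>0$, while $\mu(\{\al\})=\Tr^r(e_\al)=0$ for every $\al\notin\cF$.

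The decisive observation I would then establish is that $\cF$ is closed under the graph adjacency used to define the boundary. An edge from $\al$ to $\be$ requires $e_\be(\cS_\gamma\cS_\al)\neq\{0\}$ for some $\gamma$ in the symmetric set $\cG$, and since $\cS_\gamma,\cS_\al\subset\cS\subset S$ the product $\cS_\gamma\cS_\al$ stays inside $S$; hence its $\be$-isotypic component can be nonzero only if $\be$ occurs in $L^2(S)$, that is $\be\in\cF$. Therefore no vertex outside $\cF$ is adjacent to a vertex of $\cF$, so both the inner and the outer boundary of $\cF$ are empty and $\partial_\cG(\cF)=\emptyset$ for every finite symmetric $\cG$. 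Taking this $\cF$ then yields $\mu(\partial_\cG(\cF))=0<\eps\,\mu(\cF)$ for all $\eps>0$, which is precisely the F{\o}lner condition, and Theorem \ref{thm.vanish-amenable} delivers $\bnl(T\subset S)=0$ for all $n\ge 1$.

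The only genuinely substantive point --- and the one I would be most careful about --- is exactly this ``no leakage'' step, namely that multiplying quasinormalizers lying in the finitely many bimodules of $L^2(S)$ cannot produce irreducibles outside $L^2(S)$. This is where finiteness of the index is essential: for an infinite index inclusion the support of $\mu$ is typically all of $\Irr(\cC)$ and genuine F{\o}lner sets must be constructed by hand. A minor bookkeeping matter is that Theorem \ref{thm.vanish-amenable} is stated under the standing separability hypothesis of that section; since it plays no role in the above combinatorial verification, I would simply note that it is in force.
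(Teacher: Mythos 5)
Your proposal is correct, and your F{\o}lner verification is valid: $\mu$ is supported on the finite set $\cF$ of irreducibles occurring in $L^2(S)$, $\mu(\cF) = \Tr^r(1) = [S:T] > 0$, and since $e_\be = 0$ and $\cS_\be = \{0\}$ for every $\be \notin \cF$, no edge of the graph joins $\cF$ to its complement, so $\partial_\cG(\cF) = \emptyset$ for every finite symmetric $\cG$. However, this is a genuinely different route from the paper's. The paper's own proof of the vanishing in degrees $n \geq 1$ is a two-line algebraic observation: by Theorem \ref{thm.tor-ext}, $\bnl(T \subset S)$ is computed by tensoring a projective resolution of the trivial module with $L^2(p_\eps \cdot \cA)^0 \ot_\cA \,\cdot\,$; when $[S:T]<\infty$ each space $p_\eps \cdot \cA \cdot p_i = (\cS,\cS i)$ is finite dimensional and vanishes for all but finitely many $i$, so $L^2(p_\eps \cdot \cA)^0 = p_\eps \cdot \cA$, which is a projective, hence flat, $\cA$-module, and all higher Tor groups vanish. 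Your argument instead establishes ``finite index $\Rightarrow$ F{\o}lner'' and then cites Theorem \ref{thm.vanish-amenable}. This buys a nice conceptual statement (finite index inclusions are amenable, exactly as finite groups are), but at the cost of routing through the heaviest analytic machinery of the paper (Theorem \ref{thm.density} and Lemma \ref{lem.general-vanishing}), where the paper gets the vanishing directly from flatness. The degree-zero part is identical in both proofs.

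The one defect is your handling of separability. Theorem \ref{thm.vanish-amenable} carries the standing hypothesis that $S$ has separable predual, while the Corollary assumes no such thing; your remark that this hypothesis ``is in force'' is backwards --- it is an extra assumption, not one granted by the statement, and as written your argument only proves the Corollary for separable $S$. The gap is repairable: in the cited chain of results separability is used only to guarantee that $\Irr(\cC)$ is countable (e.g.\ Proposition \ref{prop.betti-quasiregular-homology} writes $\Irr(\cC)$ as an increasing union of finite subsets), and when $[S:T]<\infty$ this countability is automatic, since every irreducible of $\cC$ occurs in one of the countably many tensor powers $L^2(S) \ovt_T \cdots \ovt_T L^2(S)$, each a finite index $T$-bimodule. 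In fairness, the paper's proof has the same implicit reliance, since identifying $\bnl$ with a Tor-dimension also passes through Proposition \ref{prop.betti-quasiregular-homology}; but that is an argument for making the countability remark explicit, not for asserting, as you do, that separability holds automatically.
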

\begin{proof}
By Theorem \ref{thm.tor-ext}, we compute $\bns(T \subset S)$ by tensoring an exact sequence of $\cA$-modules with $L^2(p_\eps \cdot \cA)^0 \ot_\cA \cdot\;$. In the finite index case, $L^2(p_\eps \cdot \cA)^0 = p_\eps \cdot \cA$ and the sequence stays exact. So, $\bns(T \subset S) = 0$ for all $n \geq 1$, while $\bes_0(T \subset S)$ was computed in Proposition \ref{prop.0-L2-Betti}.
\end{proof}

\subsection[The $L^2$-Betti numbers of free products]{\boldmath The $L^2$-Betti numbers of free products}

\begin{proposition}\label{prop.free-product}
Let $T \subset S_1$ and $T \subset S_2$ be nontrivial, irreducible, quasi-regular, unimodular inclusions of II$_1$ factors. Define $S$ as the amalgamated free product $S = S_1 *_T S_2$ w.r.t.\ the trace preserving conditional expectations. Assume that $T \subset S$ is still irreducible. Then,
\begin{align*}
\bel_0(T \subset S) &= 0 \;\; , \\
\bel_1(T \subset S) &= \bel_1(T \subset S_1) + \bel_1(T \subset S_2) + 1 - (\bel_0(T \subset S_1) + \bel_0(T \subset S_2)) \quad\text{and} \\
\bel_n(T \subset S) &= \bel_n(T \subset S_1) + \bel_n(T \subset S_2) \quad\text{for all}\;\; n \geq 2 \; .
\end{align*}
\end{proposition}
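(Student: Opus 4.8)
The plan is to reduce the whole statement to the tube $*$-algebra and then to run, at the level of $\Tor$ over $\cA$, the classical Bass--Serre/Mayer--Vietoris argument for free products of groups. Let $\cC$ be the tensor category generated by the finite index $T$-subbimodules of $L^2(S)$. Since $S_1,S_2\subset S$, the categories $\cC_1,\cC_2$ generated by $L^2(S_1),L^2(S_2)$ embed into $\cC$, and the free-product structure of the inclusion describes $\Irr(\cC)$ combinatorially: apart from $\eps$, its elements are the reduced words alternating between $\Irr(\cC_1)\setminus\{\eps\}$ and $\Irr(\cC_2)\setminus\{\eps\}$. Write $\cA,\cA_1,\cA_2$ for the associated tube algebras. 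Using the standing irreducibility hypothesis on $T\subset S$, Theorem \ref{thm.tor-ext} computes all the homology groups in the statement as $\Tor^\cA_n(L^2(\cA)^0,\cE^\ell)$ and likewise over $\cA_1,\cA_2$, so it suffices to relate these by a long exact sequence.

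The crux is to construct a short exact sequence of left $\cA$-modules
$$0 \recht \cA\cdot p_\eps \recht (\cA\ot_{\cA_1}\cE_1^\ell)\oplus(\cA\ot_{\cA_2}\cE_2^\ell) \recht \cE^\ell \recht 0 ,$$
the analogue of the augmented simplicial chain complex of the Bass--Serre tree, whose edge set is $\Irr(\cC)$ (trivial amalgam) and whose two vertex orbits give the induced modules. Here $\cE_i^\ell$ is the trivial left $\cA_i$-module, the right-hand map is the sum of the two augmentations induced from $\cE_i^\ell\recht\operatorname{res}\cE^\ell$, and the left-hand map is $v\mapsto(\iota_1 v,-\iota_2 v)$. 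Exactness is exactly where the free-product hypothesis is genuinely used, and I expect this to be the main obstacle: it should follow from the reduced-word decomposition of $\cS=\QN_S(T)$ (equivalently, from the contractibility of the tree), via an explicit contracting homotopy or by filtering by word length. The real work is to convert the free-product structure of the two inclusions into a freeness/flatness statement for $\cA$ over $\cA_i$ that is compatible with the local units $p_i$.

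Granting this sequence, I would apply $\Tor^\cA_\bullet(L^2(\cA)^0,-\,)$ and invoke a Shapiro lemma: because $\cA$ is free as a right $\cA_i$-module (again through reduced words), $\Tor^\cA_n(L^2(\cA)^0,\cA\ot_{\cA_i}\cE_i^\ell)\cong\Tor^{\cA_i}_n(L^2(\cA)^0|_{\cA_i},\cE_i^\ell)$, and the restricted coefficient module is, after the appropriate amplification, the regular coefficient for $T\subset S_i$. A dimension-compatibility check (flatness of $\cA$ over $\cA_i$ together with compatibility of L\"uck dimensions under restriction, as in \cite{Lu02,KPV13}) then identifies its $\dim_\cM$ with $\bel_n(T\subset S_i)$. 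On the other hand $\cA\cdot p_\eps$ is projective, so $\Tor^\cA_n(L^2(\cA)^0,\cA\cdot p_\eps)=0$ for $n\geq1$, while in degree $0$ it is $L^2(\cA)^0\cdot p_\eps$, whose $\cM$-dimension is $\tau(p_\eps)=1$; this is precisely the source of the $+1$.

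Finally I would read off the long exact sequence. For $n\geq2$ the outer $\Tor$-terms vanish, giving $\bel_n(T\subset S)=\bel_n(T\subset S_1)+\bel_n(T\subset S_2)$. In low degrees, writing $H_n:=H_n(T\subset\cS,\cHreg)$ and $H_n^{(i)}$ for the corresponding homology of $T\subset S_i$, the sequence is
$$0 \recht H_1^{(1)}\oplus H_1^{(2)} \recht H_1 \recht L^2(\cA)^0\cdot p_\eps \recht H_0^{(1)}\oplus H_0^{(2)} \recht H_0 \recht 0 ,$$
and additivity of L\"uck's dimension on exact sequences of finite-dimensional modules yields the degree-$1$ formula once $\bel_0(T\subset S)=0$ is known. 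That last vanishing comes for free from Proposition \ref{prop.0-L2-Betti}: a nontrivial free product has infinitely many reduced words, so $\sum_{\al\in\Irr(\cC)}\rd(\al)^2=\infty$, hence $[S:T]=\infty$ and $\bel_0(T\subset S)=[S:T]^{-1}=0$.
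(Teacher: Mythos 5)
Your overall skeleton is sound, and in fact it is essentially the paper's own proof in different packaging: the paper constructs one explicit projective resolution
$$\cdots \recht \cA^1_2 \oplus \cA^2_2 \recht \cA^1_1 \oplus \cA^2_1 \recht \cA \cdot p_\eps \recht \cE^\ell \recht 0 \; ,$$
whose terms $\cA^k_n \cong \bigoplus_{i \in \Irr(\cC_k)} \cA \cdot p_i \ot (i \cS_k^n,\eps)$ are exactly the inductions $\cA \ot_{\cA_k}(\text{bar resolution of } \cE^\ell_k)$, so that this resolution is precisely your Mayer--Vietoris sequence spliced with the induced bar resolutions; your long exact sequence, the $+1$ coming from $\dim_\cM(L^2(\cA)^0 \cdot p_\eps) = \tau(p_\eps) = 1$, the additivity for $n \geq 2$, and the vanishing $\bel_0(T \subset S) = [S:T]^{-1} = 0$ all match the paper's computations. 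The problem is that you have not proved the two statements on which everything rests: (a) exactness of your short exact sequence $0 \recht \cA\cdot p_\eps \recht (\cA\ot_{\cA_1}\cE_1^\ell)\oplus(\cA\ot_{\cA_2}\cE_2^\ell) \recht \cE^\ell \recht 0$, and (b) flatness of $\cA$ as a right $\cA_i$-module, which your Shapiro step requires. You explicitly defer both ("I expect this to be the main obstacle", "the real work is to convert..."), but these \emph{are} the proof: the paper's entire argument is devoted to them. Concretely, it introduces the operators $D_1, D_2$ on $\cA \cdot p_\eps$ built from the $T$-bimodule factorizations $L^2(S) \cong L^2(\cS'_k) \ovt_T L^2(S_k)$ (where $\cS'_k$ is the span of $T$ and alternating products ending in $\cS_{3-k} \ominus T$), shows that $\cA^k_1 \overset{\partial}{\recht} \cA\cdot p_\eps \overset{D_k}{\recht} \cA\cdot p_\eps$ is exact, and then uses the alternating sums $S(V), T(V)$ satisfying $S(V) + T(V) = V - \partial(V)$ as the contracting homotopy at the two low positions. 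Nothing in your proposal substitutes for this; "filtering by word length" or "contractibility of the tree" is a heuristic, not an argument, in a setting where $\cA$ is a non-unital algebra with local units and the trivial module is $\cE^\ell$ rather than $\C$.

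A secondary but genuine error: your combinatorial description of $\Irr(\cC)$ as the set of reduced alternating words in $\Irr(\cC_1)\setminus\{\eps\}$ and $\Irr(\cC_2)\setminus\{\eps\}$ does not follow from the hypotheses. That description is exactly the \emph{freeness} of $\cC_1$ and $\cC_2$, and the paper only records the implication freeness $\Rightarrow$ irreducibility of $T \subset S_1 *_T S_2$, not the converse; the proposition assumes only irreducibility. This matters because your flatness claim for $\cA$ over $\cA_i$ is justified "through reduced words", i.e.\ it leans on the unproven stronger hypothesis. The paper avoids this entirely by working with the vector-space decomposition of $\cS = \QN_S(T)$ into $T$ and alternating products of $\cS_1 \ominus T$ and $\cS_2 \ominus T$ (which does follow from the amalgamated free product structure), never with a decomposition of $\Irr(\cC)$ into words. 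To complete your proof you would need to either carry out the homotopy argument at the level of intertwiner spaces as the paper does, or prove (a) and (b) by some other means without invoking the reduced-word description of $\Irr(\cC)$.
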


Denote by $\cC_i$ the tensor category of finite index $T$-bimodules generated by $T \subset S_i$. If $\cC_1$ and $\cC_2$ are free, in the sense that every alternating tensor product of $T$-bimodules in $\Irr(\cC_1) \setminus \{\eps\}$ and $\Irr(\cC_2) \setminus \{\eps\}$ stays irreducible, then $T \subset S_1 *_T S_2$ is automatically irreducible.

\begin{corollary}
If a rigid C$^*$-tensor category $\cC$ is the free product of non trivial full tensor subcategories $\cC_1$ and $\cC_2$, then
\begin{align*}
\bel_0(\cC) &= 0 \;\; , \\
\bel_1(\cC) &= \bel_1(\cC_1) + \bel_1(\cC_2) + 1 - (\bel_0(\cC_1) + \bel_0(\cC_2)) \quad\text{and} \\
\bel_n(\cC) &= \bel_n(\cC_1) + \bel_n(\cC_2) \quad\text{for all}\;\; n \geq 2 \; .
\end{align*}
\end{corollary}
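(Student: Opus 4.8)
The plan is to reduce the statement to Proposition \ref{prop.free-product} through the SE-inclusion picture. Recall from Section \ref{sec.L2-Betti-category} that when a rigid C$^*$-tensor category $\cC$ is realized as a tensor category of finite index $M$-bimodules having equal left and right dimension, with associated SE-inclusion $T \subset S$ where $T = M \ovt M\op$, one has $\bnl(\cC) = \bnl(T \subset \cS)$. So I would first fix such a realization of $\cC$; since $\cC_1$ and $\cC_2$ are full tensor subcategories, they are automatically realized as tensor subcategories of $M$-bimodules, and I denote by $T \subset S_1$ and $T \subset S_2$ their SE-inclusions, built on the same $T = M \ovt M\op$. The goal is then to show that $S$ is the amalgamated free product $S_1 *_T S_2$ with respect to the trace preserving conditional expectations, after which the corollary follows by plugging $\bnl(\cC) = \bnl(T \subset \cS)$ and $\bnl(\cC_i) = \bnl(T \subset \cS_i)$ into Proposition \ref{prop.free-product}.

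The heart of the argument is the identification $S \cong S_1 *_T S_2$. By \eqref{eq.decomp}, $L^2(S_i) \ominus L^2(T) = \bigoplus_{\al \in \Irr(\cC_i) \setminus \{\eps\}} \cH_\al \ot \overline{\cH_\al}$ as a $T$-bimodule, so a generic reduced word in the free product GNS decomposition of $S_1 *_T S_2$ has the form $(\cH_{\al_1} \ot \overline{\cH_{\al_1}}) \ovt_T \cdots \ovt_T (\cH_{\al_n} \ot \overline{\cH_{\al_n}})$ with $\al_k \in \Irr(\cC_{i_k}) \setminus \{\eps\}$ and $i_1 \neq i_2 \neq \cdots \neq i_n$. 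Since $T = M \ovt M\op$, the Connes tensor product over $T$ splits the $\cH$-factors from the $\overline{\cH}$-factors and, up to the conventional bookkeeping of conjugates, produces the summand $\cH_w \ot \overline{\cH_w}$ for the alternating word $w = \al_1 \cdots \al_n$. The freeness of $\cC_1$ and $\cC_2$ inside $\cC$ means precisely that each such $w$ lies in $\Irr(\cC)$ and that distinct reduced words give distinct irreducibles; hence the reduced words are in bijection with $\Irr(\cC) \setminus \{\eps\}$, each appearing once, and the $T$-bimodule decomposition of $S_1 *_T S_2$ is exactly $\bigoplus_{\al \in \Irr(\cC)} \cH_\al \ot \overline{\cH_\al}$, i.e.\ \eqref{eq.decomp}. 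Combined with the characterization of the SE-algebra as the tracial von Neumann algebra generated by commuting copies of $M$ and $M\op$ carrying this bimodule structure (\cite{Po94a,Po99,LR94,Ma99}), this yields $S \cong S_1 *_T S_2$, with $S_1, S_2$ free over $T$ inside $S$.

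With this identification the hypotheses of Proposition \ref{prop.free-product} are all met: the inclusions $T \subset S_i$ are nontrivial (as $\cC_i$ is nontrivial), irreducible and quasi-regular (SE-inclusions always are, see \cite{Po99}), and unimodular, because all the $M$-bimodules involved have equal left and right dimension. The inclusion $T \subset S$ is again irreducible, being itself an SE-inclusion. Proposition \ref{prop.free-product} therefore applies, and its three displayed formulas turn into the three formulas of the corollary once $\bnl$ of each inclusion is rewritten, via $\bnl(\cC) = \bnl(T \subset \cS)$ and $\bnl(\cC_i) = \bnl(T \subset \cS_i)$, as the corresponding $L^2$-Betti number of the tensor category.

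I expect the main obstacle to be the identification $S = S_1 *_T S_2$ itself: beyond matching the $T$-bimodule decompositions one must check that the multiplication, the adjoint and the trace of the amalgamated free product coincide with those of the SE-algebra of $\cC$, and carefully track conjugates and word-reversals in the relative tensor products over $T = M \ovt M\op$. The cleanest route is to verify directly that the copies of $S_1$ and $S_2$ sitting inside the SE-algebra $S$ of $\cC$ are free with amalgamation over $T$ and generate $S$; this reduces to the orthogonality and moment relations governed by the free product fusion rules, which hold precisely because alternating words stay irreducible.
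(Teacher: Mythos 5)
Your proposal is correct and is essentially the paper's own (implicit) argument: the corollary is stated there without proof, as an immediate consequence of Proposition \ref{prop.free-product} via the identifications $\bnl(\cC) = \bnl(T \subset \cS)$ and $\bnl(\cC_i) = \bnl(T \subset \cS_i)$ from Section \ref{sec.L2-Betti-category}. The one step the paper leaves implicit, namely that the SE-algebra of $\cC = \cC_1 * \cC_2$ is canonically the amalgamated free product $S_1 *_T S_2$ (which in particular yields the irreducibility of $T \subset S_1 *_T S_2$ required in Proposition \ref{prop.free-product}, without needing the freeness-of-$T$-bimodule-categories remark preceding the corollary), is exactly what you supply, and your verification via freeness and irreducibility of alternating words is sound.
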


\begin{proof}[Proof of Proposition \ref{prop.free-product}]
Let $\cC$ be the tensor category of finite index $T$-bimodules generated by $T \subset S$. Write $\cS = \QN_S(T)$ and $\cS_k = \QN_{S_k}(T)$ for $k=1,2$. Denote by $\cA$ the associated tube $*$-algebra. Consider the left $\cA$-module $\cE^\ell$ as in Remark \ref{rem.left-triv}. To compute $\bns(T \subset S)$, we will construct a specific resolution of $\cE^\ell$. Define the $\cA$-module map
$\partial : \cA \cdot p_\eps \recht \cE^\ell$ given by \eqref{eq.first-A-module-map}. For $k=1,2$ and $n \geq 1$, consider the $n$-fold relative tensor product $\cS_k^n = \cS_k \ot_T \cdots \ot_T \cS_k$, define
$$\cA^k_n = \bigoplus_{i \in \Irr(\cC)} (i \cS \cS_k^n , \cS)$$
and turn $\cA^k_n$ into a left $\cA$-module as in \eqref{eq.left-A-module-n}. We have the left $\cA$-module isomorphisms
$$\bigoplus_{i \in \Irr(\cC_k)} \cA \cdot p_i \ot (i \cS_k^n, \eps) \recht \cA^k_n : V \ot W \recht (V \ot 1)(1 \ot W)$$
so that every $\cA^k_n$ is a projective left $\cA$-module. The same formulas as in \eqref{eq.hom-in-resolution} yield $\cA$-module maps
$$\partial : \cA^k_1 \recht \cA \cdot p_\eps \quad\text{and}\quad \partial : \cA^k_n \recht \cA^k_{n-1} \quad\text{for all}\;\; n \geq 2 \; .$$
Taking direct sums, we find the complex
\begin{equation}\label{eq.resolution-free-product}
\cdots \recht \cA^1_3 \oplus \cA^2_3 \recht \cA^1_2 \oplus \cA^2_2 \recht \cA^1_1 \oplus \cA^2_1 \recht \cA \cdot p_\eps \recht \cE \recht 0 \; .
\end{equation}
We claim that the complex in \eqref{eq.resolution-free-product} is exact. The exactness at the position $\cA^1_n \oplus \cA^2_n$ for $n \geq 2$ follows by using the same homotopy as in \eqref{eq.homotopy}. The exactness at the position $\cA^1_1 \oplus \cA^2_1$ follows in the same way, once we prove that $\partial(V) = \partial(W)$ for $V \in \cA^1_1$ and $W \in \cA^2_1$ implies that $\partial(V) = \partial(W) = 0$. To prove this statement, define $\cS'_1$ as the linear span of $T$ and all alternating products of $\cS_1 \ominus T$ and $\cS_2 \ominus T$ that end with $\cS_2 \ominus T$. Note that the multiplication map defines a unitary $T$-bimodular operator $L^2(\cS'_1) \ovt_T L^2(\cS_1) \recht L^2(\cS)$. We similarly define $\cS'_2$. In this way, we identify $(i\cS,\cS)$ with $(i \cS'_1 \cS_1,\cS)$ and with $(i \cS'_2 \cS_2,\cS)$. Viewing $(i \cS'_1,\cS)$ as a subspace of $(i \cS,\cS)$ by the inclusion $\cS'_1 \subset \cS$ and using the multiplication maps $m_k : \cS \ot_T \cS_k \recht \cS$, we define the linear maps
\begin{align*}
& D_1 : \cA \cdot p_\eps \recht \cA \cdot p_\eps : V \mapsto (1^2 \ot a_1^*)(V \ot 1)m_1^* \quad\text{for all}\;\; V \in (i \cS'_1 \cS_1,\cS) \;\; , \\
& D_2 : \cA \cdot p_\eps \recht \cA \cdot p_\eps : V \mapsto (1^2 \ot a_2^*)(V \ot 1)m_2^* \quad\text{for all}\;\; V \in (i \cS'_2 \cS_2,\cS) \;\; .
\end{align*}
Note that for every $V \in (i\cS, \cS)$, we have $D_1(V) \in (i \cS'_1, \cS)$. We also have $D_1(V) = V$ for all $V \in (i \cS'_1,\cS)$. Analogous statements hold for $D_2$.

Using the embedding $(i \cS'_k,\cS) \subset (i \cS,\cS)$ as the homotopy, we get that
$$\cA^k_1 \overset{\partial}{\recht} \cA \cdot p_\eps \overset{D_k}{\recht} \cA \cdot p_\eps$$
is exact.

Writing $\cS_k^\circ = \cS_k \ominus T$, we have that $\cS$ is the linear span of $T$ and all alternating products in $\cS_1^\circ$ and $\cS_2^\circ$. When e.g.\ $V \in (i \cS_1^\circ \cS_2^\circ \cS_1^\circ , \cS)$, then $D_1(V)$ belongs to $(i \cS_1^\circ \cS_2^\circ, \cS)$, and $D_2(D_1(V))$ belongs to $(i \cS_1^\circ, \cS)$, so that $D_1(D_2(D_1(V)))$ belongs to $(i,\cS)$ and equals $\partial(V)$, where we viewed $(i, \cS) \subset (i \cS,\cS)$ through the identification of $W$ and $(1 \ot \delta)W$. All further $(D_2 D_1)^n(V)$ with $n \geq 2$ equal $\partial(V)$. In general, for all $V \in \cA \cdot p_\eps$, the sequences $(D_1 D_2)^n(V)$ and $(D_2 D_1)^n(V)$ become constantly equal to $\partial(V)$ for $n$ large enough.

So, defining for all $n \geq 1$, the maps
\begin{align*}
& S_n : \cA \cdot p_\eps \recht \cA \cdot p_\eps : S_n(V) = D_1(V) - D_2(D_1(V)) + \cdots + D_1((D_2 D_1)^{n-1}(V)) - (D_2 D_1)^n(V)\\
& T_n : \cA \cdot p_\eps \recht \cA \cdot p_\eps : T_n(V) = D_2(V) - D_1(D_2(V)) + \cdots + D_2((D_1 D_2)^{n-1}(V)) - (D_1 D_2)^n(V)
\end{align*}
also the sequences $S_n(V)$ and $T_n(V)$ become constant for $n$ large enough, and we denote this `limit' as $S(V)$, resp.\ $T(V)$. When $V \in (i \cS_1' \cS_1^\circ,\cS)$, we have $T_n(V) = V - S_{n-1}(V) - (D_1 D_2)^n(V)$, so that $S(V) + T(V) = V - \partial(V)$. The same formula holds when $V \in (i \cS_2' \cS_2^\circ,\cS)$ and when $V \in (i,\cS)$. So, we get that
$$S(V) + T(V) = V - \partial(V) \quad\text{for all}\;\; V \in \cA \cdot p_\eps \; .$$

We are now ready to prove the exactness of \eqref{eq.resolution-free-product} at the position $\cA^1_1 \oplus \cA^2_1$. Assume that $\partial(V) = \partial(W)$ for $V \in \cA^1_1$ and $W \in \cA^2_1$. Since $\cA_2^k \recht \cA_1^k \recht \cA \cdot p_\eps$ is exact, it suffices to prove that $\partial(V) = 0$. We have that $D_1(\partial(V)) = 0$. But also $D_2(\partial(W)) = 0$ and thus, $D_2(\partial(V)) = 0$. Both together imply that $S(\partial(V)) = 0 = T(\partial(V))$, so that $\partial(V) = \partial(\partial(V)) = 0$.

Finally, we have to prove that \eqref{eq.resolution-free-product} is exact at the position $\cA \cdot p_\eps$. Take $V \in \cA \cdot p_\eps$ with $\partial(V) = 0$. Then, $V = S(V) + T(V)$. It suffices to prove that $S(V) \in \partial(\cA^2_1)$ and that $T(V) \in \partial(\cA^1_1)$. For this, it suffices to prove that $D_2(S(V)) = 0$ and $D_1(T(V)) = 0$. Since $D_2(W - D_2(W)) = 0$ for all $W \in \cA \cdot p_\eps$, the definition of $S$ immediately implies that $D_2(S(V))= 0$. Similarly, we get that $D_1(T(V)) = 0$.

So, we have proved that \eqref{eq.resolution-free-product} is a resolution of $\cE$ by projective left $\cA$-modules. Write $\cM = \cA\dpr$. By Theorem \ref{thm.tor-ext}, the $L^2$-Betti numbers of $T \subset S$ can thus be computed as the $\cM$-dimension of the homology of the complex
$$\cdots \recht \cB^1_3 \oplus \cB^2_3 \recht \cB^1_2 \oplus \cB^2_2 \recht \cB^1_1 \oplus \cB^2_1 \recht L^2(\cA \cdot p_\eps) \; ,$$
where
$$\cB^k_n = \bigoplus_{i \in \Irr(\cC_k)} L^2(\cA \cdot p_i) \ot (i \cS_k^n, \eps)$$
and the boundary maps are the natural extensions of the boundary maps in \eqref{eq.resolution-free-product}. Denote by $\cA_k$ the tube $*$-algebra of $T \subset S_k$ and $\cC_k$. Write $\cM_k = \cA_k\dpr$. By Theorem \ref{thm.tor-ext}, the $L^2$-Betti numbers of $T \subset S_k$ are computed as the $\cM_k$-dimension of the homology of the complex
$$\cdots \recht \cL^k_3 \recht \cL^k_2 \recht \cL^k_1 \recht L^2(p_\eps \cdot \cA_k \cdot p_\eps) \; ,$$
where
$$\cL^k_n = \bigoplus_{i \in \Irr(\cC_k)} L^2(\cA_k \cdot p_i) \ot (i \cS_k^n,\eps)$$
and the boundary maps are as above.

For $k=1,2$, define the projection $q_k \in \cM$ given by $q_k = \sum_{i \in \Irr(\cC_k)} p_i$. For any chain complex $(L_n)_{n \geq 0}$ of $\cM_k$-modules, the $\cM_k$-dimension of the homology of $(L_n)_{n \geq 0}$ equals the $\cM$-dimension of the homology of the complex $(\cM \cdot q_k \ot_{\cM_k} L_n)_{n \geq 0}$. Since for every $i \in \Irr(\cC_k)$, the multiplication map $\cM \cdot q_k \ot_{\cM_k} L^2(\cA_k \cdot p_i) \recht L^2(\cA \cdot p_i)$ is a dimension isomorphism, it follows that $\bns(T \subset S_k)$ can be computed as the $\cM$-dimension of the homology of $(\cB^k_n)_{n \geq 0}$.

We then immediately get that
$$\bnl(T \subset S) = \bnl(T \subset S_1) + \bnl(T \subset S_2) \quad\text{for all}\;\; n \geq 2 \; .$$
We also get that
\begin{equation}\label{eq.beta-one}
\bel_1(T \subset S) = \bel_1(T \subset S_1) + \bel_1(T \subset S_2) + \dim_\cM(\partial(\cB_1^1) \cap \partial(\cB_1^2)) \; .
\end{equation}
Since both $S_k \neq T$, we get that all alternating products of $\cS_1^\circ$ and $\cS_2^\circ$ define nonzero orthogonal $T$-subbimodules of $L^2(S)$. Therefore, $T \subset S$ has infinite index and $\bes_0(T \subset S)= 0$.

For $k=1,2$, define the projection $z_k \in p_\eps \cdot \cA \cdot p_\eps$ given by $z_k = 0$ if $[S_k:T]=\infty$ and otherwise given as $[S_k:T]^{-1}$ times the projection of $L^2(S)$ onto $L^2(S_k)$ viewed as an element in $p_\eps \cdot \cA \cdot p_\eps = (\cS,\cS)$.
Write $\cM_\eps = p_\eps \cdot \cM \cdot p_\eps$. Exactly as in the proof of Proposition \ref{prop.0-L2-Betti}, we get that
\begin{align*}
\dim_\cM(\partial(\cB_1^1) \cap \partial(\cB_1^2)) & = \dim_{\cM_\eps}(p_\eps \cdot \partial(\cB_1^1) \cap p_\eps \cdot \partial(\cB_1^2)) \\
&= 1 - \dim_{\cM_\eps}(p_\eps \cdot \partial(\cA^1_1)^\perp + p_\eps \cdot \partial(\cA^2_1)^\perp) = 1 - \tau(z_1 \vee z_2) \\
& = 1-(\tau(z_1) + \tau(z_2) - \tau(z_1 \wedge z_2)) \; .
\end{align*}
Since $T \subset S$ has infinite index, we have $z_1 \wedge z_2 = 0$ and we conclude that
$$\dim_\cM(\partial(\cB_1^1) \cap \partial(\cB_1^2)) = 1 - [S_1:T]^{-1} - [S_2,T]^{-1} = 1 - \bel_0(T \subset S_1) - \bel_0(T \subset S_2) \; .$$
Together with \eqref{eq.beta-one}, we have found the required formula for $\bes_1(T \subset S)$.
\end{proof}

\subsection[The $L^2$-Betti numbers of tensor products]{\boldmath The $L^2$-Betti numbers of tensor products}

\begin{proposition}
If $T_1 \subset S_1$ and $T_2 \subset S_2$ are irreducible, quasi-regular, unimodular inclusions of II$_1$ factors, then
$$\bel_n(T_1 \ovt T_2 \subset S_1 \ovt S_2) = \sum_{k=0}^n \bel_k(T_1 \subset S_1) \, \bel_{n-k}(T_2 \subset S_2) \; .$$
If $\cC_1$ and $\cC_2$ are rigid C$^*$-tensor categories, we have a similar formula for $\bns(\cC_1 \times \cC_2)$.
\end{proposition}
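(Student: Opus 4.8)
The plan is to reduce the statement to a Künneth formula at the level of tube algebras. Writing $\cC_k$ for the tensor category of finite index $T_k$-bimodules generated by $L^2(S_k)$, the finite index $(T_1 \ovt T_2)$-subbimodules of $L^2(S_1 \ovt S_2) = L^2(S_1) \ovt L^2(S_2)$ are exactly the direct sums of external products $\cH_{i_1} \ot \cH_{i_2}$ with $i_k \in \Irr(\cC_k)$, so the generated category $\cC$ is the external product $\cC_1 \times \cC_2$ with $\Irr(\cC) = \Irr(\cC_1) \times \Irr(\cC_2)$. Since morphism spaces in a product category factorize, $((i_1,i_2)(\al_1,\al_2),(\al_1,\al_2)(j_1,j_2)) = (i_1\al_1,\al_1 j_1) \ot (i_2\al_2,\al_2 j_2)$, one checks directly from the definitions in Section \ref{sec.tube-quasireg} that the product and the $\#$-operation factorize, yielding a $*$-algebra isomorphism $\cA \cong \cA_1 \otalg \cA_2$. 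I would then note that unimodularity of $T_1 \ovt T_2 \subset S_1 \ovt S_2$ follows from that of the factors (indeed $\Delta_{\cS_1 \ovt \cS_2} = \Delta_{\cS_1} \ot \Delta_{\cS_2} = 1$), so that $\cM := \cA\dpr \cong \cM_1 \ovt \cM_2$ carries the product trace and $\bns(T_1 \ovt T_2 \subset S_1 \ovt S_2)$ is well-defined.

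By Theorem \ref{thm.tor-ext} together with Proposition \ref{prop.betti-quasiregular-homology}, $\bns(T_k \subset S_k) = \dim_{\cM_k}\Tor^{\cA_k}_n(L^2(\cA_k)^0,\cE^\ell_k)$, and similarly for the tensor product inclusion with $\cA$, $\cM$ and $\cE^\ell$. Under $\cA \cong \cA_1 \otalg \cA_2$ the trivial and regular modules factorize: $\cE^\ell \cong \cE^\ell_1 \otalg \cE^\ell_2$ and $L^2(\cA)^0 \cong L^2(\cA_1)^0 \otalg L^2(\cA_2)^0$. Choosing projective $\cA_k$-resolutions $P^{(k)}_\bullet \recht \cE^\ell_k$, the total complex $P^{(1)}_\bullet \otalg P^{(2)}_\bullet$ consists of projective $\cA$-modules, and since $\C$ is a field (so that $\otalg$ is exact) it is a resolution of $\cE^\ell$. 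Applying $L^2(\cA)^0 \ot_\cA -$ and using the standard identification $(M_1\otalg M_2)\ot_{\cA_1\otalg\cA_2}(N_1\otalg N_2) \cong (M_1\ot_{\cA_1}N_1)\otalg(M_2\ot_{\cA_2}N_2)$, the complex computing $\Tor^\cA_n(L^2(\cA)^0,\cE^\ell)$ becomes the total complex $D^{(1)}_\bullet \otalg D^{(2)}_\bullet$ of the complexes $D^{(k)}_\bullet = L^2(\cA_k)^0 \ot_{\cA_k} P^{(k)}_\bullet$. As the algebraic Künneth theorem over the field $\C$ carries no $\Tor^\C$-correction terms, this yields the $\cM$-module isomorphism
$$\Tor^\cA_n(L^2(\cA)^0,\cE^\ell) \cong \bigoplus_{k=0}^n \Tor^{\cA_1}_k(L^2(\cA_1)^0,\cE^\ell_1) \otalg \Tor^{\cA_2}_{n-k}(L^2(\cA_2)^0,\cE^\ell_2).$$

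Finally I would apply Lück's dimension function. The one genuinely technical input is multiplicativity of the dimension over a tensor product of semifinite tracial von Neumann algebras: for an $\cM_1$-module $A$ and an $\cM_2$-module $B$, the algebraic tensor product $A \otalg B$ is naturally an $\cM = \cM_1 \ovt \cM_2$-module with $\dim_{\cM}(A \otalg B) = \dim_{\cM_1}(A)\,\dim_{\cM_2}(B)$. Granting this and summing the displayed decomposition over $k$ gives $\bnl(T_1 \ovt T_2 \subset S_1 \ovt S_2) = \sum_{k=0}^n \bel_k(T_1 \subset S_1)\,\bel_{n-k}(T_2 \subset S_2)$. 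The categorical statement follows verbatim: replace $\cE^\ell$ by $\C$ via the co-unit (noting that $\counit_1 \ot \counit_2 = \counit$ on $\cA_1 \otalg \cA_2 \cong \cA(\cC_1\times\cC_2)$) and use $\bns(\cC) = \bns(T \subset \cS)$ from Section \ref{sec.L2-Betti-category}. The main obstacle is precisely the multiplicativity of the semifinite dimension under $\ovt$: the algebraic Künneth step is automatic over a field, but one must either invoke the tensor-product behaviour of Lück's dimension (\cite{Lu02}) adapted to the semifinite setting of \cite{KPV13}, or reduce to the finite tracial case by cutting down with finite projections, where this multiplicativity is classical (as in the group product formula of \cite{CG85}).
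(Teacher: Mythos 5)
Your proposal is correct and follows essentially the same route as the paper: the paper likewise identifies the tube algebra of the tensor product inclusion with the algebraic tensor product $\cA_1 \otalg \cA_2$, observes that the trivial module factorizes, and takes the total complex of the bicomplex built from projective resolutions of the two trivial modules. The only difference is one of packaging: where you unpack the final step into the algebraic K\"unneth formula over $\C$ plus multiplicativity of the semifinite dimension function (which you rightly flag as the key technical input), the paper simply defers that entire computation to the proof of \cite[Theorem 2.1]{Ky09}, which is precisely the reference supplying the dimension bookkeeping you describe.
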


\begin{proof}
The tube algebra $\cA$ of $T \subset S$ is canonically isomorphic with the algebraic tensor product $\cA_1 \ot \cA_2$ of the tube algebras $\cA_k$ of $T_k \subset S_k$. Also the trivial left $\cA$-module $\cE$ is the tensor product $\cE_1 \ot \cE_2$ of the trivial left $\cA_k$-modules $\cE_k$. Given resolutions $(L^k_n)$ of $\cE_k$ by projective left $\cA_k$-modules, we build the bicomplex of $\cA$-modules $(L^1_n \ot L^2_m)_{n,m}$. The total complex
$$L_n = \bigoplus_{k=0}^n (L^1_k \ot L^2_{n-k})$$
is a resolution of $\cE$ by projective left $\cA$-modules. The computation of $\bns(T \subset S)$ can then be done exactly as in the proof of \cite[Theorem 2.1]{Ky09}.
\end{proof}

\subsection[The $L^2$-Betti numbers of the Temperley-Lieb-Jones subfactors]{\boldmath The $L^2$-Betti numbers of the Temperley-Lieb-Jones subfactors}

\begin{definition}
For every extremal finite index subfactor $N \subset M$, we define $\bes_{\sub,n}(N \subset M) := \bns(T \subset S)$ where $T \subset S$ is the SE-inclusion of $N \subset M$.
\end{definition}

The following proposition implies in particular that $\bes_{\sub,n}(N \subset M)$ only depends on the standard invariant of the subfactor $N \subset M$.

\begin{proposition} \label{prop.L2-Betti-subfactors-generalities}
Let $N \subset M$ be an extremal finite index subfactor with tunnel/tower $(M_k)_{k \in \Z}$. Let $\cC_M$ be the category of finite index $M$-bimodules generated by $N \subset M$.
\begin{enumerate}
\item We have $\bes_{\sub,n}(N \subset M) = \bns(\cC_M)$. More generally, whenever $M_k \subset P \subset M_m$ for some $k \leq m$, we have $\bes_{\sub,n}(N \subset M) = \bns(\cC_P)$ where $\cC_P$ is the category of finite index $P$-bimodules generated by $L^2(M_n)$, $n \geq m$.
\item We have $\bes_{\sub,n}(N \subset M) = \bes_{\sub,n}(P \subset Q)$ whenever $M_a \subset P \subset M_k \subset M_m \subset Q \subset M_b$ with $a \leq k < m \leq b$.
\item The $0$'th $L^2$-Betti number is given by the inverse of the \emph{global index} of $N \subset M$ meaning that $\bes_{\sub,0}(N \subset M) = \Bigl(\sum_{\al \in \Irr(\cC_M)} \rd(\al)^2\Bigr)^{-1}$.
\end{enumerate}
\end{proposition}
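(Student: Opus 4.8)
The plan is to reduce all three statements to two facts already available: the identification $\bns(\cC) = \bns(T \subset \cS)$ established in Section~\ref{sec.L2-Betti-category} for a tensor category realized as $M$-bimodules with associated SE-inclusion, and the Morita invariance of the $L^2$-Betti numbers of rigid C$^*$-tensor categories (Proposition~\ref{prop.L2-Betti-Morita}). For the first equality in statement~1 I would simply unfold definitions: by the definition of $\bes_{\sub,n}$ we have $\bes_{\sub,n}(N \subset M) = \bns(T \subset \cS)$, where $T \subset S$ is the SE-inclusion built from $\cC_M$, and Section~\ref{sec.L2-Betti-category} identifies $\bns(T \subset \cS)$ with $\bns(\cC_M)$. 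For a general intermediate algebra $P$ with $M_k \subset P \subset M_m$, the content is that $\cC_P$ is Morita equivalent to $\cC_M$, and I would build this equivalence in two steps. First, the finite index inclusion $P \subset M_m$ produces, exactly as in the $\cC_{N-N}$ versus $\cC_{M-M}$ discussion preceding Proposition~\ref{prop.L2-Betti-Morita}, the bimodule categories $\cC_{P-M_m}$ and $\cC_{M_m-P}$ of finite index $P$-$M_m$- and $M_m$-$P$-bimodules occurring in the $L^2(M_n)$; these implement a Morita equivalence between $\cC_P$ and $\cC_{M_m-M_m}$ in the sense of Definition~\ref{def.Morita-equivalence}. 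Second, iterating the basic construction along $M \subset M_1 \subset \cdots \subset M_m$ yields a chain of Morita equivalences $\cC_{M_m-M_m} \sim \cdots \sim \cC_M$, where $\cC_{N-N} \sim \cC_{M-M}$ is used to absorb the parity of $m$. Proposition~\ref{prop.L2-Betti-Morita} then gives $\bns(\cC_P) = \bns(\cC_M) = \bes_{\sub,n}(N \subset M)$.

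For statement~2 I would apply statement~1 twice, bringing both subfactors down to the common level $P$. Applying the general part of statement~1 to $N \subset M$ with the intermediate algebra $P$ (using $M_a \subset P \subset M_k$) gives $\bes_{\sub,n}(N \subset M) = \bns(\cC_P)$, where $\cC_P$ is the category of finite index $P$-bimodules generated by the $L^2(M_n)$. Applying statement~1 to the subfactor $P \subset Q$, with $P$ viewed as its first tunnel algebra, gives $\bes_{\sub,n}(P \subset Q) = \bns(\cC_P^{P \subset Q})$, where $\cC_P^{P \subset Q}$ is the category of $P$-bimodules generated by the Jones tower $L^2(Q_j)$, $j \geq 0$, of $P \subset Q$. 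It then suffices to show these two categories of $P$-bimodules coincide. One inclusion is easy: since $P \subset M_m \subset Q \subset M_b$, the $P$-bimodule $\bim{P}{L^2(Q)}{P}$ is contained in $\bim{P}{L^2(M_b)}{P}$, and because $\cC_P$ is closed under $\ot_T$ all $L^2(Q_j) = L^2(Q) \ot_P \cdots \ot_P L^2(Q)$ land in $\cC_P$, whence $\cC_P^{P\subset Q} \subseteq \cC_P$. Conversely, $M_m \subset Q$ gives $\bim{P}{L^2(M_m)}{P} \hookrightarrow \bim{P}{L^2(Q)}{P}$, so the generators of $\cC_P$ (which, since $a \leq k < m \leq b$, are cofinal with and hence generate the same category as the $P$-bimodules appearing in $L^2(Q_j)$) lie in $\cC_P^{P\subset Q}$. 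Thus $\cC_P = \cC_P^{P\subset Q}$ and the two $L^2$-Betti numbers agree; note that here no appeal to Proposition~\ref{prop.L2-Betti-Morita} is even needed, only the identification of the two categories.

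Statement~3 is then immediate: by statement~1, $\bes_{\sub,0}(N \subset M) = \bes_0(\cC_M)$, and Corollary~\ref{cor.zero-betti-category} evaluates $\bes_0(\cC_M) = \bigl(\sum_{\al \in \Irr(\cC_M)} \rd(\al)^2\bigr)^{-1}$, which is by definition the inverse of the global index of $N \subset M$.

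I expect the main obstacle to be the category bookkeeping in the background of statements~1 and~2 rather than any $L^2$-homological work: verifying that the inclusion $P \subset M_m$ genuinely produces bimodule categories satisfying the axioms of Definition~\ref{def.Morita-equivalence}, that the basic-construction equivalences chain correctly through the parity of $m$, and above all that $\cC_P$ and $\cC_P^{P\subset Q}$ really are the \emph{same} full category of finite index $P$-bimodules arising from the standard invariant. This last identification, where the strict inequality $k < m$ guarantees that $P \subset Q$ contains a full basic construction and hence generates everything, is standard subfactor/bimodule theory but is where all the actual content resides; once it is in place, Proposition~\ref{prop.L2-Betti-Morita} and Corollary~\ref{cor.zero-betti-category} close out the argument purely formally.
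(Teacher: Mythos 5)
Your proposal is correct and follows essentially the same route as the paper, whose proof is a one-line citation of exactly the ingredients you use: the identification $\bns(\cC) = \bns(T \subset \cS)$ from Section~\ref{sec.L2-Betti-category}, Morita invariance of $L^2$-Betti numbers (Proposition~\ref{prop.L2-Betti-Morita}), and Corollary~\ref{cor.zero-betti-category} for the $0$'th Betti number. Your write-up simply unpacks what the paper calls ``immediate consequences'' — in particular the Morita equivalence $\cC_P \sim \cC_M$ via the $P$-$M_m$-bimodule categories, and the cofinality argument (using $k<m$) identifying $\cC_P$ with the $P$-bimodule category of $P \subset Q$ — and these details are sound.
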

\begin{proof}
These are immediate consequences of the discussion in Section \ref{sec.L2-Betti-category} and the stability of $L^2$-Betti numbers under Morita equivalence in Proposition \ref{prop.L2-Betti-Morita}. The last point follows from Corollary \ref{cor.zero-betti-category}.
\end{proof}

Recall that a subfactor $N \subset M$ is called Temperley-Lieb-Jones (TLJ) if the relative commutants $M_i' \cap M_j$ in the Jones tower $N \subset M \subset M_1 \subset \cdots$ are as small as possible, i.e.\ generated by the Jones projections $e_k$, $i < k < j$.

A TLJ subfactor $N \subset M$ is said to be of type $A_n$ with $n \in \{2,3,\ldots\} \cup \{\infty\}$ if the principal graph of $N \subset M$ is the Dynkin graph $A_n$. Equivalently, a TLJ subfactor $N \subset M$ is of type $A_n$ with $2 \leq n < \infty$ if $[M:N] = 4 \cos^2(\pi/(n+1))$ and it is of type $A_\infty$ if $[M:N] \geq 4$. The $A_2$ case corresponds to the trivial subfactor $N=M$.

\begin{theorem} \label{thm.L2-Betti-TLJ-Fuss-Catalan}
\begin{enumerate}
\item Let $N \subset M$ be a TLJ subfactor of type $A_n$ with $n \in \{2,3,\ldots\}\cup\{\infty\}$. Then, $\bes_{\sub,k}(N \subset M) = 0$ for all $k \geq 1$ and
$$\bes_{\sub,0}(N \subset M) = \frac{4 \sin^2\bigl(\frac{\pi}{n+1}\bigr)}{n+1} \; ,$$
where the right hand side is interpreted as $0$ when $n=\infty$.

\item Let $N \subset M$ be a Fuss-Catalan subfactor in the sense of \cite{BJ95}, given as the free composition of a TLJ subfactor of type $A_n$ and a TLJ subfactor of type $A_m$ with $n,m \in \{3,4,\ldots\}\cup\{\infty\}$. Then $\bes_{\sub,k}(N \subset M) = 0$ for all $k \neq 1$ and
    $$\bes_{\sub,1}(N \subset M) = 1 - \frac{4 \sin^2\bigl(\frac{\pi}{n+1}\bigr)}{n+1} - \frac{4 \sin^2\bigl(\frac{\pi}{m+1}\bigr)}{m+1} \; .$$
    Note that $\bes_{\sub,1}(N \subset M) > 0$, except in the amenable case $n=m=3$.
\end{enumerate}
\end{theorem}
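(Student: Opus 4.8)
The plan is to reduce everything to the tensor-category invariants $\bns(\cC_M)$ and to treat the two assertions in turn. By Proposition~\ref{prop.L2-Betti-subfactors-generalities} we have $\bes_{\sub,n}(N\subset M)=\bns(\cC_M)$, where $\cC_M$ is the even (i.e.\ $M$-$M$-bimodule) part of the Temperley--Lieb--Jones category. Thus for part~1 it suffices to compute $\bns(\cC_M)$ for the $A_n$ category, and for part~2 to recognise the Fuss--Catalan category as a free product of two such categories and invoke the free-product formula.

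First I would compute $\bes_{\sub,0}$. By Corollary~\ref{cor.zero-betti-category}, $\bes_0(\cC_M)$ is the inverse of the global index $\sum_{\al\in\Irr(\cC_M)}\rd(\al)^2$. Writing $N=n+1$, the simple $M$-$M$-bimodules are indexed by the even vertices of the principal graph $A_n$ and have quantum dimensions $\sin(j\pi/N)/\sin(\pi/N)$ with $j$ odd, $1\le j\le N-1$. The only genuine calculation is the trigonometric identity
\[
\sum_{\substack{1\le j\le N-1\\ j\ \mathrm{odd}}}\sin^2\!\Big(\tfrac{j\pi}{N}\Big)=\tfrac{N}{4},
\]
which I would obtain from $\sin^2=\tfrac12(1-\cos)$ together with the vanishing of the relevant sum of roots of unity (distinguishing the parities of $N$). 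This gives global index $N/\big(4\sin^2(\pi/N)\big)$ and hence $\bes_{\sub,0}=4\sin^2(\pi/N)/N$; for $n=\infty$ the dimensions grow and the index diverges, so $\bes_{\sub,0}=0$, matching the stated limiting value.

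The substance of part~1 is the vanishing $\bns(\cC_M)=0$ for $n\ge1$. For $n<\infty$ the category is finite, so the Følner condition of Definition~\ref{def.strongly-amenable-quasi-reg-inclusion} holds trivially, and for $A_\infty$ at index exactly $4$ the quantum dimensions grow only linearly and the weighted half-line is again Følner; in both cases Corollary~\ref{cor.vanish-amenable-tensor-category} gives the vanishing. The remaining, and genuinely hard, case is $A_\infty$ at index $>4$, which is \emph{not} amenable (the quantum dimensions grow exponentially), so Corollary~\ref{cor.vanish-amenable-tensor-category} does not apply. Here my plan is to build an explicit short projective resolution of the trivial $\cA$-module $\C$ directly from the diagrammatic bar complex of Sections~\ref{sec.L2-Betti-category} and~\ref{sec.affine-cat} (using Lemma~\ref{lem.tensor-product-planar}), exploiting the fact that the fusion graph of $\cC_M$ under the generating object is a \emph{path} with no branching. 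Concretely, one expects a length-one resolution $0\to P_1\to P_0\to\C\to0$ whose two terms are indexed by the edges, respectively the vertices, of this path; its exactness, with a contracting homotopy supplied by the Jones--Wenzl/Chebyshev recursion read diagrammatically, would immediately give $\bns(\cC_M)=0$ for $n\ge2$, while the absence of branching---exactly as for the $L^2$-cohomology of the line $\Z$---should force the degree-one homology, after tensoring with $L^2(\cA)^0$ over $\cA$, to have $\cM$-dimension zero as well, uniformly in the index. Establishing this exactness and the degree-one dimension count for generic $q$ is the main obstacle; everything else is bookkeeping.

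Finally, for part~2, the Fuss--Catalan subfactor is by definition the free composition of TLJ subfactors of types $A_n$ and $A_m$, so its category $\cC_M$ is the free product $\cC_1*\cC_2$ of the two corresponding TLJ categories, and this freeness guarantees irreducibility of the amalgamated inclusion. Applying the free-product formula (Proposition~\ref{prop.free-product} and the corollary following it), together with the part~1 values $\bes_1(\cC_i)=0$ and $\bes_0(\cC_1)=4\sin^2(\pi/(n+1))/(n+1)$, respectively $\bes_0(\cC_2)=4\sin^2(\pi/(m+1))/(m+1)$, yields $\bes_{\sub,0}=0$, then $\bes_{\sub,k}=\bes_k(\cC_1)+\bes_k(\cC_2)=0$ for $k\ge2$, and
\[
\bes_{\sub,1}=1-\frac{4\sin^2\!\big(\tfrac{\pi}{n+1}\big)}{n+1}-\frac{4\sin^2\!\big(\tfrac{\pi}{m+1}\big)}{m+1},
\]
which is positive unless $n=m=3$ (the amenable case), as claimed.
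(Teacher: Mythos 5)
Your computation of $\bes_{\sub,0}$ and your treatment of part~2 are sound: the global-index calculation agrees with the paper's (which reads the dimensions off \cite[Table 1.4.8]{GHJ89}), and part~2 is exactly the paper's argument~--- the free composition makes the category of bimodules over the \emph{intermediate} factor $P$ (the paper is careful to work with $\cC_P$, not $\cC_M$ itself, which is only Morita equivalent to the free product) a free product of two TLJ categories, after which Proposition \ref{prop.L2-Betti-subfactors-generalities} and Proposition \ref{prop.free-product} finish. Your use of amenability (Corollary \ref{cor.vanish-amenable-tensor-category}) for finite $n$ and for $A_\infty$ at index $4$ is also valid, although the paper never makes this case split.

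The genuine gap is the case you yourself flag as the main obstacle: $A_\infty$ at index $>4$, i.e.\ the non-amenable TLJ categories, which is precisely where the theorem says something beyond Corollary \ref{cor.vanish-amenable-tensor-category}. There you offer only the expectation of a length-one projective resolution indexed by the vertices and edges of the fusion path, plus the hope that the absence of branching forces the degree-one homology to have $\cM$-dimension zero after tensoring with $L^2(\cA)^0$. Neither step is carried out, and the second cannot follow from the path structure alone: free groups also admit length-one resolutions coming from trees, yet their first $L^2$-Betti number is nonzero. What makes the analogy with $\Z$ work is not the shape of the graph but the fact that $L(\Z)$ is diffuse abelian and nonzero elements of $\C[\Z]$ are a.e.\ invertible, so kernels of matrices over $\C[\Z]$ are densely spanned by algebraic vectors. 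The paper supplies precisely this input for TLJ by quoting \cite[Section 5.2]{GJ15}: every corner $p_i \cdot \cA \cdot p_i$ of the tube $*$-algebra is essentially a polynomial algebra whose weak closure $p_i \cdot \cM \cdot p_i$ is diffuse abelian, so every nonzero element of the corner acts injectively. Combining this with Lemma \ref{lem.polynomials} and the general vanishing criterion of Lemma \ref{lem.general-vanishing} yields $\bes_k(\cC)=0$ for all $k \geq 1$, uniformly in the index and with no amenable/non-amenable dichotomy. Until you either invoke this structural result on the TLJ tube algebra or prove its conclusion by hand~--- establishing exactness of your resolution \emph{and} then still proving injectivity of the relevant corner elements to control the degree-one dimension~--- your part~1 is incomplete in exactly the range of indices where it has content.
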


\begin{proof}
1.\ Let $\cC$ be the tensor category of finite index $M$-bimodules generated by $N \subset M$. Denote by $\cA$ its tube $*$-algebra, with corresponding von Neumann algebra $\cM = \cA\dpr$. In \cite[Section 5.2]{GJ15}, it is proved that for every $i \in \Irr(\cC)$, the von Neumann algebra $p_i \cdot \cM \cdot p_i$ is diffuse abelian and the subalgebra $p_i \cdot \cA \cdot p_i$ is essentially a polynomial algebra. In particular, it follows from \cite[Section 5.2]{GJ15} that every nonzero element of $p_i \cdot \cA \cdot p_i$ defines an injective operator in $p_i \cdot \cM \cdot p_i$. Combining Lemma \ref{lem.polynomials} below and Lemma \ref{lem.general-vanishing}, we conclude that $\bes_k(\cC) = 0$ for all $k \geq 1$. The formula for $\bes_{\sub,0}(N \subset M)$ follows from Proposition \ref{prop.L2-Betti-subfactors-generalities} and the following computation of the global index of a TLJ subfactor of type $A_n$.

For $0 \leq k \leq n-1$, define
$$d_k = \frac{\sin\bigl(\frac{k+1}{n+1} \pi \bigr)}{\sin \bigl(\frac{\pi}{n+1}\bigr)} \; .$$
By \cite[Table 1.4.8]{GHJ89}, the dimensions of the irreducible $M$-bimodules generated by $N \subset M$ are given by $d_k$, $k$ even. A direct computation then gives that the global index of $N \subset M$ equals
$$\frac{n+1}{4 \sin^2\bigl(\frac{\pi}{n+1}\bigr)} \; .$$

2.\ Let $N \subset M$ be a Fuss-Catalan subfactor given as the free composition of TLJ subfactors $N \subset P$ and $P \subset M$. Let $N \subset M \subset M_1 \subset \cdots$ be the Jones tower. Define $\cC_P$ as the category of finite index $P$-bimodules generated by $L^2(M_n)$, $n \geq 0$. By definition, $\cC_P$ is the free product of the categories of $P$-bimodules $\cC_1$ and $\cC_2$ generated by resp.\ $N \subset P$ and $P \subset M$. By Proposition~\ref{prop.L2-Betti-subfactors-generalities}, we have $\bes_{\sub,k}(N \subset M) = \bes_k(\cC_P)$ for all $k \geq 0$. Since $\cC_P$ is the free product of $\cC_1$ and $\cC_2$, the conclusion of the theorem follows from~1 and Proposition~\ref{prop.free-product}.
\end{proof}

In the proof of Theorem \ref{thm.L2-Betti-TLJ-Fuss-Catalan}, we needed the following lemma, using the notation of Definition~\ref{def.L2-Betti-map}.

\begin{lemma} \label{lem.polynomials}
Let $(X,\mu)$ be a standard probability space and $\cD \subset L^\infty(X,\mu)$ a dense $*$-subalgebra with the property that every $a \in \cD \setminus \{0\}$ satisfies $a(x) \neq 0$ for a.e.\ $x \in X$. Then for every $V \in M_{m,n}(\C) \ot \cD$, we have that $\bes(V) = 0$.
\end{lemma}

\begin{proof}
View $V$ as a measurable function $X \recht M_{m,n}(\C)$ with the property that the components $x \mapsto V(x)_{ij}$ belong to $\cD$ for all $i,j$. Denote by $K$ the closure of $\Ker V \cap \cD^{\oplus n}$ inside $L^2(X,\mu)^{\oplus n}$. We have to prove that $K = \Ker V$.

For all subsets $I \subset \{1,\ldots,m\}$ and $J \subset \{1,\ldots,n\}$ with $|I|=|J|$, we denote by $V(x)_{I,J}$ the $I \times J$ minor of the matrix $V(x)$, i.e.\ the determinant of the matrix given by the $I$-rows and $J$-columns of $V(x)$. Define $k \in \{0,\ldots,n\}$ as the largest integer for which there exist such subsets $I$ and $J$ with $|I|=|J|=k$ and with $x \mapsto V(x)_{I,J}$ being nonzero on a non negligible set of $x \in X$. Since $x \mapsto V(x)_{I,J}$ belongs to $\cD$, we then get that $V(x)_{I,J} \neq 0$ for a.e.\ $x \in X$, while $V(x)_{I',J'} = 0$ for a.e.\ $x \in X$ and all subsets $I',J'$ with $|I'|=|J'| > k$. After removing from $X$ a set of measure zero and after reordering the indices, we may assume that with $I=J= \{1,\ldots,k\}$, we have $V(x)_{I,J} \neq 0$ for all $x \in X$, and $V(x)_{I',J'} = 0$ for all $x \in X$ and all subsets $I',J'$ with $|I'|=|J'| > k$.

We define for all $r = k+1,\ldots,n$, the elements $\xi_r \in \cD^{\oplus n}$ given by
$$\xi_r(x)_j = \begin{cases} (-1)^j V(x)_{I,(J \setminus \{j\}) \cup \{r\}} &\;\; \text{if}\;\; 1 \leq j \leq k \; ,\\
(-1)^{k+1} V(x)_{I,J} &\;\; \text{if}\;\; j = r \; , \\
0 &\;\; \text{if}\;\; j \in \{k+1,\ldots,n\} \setminus \{r\} \; .\end{cases}$$
For every $x \in X$, the matrix $V(x)$ has rank $r$ and the vectors $\xi_r(x) \in \C^n$, $r = k+1,\ldots,n$, form a basis for $\Ker V(x)$.

Fix $\eta \in \Ker V$. Then, for a.e.\ $x \in X$, we have that
\begin{equation}\label{eq.my-eta}
\eta(x) = \sum_{r=k+1}^n V(x)_{I,J}^{-1} \eta(x)_r \; \xi_r(x) \; .
\end{equation}
Fix $\eps > 0$. Take a measurable subset $X_0 \subset X$ such that $\mu(X \setminus X_0) < \eps$ and such that both $x \mapsto V(x)_{I,J}^{-1}$ and $x \mapsto \eta(x)_r$ are bounded on $X_0$. Denote by $1_{X_0}$ the projection in $L^\infty(X,\mu)$ that corresponds to $X_0$. Then \eqref{eq.my-eta} implies that $\eta \cdot 1_{X_0}$ belongs to the linear span of $(\Ker V \cap \cD^{\oplus n}) \cdot L^\infty(X)$. Thus, $\eta \cdot 1_{X_0} \in K$. Since $\eps > 0$ is arbitrary, we conclude that $\eta \in K$. So we have proved that $K = \Ker V$.
\end{proof}

\subsection{Homology with trivial coefficients}

The following result generalizes the statement that homology of finite groups with trivial coefficients vanishes.

\begin{proposition}\label{prop:FiniteIndexVanishes}
Let $T\subset S$ be a finite index inclusion. Then $H_{n}(T\subset S,L^2(S))=0$ for all $n \geq 1$, while $H_0(T \subset S,L^2(S)) = \C$.
\end{proposition}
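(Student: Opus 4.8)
The plan is to exploit that a finite index inclusion is, homologically, a separable extension. First I would record two reductions. Since $T \subset S$ has finite index, $S$ is itself a finite index $T$-bimodule, so every $x \in S$ lies in $\QN_S(T)$; hence $\cS = \QN_S(T) = S$, and the ``algebraic part'' $\cH^0$ of the trivial bimodule $\cH = L^2(S)$ is all of $S$. By the identification in Remark \ref{rem.homology}(2), the complex of Definition \ref{def.homology-quasiregular} computing $H_n(T\subset S, L^2(S))$ is then isomorphic to the relative Hochschild complex $D_n = (S \ot_T S \ot_T \cdots \ot_T S)/T$ ($n$ copies of $S$ after the coefficient factor), with the usual faces and coefficients in $S$. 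Thus $H_n(T\subset S, L^2(S)) \cong HH^T_n(S,S) = \Tor^{S\ot_T S\op|T}_n(S,S)$, and it suffices to compute the homology of $(D_n,\partial)$.

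For degree $0$, irreducibility of $T \subset S$ gives $C_0 = (L^2(S))_T = \C\hat 1$, and $\hat 1$ is a genuine $S$-central vector. Since $\hat 1$ is a nonzero $S$-central unit vector lying in $\cH_T$, Proposition \ref{prop.0-homology-cohomology}(2) shows $H_0(T\subset S, L^2(S)) \neq 0$; as $C_0$ is one-dimensional, this forces $\Im(\partial\colon C_1 \to C_0) = 0$ and hence $H_0 = \C$. (Concretely one checks $\partial_0 = \partial_1$ on $C_1$: because $\hat 1$ is a trace vector, the left and right actions of $x \in S$ pair identically with $\hat 1$, so $\xi x$ and $p_T(x\xi)$ have the same $\hat1$-coefficient.)

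For $n \geq 1$, the key point is that finite index makes $S$ a separable extension of $T$. Taking a right Pimsner-Popa basis $x_1,\dots,x_m$ of $S$ over $T$ as in Lemma \ref{lem.well-behaved}, the element $e = \sum_i x_i \ot_T x_i^* \in S\ot_T S$ satisfies $s\cdot e = e\cdot s$ for all $s\in S$ and $m(e) = [S:T]\,1$; equivalently these are exactly the relations \eqref{eq.m-a}, which now hold without any truncating projections $e_\cF$. Writing $\tilde e = [S:T]^{-1} e$, the map $\sigma\colon S \to S\ot_T S$, $\sigma(s) = s\tilde e = \tilde e s$, is an $S$-$S$-bimodule splitting of the multiplication $m\colon S\ot_T S \to S$. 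Hence $S$ is a relative direct summand of the relatively free $S$-bimodule $S\ot_T S \cong S\ot_T S\op$, so $S$ is relatively projective over $S\ot_T S\op$ and has relative projective dimension $0$. Therefore $HH^T_n(S,S) = \Tor^{S\ot_T S\op|T}_n(S,S) = 0$ for all $n\geq 1$, which is the desired vanishing. (Alternatively one can write down an explicit contracting homotopy on $(D_n)$ from $\tilde e$.)

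The step I expect to be most delicate is the clean passage between the analytic complex of Definition \ref{def.homology-quasiregular} and the purely algebraic relative Hochschild complex: one must invoke the isomorphism of Remark \ref{rem.homology}(2) to replace $T$-central vectors by the cyclic coinvariant quotient $/T$, and then verify that the $S$-centrality of $\tilde e$ is compatible with that quotient so that the splitting $\sigma$ (and the associated summand structure) descends correctly. The centrality of $e$ is precisely what is needed to control the ``wrap-around'' face $\partial_n$, where the last variable acts back on the coefficient; establishing $s\cdot e = e\cdot s$ from the Pimsner-Popa basis relations is the one computation I would carry out in full.
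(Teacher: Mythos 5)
Your proof is correct, and it takes a genuinely different route from the paper's. The two arguments share the first step: by Remark \ref{rem.homology}.2 (which rests on \cite{Hu98}), the complex of Definition \ref{def.homology-quasiregular} with coefficients in $L^2(S)$ is isomorphic to the algebraic cyclic complex $(S \ot_T S \ot_T \cdots \ot_T S)/T$, and in the finite index case $\cS = \QN_S(T) = S$ and $\cH^0 = S$, as you say. From there, the paper observes that these cyclic tensor products are exactly the higher relative commutants $T' \cap S_k$ of the Jones tower and simply quotes \cite{Jo98} for the acyclicity of that complex with its differential; you instead stay inside relative homological algebra: the Pimsner--Popa basis of Lemma \ref{lem.well-behaved} gives the separability element $e = \sum_i x_i \ot_T x_i^*$, whose $S$-centrality (your computation from the basis relations is the right one; equivalently $e = m^*(\hat 1)$, $m^*$ is $S$-bimodular and $\hat 1$ is the trace vector) shows that $m : S \ot_T S \recht S$ splits as an $S$-bimodule map, so $S$ is relatively projective over $S \ot_T S\op$ relative to $T$ and the relative Tor groups vanish in positive degree, in the spirit of \cite{H56}, which the paper already cites for this complex. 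Your degree-zero argument is also fine: irreducibility (a standing hypothesis of the section in which the homology is defined, so legitimately available here) gives $C_0 = \C \hat 1$, and traciality of $\hat 1$ makes $\partial : C_1 \recht C_0$ vanish, so $H_0 = \C$; the detour through Proposition \ref{prop.0-homology-cohomology} is not even needed. What your route buys: it is self-contained, avoiding the appeal to \cite{Jo98}, and it actually proves the stronger statement that $H_n(T \subset S,\cH) = 0$ for all $n \geq 1$ and \emph{every} Hilbert $S$-bimodule $\cH$, since relative separability kills relative Hochschild homology with arbitrary coefficients. What the paper's route buys: brevity, and the identification of the chain spaces with the relative commutant complex, which is the natural object from the subfactor and planar algebra point of view.
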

\begin{proof}
By Remark \ref{rem.homology}, the differential complex in Definition \ref{def.homology-quasiregular} computing $H_{n}(T\subset S,L^2(S))$ consists of cyclic tensor products, which
are exactly the higher relative commutants $T'\cap S_{k}$ associated to the Jones tower $T\subset S\subset S_{1}\subset\cdots$. The differential of this complex is then precisely the one considered in \cite[Section 6]{Jo98} and it follows from \cite{Jo98} that the complex is acyclic.
\end{proof}

\begin{remark}
Let $T \subset S$ be an irreducible quasi-regular inclusion of II$_1$ factors. Write $\cS = \QN_S(T)$. By Remark \ref{rem.homology}.2, the homology $H_n(T \subset S,L^2(S))$ can be computed by the bar complex $(C_n)_{n \geq 0}$ given by the $(n+1)$-fold cyclic tensor products $C_n = \cS^{n+1}/T$ of $\cS$ relative to $T$. Defining the shift $\tau : C_n \recht C_n : \tau(x_0 \ot \cdots \ot x_n) = x_n \ot x_0 \ot \cdots x_{n-1}$, one can also define the \emph{cyclic chain complex} $(C_n^\lambda)_{n \geq 0}$ given by $C_n^\lambda = C_n / \{ \xi - (-1)^n \tau(\xi) \mid \xi \in C_n \}$. The \emph{cyclic homology} of $T \subset S$ can then be defined as the homology of $(C_n^\lambda)_{n \geq 0}$. Similarly, one defines the \emph{cyclic cohomology} of $T \subset S$. Again, when $T \subset S$ has finite index, cyclic homology trivializes (see \cite[Corollary 6.3]{Jo98}), but for other quasi-regular inclusions like the SE-inclusion of a subfactor of infinite depth, one obtains a potentially interesting cyclic (co)homology theory.

Using the methods of Theorem \ref{thm.tor-ext}, the cyclic homology of $T \subset S$ can be identified with a cyclic homology theory for the tube $*$-algebra $\cA$ associated with $T \subset S$ and the tensor category $\cC$ generated by the finite index $T$-subbimodules of $L^2(S)$. In particular, one can define in this way a cyclic homology theory for rigid C$^*$-tensor categories.
\end{remark}

Let $\mathscr{P}$ be a planar algebra. The tube algebra associated to $\mathscr{P}$ has a canonical trivial module, which corresponds under Morita equivalence to the trivial module $\mathcal{E}^r$ discussed in Lemma~\ref{lem.reg-triv-A} and Remark~\ref{rem.reg-triv-as-cp-maps-and-states} (as a graded vector space, this module is the planar algebra $\mathscr{P}$ itself, with the action defined by gluing elements of $\mathscr{P}$ into the input disk of a tube algebra element).  The homology with coefficients in this module is then computed by the differential complex described as follows. The space $\mathscr{C}_{k}$ is the linear span of diagrams drawn on the sphere with points $r_{k+1},\dots,r_{1}=\infty$ removed:

$$\begin{tikzpicture}
[manyStrings/.style={line width=2.5pt}]
\node [rectangle,draw] (x) at (1,1) {$x$};
\node [circle] (p1) at (1.75,1) {$\cdot$};
\node [circle] (p2) at (2.5,1) {$\cdot$};
\node [circle] (dots) at (3.35,1) {$\cdots$};
\node [circle] (pkPlus1) at (4,1) {$\cdot$};
\draw [manyStrings] (x) .. controls +(23:1.6) and +(157:-1.6) ..  (x);
\draw [manyStrings] (x) .. controls +(50:1) and +(90:1) .. (3.0,1) .. controls +(-90:1) and +(-50:1) .. (x);
\draw [manyStrings] (x) .. controls +(70:1.4) and +(90:1) .. (4.5,1) .. controls +(-90:1) and +(-70:1.4) .. (x);
\end{tikzpicture}$$

The differential $\partial_{k}:\mathscr{C}_{k}\to\mathscr{C}_{k-1}$ is again given by $\sum_{j=0}^{k}(-1)^{j}d_{j}$ where $d_{j}$ sends a diagram drawn on $S^{2}\setminus\{r_{1},\dots,r_{k+1}\}$ to the diagram drawn on $S^{2}\setminus\{r_{1},\dots,r_{j},r_{j+2},\dots,r_{k+1}\}$. In particular, we have that $d_{0}$ is given by

$$d_0 \left(\begin{array}{c}\begin{tikzpicture}
[manyStrings/.style={line width=2.5pt}]
\node [rectangle,draw] (x) at (1,1) {$x$};
\node [circle] (p1) at (1.75,1) {$\cdot$};
\node [circle] (p2) at (2.5,1) {$\cdot$};
\node [circle] (dots) at (3.35,1) {$\cdots$};
\node [circle] (pkPlus1) at (4,1) {$\cdot$};
\draw [manyStrings] (x) .. controls +(23:1.6) and +(157:-1.6) ..  (x);
\draw [manyStrings] (x) .. controls +(50:1) and +(90:1) .. (3.0,1) .. controls +(-90:1) and +(-50:1) .. (x);
\draw [manyStrings,color=blue] (x) .. controls +(70:1.4) and +(90:1) .. (4.5,1) .. controls +(-90:1) and +(-70:1.4) .. (x);
\end{tikzpicture}
\end{array}\right)
=
\begin{array}{c}\begin{tikzpicture}
[manyStrings/.style={line width=2.5pt}]
\node [rectangle,draw] (x) at (1,1) {$x$};
\node [circle] (p1) at (1.75,1) {$\cdot$};
\node [circle] (p2) at (2.5,1) {$\cdot$};
\node [circle] (dots) at (3.35,1) {$\cdots$};
\draw [manyStrings] (x) .. controls +(23:1.6) and +(157:-1.6) ..  (x);
\draw [manyStrings] (x) .. controls +(50:1) and +(90:1) .. (3.0,1) .. controls +(-90:1) and +(-50:1) .. (x);
\draw [manyStrings,color=blue] (x) .. controls +(70:1.4) and +(90:1) .. (0,1) .. controls +(-90:1) and +(-70:1.4) .. (x);
\end{tikzpicture}
\end{array}
$$
where we have colored the strings of $x$ that pass between the point $r_2$ and the point at infinity $r_{1}$ in blue for emphasis.

In the case of the TLJ planar algebra, the space $\mathscr{C}_{k}$ is linearly spanned by all possible topological arrangements of non-intersecting circles surrounding $k$ points in the plane (which is identified with the sphere with a point at infinity removed).
Furthermore, the interiors of the circles are shaded in an alternating fashion, so that each circle lies at the boundary between a shaded an unshaded region.  The shading of the entire picture is completely determined by whether the region near the point at infinity is shaded or not, and we refer to this as the shading of the picture.
Thus, for example
\[
\mathscr{C}_{0}=\mathbb{C},\qquad\mathscr{C}_{1}=\operatorname{span}\{\sigma_{k}:k\geq0\},\qquad\mathscr{C}_{2}=\operatorname{span}\{\sigma_{a,b}^{c}:a,b,c\geq0\}
\]
where

\begin{equation}\label{eq.sigmas}
\sigma_k=
\begin{array}{c}
\begin{tikzpicture}
[manyStrings/.style={line width=2.5pt}]
\node[circle,draw,line width=2.5pt]  (x) at (0,0) {$\cdot$};
\node[token](p0) at (x.east) {\scriptsize $k$};
\end{tikzpicture}
\end{array}
,\qquad \sigma_{a,b}^c =
\begin{array}{c}
\begin{tikzpicture}
[manyStrings/.style={line width=2.5pt}]
\node[circle,draw,line width=2.5pt]  (c) at (0,0) {$\qquad\qquad\ $};
\node [circle,draw, line width=2.5pt] (a) at (-0.5,0) {$\cdot$};
\node [circle,draw, line width=2.5pt] (b) at (0.5,0) {$\cdot$};
\node[token](p0) at (c.east) {\scriptsize $c$};
\node[token](p1) at (a.east) {\scriptsize $a$};
\node[token](p2) at (b.west) {\scriptsize $b$};
\end{tikzpicture}
\end{array}
\end{equation}
(the letters indicate numbers of parallel strings).  Here we are abusing notation and are using the same symbol and picture not specifying the shading at infinity.  These elements are linearly independent in the case that the parameter $\delta$ is generic (i.e., $\delta\geq2$). For $\delta<2$ there are relations between these elements. In particular, in that case $\mathscr{C}_{1}$ is the linear span of $\sigma_{k}$ for $0\leq k\leq d$ for some fixed $d$ (depending on $\delta<2$).

\begin{proposition}\label{prop.low-homology-TLJ-trivial-coeff}
We have $H_{0}(\TLJ(\delta))=\mathbb{C}$, while $H_{1}(\TLJ(\delta)) = H_{2}(\TLJ(\delta)) =0$.
\end{proposition}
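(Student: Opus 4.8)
The plan is to compute the three homology groups directly from the diagrammatic chain complex $(\mathscr{C}_k,\partial)$ described above, using the explicit action of the face maps $d_j$. Recall that $d_j$ forgets the point $r_{j+1}$, after which every circle that had enclosed only $r_{j+1}$ (together with points that are themselves being forgotten) becomes contractible and evaluates to a factor of $\delta$. Carrying this out on the generators in \eqref{eq.sigmas} gives, for the bottom differentials,
$$\partial(\sigma_k) = d_0(\sigma_k) - d_1(\sigma_k) = \delta^k - \delta^k = 0 \qquad\text{and}\qquad \partial(\sigma_{a,b}^c) = \delta^c\,\sigma_{a+b} - \delta^a\,\sigma_{b+c} + \delta^b\,\sigma_{a+c},$$
the three terms in the second formula coming from forgetting $r_1=\infty$, then $r_2$, then $r_3$. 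Since $\mathscr{C}_0=\C$ and $\partial\colon\mathscr{C}_1\to\mathscr{C}_0$ vanishes, we immediately obtain $H_0(\TLJ(\delta))=\mathscr{C}_0=\C$.

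For $H_1$ I would show that $\partial\colon\mathscr{C}_2\to\mathscr{C}_1$ is onto, noting that $\Ker(\partial\colon\mathscr{C}_1\to\mathscr{C}_0)=\mathscr{C}_1$. From the formula above, $\partial(\sigma_{0,0}^0)=\sigma_0$, so $\sigma_0\in\Im\partial$, while $\partial(\sigma_{a,0}^0)=2\sigma_a-\delta^a\sigma_0$; combining these yields $\sigma_a\in\Im\partial$ for every $a$. Hence $\Im(\partial\colon\mathscr{C}_2\to\mathscr{C}_1)=\mathscr{C}_1$ and $H_1(\TLJ(\delta))=0$.

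The bulk of the work is $H_2=0$, for which it suffices to prove the inclusion $\Ker(\partial\colon\mathscr{C}_2\to\mathscr{C}_1)\subseteq\Im(\partial\colon\mathscr{C}_3\to\mathscr{C}_2)$. Here I would first fix an explicit basis of $\mathscr{C}_3$: a non-crossing (laminar) configuration of circles around the three points $r_2,r_3,r_4$ is determined by a number $c$ of outermost circles enclosing all three points, an optional choice of one of the three pairs $\{2,3\},\{2,4\},\{3,4\}$ enclosed by some circles, and the multiplicities of the circles around the individual points; this gives four combinatorial families of generators. One then computes $\partial\colon\mathscr{C}_3\to\mathscr{C}_2$ on each family by the same forget-a-point rule — the essential point being that forgetting a point either cuts a pair/triple circle down to a lower configuration or contracts it to a power of $\delta$ — and carries out the resulting linear algebra to exhibit every element of $\Ker\partial$ as a boundary.

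The main obstacle is precisely this last step: organizing the combinatorics of nested circle configurations around three points, together with the signs and $\delta$-power bookkeeping, into a clean surjectivity statement. A natural attempt to bypass it — using the ``add an isolated point near $\infty$'' operation $h\colon\mathscr{C}_k\to\mathscr{C}_{k+1}$ as a contracting homotopy — fails, because the face map $d_0$ (forgetting $\infty$) does not commute with $h$: after deleting $\infty$ the freshly added point acquires a genuine position relative to the surrounding circles, so $\partial h+h\partial\neq\operatorname{id}$. This is what forces the hands-on computation and, presumably, the restriction to degrees $\le 2$. Finally, I would record that for $\delta<2$ the symbols $\sigma_k,\sigma_{a,b}^c$ are no longer linearly independent; since the differentials are defined diagrammatically they descend to the corresponding quotient complex, and the surjectivity arguments for $H_1$ and $H_2$ go through there, so the stated conclusion holds for all $\delta$.
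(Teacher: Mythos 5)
Your treatment of $H_0$ and $H_1$ is correct and essentially coincides with the paper's: your boundary formula $\partial\sigma_{a,b}^{c}=\delta^{c}\sigma_{a+b}-\delta^{a}\sigma_{b+c}+\delta^{b}\sigma_{a+c}$ agrees with the paper's up to the harmless relabelling $a\leftrightarrow b$, and your route to $H_{1}=0$, via $\partial\sigma_{0,0}^{0}=\sigma_{0}$ and $\partial\sigma_{a,0}^{0}=2\sigma_{a}-\delta^{a}\sigma_{0}$, is in fact slightly more direct than the paper's induction using $\partial\sigma_{a,1}^{0}$. Like the paper, you suppress the shading bookkeeping when writing these formulas; that is patchable and not the problem.

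The problem is $H_{2}$: you have not proved that it vanishes. What you offer is a plan---fix a basis of $\mathscr{C}_{3}$, compute $\partial_{3}$ on each combinatorial family, and ``carry out the resulting linear algebra''---and you yourself flag that last step as the main obstacle; nowhere is a $2$-cycle actually exhibited as a boundary. That step is precisely where the paper's work lies, and it is not done by brute-force linear algebra over a basis of $\mathscr{C}_{3}$. Instead, the paper computes $\partial_{3}$ on just two well-chosen families of three-point diagrams and extracts reduction relations (its \eqref{eq:removeouter}, \eqref{eq:sigmabplus1}, \eqref{eq:firstSpan}, \eqref{eq:sigma1c}, \eqref{eq:sigma0b}) showing that every element of $\mathscr{C}_{2}$ is congruent modulo $\Im\partial_{3}$ to an element of $\lspan\{\sigma_{0,a}^{0}:a\geq0\}$; it then finishes with a kernel argument: if $z\in\Ker\partial_{2}$ and $z\sim\sum_{a}\alpha_{a}\sigma_{0,a}^{0}$, then $0=\partial_{2}z=\sum_{a}\alpha_{a}\,(2\sigma_{a}-\delta^{a}\sigma_{0})$, and linear independence of the $\sigma_{a}$ (valid for $\delta\geq2$) forces all $\alpha_{a}=0$. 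Without this reduction-plus-independence argument, or a substitute for it, the claim $H_{2}=0$ remains unproved. A second, smaller flaw: your closing assertion that for $\delta<2$ the arguments ``go through'' in the quotient complex is fine for $H_{1}$ (surjectivity of $\partial_{2}$ onto $\mathscr{C}_{1}$ passes to quotients) but not for $H_{2}$, because passing to a quotient complex can strictly enlarge $\Ker\partial_{2}$, so vanishing of $H_{2}$ does not descend for free. The paper treats $\delta<2$ separately, either by invoking finite depth (so that the SE-inclusion has finite index and Proposition \ref{prop:FiniteIndexVanishes} applies) or by choosing $k$ so that $\{\sigma_{0,a}^{0}:0\leq a\leq k\}$ and $\{\sigma_{a}:0\leq a\leq k\}$ are linearly independent with $\lspan\{\sigma_{0,a}^{0}:0\leq a\leq k\}=\lspan\{\sigma_{0,a}^{0}:a\geq0\}$.
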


Before proving Proposition \ref{prop.low-homology-TLJ-trivial-coeff}, it is worth noting that if $\delta\geq2$, the fusion algebra associated to the TLJ planar algebra is isomorphic to the algebra of single-variable polynomials $R=\mathbb{C}[t]$ with the augmentation given by $\counit : p\mapsto p(\delta)$. One can easily check that the map $q\mapsto q'(\delta)$ is a nontrivial linear function on the space of Hochschild $1$-cycles for $(R,\alpha)$ and descends to a nonzero functional on $\HH_{1}(R,\mathbb{C})$. This homology group is therefore nonzero (it is in fact equal to $\mathbb{C}$).

So, the TLJ planar algebra provides an example where the homology of the tube algebra is different from the homology of the associated fusion algebra.

\begin{proof}
Using the notation \eqref{eq.sigmas}, we have
\[
\partial_{1}\sigma_{k}=0 \quad\text{and}\quad\partial_{2}\sigma_{a,b}^{c}=\delta^{a}\sigma_{b+c}-\delta^{b}\sigma_{a+c}+\delta^{c}\sigma_{a+b},
\] where the shading of all of the terms on the right hand side of the equation is the same as that of the element on the left, except that the shading of the last term is reversed if $c$ is odd.
So, $H_{0}(\TLJ(\delta))=\mathbb{C}$. We also get that $\partial_{2}\sigma_{a,1}^{0}=\delta^{a}\sigma_{1}-\delta\sigma_{a}+\sigma_{a+1}$ so that $\sigma_{a+1}$  (with either shading) is homologous to a linear combination of $\sigma_{a}$ and $\sigma_{1}$. Applying this inductively shows that any $\sigma_{k}$ (with either shading) is homologous to an element of the linear span of $\sigma_{1}$ and $\sigma_{0}$ (both with the opposite shading). On the other hand, $\partial_{2}\sigma_{0,0}^{1}=\sigma_{1}-\sigma_{1}+\delta\sigma_{0}$ which shows that $\sigma_{0}$ (with either shading) is homologous to zero. Finally, $\partial_{2}\sigma_{0,1}^{0}=\sigma_{1}-\delta\sigma_{0}+\sigma_{1}$ which shows that $2\sigma_{1}$ (with either shading) is homologous to $\delta\sigma_{0}$ and thus to zero. So, we have proved that $H_{1}(\TLJ(\delta))=0$.

We further compute

$$
\partial_3\left(
\begin{array}{c}
\begin{tikzpicture}
[manyStrings/.style={line width=2.5pt}]
\node[circle,draw,line width=2.5pt]  (a) at (0,0) {$\cdot$};
\node[token](p0) at (a.east) {\scriptsize $a$};
\node[circle,draw,line width=2.5pt]  (b) at (1,0) {$\cdot$};
\node[token](p0) at (b.east) {\scriptsize $b$};
\node[circle,draw,line width=2.5pt]  (c) at (2,0) {$\cdot$};
\node[token](p0) at (c.east) {\scriptsize $c$};
\end{tikzpicture}
\end{array}
\right) = \delta^a \sigma_{b,c}^0 - \delta^b \sigma_{a,c}^0 + \delta^c \sigma_{a,b}^0 -\sigma_{a,b}^c
$$
where the shading of all terms on the right is the same as that of the term on the left, except that the shading of $\sigma_{a,b}^c$ is reversed when $c$ is odd.

We will use the notation $x\sim y$ to indicate that $x-y\in\operatorname{image}\partial_{3}$.
Thus:
\begin{equation}
\sigma_{a,b}^{c}\sim\delta^{a}\sigma_{b,c}^{0}-\delta^{b}\sigma_{a,c}^{0}+\delta^{c}\sigma_{a,b}^{0}.\label{eq:removeouter}
\end{equation}
(with same or reversed shading depending on the parity of $c$).

Next, consider

\begin{eqnarray*}
\partial_3\left(
\begin{array}{c}
\begin{tikzpicture}
[manyStrings/.style={line width=2.5pt}]
\node[circle,line width=2.5pt]  (a) at (0,0) {$\cdot$};
\node[circle,draw,line width=2.5pt]  (b) at (1,0) {$\cdot$};
\node[token](p0) at (b.west) {\scriptsize $b$};
\node[circle,draw,line width=2.5pt]  (c) at (2,0) {$\cdot$};
\node[token](p0) at (c.east) {\scriptsize $c$};
\node[circle,draw,line width=2.5pt] (l) at (0.5,0) {$\qquad\quad\ \ \ $};
\node[token](l0) at (l.west) {\scriptsize $l$};
\end{tikzpicture}
\end{array}
\right) & = & \sigma_{b+l,c}^0 - \delta^b \sigma_{l,c}^0 + \delta^c \sigma_{0,b}^l - \sigma_{0,b}^{l+c}\\
&\sim& \sigma_{b+l,c}^0 - \delta^b \sigma_{l,c}^0 + \delta^c \left[ \sigma_{b,l}^0 - \delta^b \sigma_{0,l}^0
+\delta^l \sigma_{0,b}^0\right] \\ && - \left[ \sigma_{b,l+c}^0 - \delta^b \sigma_{0,l+c} + \delta^{l+c} \sigma_{0,b}^0\right].
\end{eqnarray*}
where the shading of all of the terms is the same as that of the term on the left hand side, except that the shading of the first occurrence of $\sigma_{0,b}^0$ is reversed according to the parity of $l$ and the shading of its second occurrence is reversed according to the parity of $l+c$.

Thus
\[
\sigma_{b+l,c}^{0}-\delta^{b}\sigma_{l,c}^{0}\sim\sigma_{b,l+c}^{0}-\delta^{c}\sigma_{b,l}^{0}+\operatorname{span}\{\sigma_{a,0}^{0},\sigma_{0,b}^{0}:a,b\geq0\}.
\] (with all possible shadings of the right hand side).
Taking $l=1$ we get:
\begin{equation}
\sigma_{b+1,c}^{0}\sim\delta^{b}\sigma_{1,c}^{0}+\sigma_{b,c+1}^{0}-\delta^{c}\sigma_{b,1}+\operatorname{span}\{\sigma_{a,0}^{0},\sigma_{0,b}^{0}:a,b\geq0\}.\label{eq:sigmabplus1}
\end{equation}
Applying this recursively shows that
\begin{equation}
\sigma_{a,b}^{c}\sim\operatorname{span}\{\sigma_{1,a}^{0},\sigma_{b,0}^{0},\sigma_{0,c}^{0}:a,b,c\geq0\}\label{eq:firstSpan}
\end{equation}
(with all possible shadings).

Setting $c=0$ in (\ref{eq:sigmabplus1}) gives
\[
\sigma_{b+1,0}^{0}\sim\delta^{b}\sigma_{1,c}^{0}+\operatorname{span}\{\sigma_{a,0}^{0},\sigma_{0,b}^{0}:a,b\geq0\}.
\]
 Thus
\begin{equation}
\sigma_{1,c}^{0}\sim\sigma_{b+1,0}^{0}+\operatorname{span}\{\sigma_{a,0}^{0},\sigma_{0,b}^{0}:a,b\geq0\}.\label{eq:sigma1c}
\end{equation}

Using (\ref{eq:removeouter}) with $a=c=0$ we get that
\begin{equation}
\sigma_{0,b}^{0}\sim\sigma_{b,0}^{0}-\delta^{b}\sigma_{0,0}^{0}+\sigma_{0,b}^{0}\label{eq:sigma0b}
\end{equation}
so that $\sigma_{b,0}^{0}\sim\delta^{b}\sigma_{0,0}^{0}$. Using (\ref{eq:sigma0b})
and (\ref{eq:sigma1c}) we deduce that
\[
\sigma_{1,c}^{0}\sim\operatorname{span}\{\sigma_{a,0}^{0},\sigma_{0,b}^{0}:a,b\geq0\},
\]
which together with (\ref{eq:sigma0b}) implies that
\[
\operatorname{span}\{\sigma_{1,a}^{0},\sigma_{b,0}^{0},\sigma_{0,c}^{0}:a,b,c\geq0\}\sim\operatorname{span}\{\sigma_{a,0}^{0},\sigma_{0,b}^{0}:a,b\geq0\}\sim\operatorname{span}\{\sigma_{0,a}^{0}:a\geq0\}.
\]
Combining this with (\ref{eq:firstSpan}) we obtain that any $\sigma_{a,b}^{c}$
is equivalent modulo the image of $\partial_{3}$ to an element of
$\operatorname{span}\{\sigma_{0,a}^{0}:a\geq0\}$ (with all possible shadings).

Assume now that that $z\in\ker\partial_{2}$. Then we may assume that
$z$ (up to the image of $\partial_{3}$) is of the form $\sum\alpha_{a}\sigma_{0,a}^{0}$ (with various shadings).

If $\delta\geq2$ then $\{\sigma_{0,a}^{0}:a\geq0\}$ and $\{\sigma_{a}:a\geq0\}$
are both linearly independent sets (with either shading). Using
\begin{eqnarray*}
\partial_{2}\sigma_{0,a}^{0} & = & 2\sigma_{a}-\delta^{a}\sigma_{0}
\end{eqnarray*} (with same shadings on both sides)
we deduce
\[
2\sum\alpha_{a}\sigma_{a}-(\sum\alpha_{a}\delta^{a})\sigma_{0}=\partial_{2}z=0
\]
which implies that $\alpha_{a}=0$ for all $a$ and so $z\sim0$. We have proved that $H_2(\TLJ(\delta))=0$.

If $\delta<2$ we already know that $H_{2}(\TLJ(\delta))$ vanishes because $\TLJ(\delta)$ is finite-depth; however, there is a short independent argument. Indeed, there exists an integer $k$ so that $\{\sigma_{0,a}^{0}:0\leq a\leq k\}$ and $\{\sigma_{a}:0\leq a\leq k\}$ are both linearly independent sets and moreover $\operatorname{span}\{\sigma_{0,a}^{0}:0\leq a\leq k\}=\operatorname{span}\{\sigma_{0,a}^{0}:a\geq0\}$. Thus we may assume that $z=\sum_{a=0}^{k}\alpha_{a}\sigma_{0,a}^{0}$ and using the formula for $\partial_{2}z$ we conclude again that
$\alpha_{a}=0$ for $0\leq a\leq k$ and that $z\sim0$.
\end{proof}

We do not know if $H_{n}(\TLJ(\delta))=0$ for all values of $\delta$ but suspect that this is the case. In general, it would be very interesting to construct a resolution (of finite length?) for $\TLJ(\delta)$ that allows to prove at the same time that $H_n(\TLJ(\delta)) = 0$ for all $n \geq 1$ and $\bns(\TLJ(\delta))=0$ for all $n \geq 0$.

\subsection{One-cohomology characterizations of property (T), the Haagerup property and amenability}

We recall the following definitions from \cite{Po86,Po01}.

\begin{definition}\label{def.prop-T-Haagerup}
Let $S$ be a II$_1$ factor and $T \subset S$ a quasi-regular irreducible subfactor.
\begin{enumerate}
\item \cite[Definition 4.1.3]{Po86} $S$ has property~(T) relative to $T$ if the following holds: whenever $\vphi_i : S \recht S$ is a net of normal $T$-bimodular completely positive maps satisfying $\lim_i \|\vphi_i(x) - x\|_2 = 0$ for every $x \in S$, then $\lim_i \bigl(\sup_{x, \|x\|\leq 1} \|\vphi_i(x) - x\|_2 \bigr) = 0$.
\item \cite[Definition 2.1]{Po01} $S$ has the Haagerup property relative to $T$ if there exists a net of normal $T$-bimodular completely positive maps $\vphi_i : S \recht S$ such that $\lim_i \|\vphi_i(x) - x\|_2 = 0$ for every $x \in S$ and such that for every $i$, the map $\vphi_i : S \recht S$ belongs to the compact ideal space $\cJ(\langle S,e_T \rangle)$ (i.e.\ the norm closed linear span of all finite projections in the semifinite factor $\langle S,e_T \rangle$, see \cite[Section 1.3.3]{Po01}).
\end{enumerate}
\end{definition}

Whenever $T \subset S$ is a quasi-regular irreducible subfactor, we denote by $\cC$ the tensor category of finite index $T$-bimodules generated by $L^2(S)$. As before, for every subset $\cF \subset \Irr(\cC)$, we denote by $e_\cF$ the orthogonal projection of $L^2(S)$ onto the closed linear span of all $T$-subbimodules of $L^2(S)$ that are isomorphic with a $T$-bimodule contained in $\cF$.

\begin{definition}\label{def.bounded-proper-cocycle}
Let $S$ be a II$_1$ factor and $T \subset S$ a quasi-regular irreducible subfactor. Denote $\cS = \QN_S(T)$. A \emph{$1$-cocycle} for $T \subset S$ is a $T$-bimodular derivation $c : \cS \recht \cH$ from $\cS$ to a Hilbert $S$-bimodule $\cH$. Such a $1$-cocycle is said to be
\begin{enumerate}
\item \emph{inner} if there exists a $T$-central vector $\xi \in \cH$ such that $c(x) = x \xi - \xi x$ for all $x \in \cS$~;
\item \emph{approximately inner} if there exists a net of $T$-central vectors $\xi_i \in \cH$ such that $\lim_i \|c(x) - (x \xi_i - \xi_i x)\| = 0$ for all $x \in \cS$~;
\item \emph{bounded} if $c$ extends to a bounded operator from $L^2(S)$ to $\cH$~;
\item \emph{proper} if for every $\kappa > 0$, there exists a finite subset $\cF \subset \Irr(\cC)$ such that $\|c(x)\| \geq \kappa \|x\|_2$ for all $x \in (1-e_\cF)(\cS)$.
\end{enumerate}
\end{definition}

The following is the main result of this section and provides a one-cohomology characterization of property~(T), the Haagerup property and amenability. These characterizations are well known in the group case~: the first is analogous to the Delorme-Guichardet theorem (see e.g.\ \cite[Theorem 2.12.4]{BHV08})~; for the second one, see \cite[Theorem 2.1.1]{CCJJV01}~; for the last one, see \cite[Chapter III, Corollary 2.4]{Gu80}.

\begin{theorem}\label{thm.one-cohom-characterizations}
Let $S$ be a II$_1$ factor with separable predual and $T \subset S$ a quasi-regular irreducible subfactor. Denote $\cS = \QN_S(T)$.
\begin{enumerate}
\item $S$ has property~(T) relative to $T$ if and only if for every Hilbert $S$-bimodule $\cH$, every $1$-cocycle $c : \cS \recht \cH$ is inner.
\item $S$ has the Haagerup property relative to $T$ if and only if there exists a proper $1$-cocycle $c : \cS \recht \cH$ into some Hilbert $S$-bimodule $\cH$.
\item $S$ is amenable relative to $T$ (see Definition \ref{def.amenable-quasi-reg-inclusion}) and $[S:T] = \infty$ if and only if there exists an approximately inner, but non inner $1$-cocycle $c : \cS \recht L^2(S) \ovt_T L^2(S)$.
\end{enumerate}
\end{theorem}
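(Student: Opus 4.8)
The plan is to prove all three one-cohomology characterizations by translating the analytic properties (property~(T), Haagerup, amenability) into statements about $1$-cocycles via the standard dictionary between completely positive maps, states on the tube algebra $\cA$, and Hilbert $S$-bimodules, using Corollary~\ref{cor.cp-maps-vs-states-tube-algebra} and Theorem~\ref{thm.tube-vs-corr}. Throughout, a $1$-cocycle $c:\cS\recht\cH$ is a $T$-bimodular derivation, so by Remark~\ref{rem.cohom-quasireg}.3 it represents a class in $H^1(T\subset\cS,\cH)$; being inner means the class vanishes. Since $\cH$ is a Hilbert $S$-bimodule that is a direct sum of finite index $T$-bimodules in $\cC$, Theorem~\ref{thm.tor-ext} identifies these cocycles with $\Ext^1_\cA(\cE^r,\cK^0)$ for the associated $\cA$-module $\cK$. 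The key observation I would exploit is that $1$-cocycles into $\cH$ correspond exactly to the ``first-order'' deformation data coming from the completely positive maps defining each property.

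For part~(1), I would follow the Delorme--Guichardet strategy. If every $1$-cocycle is inner, suppose $\vphi_i:S\recht S$ is a net of $T$-bimodular unital trace-preserving completely positive maps with $\|\vphi_i(x)-x\|_2\recht 0$. Using Corollary~\ref{cor.cp-maps-vs-states-tube-algebra}, each $\vphi_i$ gives a state $\om_i$ on $\cA$ with $\om_i(p_\eps)=1$ converging pointwise to the counit $\counit$. The failure of uniform convergence would produce, by a standard GNS/ultraproduct argument, a Hilbert $S$-bimodule $\cH$ and a nontrivial $1$-cocycle $c:\cS\recht\cH$; the cocycle is obtained as the derivative of the path of positive definite functions and is not inner precisely because the convergence is not uniform. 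Conversely, if $S$ does not have relative property~(T), one builds almost-invariant but not invariant vectors and manufactures an unbounded (hence non-inner) $1$-cocycle into the direct integral bimodule, exactly as in \cite[Theorem 2.12.4]{BHV08}.

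For part~(2), the Haagerup property relative to $T$ gives a net $\vphi_i$ whose associated states $\om_i$ lie in the compact ideal space, i.e.\ correspond to representations of $\cA$ that are ``$c_0$'' with respect to the grading by $\Irr(\cC)$. I would assemble a single proper cocycle by the Schoenberg-type construction: writing each $\vphi_i$ as $e^{-t_i\, a}$ for a generator $a$ (after passing to the relevant one-parameter semigroup structure on $p_\eps\cdot\cA\cdot p_\eps$), the generator defines a conditionally negative type function on $\cS$, and its GNS-type dilation yields a $1$-cocycle $c:\cS\recht\cH$. Properness of $c$ translates into the requirement that for each $\kappa$ only finitely many $\al\in\Irr(\cC)$ have $\|c\|$ small on $\cS_\al$, which is exactly the compactness (finite-rank cutoff) condition built into Definition~\ref{def.prop-T-Haagerup}.2 via $e_\cF$. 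The converse direction runs Schoenberg in reverse: a proper cocycle gives conditionally negative type functions, hence a semigroup of completely positive maps $\vphi_t$ landing in $\cJ(\langle S,e_T\rangle)$ by properness.

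Part~(3) is the most delicate and I expect it to be the main obstacle, because it is where the specific coarse bimodule $L^2(S)\ovt_T L^2(S)$ and the distinction between inner and approximately inner become essential. Here I would argue that $S$ amenable relative to $T$ is, by Proposition~\ref{prop.characterization-amenable-inclusion}, equivalent to the existence of $T$-central approximately $S$-central unit vectors in $L^2(S)\ovt_T L^2(S)$, and by Proposition~\ref{prop.0-homology-cohomology}.2 equivalent to $H_0(T\subset\cS,L^2(S)\ovt_T L^2(S))\neq 0$. The condition $[S:T]=\infty$ ensures (as in the computation of $\bes_0$ in Proposition~\ref{prop.0-L2-Betti}) that there is no genuine $S$-central vector, so $H^0$ of the coarse bimodule vanishes and the trivial cocycle $0$ is not ``detected'' by an honest invariant vector. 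The approximately $S$-central net $\xi_i$ then lets me build a $1$-cocycle that is approximately inner (via the differences $x\xi_i-\xi_i x$) but fails to be inner precisely because no limiting $T$-central vector exists; this is the familiar dichotomy that an amenable but infinite object carries a coboundary that is a limit of coboundaries without being one. The delicate point will be checking that the candidate cocycle genuinely lands in $L^2(S)\ovt_T L^2(S)$ and is a derivation, and conversely that an approximately inner non-inner cocycle forces the Følner-type almost invariance; I would handle this by the open-mapping/Baire argument already used in the proof of Proposition~\ref{prop.0-homology-cohomology}.2, transferring between the norm estimate $\eps\|\xi\|\leq\|\delta_\cF(\xi)\|$ and the existence of approximately central nets.
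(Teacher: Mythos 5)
Your proposal for part~(1) never proves the Delorme direction, which is the hardest part of the theorem. Both of your arguments establish the \emph{same} implication: your first argument shows ``every $1$-cocycle inner $\Rightarrow$ property~(T)'', and your ``Conversely\dots'' sentence (not~(T) $\Rightarrow$ there exists a non-inner cocycle) is just the contrapositive of that implication, not the missing one. What is missing is: property~(T) $\Rightarrow$ every $1$-cocycle is inner. Starting from an arbitrary, a priori unbounded, $T$-bimodular derivation $c : \cS \recht \cH$, the paper first makes $c$ real (replacing $\cH$ by $\cH \oplus \overline{\cH}$), proves that $c$ is closable as an operator from $L^2(S)$ to $\cH$, and invokes Sauvageot's theorem \cite{Sa88} to produce the semigroup $\vphi_t = \exp(-t c^*c)$ of unital normal $T$-bimodular completely positive maps; property~(T) combined with Lemma \ref{lem.uniformity} then forces $\|1 - \exp(-t c^*c)\| \recht 0$ in operator norm, so $c^*c$ is bounded, and one concludes with Lemma \ref{lem.bounded-inner}, which shows that a bounded cocycle is inner via the center of the closed convex hull of $\{u^* c(u) \mid u \in \cU(S)\}$. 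None of these steps (realness, closability, Sauvageot, bounded $\Rightarrow$ inner) appears in your proposal, and without them part~(1) is only half proved.

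In part~(2) your forward direction rests on ``writing each $\vphi_i$ as $e^{-t_i a}$ for a generator $a$'', and this step fails: a single completely positive map need not embed into any one-parameter semigroup. Already for groups, the $c_0$ positive definite function $\delta_e$ on $\Z$ is not of the form $e^{-\psi}$ with $\psi$ conditionally negative, since that would force $\psi$ to take the value $+\infty$. The correct construction (the paper's Lemma \ref{lem.construct-cocycle}) does not exponentiate anything: it takes the GNS bimodules $\cH_n$ with $T$-central vectors $\xi_n$ associated to the maps $\vphi_n$, passes to a subsequence so that $c(x) = \oplus_n (x\xi_n - \xi_n x)$ converges for every $x \in \cS$, and deduces properness of $c$ from the compactness of each $\vphi_n$ via Lemma \ref{lem.compact-ideal-space}. (Your converse for~(2) -- a proper cocycle yields the semigroup $\exp(-tc^*c)$ landing in $\cJ(\langle S,e_T\rangle)$ -- is correct in spirit, modulo the same closability issue as above.) Part~(3) of your proposal does essentially match the paper's argument: the Fr\'{e}chet space $\Mor_{T-T}(\cS,\cH)$ with the topology of pointwise norm convergence, injectivity of $\partial$ from $[S:T]=\infty$, and the open mapping theorem showing that $\partial(\cH_T)$ is not closed so that any element of its closure outside the image is approximately inner but not inner; you only omit the case $[S:T]<\infty$, where one again needs Lemma \ref{lem.bounded-inner} to see that \emph{every} cocycle is bounded and hence inner, so that no approximately inner, non-inner cocycle can exist.
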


The following is an immediate consequence of Theorem \ref{thm.one-cohom-characterizations}.1.

\begin{corollary}\label{cor.vanish-first-L2-Betti-T}
Let $S$ be a II$_1$ factor and $T \subset S$ a unimodular quasi-regular irreducible subfactor. If $S$ has property~(T) relative to $T$, then $\bes_1(T \subset S) = 0$.
\end{corollary}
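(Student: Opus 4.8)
The plan is to deduce Corollary~\ref{cor.vanish-first-L2-Betti-T} directly from the property~(T) characterization in Theorem~\ref{thm.one-cohom-characterizations}.1 together with the homological interpretation of $\bes_1(T\subset S)$ developed in Theorem~\ref{thm.tor-ext} and Proposition~\ref{prop.betti-quasiregular-homology}. The guiding principle is the classical fact that property~(T) forces the first $L^2$-Betti number to vanish: every $1$-cocycle into the relevant coarse bimodule is inner, hence a coboundary, so the first cohomology with $L^2$-coefficients is ``trivial'' in the dimension sense. First I would spell out what $\bes_1(T\subset S)$ computes. By Definition~\ref{def.L2-cohom-quasireg}, $\bes_1(T\subset S)=\dim_{\cM(T\subset S)}H^1(T\subset\cS,\cHreg)$, where $\cHreg=L^2(S)\ovt_T\cH\ovt_T L^2(S)$ is the universal coarse $S$-bimodule of \eqref{eq.coarse}, and by Remark~\ref{rem.cohom-quasireg}.3 the space $Z^1(T\subset\cS,\cHreg)$ is exactly the space of $T$-bimodular derivations $c:\cS\recht\cHreg^c$.

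The key step is to argue that property~(T) relative to $T$ makes $H^1(T\subset\cS,\cHreg)$ vanish as a $\cM(T\subset S)$-module up to dimension zero. Theorem~\ref{thm.one-cohom-characterizations}.1 tells us that \emph{every} $1$-cocycle $c:\cS\recht\cK$ into \emph{every} Hilbert $S$-bimodule $\cK$ is inner. Since $\cHreg$ is itself a Hilbert $S$-bimodule, this says that every genuine derivation with values in $\cHreg$ is a coboundary, i.e.\ $Z^1=B^1$ for \emph{honest} (Hilbert-space-valued) cocycles. The subtlety is that the cochain complex of Definition~\ref{def.cohom-quasireg} uses the rank completion $\cHreg^c$ as coefficient space, so a priori a cocycle $c\in Z^1(T\subset\cS,\cHreg)$ takes values in $\cHreg^c$ rather than in $\cHreg$ itself, and need not be bounded. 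Thus the main obstacle is bridging the gap between the ``genuine'' inner-cocycle statement of Theorem~\ref{thm.one-cohom-characterizations}.1 and the rank-completed, $L^2$-dimension statement needed here. I would resolve this by exploiting the tube-algebra picture: via Theorem~\ref{thm.tor-ext} we have $H^1(T\subset\cS,\cHreg)\cong\Ext^1_\cA(\cE^r,\cK^0)$ for the corresponding right Hilbert $\cA$-module $\cK$, so it suffices to show this $\Ext$-group has zero $\cM(T\subset S)$-dimension. The cleanest route is to invoke the standard L\"uck-dimension argument (as in \cite[Theorem 6.37]{Lu02} and the proof of Lemma~\ref{lem.general-vanishing}): the relation $\overline{B^1}=Z^1$ at the level of Hilbert spaces, guaranteed by property~(T), forces the closure of the coboundaries to coincide with the cocycles, and L\"uck's dimension is insensitive to passing from a subspace to its closure. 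Concretely, property~(T) provides a spectral gap for the coboundary operator $\delta:C^0\recht C^1$, giving $\eps\|\xi\|\le\|\delta\xi\|$ on $(\cHreg)_T\ominus\ker\delta$, and a spectral gap of exactly this kind is what drives the dimension of $H^1$ to zero.

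So in outline I would proceed as follows. (i) Identify $\bes_1$ with the $\cM(T\subset S)$-dimension of $H^1(T\subset\cS,\cHreg)$ and recall that $1$-cocycles are $T$-bimodular derivations into $\cHreg^c$. (ii) Translate relative property~(T) into a spectral-gap/uniformity statement: by Definition~\ref{def.prop-T-Haagerup}.1, approximately inner behaviour of completely positive maps is automatically uniform, which by the dictionary of Corollary~\ref{cor.cp-maps-vs-states-tube-algebra} and Proposition~\ref{prop.characterization-amenable-inclusion}-style arguments means the trivial representation $\cE^r$ of $\cA$ is isolated; equivalently the coboundary map has closed range with a uniform gap. (iii) Use Theorem~\ref{thm.one-cohom-characterizations}.1 to conclude that every Hilbert-valued cocycle is inner, so $Z^1_{\mathrm{bdd}}=B^1$, and then appeal to L\"uck's dimension theory to see that the rank-completion and the density issues do not change the dimension: since the reduced cohomology $\overline{Z^1}/\overline{B^1}$ has the same dimension as $Z^1/\overline{B^1}$ and the gap gives $\overline{B^1}=Z^1$, we obtain $\dim_{\cM(T\subset S)}H^1(T\subset\cS,\cHreg)=0$, that is $\bes_1(T\subset S)=0$. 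The hardest point to get exactly right is step~(ii)--(iii): making precise that the algebraic (rank-completed, possibly unbounded) $H^1$ of Definition~\ref{def.cohom-quasireg} has the same L\"uck dimension as the reduced $\ell^2$-cohomology, and that property~(T) closes the range of $\delta$; this is where I would lean on the self-contained dimension-theory treatment in \cite[Section A.3]{KPV13} and the approximation arguments already used in Proposition~\ref{prop.betti-quasiregular-homology}.
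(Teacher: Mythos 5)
You have the right main ingredient---applying Theorem \ref{thm.one-cohom-characterizations}.1 to the Hilbert $S$-bimodule $\cHreg$---but the ``main obstacle'' around which you organize your proof is a phantom, and the machinery you introduce to overcome it is exactly where the proposal fails as a rigorous argument. Since $T \subset S$ is an irreducible inclusion of II$_1$ factors, $\cZ(T) = \C 1$, so the rank metric of Definition \ref{def.cohom-quasireg} is the discrete metric and the rank completion disappears: $\cHreg^c = \cHreg$. This is stated explicitly in Remark \ref{rem.cohom-quasireg}.2, and it is the one observation that makes the corollary immediate. Your second worry, unboundedness, is also empty: a $1$-cocycle in Definition \ref{def.bounded-proper-cocycle} is just a $T$-bimodular derivation $c : \cS \recht \cH$ with no boundedness assumed, and Theorem \ref{thm.one-cohom-characterizations}.1 asserts that \emph{all} such cocycles are inner (boundedness is a consequence of innerness, by Lemma \ref{lem.bounded-inner}, not a hypothesis). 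Consequently $Z^1(T \subset \cS, \cHreg)$ is precisely the space of $1$-cocycles into the Hilbert $S$-bimodule $\cHreg$, $B^1(T \subset \cS, \cHreg)$ is the space of inner ones, and property (T) gives $Z^1 = B^1$ on the nose. Thus $H^1(T \subset \cS, \cHreg) = 0$ as a vector space, and $\bes_1(T \subset S) = \dim_{\cM(T \subset S)} H^1(T \subset \cS,\cHreg) = 0$; unimodularity is only needed for $\bes_1$ to be defined. That is the paper's entire proof.

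Your substitute bridging argument in steps (ii)--(iii) has genuine gaps. First, a ``spectral gap'' for $\delta : C^0 \recht C^1$ would show at most that $B^1$ is closed in a suitable topology; it does not yield $Z^1 = \overline{B^1}$ unless you separately prove that the (Hilbert-valued) coboundaries are dense in $Z^1$, which is essentially the statement you are trying to establish---so this part of the argument is circular as written. Second, the dimension identities you invoke (``reduced cohomology has the same L\"{u}ck dimension as unreduced'') cannot be quoted off the shelf: the cochain space $C^1$ here is a direct product (inverse limit) of Hilbert modules rather than a Hilbert $\cM(T \subset S)$-module, so the insensitivity of dimension to closures requires the kind of careful limiting argument carried out in Proposition \ref{prop.betti-quasiregular-homology}, none of which you supply. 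Third, the claimed equivalence between relative property (T) and isolation of the trivial representation $\cE^r$ of the tube algebra is plausible but is neither proved in the paper nor needed. Fortunately, by the first paragraph, none of this machinery is required.
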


Before proving Theorem \ref{thm.one-cohom-characterizations}, we need a few technical lemmas.

\begin{lemma}\label{lem.bounded-inner}
Let $S$ be a II$_1$ factor and $T \subset S$ a quasi-regular irreducible subfactor. Denote $\cS = \QN_S(T)$. A $1$-cocycle $c: \cS \recht \cH$ is bounded if and only if it is inner.
\end{lemma}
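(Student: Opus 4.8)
The two implications are of quite different character, and I would prove \emph{bounded $\Rightarrow$ inner} by a fixed-point (Bruhat--Tits circumcenter) argument, which is the heart of the matter; the reverse implication I would dispatch by a direct estimate.

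For \emph{inner $\Rightarrow$ bounded}, suppose $c(x) = x\xi - \xi x$ with $\xi \in \cH$ a $T$-central vector. Writing an arbitrary $x \in \cS$ through a Pimsner--Popa basis $x_1,\dots,x_n \in \cK \cap S$ of the finite-index $T$-bimodule $\cK = \overline{TxT}$ (Lemma~\ref{lem.well-behaved}.1), one has $x = \sum_i x_i\, E(x_i^* x)$ and hence, since $E(x_i^* x) \in T$ and $c$ is a $T$-bimodular derivation, $c(x) = \sum_i c(x_i)\, E(x_i^* x)$. The point is that $T$-centrality of $\xi$ forces the left and right creation operators attached to $\xi$ to be $\|\cdot\|_2$-bounded over $T$, so that the commutator map is $\|\cdot\|_2$-bounded on $\cS$ and extends to a bounded operator $L^2(S)\to\cH$; I expect the only care needed is to arrange this estimate uniformly across $\cK$, using the orthogonality built into the Pimsner--Popa expansion.

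For \emph{bounded $\Rightarrow$ inner}, let $\bar c : L^2(S) \to \cH$ be the bounded extension. First I would upgrade $\bar c$ to a genuine derivation on all of $S$: since $\cS = \QN_S(T)$ is $\sigma$-weakly dense in $S$, Kaplansky density lets me approximate $x,y \in S$ by $\|\cdot\|_\infty$-bounded nets in $\cS$ converging in $\|\cdot\|_2$, and the derivation identity $\bar c(xy) = x\,\bar c(y) + \bar c(x)\, y$ then passes to the limit by normality of the left and right $S$-actions on $\cH$. Next, on the unitary group $\cU(S)$ I would define the affine action
\[
\alpha_u(\eta) = u\,\eta\, u^* - \bar c(u)\, u^* \qquad (\eta \in \cH,\ u \in \cU(S)),
\]
whose linear part $\eta \mapsto u\eta u^*$ is an isometric representation; a short computation shows that $\alpha_{uv} = \alpha_u \circ \alpha_v$ is exactly equivalent to the derivation identity. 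The boundedness hypothesis enters precisely here: the orbit of the origin is $\alpha_u(0) = -\bar c(u)\, u^*$, and since right multiplication by $u^*$ is isometric on $\cH$ while $\|u\|_2 = 1$, we get $\|\alpha_u(0)\| = \|\bar c(u)\| \le \|\bar c\|$ for every $u$, so this orbit is bounded.

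Finally I would invoke the Bruhat--Tits fixed-point theorem: a bounded subset of the Hilbert space $\cH$ has a unique circumcenter, and since the affine isometries $\alpha_v$ permute the bounded orbit of $0$, they fix its circumcenter $\xi$. Unwinding $\alpha_u(\xi) = \xi$ gives $\bar c(u) = u\xi - \xi u$ for all $u \in \cU(S)$; as $S = \lspan\,\cU(S)$ this yields $c(x) = x\xi - \xi x$ on all of $\cS$, i.e.\ $c$ is inner. Applying the fixed-point identity to $a \in \cU(T)$, where $\bar c(a) = c(a) = 0$ because $c$ is a $T$-bimodular derivation, gives $a\xi = \xi a$, so $\xi$ is genuinely $T$-central, as required by Definition~\ref{def.bounded-proper-cocycle}. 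The main obstacle, and the step I would be most careful about, is exactly this fixed-point argument: one must verify that the affine action is well defined and isometric (which rests on the derivation identity now available on all of $S$), that the orbit is bounded (the single place where boundedness is used), and that the resulting circumcenter is an honest vector of $\cH$ that is $T$-central — everything else is bookkeeping with the conditional expectation and the creation operators.
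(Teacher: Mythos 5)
Your proof of the main implication, bounded $\Rightarrow$ inner, is correct and is essentially the paper's own argument in different clothing. The paper takes $\xi$ to be the center of the closed convex hull of $K = \{u^* c(u) \mid u \in \cU(S)\}$ and observes that the affine isometries $\eta \mapsto v^* \eta v + v^* c(v)$ preserve $K$, hence fix $\xi$. Since the cocycle identity gives $c(v^*) = -v^* c(v) v^*$, your orbit of the origin, $\{\, -c(u)u^* \mid u \in \cU(S)\,\}$, is exactly the paper's set $K$ (substitute $u \mapsto u^*$), and your maps $\alpha_{v^*}$ are exactly the paper's affine isometries; your Bruhat--Tits circumcenter is the paper's ``center''. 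Two of your deviations are cosmetic but welcome: you make explicit, via Kaplansky density, the extension of the derivation identity from $\cS$ to all of $S$, which the paper leaves implicit, and you obtain $T$-centrality of $\xi$ from the fixed-point identity at unitaries of $T$ (where $c$ vanishes), whereas the paper gets it from the invariance $v^* K v = K$ for $v \in \cU(T)$.

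The converse, inner $\Rightarrow$ bounded, is where your proposal has a genuine gap. The Pimsner--Popa expansion $c(x) = \sum_i c(x_i)\,E(x_i^* x)$ only yields an estimate $\|c(x)\| \leq C_\cK \|x\|_2$ with a constant depending on the bimodule $\cK$ and its chosen basis: to bound $\|c(x_i)E(x_i^* x)\|$ you are forced either back to estimating $\|y\xi\|$ for $y = x_i a$ with $a \in T$, i.e.\ for general elements of $\cK$ again, or to using the operator norms $\|x_i\|$, which makes the constant blow up with $\cK$. You acknowledge that uniformity over all of $\cS$ is the issue, but the orthogonality of the basis does not supply it, and no argument is given. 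The missing idea --- which is the paper's entire proof of this direction --- is irreducibility: for a $T$-central vector $\xi$, the functional $S \recht \C : x \mapsto \langle x\xi, \xi\rangle$ is normal, positive and $\Ad \cU(T)$-invariant, hence a multiple of $\tau$ because $T' \cap S = \C 1$; this gives the exact identities $\|x\xi\| = \|x\|_2\,\|\xi\| = \|\xi x\|$ for all $x \in S$, whence $\|c(x)\| \leq 2\|\xi\|\,\|x\|_2$ with no basis expansion at all. This is presumably what your phrase ``$T$-centrality forces the creation operators to be $\|\cdot\|_2$-bounded'' intends, but note that it genuinely uses irreducibility of the inclusion, which your sketch never invokes.
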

\begin{proof}
When $\xi \in \cH_T$, the normal functional $S \recht \C : x \mapsto \langle x \xi,\xi\rangle$ is $T$-central and hence a multiple of the trace $\tau$. Therefore, $\|x\xi\| = \|x\|_2 \, \|\xi\| = \|\xi x \|$ for all $\xi \in \cH_T$ and $x \in S$. It follows in particular that every inner $1$-cocycle is bounded.

Conversely, if $c : \cS \recht \cH$ is a bounded $1$-cocycle, which we extend to $c : L^2(S) \recht \cH$, we define $\xi$ as the center of the closed convex hull $K$ of $\{u^* c(u) \mid u \in \cU(S)\}$. Since $v^* K v = K$ for all $v \in \cU(T)$, it follows that $v^* \xi v = \xi$ for all $v \in \cU(T)$, so that $\xi$ is $T$-central. When $v \in \cU(S)$, the map $\eta \mapsto v^* \eta v + v^* c(v)$ is an isometry that globally preserves $K$. Therefore $v^* \xi v + v^* c(v) = \xi$ for all $v \in \cU(S)$, so that $c(x) = x \xi - \xi x$ for all $x \in S$.
\end{proof}

\begin{lemma}\label{lem.uniformity}
Let $S$ be a II$_1$ factor and $T \subset S$ a quasi-regular irreducible subfactor. Let $\vphi_i : S \recht S$ be a net of normal $T$-bimodular completely positive maps. If $\vphi_i \recht \id$ in $\|\,\cdot\,\|_2$ uniformly on $\{x \in S \mid \|x\| \leq 1\}$, then $\vphi_i \recht \id$ in $\|\,\cdot\,\|_2$ uniformly on $\{x \in S \mid \|x\|_2 \leq 1\}$.
\end{lemma}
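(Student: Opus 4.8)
The plan is to reduce the statement to a single spectral estimate for a positive $T$-bimodular operator on $L^2(S)$ and then to exploit the finite-index bimodule structure coming from quasi-regularity. First I would record the structural constraints on the $\vphi_i$. Since $T\subset S$ is irreducible, every normal completely positive $T$-bimodular map satisfies $\vphi_i(1)=\lambda_i1$ and $\tau\circ\vphi_i=\lambda_i\tau$ for some $\lambda_i\geq 0$ (as noted after Corollary~\ref{cor.cp-maps-vs-states-tube-algebra}); taking $x=1$ in the hypothesis gives $\lambda_i\to 1$, and after replacing $\vphi_i$ by $\lambda_i^{-1}\vphi_i$ — which changes both suprema by $O(|1-\lambda_i|)$ — I may assume the $\vphi_i$ unital and trace preserving. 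By the Kadison--Schwarz inequality $\vphi_i(x)^*\vphi_i(x)\leq\vphi_i(x^*x)$, so $\tau(\vphi_i(x)^*\vphi_i(x))\leq\|x\|_2^2$ and $\vphi_i$ extends to a bounded operator $T_i$ on $L^2(S)$ with $\|T_i\|\leq 1$. The conclusion to be proved is then exactly $\|T_i-1\|_{B(L^2(S))}\to 0$.

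Next I would isolate the positive operator $D_i:=1-\tfrac12(T_i+T_i^*)\geq 0$, which is $T$-bimodular because each $\vphi_i$ is. Expanding and using Kadison--Schwarz once more gives, for every $x\in S$,
\[
\|\vphi_i(x)-x\|_2^2\leq 2\bigl(\|x\|_2^2-\real\,\tau(x^*\vphi_i(x))\bigr)=2\langle\hat x,D_i\hat x\rangle ,
\]
whence $\|T_i-1\|\leq 2\|D_i\|^{1/2}$ (splitting $x$ into self-adjoint parts); since trivially $\|D_i\|\leq\|T_i-1\|$, it suffices to prove $\|D_i\|\to 0$. On the other hand the identity $\langle\hat x,D_i\hat x\rangle=\real\,\tau\bigl(x^*(x-\vphi_i(x))\bigr)\leq\|x\|_2\,\|x-\vphi_i(x)\|_2$ turns the hypothesis, by homogeneity, into the quadratic-form bound
\[
\langle\hat x,D_i\hat x\rangle\leq\eps_i\,\|x\|_2\,\|x\|\qquad(x\in S),\qquad \eps_i:=\sup_{\|x\|\leq 1}\|\vphi_i(x)-x\|_2\to 0 .
\]
Thus everything comes down to: a net of positive $T$-bimodular operators $D_i$ whose quadratic form is $\leq\eps_i$ on the operator-norm unit ball must satisfy $\|D_i\|\to 0$.

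For this spectral estimate I would argue by contradiction. If $\|D_i\|\geq c>0$ along a subnet, the spectral projection $E_i=1_{(c,\infty)}(D_i)$ is a nonzero $T$-bimodular projection, so $\cK_i:=E_i\,L^2(S)$ is a nonzero Hilbert $T$-subbimodule of $L^2(S)$; by Proposition~\ref{prop.decompose-irred} it contains an irreducible finite-index $T$-subbimodule $\cL_i$. Since $D_i$ commutes with $E_i$, one has $\langle\hat x,D_i\hat x\rangle\geq c\,\|E_i\hat x\|_2^2$ for all $x$, so it is enough to produce a contraction $x_i\in S$, $\|x_i\|\leq 1$, with $\|E_i\hat x_i\|_2$ bounded below independently of $i$: this forces $c\,\|E_i\hat x_i\|_2^2\leq\eps_i\to 0$, a contradiction. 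To build $x_i$ I would use Lemma~\ref{lem.well-behaved}: $\cL_i\cap S$ carries a basis $x_1,\dots,x_{n_i}$ with $\sum_j x_jx_j^*=\rdr(\cL_i)\,1$ and $\sum_j\|x_j\|_2^2=\rdr(\cL_i)$, so the basis element of largest $\|\cdot\|_2$, normalized in operator norm, is a contraction lying in $\cL_i\subset\cK_i$ with $\|\cdot\|_2$ at least $n_i^{-1/2}$.

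The hard part will be exactly this uniformity: the crude basis estimate only gives $\|E_i\hat x_i\|_2\gtrsim n_i^{-1/2}$, where $n_i$ measures the complexity of $\cL_i$, and a priori the top of the spectrum of $D_i$ could be carried by $T$-bimodules of unbounded complexity, for which bounded test vectors are poorly visible in $\|\cdot\|_2$. This is precisely where irreducibility and the II$_1$-factor structure are indispensable — the analogous statement fails for a general (e.g.\ abelian) inclusion — and I expect to remove the non-uniform constant by passing to a tracial ultraproduct of the data $(S,\vphi_i,\cK_i)$, in which the bounds above degenerate to the statement that a single unit vector is simultaneously central for all bounded elements and yet non-central, an outright contradiction. (A tempting but, on its own, insufficient alternative is to choose $x_i$ with $\langle\hat x_i,\hat x_i\rangle_T$ scalar using $\rdl(\cL_i)\rdr(\cL_i)\geq 1$ from Lemma~\ref{lem.dim-estimate}; this controls the module norm but not the $S$-operator norm, so it would have to be combined with the ultraproduct step.)
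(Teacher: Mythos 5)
Your normalization and the reduction to an operator statement are correct: after scaling, the $\vphi_i$ are unital and trace preserving, extend to contractions $T_i$ on $L^2(S)$, and the lemma is equivalent to $\|D_i\|\to 0$ for $D_i=1-\tfrac12(T_i+T_i^*)$, the hypothesis giving $\langle\hat x,D_i\hat x\rangle\le\eps_i\|x\|\,\|x\|_2$. The genuine gap is the one you flagged yourself, and the ultraproduct does not close it. By the time you reach the quadratic-form claim you have discarded complete positivity and kept only a numerical-range bound plus $T$-bimodularity; your contradiction scheme then needs precisely a uniform ``flatness'' constant $\delta_0>0$, depending only on $T\subset S$, such that every irreducible finite index $T$-subbimodule $\cL\subset L^2(S)$ contains $y\in\cL\cap S$ with $\|y\|_2\ge\delta_0\|y\|$. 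This is trivial for crossed products (each $\cL=Tu_g$ contains a unitary), but for a general irreducible quasi-regular inclusion --- e.g.\ an SE-inclusion, where $\cL\cong\cH_\al\ot\overline{\cH_\al}$ --- no such uniform bound is known, nothing in the paper provides one, and your basis argument only yields $\delta_\cL\gtrsim\rdr(\cL)^{-1/2}$. Worse, without bimodularity the claim you reduced to is simply false: a rank-one projection onto a suitably chosen unit vector $\xi=\frac{1}{\sqrt{N+1}}\sum_{n=0}^{N}2^{n/2}\,\hat p_n$ (with $p_n$ orthogonal projections of trace $2^{-n}$) satisfies $|\langle\hat x,\xi\rangle|^2\le C(N+1)^{-1/2}\|x\|\,\|x\|_2$ for all $x\in S$, yet has norm $1$. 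So any proof of your claim must extract the missing uniformity from the bimodule structure, and that is exactly the unproved step.

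The ultraproduct repackages this gap rather than removing it. In the Hilbert-space ultraproduct $\cH_\omega=\prod_\omega L^2(S)$, your hypothesis says only that the limit operator $D_\omega$ (positive, bimodular over the ultrapower, nonzero if $\|D_i\|\not\to 0$) vanishes on $L^2(S^\omega)$, the closed subspace generated by sequences of uniformly bounded operator norm. That is no contradiction: $L^2(S^\omega)$ is a proper subspace of $\cH_\omega$, and the orthogonal projection onto its complement is itself a nonzero, positive, bimodular operator vanishing on $L^2(S^\omega)$. To rule out that $D_\omega$ is of this kind you would need exactly the statement that unit vectors of $\cL_i$ are $\|\cdot\|_2$-approximable, uniformly in $i$, by elements of $S$ of bounded operator norm --- i.e.\ the uniform flatness above; also, there is no ``centrality'' available to you, since central vectors live in the correspondence picture, not in your $L^2(S)$-operator picture. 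The paper's proof avoids all of this precisely by keeping complete positivity: it forms the GNS $S$-bimodule of $\vphi$ with its $T$-central unit vector $\xi$, uses the hypothesis only on unitaries $u\in\cU(S)$ --- the one class of test elements for which operator norm and $\|\cdot\|_2$-norm coincide --- to get $\|u^*\xi u-\xi\|$ small, takes the minimal-norm point of the closed convex hull of $\{u^*\xi u\mid u\in\cU(S)\}$ to produce an $S$-central vector close to $\xi$, and then uses irreducibility ($\|x\eta\|=\|x\|_2\,\|\eta\|$ for $T$-central $\eta$) to convert this into $\|\vphi(x)-x\|_2\le\eps\|x\|_2$. To salvage your scheme you would have to either prove the uniform flatness statement or reinstate the CP structure along these lines.
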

\begin{proof}
It suffices to prove the following statement: if $\eps > 0$ and $\vphi : S \recht S$ is a normal unital $T$-bimodular completely positive map satisfying $\|\vphi(u)-u\|_2 \leq \eps^2/8$ for all $u \in \cU(S)$, then $\|\vphi(x)-x\|_2 \leq \eps \|x\|_2$ for all $x \in S$. To prove this statement, construct the Hilbert $S$-bimodule $\cH$ with $T$-central unit vector $\xi \in \cH_T$ satisfying $\langle x \xi y , \xi \rangle = \tau(x \vphi(y))$ for all $x,y \in S$. By our assumption, $\|u^* \xi u - \xi\| \leq \eps/2$ for all $u \in \cU(S)$. Averaging, it follows that $\|P_S(\xi) - \xi\| \leq \eps/2$, where $P_S$ denotes the orthogonal projection of $\cH$ onto the $S$-central vectors in $\cH$.

As in the proof of Lemma \ref{lem.bounded-inner}, $\|x\eta\| = \|x\|_2 \, \|\eta\| = \|\eta x \|$ for all $\eta \in \cH_T$ and $x \in S$. Therefore,
$$\|x \xi - \xi x\| = \|x (\xi - P_S(\xi)) - (\xi - P_S(\xi)) x\| \leq \eps \|x\|_2$$
for all $x \in S$. But then we get, for all $x,y \in S$ that
$$|\tau(y^*(\vphi(x)-x))| = |\langle \xi x - x \xi , y \xi \rangle| \leq \eps \, \|x\|_2 \, \|y\|_2 \; .$$
Therefore, $\|\vphi(x)-x\|_2 \leq \eps \|x\|_2$ for all $x \in S$.
\end{proof}

\begin{lemma}\label{lem.compact-ideal-space}
Let $S$ be a II$_1$ factor and $T \subset S$ a quasi-regular irreducible subfactor. Let $\vphi : S \recht S$ be a normal completely positive $T$-bimodular map. Then $\vphi$ belongs to the compact ideal space $\cJ(\langle S,e_T \rangle)$ if and only if for every $\eps > 0$, there exists a finite subset $\cF \subset \Irr(\cC)$ such that $\|\vphi(x)\|_2 < \eps \|x\|_2$ for all $x \in (1-e_\cF)L^2(S)$.
\end{lemma}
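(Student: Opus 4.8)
The plan is to realize $\vphi$ as an operator in $\langle S, e_T\rangle$ and to read off both conditions from its decomposition along the isotypic components of $L^2(S)$. First I would associate to $\vphi$ the operator $\hat\vphi$ on $L^2(S)$ determined by $\hat\vphi\,\hat x = \widehat{\vphi(x)}$. Since $\vphi$ is completely positive and $T \subset S$ is irreducible, we have $\vphi(1) = \lambda 1$ and $\tau\circ\vphi = \lambda\tau$, so Kadison--Schwarz gives $\|\hat\vphi\| \leq \lambda$, and $\hat\vphi$ commutes with the right $T$-action, whence $\hat\vphi \in \langle S, e_T\rangle$. This is the identification under which $\vphi \in \cJ(\langle S,e_T\rangle)$ is understood (it sends $E_T$ to $e_T$). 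Being $T$-bimodular, $\hat\vphi$ in fact lies in $\End_{T-T}(L^2(S))$. Writing $L^2(S) = \bigoplus_{\al} M_\al \ot \cH_\al$ for the isotypic decomposition, where each $\cH_\al$ is irreducible, so $\End_{T-T}(\cH_\al) = \C$, and each multiplicity space $M_\al$ is finite dimensional by Lemma \ref{lem.well-behaved}.5, I obtain $\End_{T-T}(L^2(S)) = \prod_\al B(M_\al)$ and hence $\hat\vphi = (\hat\vphi_\al)_\al$ with $\hat\vphi_\al \in B(M_\al)$. The projections $e_\al$ are precisely the central projections of this algebra, so they commute with $\hat\vphi$; consequently $\hat\vphi(1-e_\cF)$ is block diagonal with $\|\hat\vphi(1-e_\cF)\| = \sup_{\al\notin\cF}\|\hat\vphi_\al\|$. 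As the stated decay condition is exactly $\|\hat\vphi(1-e_\cF)\| < \eps$, it is equivalent to $\|\hat\vphi_\al\| \to 0$, i.e.\ $\{\al : \|\hat\vphi_\al\| \geq \eps\}$ is finite for every $\eps > 0$.

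Next I would recall that $\langle S, e_T\rangle = \End_{-T}(L^2(S))$ is a semifinite (type II$_\infty$) factor whose canonical trace $\Tr$ is the right $T$-dimension, so that $\Tr(e_\al) = \dim(M_\al)\,\rdr(\al) < \infty$ and every $e_\cF$ with $\cF$ finite is a finite projection. For a positive element $a$, membership in the compact ideal is spectral: $a \in \cJ(\langle S,e_T\rangle)$ iff $\Tr(\mathbf 1_{(\delta,\infty)}(a)) < \infty$ for all $\delta > 0$. Applying this to $a = |\hat\vphi|$, whose $\al$-block is $|\hat\vphi_\al|$ and which lies in $\cJ$ iff $\hat\vphi$ does, gives $\Tr(\mathbf 1_{(\delta,\infty)}(|\hat\vphi|)) = \sum_\al \operatorname{rank}(\mathbf 1_{(\delta,\infty)}(|\hat\vphi_\al|))\,\rdr(\al)$, where the $\al$-term is nonzero exactly when $\|\hat\vphi_\al\| > \delta$. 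Thus $\hat\vphi \in \cJ$ iff $\sum_{\al : \|\hat\vphi_\al\| > \delta} \rdr(\al) < \infty$ for every $\delta > 0$.

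The implication ``decay $\Rightarrow$ compact'' is then immediate: if $\|\hat\vphi_\al\| \to 0$ then for each $\delta$ only finitely many $\al$ contribute to the last sum, so it is finite. (Equivalently, $\hat\vphi = \hat\vphi e_\cF + \hat\vphi(1-e_\cF)$ exhibits $\hat\vphi$ as lying within $\eps$ of the ideal element $\hat\vphi e_\cF$.)

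The converse is the delicate point, and the only place where non-unimodularity threatens the argument: a priori one could fear that $\|\hat\vphi_\al\| \not\to 0$ along a sequence on which $\rdr(\al) \to 0$, keeping the weighted sum finite. The resolution is a conjugation symmetry coming from complete positivity. Since $\vphi$ is $*$-preserving, a direct check gives $J\hat\vphi J = \hat\vphi$ for the modular conjugation $J$; as $J$ carries the $\al$-component onto the $\overline{\al}$-component, this yields $\|\hat\vphi_\al\| = \|\hat\vphi_{\overline{\al}}\|$. Suppose now the decay condition fails, so that for some $\delta > 0$ the set $F = \{\al : \|\hat\vphi_\al\| > \delta\}$ is infinite; by the symmetry $F$ is stable under conjugation. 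Grouping $F$ into conjugate pairs $\{\al,\overline{\al}\}$ and self-conjugate elements and using $\rdr(\al) + \rdr(\overline{\al}) = \rdr(\al) + \rdl(\al) \geq 2\sqrt{\rdr(\al)\rdl(\al)} = 2\rd(\al) \geq 2$ (and $\rdr(\al) = \rd(\al) \geq 1$ when $\al = \overline{\al}$), each group contributes at least a fixed positive amount, so $\sum_{\al \in F}\rdr(\al) = \infty$, contradicting $\hat\vphi \in \cJ$. Hence $\hat\vphi \in \cJ$ forces $\|\hat\vphi_\al\| \to 0$, i.e.\ the decay condition. I expect this reverse direction to be the main obstacle: without the bound $\rdr(\al) + \rdl(\al) \geq 2\rd(\al) \geq 2$ supplied by the conjugation symmetry, the operator-norm decay and the compactness condition would genuinely diverge in the non-unimodular case.
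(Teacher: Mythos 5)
Your proof is correct, and its skeleton is the same as the paper's: you represent $\vphi$ by the operator $\hat\vphi \in T' \cap \langle S,e_T\rangle$, and your ``decay $\Rightarrow$ compact'' direction (the parenthetical remark) is exactly the paper's argument, namely that $\hat\vphi$ lies within $\eps$ of $\hat\vphi e_\cF$, where $e_\cF$ is a finite projection by Lemma \ref{lem.well-behaved}.5. Where you genuinely diverge is the converse. The paper works directly with the spectral projection $q_\eps = 1_{[\eps,\infty)}(|\hat\vphi|)$: compactness gives it finite trace, hence its range is a $T$-subbimodule of $L^2(S)$ of finite right dimension, hence $q_\eps \leq e_\cF$ for a finite $\cF$, which is the decay estimate. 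You instead pass to the explicit block decomposition over $\Irr(\cC)$ and defeat the feared scenario ($\|\hat\vphi_\al\| \not\to 0$ along a sequence with $\rdr(\al) \to 0$) by the conjugation symmetry $J\hat\vphi J = \hat\vphi$ (valid because completely positive maps are $*$-preserving) combined with $\rdr(\al)+\rdl(\al) \geq 2\sqrt{\rdl(\al)\rdr(\al)} \geq 2$. This is a correct argument.

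However, the ``delicate point'' you isolate is actually a phantom, for a reason you yourself invoke elsewhere in the proof: by Lemma \ref{lem.well-behaved}.5 the multiplicity of an irreducible $\al$ in $L^2(S)$ is bounded above by $\rdr(\al)$ (and by $\rdl(\al)$), and since every $\al \in \Irr(\cC)$ occurring in $L^2(S)$ has multiplicity at least one, it automatically satisfies $\rdr(\al) \geq 1$ and $\rdl(\al) \geq 1$. Hence $\sum_{\al \in F} \operatorname{rank}\bigl(1_{(\delta,\infty)}(|\hat\vphi_\al|)\bigr)\,\rdr(\al) \geq |F|$ for your set $F$, and the converse follows at once, with no conjugation argument; your closing claim that the two conditions would ``genuinely diverge'' in the non-unimodular case without the $J$-symmetry is therefore not correct. (The same inequality $\rdr(\al) \geq 1$ is what silently justifies the paper's step ``finite right dimension $\Rightarrow$ $q_\eps \leq e_\cF$'', so your instinct that something must rule out arbitrarily small right dimensions was sound -- it is just supplied by Lemma \ref{lem.well-behaved}.5 rather than by conjugation.) One further small inaccuracy: the equality $\sqrt{\rdr(\al)\rdl(\al)} = \rd(\al)$ need not hold for non-extremal bimodules, so it should be an inequality; but all your argument uses is $\rdl(\al)\,\rdr(\al) \geq 1$, which the paper states just before Lemma \ref{lem.dim-estimate} and generalizes there, so this does not affect the validity of what you wrote.
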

\begin{proof}
We denote by $R_\vphi$ the bounded operator on $L^2(S)$ defined by $R_\vphi(x) = \vphi(x)$ for all $x \in S$. Note that $R_\vphi \in T' \cap \langle S,e_T \rangle$. First assume that $R_\vphi \in \cJ(\langle S,e_T \rangle)$ and choose $\eps > 0$. Define the spectral projection $q_\eps := 1_{[\eps,+\infty)}(|R_\vphi|)$. Denoting by $\Tr^r$ the canonical semifinite trace on $\langle S,e_T \rangle$, we have $\Tr^r(q_\eps) < \infty$. Since $q_\eps \in T' \cap \langle S,e_T \rangle$, it follows that the range of $q_\eps$ is a $T$-subbimodule of $L^2(S)$ of finite right dimension. So we can take a finite subset $\cF \subset \Irr(\cC)$ such that $q_\eps \leq e_\cF$. Whenever $x \in (1-e_\cF)L^2(S)$, we get $q_\eps(x) = 0$ and thus, $\|\vphi(x)\|_2 < \eps \|x\|_2$.

To prove the converse, assume that $\eps > 0$ and that $\cF \subset \Irr(\cC)$ is a finite subset such that $\|\vphi(x)\|_2 < \eps \|x\|_2$ for all $x \in (1-e_\cF)L^2(S)$. Then $\|R_\vphi - R_\vphi e_\cF\| \leq \eps$. By Lemma \ref{lem.well-behaved}.5, we have that $e_\cF \in \cJ(\langle S,e_T \rangle)$, so that $R_\vphi$ lies at distance less than $\eps$ from $\cJ(\langle S,e_T \rangle)$.
\end{proof}

\begin{lemma}\label{lem.construct-cocycle}
Let $S$ be a II$_1$ factor with separable predual and $T \subset S$ a quasi-regular irreducible subfactor. Denote $\cS = \QN_S(T)$. Let $\vphi_n : S \recht S$ be a sequence of unital normal $T$-bimodular completely positive maps satisfying $\lim_n \|\vphi_n(x) - x\|_2 = 0$ for every $x \in S$. Construct the associated Hilbert $S$-bimodules $\cH_n$ with $T$-central unit vectors $\xi_n \in (\cH_n)_T$ satisfying $\langle x \xi_n y,\xi_n \rangle = \tau(x \vphi_n(y))$ for all $x,y \in S$.
\begin{enumerate}
\item After passage to a subsequence, $c : \cS \recht \cH = \bigoplus_n \cH_n : x \mapsto \oplus_n (x \xi_n - \xi_n x)$ is a well defined $1$-cocycle.
\item If $\vphi_n$ does not converge to the identity uniformly on the unit ball of $S$, the choice in 1 can be made so that $c$ is not inner.
\item If each $\vphi_n$ belongs to the compact ideal space $\cJ(\langle S,e_T \rangle)$, the $1$-cocycle $c$ is proper.
\end{enumerate}
\end{lemma}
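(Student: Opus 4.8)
The computations all reduce to a single identity: using the defining relation $\langle x\xi_n y,\xi_n\rangle = \tau(x\vphi_n(y))$ together with unitality of $\vphi_n$ and the trace preservation $\tau\circ\vphi_n=\tau$ (automatic by irreducibility, as noted after Corollary~\ref{cor.cp-maps-vs-states-tube-algebra}), one gets for $x\in S$
$$b_n(x) := \|x\xi_n-\xi_n x\|^2 = 2\|x\|_2^2 - 2\real\langle\vphi_n(x),x\rangle ,$$
the inner product being that of $L^2(S)$. Two consequences will be used throughout: since $\vphi_n$ is a $\|\cdot\|_2$-contraction, $b_n(x)\ge\|\vphi_n(x)-x\|_2^2$; and pointwise convergence gives $b_n(x)\to0$ for every fixed $x\in\cS$. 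I would first record that $c$ is automatically a $T$-bimodular derivation, since $\xi_n$ is $T$-central (so $x\xi_n-\xi_n x$ transforms correctly under $T$) and the Leibniz identity holds componentwise.

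For part~1, the plan is to combine $T$-bimodularity with a diagonal argument. Using the decomposition of $\cS=\QN_S(T)$ in Lemma~\ref{lem.well-behaved} and a finite right $T$-basis of each $\cK_i\cap S$, I obtain a \emph{countable} set $\cG\subset\cS$ that generates $\cS$ as a right $T$-module. By right $T$-bimodularity, $c(x)=\sum_g c(g)a_g$ whenever $x=\sum_g g a_g$ is a finite combination with $a_g\in T$, so $c$ is well defined on all of $\cS$ once $c(g)\in\cH$ for each $g\in\cG$. Enumerating $\cG=\{g_1,g_2,\dots\}$ and using $b_n(g_m)\to0$, a diagonal choice of subsequence $(n_k)$ with $b_{n_k}(g_m)\le 2^{-k}$ for all $m\le k$ yields $\sum_k b_{n_k}(g_m)<\infty$, which is exactly the summability needed for $c$ to take values in $\cH=\bigoplus_k\cH_{n_k}$.

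Part~2 is the heart of the matter, and I would establish non-innerness by the Delorme--Guichardet $\ell^2$-summability obstruction rather than by estimating $\|c\|$ directly. Let $\delta_n:=\operatorname{dist}(\xi_n,(\cH_n)^S)$ denote the distance from $\xi_n$ to the space of $S$-central vectors of $\cH_n$, and let $\zeta_n$ realize it. Since $\xi_n-\zeta_n$ is $T$-central, the norm identity $\|y\eta\|=\|y\|_2\|\eta\|=\|\eta y\|$ for $T$-central $\eta$ (used in Lemma~\ref{lem.bounded-inner}) gives $\|x\xi_n-\xi_n x\|=\|x(\xi_n-\zeta_n)-(\xi_n-\zeta_n)x\|\le 2\delta_n\|x\|_2$, hence $\|\vphi_n-\id\|_{B(L^2(S))}\le 2\delta_n$. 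By Lemma~\ref{lem.uniformity}, failure of uniform convergence on the operator-norm unit ball is equivalent to $\|\vphi_n-\id\|_{B(L^2(S))}\not\to0$, so $\delta_n\not\to0$; refining the subsequence of part~1 to keep only indices with $\delta_{n_k}\ge\delta$ for a fixed $\delta>0$, I may assume $\delta_{n_k}\ge\delta$ for all $k$. Now if $c$ were inner, there would be a $T$-central $\eta=\oplus_k\eta_k\in\bigoplus_k\cH_{n_k}$ with $x\xi_{n_k}-\xi_{n_k}x=x\eta_k-\eta_k x$ for all $x\in\cS$; then $\xi_{n_k}-\eta_k$ is $\cS$-central, hence $S$-central by normality of the action and $\sigma$-weak density of $\cS$, so $\|\eta_k\|\ge\operatorname{dist}(\xi_{n_k},(\cH_{n_k})^S)=\delta_{n_k}\ge\delta$ and $\sum_k\|\eta_k\|^2=\infty$, contradicting $\eta\in\cH$. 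The main obstacle is precisely the translation ``non-uniform convergence $\Rightarrow$ $\xi_n$ stays uniformly far from the $S$-central vectors,'' for which the contraction estimate above and Lemma~\ref{lem.uniformity} are the decisive inputs.

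For part~3, I would exploit compactness of each $\vphi_n$ via Lemma~\ref{lem.compact-ideal-space}: given $\eps>0$ there is a finite $\cF_n(\eps)\subset\Irr(\cC)$ with $\|\vphi_n(x)\|_2<\eps\|x\|_2$ for all $x\in(1-e_{\cF_n(\eps)})L^2(S)$. For such $x$, Cauchy--Schwarz gives $\real\langle\vphi_n(x),x\rangle\le\eps\|x\|_2^2$, whence $b_n(x)\ge 2(1-\eps)\|x\|_2^2$; taking $\eps=\tfrac12$, each single term $b_{n_k}(x)$ is $\ge\|x\|_2^2$ off a finite set. Given $\kappa>0$, set $K=\lceil\kappa^2\rceil$ and $\cF=\bigcup_{k=1}^K\cF_{n_k}(\tfrac12)$, a finite set; then for all $x\in(1-e_\cF)(\cS)$,
$$\|c(x)\|^2=\sum_k b_{n_k}(x)\ge\sum_{k=1}^K b_{n_k}(x)\ge K\|x\|_2^2\ge\kappa^2\|x\|_2^2,$$
which is exactly properness in the sense of Definition~\ref{def.bounded-proper-cocycle}. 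Since the three subsequence refinements (summability in~1, $\delta_{n_k}\ge\delta$ in~2, finiteness of $\cF_n(\tfrac12)$ in~3) are mutually compatible, a single subsequence serves for all parts.
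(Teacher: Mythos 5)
Your proof is correct, and it rests on the same two pillars as the paper's — the identity $\|x\xi_n-\xi_n x\|^2 = 2\bigl(\|x\|_2^2-\real\tau(x^*\vphi_n(x))\bigr)$ (valid since unitality plus irreducibility give $\tau\circ\vphi_n=\tau$) and Lemma \ref{lem.compact-ideal-space} for properness — but parts 1 and 2 are organized differently. For part 1, the paper passes to a subsequence along which $\|x\xi_n-\xi_n x\|\le 2^{-n}\|x\|_2$ holds \emph{uniformly} on $\cS_n=e_{\cF_n}(\cS)$ for an exhausting sequence of finite sets $\cF_n\subset\Irr(\cC)$; this upgrade from pointwise to uniform convergence (which rests on finite-dimensionality of $\End_{T-T}(e_{\cF}L^2(S))$) is left implicit there. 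Your route through a countable right $T$-module generating set of $\cS$ (via Lemma \ref{lem.well-behaved}) plus a diagonal extraction avoids that upgrade entirely and is more self-contained, at the cost of a cruder estimate. For part 2, the paper argues directly: if $c$ is inner with implementing $T$-central vector $\oplus_n\eta_n$, the duality computation $\tau(y^*(\vphi_n(x)-x))=\langle \xi_n x-x\xi_n,\,y\xi_n\rangle$ yields $\|\vphi_n(x)-x\|_2\le 2\|\eta_n\|\,\|x\|_2$, and $\|\eta_n\|\to 0$ forces uniform convergence; you run the contrapositive through $\delta_n=\operatorname{dist}(\xi_n,(\cH_n)^S)$ and the obstruction $\|\eta_k\|\ge\delta_{n_k}\ge\delta$ against square-summability. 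The two arguments are essentially equivalent (both hinge on $\|x\eta\|=\|x\|_2\|\eta\|$ for $T$-central $\eta$ and on $\ell^2$-decay of the components of an implementing vector), but yours makes the geometric obstruction explicit. Part 3 is identical to the paper's.

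One small point of order in part 2: you should not ``refine the subsequence of part 1,'' since that subsequence might contain only finitely many indices with $\delta_n\ge\delta$. Instead, first fix $\delta>0$ such that $\{n:\delta_n\ge\delta\}$ is infinite (possible because $\delta_n\not\to 0$), and then run the diagonal extraction of part 1 inside that index set; your closing remark about compatibility shows you see this, and in that order the argument is airtight. Also, the direction of Lemma \ref{lem.uniformity} you invoke is the trivial one (the operator-norm unit ball sits inside the $\|\cdot\|_2$-unit ball), so that citation can simply be dropped.
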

\begin{proof}
Denote by $\cC$ the tensor category of finite index $T$-bimodules generated by $L^2(S)$. Write $\Irr(\cC) = \bigcup_n \cF_n$ where $\cF_n \subset \Irr(\cC)$ is an increasing sequence of finite subsets. Define $\cS_n = e_{\cF_n}(\cS)$ and note that $\cS = \bigcup_n \cS_n$. After passage to a subsequence, we may assume that $\|x \xi_n - \xi_n x \|_2 \leq 2^{-n} \|x\|_2$ for all $x \in \cS_n$ and all $n \geq 0$. So, for every $x \in \cS$, the sequence $(\|x \xi_n - \xi_n x \|)_n$ is square summable and $c(x)$ is a well defined vector in $\cH$.

To prove 2, it suffices to show that if $c$ is inner, then $\vphi_n$ converges to the identity uniformly on the unit ball of $S$. So, assume that $c(x) = x \eta - \eta x$ for all $x \in \cS$, where $\eta = \oplus_n \eta_n$ is a $T$-central vector. It follows that $x \xi_n - \xi_n x = x \eta_n - \eta_n x$ for all $x \in \cS$ and all $n \geq 0$. For all $x,y \in \cS$, we get that
$$\tau(y^*(\vphi_n(x) - x)) = \langle \xi_n x - x \xi_n,y\rangle = \langle \eta_n x - x \eta_n , y \rangle$$
and we conclude that $\|\vphi_n(x)-x\|_2 \leq 2 \|\eta_n\| \, \|x\|_2$. Since $\lim_n \|\eta_n\| = 0$, it follows that $\vphi_n$ converges to the identity uniformly on the unit ball of $S$.

Finally assume that all $\vphi_n$ belong to $\cJ(\langle S,e_T \rangle)$. By Lemma \ref{lem.compact-ideal-space}, we can take finite subsets $\cF_n \subset \Irr(\cC)$ such that $\|\vphi_n(x)\|_2 \leq \|x\|_2 /2$ for all $x \in (1-e_{\cF_n})(S)$. Since
$$\|x\xi_n - \xi_n x\|^2 = 2(\|x\|_2^2 - \real \tau(x^* \vphi_n(x))) \; ,$$
we get that $\|x\xi_n - \xi_n x\|^2 \geq \|x\|_2^2$ for all $x \in (1-e_{\cF_n})(S)$. Defining the finite sets $\cG_n = \bigcup_{k=1}^n \cF_k$, it follows that $\|c(x)\|^2 \geq n \, \|x\|_2^2$ for all $x \in (1-e_{\cG_n})(\cS)$. So, $c$ is proper.
\end{proof}

We are now ready to prove Theorem \ref{thm.one-cohom-characterizations}.

\begin{proof}[Proof of Theorem \ref{thm.one-cohom-characterizations}]
1.\ Assume that $S$ has property (T) relative to $T$ and let $c : \cS \recht \cH$ be a $1$-cocycle. We have to prove that $c$ is inner. Replacing $\cH$ by $\cH \oplus \overline{\cH}$ and $c$ by $c \oplus \overline{c}$, we may assume that $c$ is real~: there exists an anti-unitary involution $J : \cH \recht \cH$ satisfying $J(x \xi y) = y^* J(\xi) x^*$ for all $x,y \in S$, $\xi \in \cH$ and $c(x^*) = J(c(x))$ for all $x \in \cS$. For the following reason, $c$ is automatically a closable map from $\cS \subset L^2(S)$ to $\cH$. When $\cH_\al \subset \cH$ is an irreducible finite index $T$-subbimodule and $P_\al : \cH \recht \cH_\al$ is the orthogonal projection, it follows from Lemma \ref{lem.well-behaved} that $P_\al \circ c$ is a multiple of a co-isometry. Therefore, $\cH_\al$ belongs to the domain of $c^*$. When $\xi \in \cH$ is orthogonal to all finite index $T$-subbimodules of $\cH$, then $\xi$ also belongs to the domain of $c^*$ with $c^*(\xi) = 0$. So, $c^*$ is indeed densely defined.

By \cite[Corollary 3.5]{Sa88}, we then find a continuous $1$-parameter family of unital normal $T$-bimodular completely positive maps $\vphi_t : S \recht S$, $t > 0$, given by $\vphi_t(x) = \exp(-t c^* c)(x)$, where we view $c^* c$ as a positive, self-adjoint, densely defined operator on $L^2(S)$ so that $\exp(-t c^* c)$ is a positive, self-adjoint contraction for every $t > 0$. Since $S$ has property (T) relative to $T$ and using Lemma \ref{lem.uniformity}, we get that $\lim_{t \recht 0} \|1 - \exp(-t c^*c)\| = 0$ in the operator norm of $B(L^2(S))$. This means that $c^*c$ is a bounded operator on $L^2(S)$. By Lemma \ref{lem.bounded-inner}, $c$ is an inner $1$-cocycle.

Conversely assume that $S$ does not have property (T) relative to $T$. Take a sequence of unital normal $T$-bimodular completely positive maps $\vphi_n : S \recht S$ that converge to the identity in $\|\,\cdot\,\|_2$ pointwise, but not uniformly on the unit ball of $S$. The construction of Lemma \ref{lem.construct-cocycle} gives a non inner $1$-cocycle.

2.\ If $S$ has the Haagerup property relative to $T$, then the construction in Lemma \ref{lem.construct-cocycle} provides a proper $1$-cocycle. Conversely, when $c : \cS \recht \cH$ is a proper $1$-cocycle, we define, as in the beginning of the proof of 1, the $1$-parameter family of unital normal $T$-bimodular completely positive maps $\vphi_t : S \recht S$, $t > 0$, given by $\vphi_t(x) = \exp(-t c^* c)(x)$. Using Lemma \ref{lem.compact-ideal-space}, it follows that each $\vphi_t$, $t > 0$, belongs to the compact ideal space $\cJ(\langle S,e_T \rangle)$. For every $x \in S$, we have that $\lim_{t \recht 0} \|\vphi_t(x) - x \|_2 = 0$. So, $S$ has the Haagerup property relative to $T$.

3.\ If $[S:T] < \infty$, every $1$-cocycle $c : S \recht \cH$ is bounded and thus inner by Lemma \ref{lem.bounded-inner}. Indeed, whenever $H_\al \subset L^2(S)$ is an irreducible $T$-subbimodule, the restriction of $c$ to $H_\al \cap S$ must be a multiple of an isometry. Since $L^2(S)$ is the direct sum of finitely many irreducible $T$-subbimodules, it follows that $c$ extends to a bounded operator from $L^2(S)$ to $\cH$.

By Proposition \ref{prop.characterization-amenable-inclusion}, $S$ is nonamenable relative to $T$ if and only if there exists a finite subset $\cG \subset \cS$ such that
$$
\|\xi\| \leq \sum_{x \in \cG} \|x \xi - \xi x \| \quad\text{for all}\;\; \xi \in \cH_T \; .
$$
So if $S$ is nonamenable relative to $T$, every $1$-cocycle $c : \cS \recht \cH$ that is approximately inner must be inner.

Finally assume that $[S:T] = \infty$ and that $S$ is amenable relative to $T$. We prove that there exists an approximately inner, but non inner, $1$-cocycle $c : \cS \recht \cH$. Equip the space $\Mor_{T-T}(\cS,\cH)$ with the topology of pointwise norm convergence. Since $\cS$ admits a countable basis as a $T$-module, $\Mor_{T-T}(\cS,\cH)$ is a Fr\'{e}chet space. Consider the continuous linear map $\partial : \cH_T \recht \Mor_{T-T}(\cS,\cH)$ given by $(\partial \xi)(x) = x \xi - \xi x$ for all $x \in \cS$. Since $[S:T] = \infty$, the map $\partial$ is injective. Since $S$ is amenable relative to $T$, there exists a sequence of unit vectors $\xi_k \in \cH_T$ such that $\partial \xi_k \recht 0$. So the open mapping theorem implies that $\partial(\cH_T)$ is not closed in $\Mor_{T-T}(\cS,\cH)$. Any $c \in \Mor_{T-T}(\cS,\cH)$ that lies in the closure of $\partial(\cH_T)$ but does not belong to $\partial(\cH_T)$ is an approximately inner, but non inner $1$-cocycle.
\end{proof}

\subsection{Stability under extensions of irreducible quasi-regular inclusions}

Since the late 1980s, it became clear that the appropriate notion of \emph{morphism} between finite index subfactors $T \subset S$ and $Q \subset P$ is encoded by a \emph{commuting square}
\begin{equation}\label{eq.my-comm-sq}
\coms{Q}{P}{T}{S}
\end{equation}
i.e.\ a square of inclusions of II$_1$ factors satisfying $E_Q(x) = E_T(x)$ for all $x \in S$, see e.g.\ \cite[Section 2]{Po92}. In this context, two conditions naturally emerged: nondegeneracy and compatibility of relative commutants.

The commuting square \eqref{eq.my-comm-sq} is said to be \emph{nondegenerate} if $Q S$ spans a dense subspace of $L^2(P)$. It can then be naturally extended to a system of commuting squares
$$
\begin{array}{cccccccc}
Q & \subset & P & \subset & P_1 & \subset & P_2 & \subset \cdots \\ \cup & & \cup & & \cup & & \cup & \\ T & \subset & S & \subset & S_1 & \subset & S_2 & \subset \cdots
\end{array}
$$
where $Q \subset P \subset P_1 \subset \cdots$ and $T \subset S \subset S_1 \subset \cdots$ are the Jones towers. The compatibility of the relative commutants, called \emph{smoothness} in \cite[Definition 2.3.1]{Po92}, is given by the condition $T' \cap S_n \subset Q' \cap P_n$ for all $n$.

A key example to illustrate this point of view is given by a crossed product inclusion $T \subset S = T \rtimes \Gamma$ where $\Gamma$ is a finite group. If \eqref{eq.my-comm-sq} is an arbitrary nondegenerate commuting square, then smoothness holds if and only if $P \cong Q \rtimes \Gamma$ where the action $\Gamma \actson Q$ extends the original action $\Gamma \actson T$.
Actually, in Definition \ref{def.extension-quasireg} below, we will impose the stronger condition $T' \cap S_n = Q' \cap P_n$ for all $n$. In the crossed product example, if the original $\Gamma \actson T$ is by outer automorphisms, this equality requires the extended action $\Gamma \actson Q$ to be outer as well.

We generalize these notions to arbitrary quasi-regular inclusions, which are typically of infinite index, and define the notion of an \emph{extension} of an irreducible quasi-regular inclusion in Definition \ref{def.extension-quasireg}. As in the case of finite groups, when $T \subset S = T \rtimes \Gamma$ is an arbitrary crossed product inclusion with $\Gamma \actson T$ being an outer action, extensions are exactly given as $Q \subset Q \rtimes \Gamma$ where $T \subset Q$ and the action $\Gamma \actson Q$ is outer and extends the original action $\Gamma \actson T$. As we explain in Remark \ref{rem.Cartan-extensions} below, when $T \subset S$ and $Q \subset P$ are Cartan subalgebra inclusions, our notion of extension corresponds to the familiar notion of extensions of countable ergodic equivalence relations.

In order to avoid infinite index inclusions with operator valued weights, we reformulate the smoothness condition directly in terms of bimodules, keeping in mind that in the finite index case, we have $T' \cap S_{2n-1} = \End_{T-T}(\cH_n)$, where $\cH_n$ equals the $n$-fold tensor product of $T$-bimodules $L^2(S) \ovt_T \cdots \ovt_T L^2(S)$.

Let $T \subset Q$ be an inclusion of II$_1$ factors. An extension of an automorphism $\al \in \Aut(T)$ to $Q$ is an automorphism $\be \in \Aut(Q)$ satisfying $\be(x) = \al(x)$ for all $x \in T$. Similarly, an extension of a Hilbert $T$-bimodule $\cH$ to $Q$ is a Hilbert $Q$-bimodule $\cK$ containing $\cH$ as a Hilbert $T$-subbimodule such that the following two conditions hold.
\begin{align*}
& p_\cH(a \xi) = E_T(a) \xi \quad\text{and}\quad p_\cH(\xi a) = \xi E_T(a) \quad\text{for all}\;\; \xi \in \cH , a \in Q \; , \; \text{and} \\
& Q \cH \;\;\text{and}\;\; \cH Q \;\; \text{span dense subsets of $\cK$.}
\end{align*}
For every Hilbert $T$-bimodule $\cH$, one can choose projections $p \in B(\cL) \ovt T$, $q \in B(\cL) \ovt T$, normal unital $*$-homomorphisms $\psi : T \recht p(B(\cL) \ovt T)p$ and $\vphi : T \recht q(B(\cL) \ovt T)q$ and $T$-bimodular unitary operators
$$U : \ {_{\psi(T)} p(\cL \ot L^2(T))_T} \recht \cH \quad , \quad V : \ {_T (\cL^* \ot L^2(T))q_{\vphi(T)}} \recht \cH \; .$$
Then, a Hilbert $Q$-bimodule $\cK$ is an extension of $\cH$ to $Q$ if and only if we can extend $\psi$ and $\vphi$ to normal unital $*$-homomorphisms $\psitil : Q \recht p(B(\cL) \ovt Q)p$ and $\vphitil : Q \recht q(B(\cL) \ovt Q)q$ and we can extend $U$ and $V$ to $Q$-bimodular unitary operators
$$\ {_{\psitil(Q)} p(\cL \ot L^2(Q))_{Q}} \recht \cK \quad , \quad \ {_{Q} (\cL^* \ot L^2(Q))q_{\vphitil(Q)}} \recht \cK \; .$$

Whenever $\cK$ is an extension of $\cH$ to $Q$, we get an identification $\cH \ovt_T L^2(Q) \cong \cK$ as $T$-$Q$-bimodules, as well as an identification $L^2(Q) \ovt_T \cH \cong \cK$ as $Q$-$T$-bimodules. In this way, we get the canonical normal faithful unital $*$-homomorphisms
\begin{align*}
& \Theta_r : \End_{\text{--}T}(\cH) \recht \End_{\text{--}Q}(\cK) : \Theta_r(V) = V \ot 1 \quad\text{and} \\
& \Theta_\ell : \End_{T\text{--}}(\cH) \recht \End_{Q\text{--}}(\cK) : \Theta_\ell(V) = 1 \ot V  \; .
\end{align*}

\begin{definition}\label{def.extension-quasireg}
An extension of an irreducible quasi-regular inclusion of II$_1$ factors $T \subset S$ is a nondegenerate commuting square
\begin{equation}\label{eq.new-cs}
\coms{Q}{P}{T}{S}
\end{equation}
where $Q \subset P$ is an irreducible quasi-regular inclusion of II$_1$ factors,
such that the $n$-th tensor powers $\cH_n = L^2(S) \ovt_T \cdots \ovt_T L^2(S)$ and $\cK_n = L^2(P) \ovt_{Q} \cdots \ovt_{Q} L^2(P)$ satisfy $\Theta_\ell(V) = \Theta_r(V)$ for all $V \in \End_{T-T}(\cH_n)$ and such that the resulting $*$-homomorphism $\Theta_n : \End_{T-T}(\cH_n) \recht \End_{Q-Q}(\cK_n)$ is bijective.
\end{definition}

Note that by the nondegeneracy of the commuting square \eqref{eq.new-cs}, the $Q$-bimodule $\cK_n$ is an extension of the $T$-bimodule $\cH_n$, so that $\Theta_\ell$ and $\Theta_r$ are well defined.

\begin{proposition}
Let $T \subset S$ and $Q \subset P$ be irreducible quasi-regular inclusions of II$_1$ factors. Assume that $Q \subset P$ is an extension of $T \subset S$. Denote by $\cC$ the tensor category generated by the finite index $T$-subbimodules of $L^2(S)$. Similarly define the tensor category $\cCtil$ of $Q$-bimodules generated by the finite index $Q$-subbimodules of $L^2(P)$. Then,
\begin{enumlist}
\item the rigid C$^*$-tensor categories $\cC$ and $\cCtil$ are naturally equivalent ,
\item there is a natural $*$-isomorphism between the tube $*$-algebras $\cA$, $\cAtil$ associated with $(T \subset S,\cC)$ and $(Q \subset P,\cCtil)$, preserving the weights on $\cA$, $\cAtil$,
\item for every $n \geq 0$, we have $\beta_n^{(2)}(T \subset S) = \beta_n^{(2)}(Q \subset P)$.
\end{enumlist}
\end{proposition}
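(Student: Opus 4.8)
The plan is to establish the three assertions in order, deducing (2) from (1) and (3) from (2). For (1) I would assemble the isomorphisms $\Theta_n$ into a unitary tensor functor $F : \cC \recht \cCtil$. As recalled just before Definition \ref{def.extension-quasireg}, every object of $\cC$ is a subobject of some tensor power $\cH_n = L^2(S) \ovt_T \cdots \ovt_T L^2(S)$, hence is cut out by a projection $p \in \End_{T-T}(\cH_n)$, while a $\Hom$-space between two such subobjects, included into a common power $\cH_N$ via the unit map $\delta : L^2(T) \recht L^2(S)$, is a corner $q\,\End_{T-T}(\cH_N)\,p$. I would set $F$ to send $p\cH_n$ to the subobject of $\cK_n$ cut out by $\Theta_n(p)$, and define $F$ on morphisms by applying the relevant $\Theta_N$ to the corner. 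Well-definedness, functoriality and full faithfulness are immediate from the $\Theta_n$ being bijective $*$-isomorphisms; the monoidal structure $F(\al)F(\be)\cong F(\al\be)$ is exactly the content of the hypotheses $\Theta_\ell=\Theta_r$ and the multiplicativity of the $\Theta_n$ relative to the identifications $\cH_m\ovt_T\cH_n=\cH_{m+n}$ and $\cK_m\ovt_Q\cK_n=\cK_{m+n}$. One must also check that $\Theta$ intertwines the unit inclusions and carries standard solutions of the conjugate equations to standard solutions, which follows from the $*$-isomorphism property and the uniqueness of such solutions.

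For (2) the key observation is that the tube $*$-algebra $\cA$ of $(T\subset S,\cC)$ is determined entirely by $\cC$ together with the object $L^2(S)$, regarded as a (generally non-unital) Frobenius algebra in $\indC$ with structure maps $m$, $\delta$ and $a=m^*\delta$: the spaces $(i\cS,\cS j)$, the product \eqref{eq.prod-A}, the adjoint $V^\#$ and the weight $\tau$ are all expressed through these maps and the traces $\Tr^\ell_i$, $\Delta_\cS$. It thus suffices to show that $F$ transports this algebra object to the corresponding one for $Q\subset P$. On objects this amounts to $L^2(S)=\bigoplus_\al n_\al\,\cH_\al$ and $L^2(P)=\bigoplus_\al n_\al\,\cK_{F(\al)}$ having equal multiplicities $n_\al$, which is read off from the bijective $*$-isomorphism $\Theta_1:\End_{T-T}(L^2(S))\recht\End_{Q-Q}(L^2(P))$. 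The multiplication and unit are matched using the commuting square: by nondegeneracy $\cK_n$ is the extension of $\cH_n$ to $Q$, so $m_P:\cK_2\recht\cK_1$ is the unique $Q$-bimodular extension of $m_S:\cH_2\recht\cH_1$, and the condition $E_Q=E_T$ on $S$ identifies $\delta_P$ with the extension of $\delta_S$. Hence $F$ induces a $*$-isomorphism $\cA\recht\cAtil$, and since $F$ preserves $\Tr^\ell$ and $\Delta_\cS$ it preserves the weights; in particular unimodularity of $T\subset S$ is equivalent to that of $Q\subset P$, so the $L^2$-Betti numbers are defined on one side exactly when on the other.

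For (3), a weight-preserving $*$-isomorphism $\cA\cong\cAtil$ transports everything needed for the $L^2$-Betti computation: the completions $\cA\dpr\cong\cAtil\dpr$ with their traces, the trivial modules $\cE^r$, $\cE^\ell$ and the regular module $L^2(\cA)$, all of which are built from the same structure maps in Lemma \ref{lem.reg-triv-A} and Remark \ref{rem.left-triv}. By Theorem \ref{thm.tor-ext} and Proposition \ref{prop.betti-quasiregular-homology}, $\bns(T\subset S)$ is the L\"uck dimension of $\Tor^\cA_n(L^2(\cA)^0,\cE^\ell)$ over $\cA\dpr$; this is carried isomorphically to the corresponding $\cAtil$-computation, yielding $\bns(T\subset S)=\bns(Q\subset P)$ for all $n\geq 0$.

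I expect the main obstacle to lie in step (2), namely in showing that $F$ sends $L^2(S)$ with its multiplication to $L^2(P)$ with its multiplication. The equivalence of (1) records only the tensor category $\cC$ and not the extra Frobenius structure that distinguishes $\cA$ from Ocneanu's tube algebra of $\cC$, so the mere bijectivity of the $\Theta_n$ is insufficient here; it is precisely at this point that the nondegeneracy of the commuting square and the expectation compatibility $E_Q=E_T$ must be invoked to pin down $m_P$ and $\delta_P$ as the canonical $Q$-linear extensions of $m_S$ and $\delta_S$.
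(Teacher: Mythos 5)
Your proposal is correct and follows essentially the same route as the paper's proof: the maps $\Theta_n$ are assembled into a tensor equivalence $\cC \recht \cCtil$, this equivalence is shown to carry $L^2(S)$ with its structure maps $m$ and $\delta$ to $L^2(P)$ with $\mtil$ and $\deltatil$ (whence a $*$-isomorphism $\cA \cong \cAtil$), and the equality of the $L^2$-Betti numbers then follows from Theorem \ref{thm.tor-ext}. The one step you assert rather than prove is that the equivalence preserves $\Tr^\ell$ and $\Tr^r$ (hence $\Delta_\cS$ and the weights): this is not automatic from the categorical equivalence, since left and right dimensions of bimodules are not invariants of the abstract C$^*$-tensor category; the paper settles it by observing that a basis (in the sense of \cite{PP84}) of a finite index subbimodule $\cH \subset \cH_n$ as a right $T$-module remains a basis of $\Theta(\cH) = \overline{\cH Q}$ as a right $Q$-module, and symmetrically on the left.
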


\begin{proof}
Since for all $n,m \geq 0$ and $V \in \End_{T-T}(\cH_n), W \in \End_{T-T}(\cH_m)$, we have $\Theta_{n+m}(V \ot W) = \Theta_n(V) \ot \Theta_n(W)$, the system of maps $\Theta_n$ induces an equivalence $\Theta$ between the rigid C$^*$-tensor categories $\cC$ and $\cCtil$. This equivalence can also be applied to infinite direct sums of objects in $\cC$, as well as to intertwiners between such bimodules. By construction, we have that $\Theta$ maps the $T$-bimodule $L^2(S)$ to the $Q$-bimodule $L^2(P)$. Also by construction, $\Theta$ maps the morphism $\delta : L^2(T) \hookrightarrow L^2(S)$ to the morphism $\deltatil : L^2(Q) \hookrightarrow L^2(P)$, and maps the ``locally defined'' morphism given by multiplication $m : L^2(S) \ovt_T L^2(S)$ to the multiplication morphism $\mtil : L^2(P) \ovt_Q L^2(P) \recht L^2(P)$. So, $\Theta$ induces a $*$-isomorphism of the tube $*$-algebras $\cA$, $\cAtil$.

Whenever $\cH \subset \cH_n$ is a finite index $T$-subbimodule and if $(\eta_j)$ is a basis of $\cH$ as a right $T$-module, then $\Theta(\cH)$ is defined as the closed linear span of $\cH Q$ inside $\cK_n$. It follows that $(\eta_j)$ is still a basis of $\Theta(\cH)$ as a right $Q$-module. Therefore, the $*$-isomorphism $\Theta : \End_{T-T}(\cH) \recht \End_{Q-Q}(\Theta(\cH))$ preserves the right traces $\Tr^r$. Similarly, $\Theta$ preserves the left traces $\Tr^l$. Therefore, the $*$-isomorphism between $\cA$ and $\cAtil$ preserves the weights defined in Proposition \ref{prop.trace-tube-quasi-reg}.

It finally follows from Theorem \ref{thm.tor-ext} that also $\beta_n^{(2)}(T \subset S) = \beta_n^{(2)}(Q \subset P)$ for all $n \geq 0$.
\end{proof}

\begin{remark}\label{rem.Cartan-extensions}
Our notion of extension in Definition \ref{def.extension-quasireg} is the ``irreducible quasi-regular inclusion'' version of the notion of an extension of countable pmp equivalence relations. A countable pmp equivalence relation $\cP$ on $(Y,\nu)$ is said to be an extension of the countable pmp equivalence relation $\cR$ on $(X,\mu)$ if we are given a measure preserving Borel map $\Delta : Y \recht X$ such that for a.e.\ $y \in Y$, $\Delta$ is a bijection between the orbit of $y$ and the orbit of $\Delta(y)$. This notion was first considered in \cite[Definition 1.4.2]{Po05} under the name of \emph{local orbit equivalence}. In \cite[Definition 1.6]{Fu06}, it has been called \emph{bijective relation morphism}, but the currently preferred terminology is \emph{extension} of equivalence relations/Cartan subalgebras, see e.g.\ \cite{AP15}.

Given countable pmp equivalence relations $\cR$ on $(X,\mu)$ and $\cP$ on $(Y,\nu)$ and writing $T = L^\infty(X) \subset L(\cR) = S$ and $Q = L^\infty(Y) \subset L(\cP) = P$, one checks that turning $\cP$ into an extension of $\cR$ by a map $\Delta : Y \recht X$ is the same as defining a nondegenerate commuting square
\begin{equation}\label{eq.final-cs}
\coms{Q}{P}{T}{S}
\end{equation}
with the property that $\cN_S(T) \subset \cN_P(Q)$.

Assume now that $T \subset S$ and $Q \subset P$ are arbitrary quasi-regular inclusions of tracial von Neumann algebras with the property that $T' \cap S = \cZ(T)$ and $Q' \cap P = \cZ(Q)$, thus covering both irreducible inclusions and Cartan inclusions. We say that $Q \subset P$ is an extension of $T \subset S$ if we are given a nondegenerate commuting square \eqref{eq.final-cs} with the following property: denoting as above by $\cH_n = L^2(S) \ovt_T \cdots \ovt_T L^2(S)$ and $\cK_n = L^2(P) \ovt_Q \cdots \ovt_Q L^2(P)$ the $n$'th tensor powers, the maps $\Theta_\ell$ and $\Theta_r$ coincide on $\End_{T-T}(\cH_n)$ and the resulting $*$-homomorphisms $\Theta_n : \End_{T-T}(\cH_n) \recht \End_{Q-Q}(\cK_n)$ satisfy
$$\lambda(\cZ(Q)) \vee \Theta_n(\End_{T-T}(\cH_n)) = \End_{Q-Q}(\cK_n) = \rho(\cZ(Q)) \vee \Theta_n(\End_{T-T}(\cH_n))$$
for all $n \geq 1$, where we denote by $\lambda$ and $\rho$ the left and right module action of $Q$ on $\cK_n$.

It is easy to check that for Cartan inclusions, this definition is equivalent with the above definition of an extension of equivalence relations. One can also prove that extensions preserve $L^2$-Betti numbers in the sense of Definition \ref{def.L2-cohom-quasireg} (using the canonical quasi-normalizer as intermediate $*$-subalgebra $T \subset \cS \subset S$). We do not elaborate this further. Note however that using the bar resolution of Section \ref{sec.cohom-cartan} and formula \eqref{eq.L2-Betti-cohom-equiv-rel} as the definition of $L^2$-Betti numbers for equivalence relations, it follows immediately that $\beta_n^{(2)}(\cP) = \beta_n^{(2)}(\cR)$ for all $n$ whenever $\cP$ is an extension of $\cR$.
\end{remark}


\begin{thebibliography}{ABC90}\setlength{\itemsep}{-1mm} \setlength{\parsep}{0mm} \small

\bibitem[AP15]{AP15} A. Aaserud and S. Popa, Approximate equivalence of group actions. {\it Ergodic Theory Dynam. Systems}, to appear. arxiv:1511.00307

\bibitem[A74]{At74} M.F. Atiyah, Elliptic operators, discrete groups and von Neumann algebras. In {\it Colloque Analyse et Topologie en l'honneur de Henri Cartan (Orsay, 1974)}, Ast\'{e}risque {\bf 32}-{\bf 33} (1976), 43-72.

\bibitem[BHV08]{BHV08} B. Bekka, P. de la Harpe and A. Valette, Kazhdan's property (T). Cambridge University Press, Cambridge, 2008.

\bibitem[BJ95]{BJ95} D. Bisch and V.F.R. Jones, Algebras associated to intermediate subfactors. {\it Invent. Math.} {\bf 128} (1997), 89-157.

\bibitem[CG85]{CG85} {J. Cheeger and M. Gromov}, $L^2$-cohomology and group cohomology. {\it Topology} {\bf 25} (1986), 189-215.

\bibitem[CC$^+$01]{CCJJV01} P.-A. Cherix, M. Cowling, P. Jolissaint, P. Julg and A. Valette, Groups with the Haagerup property. Birkh\"{a}user Verlag, Basel, 2001.

\bibitem[C78]{Co78} A. Connes, Sur la th\'{e}orie non commutative de l'int\'{e}gration. In {\it Alg\`{e}bres d'op\'{e}rateurs (Les Plans-sur-Bex, 1978)}, Lecture Notes in Math. {\bf 725}, Springer, Berlin, 1979, pp.\ 19-143.

\bibitem[C12]{Cu12} S. Curran, On the planar algebra of Ocneanu's asymptotic inclusion. {\it Internat. J. Math.} {\bf 23} (2012), art.\ id.\ 1250114, 43 pp.

\bibitem[FH80]{FH80} T. Fack and P. de la Harpe, Sommes de commutateurs dans les alg\`{e}bres de von Neumann finies continues. {\it Ann. Inst. Fourier (Grenoble)} {\bf 30} (1980), 49-73.

\bibitem[FM75]{FM75} {J. Feldman and C.C. Moore}, Ergodic Equivalence Relations, Cohomology, and Von Neumann Algebras, I, II. {\it Trans. Amer. Math. Soc.} {\bf 234} (1977), 289-324, 325-359.

\bibitem[F06]{Fu06} A. Furman, On Popa's cocycle superrigidity theorem. {\it Int. Math. Res. Not. IMRN} {\bf 19} (2007), art.\ id.\ rnm073.

\bibitem[G01]{Ga01} {D. Gaboriau}, Invariants $\ell^2$ de relations d'\'{e}quivalence et de groupes. {\it Publ. Math. Inst. Hautes \'{E}tudes Sci.} {\bf 95} (2002), 93-150.

\bibitem[GJ15]{GJ15} S. Ghosh and C. Jones, Annular representation theory for rigid C$^*$-tensor categories. {\it J. Funct. Anal.} {\bf 270} (2016), 1537-1584.

\bibitem[GHJ89]{GHJ89} F.M. Goodman, P. de la Harpe and V.F.R. Jones, Coxeter graphs and towers of algebras. {\it Mathematical Sciences Research Institute Publications} {\bf 14}, Springer-Verlag, New York, 1989.

\bibitem[G80]{Gu80} A. Guichardet, Cohomologie des groupes topologiques et des alg\`{e}bres de Lie. CEDIC/Fernand Nathan, Paris, 1980.

\bibitem[H56]{H56} G. Hochschild, Relative homological algebra. {\it Trans. Amer. Math. Soc.} {\bf 82} (1956), 246-269.

\bibitem[H98]{Hu98} H.-P. Huang, Commutators associated to a subfactor and its relative commutants. {\it Ann. Inst. Fourier (Grenoble)} {\bf 52} (2002), 289-301.

\bibitem[J82]{Jo82}  V.F.R. Jones, Index for subfactors. {\it Invent. Math.} {\bf 72} (1983), 1-25.

\bibitem[J98]{Jo98} V.F.R Jones, The planar algebra of a bipartite graph. In {\it Knots in Hellas '98}, World Scientific, 1999, pp.\ 94-117.

\bibitem[J99]{jones:planar} V.F.R. Jones, Planar Algebras, Preprint, Berkeley 1999, arXiv:math.QA/9909027.

\bibitem[J01]{jones:annular} V.F.R. Jones, The annular structure of subfactors. In {\it Essays on geometry and related topics}, Vol. 1, 2, Monogr. Enseign. Math. {\bf 38}, Enseignement Math., Geneva, 2001, pp.\ 401-463.

\bibitem[K09]{Ky09} D. Kyed, An $L^2$-K\"{u}nneth formula for tracial algebras. {\it J. Operator Theory} {\bf 67} (2012), 317-327.

\bibitem[KPV13]{KPV13}  D. Kyed, H.D. Petersen and S. Vaes, $L^2$-Betti numbers of locally compact groups and their cross section equivalence relations. {\it Trans. Amer. Math. Soc.} {\bf 367} (2015), 4917-4956.

\bibitem[LR94]{LR94} R. Longo and K.-H. Rehren, Nets of subfactors. In {\it Workshop on Algebraic Quantum Field Theory and Jones Theory (Berlin, 1994)}. {\it Rev. Math. Phys.} {\bf 7} (1995), 567-597.

\bibitem[LR95]{LR95} R. Longo and J.E. Roberts, A theory of dimension. {\it K-Theory} {\bf 11} (1997), 103-159.

\bibitem[L02]{Lu02} W. L\"{u}ck, $L^2$-invariants: theory and applications to geometry and K-theory. Springer-Verlag, Berlin, 2002.

\bibitem[M99]{Ma99} T. Masuda, Generalization of Longo-Rehren construction to subfactors of infinite depth and amenability of fusion algebras. {\it J. Funct. Anal.} {\bf 171} (2000), 53-77.

\bibitem[M01]{Mu01} M. M\"{u}ger, From subfactors to categories and topology, I. Frobenius algebras in and Morita equivalence of tensor categories. {\it J. Pure Appl. Algebra} {\bf 180} (2003), 81-157.

\bibitem[NT13]{NT13} S. Neshveyev and L. Tuset, Compact quantum groups and their representation categories. {\it Cours Sp\'{e}cialis\'{e}s} {\bf 20}. Soci\'{e}t\'{e} Math\'{e}matique de France, Paris, 2013.

\bibitem[NY15a]{NY15a} S. Neshveyev and M. Yamashita, Drinfeld center and representation theory for monoidal categories. {\it Commun. Math. Phys.} {\bf 345} (2016), 385-434.

\bibitem[NY15b]{NY15b} S. Neshveyev and M. Yamashita, A few remarks on the tube algebra of a monoidal category. {\it Preprint.}  arXiv:1511.06332.

\bibitem[O93]{Oc93} A. Ocneanu, Chirality for operator algebras. In {\it Subfactors (Kyuzeso, 1993)}, World Sci. Publ., River Edge, 1994, pp.\ 39-63.

\bibitem[PP84]{PP84} M. Pimsner and S. Popa, Entropy and index for subfactors. {\it Ann. Sci. \'{E}cole Norm. Sup.} {\bf 19} (1986), 57-106.

\bibitem[P86]{Po86} {S. Popa}, Correspondences. {\it INCREST Preprint} {\bf 56} (1986). Available at\\ \href{http://www.math.ucla.edu/~popa/preprints.html}{www.math.ucla.edu/$\sim$popa/preprints.html}

\bibitem[P92]{Po92} S. Popa, Classification of amenable subfactors of type II. {\it Acta Math.} {\bf 172} (1994), 163-255.

\bibitem[P93]{Po93} S. Popa, Approximate innerness and central freeness for subfactors: a classification result.  In {\it Subfactors (Kyuzeso, 1993)}, World Sci. Publ., River Edge, 1994, pp.\ 274-293.

\bibitem[P94a]{Po94a} S. Popa, Symmetric enveloping algebras, amenability and AFD properties for subfactors. {\it Math. Res. Lett.} {\bf 1} (1994), 409-425.

\bibitem[P94b]{Po94b} S. Popa, An axiomatization of the lattice of higher relative commutants of a subfactor. {\it Invent. Math.} {\bf 120} (1995), 427-445.

\bibitem[P97a]{Po97a} S. Popa, The relative Dixmier property for inclusions of von Neumann algebras of finite index. {\it Ann. Sci. \'{E}cole Norm. Sup.} {\bf 32} (1999), 743-767.

\bibitem[P97b]{Po97b} S. Popa, On Connes' joint distribution trick and a notion of amenability for positive maps. {\it Enseign. Math.} {\bf 44} (1998), 57-70.

\bibitem[P99]{Po99} S. Popa, Some properties of the symmetric enveloping algebra of a subfactor, with applications to amenability and property T. {\it Doc. Math.} {\bf 4} (1999), 665-744.

\bibitem[P01]{Po01} S. Popa, On a class of type II$_1$ factors with Betti numbers invariants. {\it Ann. of Math.} {\bf 163} (2006), 809-899.

\bibitem[P05]{Po05} {S. Popa}, Cocycle and orbit equivalence superrigidity for malleable actions of $w$-rigid groups. {\it Invent. Math.} {\bf 170} (2007), 243-295.

\bibitem[PV14]{PV14} S. Popa and S. Vaes, Representation theory for subfactors, $\lambda$-lattices and C$^*$-tensor categories, {\it Commun. Math. Phys.} {\bf 340} (2015), 1239-1280.

\bibitem[S88]{Sa88} J.-L. Sauvageot, Quantum Dirichlet forms, differential calculus and semigroups. In {\it Quantum probability and applications, V (Heidelberg, 1988)}, Lecture Notes in Math. {\bf 1442}, Springer, Berlin, 1990, pp.\ 334-346.

\bibitem[T06]{T06} A. Thom, $L^2$-invariants and rank metric. In {\it C$^*$-algebras and elliptic theory II}, Trends Math., Birkh\"{a}user, Basel, 2008, pp.\ 267-280.

\bibitem[V07]{Va07} {S. Vaes}, Explicit computations of all finite index bimodules for a family of II$_1$ factors. \emph{Ann. Sci. \'{E}cole Norm. Sup.} {\bf 41} (2008), 743-788.

\end{thebibliography}
\end{document}